\pgfplotsset{compat=1.9}
   \def\MR#1{}
\renewcommand{\tocsection}[3]{
  \indentlabel{\@ifnotempty{#2}{\bfseries\ignorespaces#1 #2\quad}}\bfseries#3}
\renewcommand{\tocsubsection}[3]{
  \indentlabel{\@ifnotempty{#2}{\ignorespaces#1 #2\quad}}#3}
\renewcommand{\tocsubsubsection}[3]{
  \indentlabel{\@ifnotempty{#2}{\ignorespaces#1 #2\quad}}#3}
\newcommand\@dotsep{4.5}
\def\@tocline#1#2#3#4#5#6#7{\relax
  \ifnum #1>\c@tocdepth % then omit
  \else
    \par \addpenalty\@secpenalty\addvspace{#2}%
    \begingroup \hyphenpenalty\@M
    \@ifempty{#4}{%
      \@tempdima\csname r@tocindent\number#1\endcsname\relax
    }{%
      \@tempdima#4\relax
    }%
    \parindent\z@ \leftskip#3\relax \advance\leftskip\@tempdima\relax
    \rightskip\@pnumwidth plus1em \parfillskip-\@pnumwidth
    #5\leavevmode\hskip-\@tempdima{#6}\nobreak
    \leaders\hbox{$\m@th\mkern \@dotsep mu\hbox{.}\mkern \@dotsep mu$}\hfill
    \nobreak
    \hbox to\@pnumwidth{\@tocpagenum{\ifnum#1=1\bfseries\fi#7}}\par% <-- \bfseries for \section page
    \nobreak
    \endgroup
  \fi}
\renewcommand\csname r@tocindent0\endcsname{0pt}
\def\l@subsection{\@tocline{2}{0pt}{2.5pc}{5pc}{}}
\newcommand{\N}{{\mathbb N}}
\newcommand{\Z}{{\mathbb Z}}
\newcommand{\C}{{\mathbb C}}
\newcommand{\F}{{\mathbb F}}
\DeclareMathAlphabet{\pazocal}{OMS}{zplm}{m}{n}
\newcommand{\calA}{{\pazocal A}}
\newcommand{\calB}{{\pazocal B}}
\newcommand{\calG}{{\pazocal G}}
\newcommand{\calH}{{\pazocal H}}
\newcommand{\calJ}{{\pazocal J}}
\newcommand{\calO}{{\pazocal O}}
\newcommand{\calP}{{\pazocal P}}
\newcommand{\calS}{{\pazocal S}}
\newcommand{\calT}{{\pazocal T}}
\newcommand{\calU}{{\pazocal U}}
\newcommand{\calW}{{\pazocal W}}
\newcommand{\calZ}{{\pazocal Z}}
\newcommand{\OV}{{\calO V}}
\newcommand{\ra}{\rightarrow}
\newcommand{\xra}{\xrightarrow}
\newcommand{\lra}{\longrightarrow}
\newcommand{\ol}{\overline}
\newcommand{\wt}{\widetilde}
\newcommand{\sred}{s_{\mathrm{red}}}
\newcommand{\Lab}{L^{{\rm ab}}}
\newcommand{\LVab}{LV^{{\rm ab}}}
\newcommand{\xddots}{%
  \raise 4pt \hbox {.}
  \mkern 6mu
  \raise 1pt \hbox {.}
  \mkern 6mu
  \raise -2pt \hbox {.}
}
\numberwithin{equation}{section}
\theoremstyle{plain}
\newtheorem{theorem}{Theorem}[section]
\newtheorem*{theorem*}{Theorem}
\newtheorem{lemma}[theorem]{Lemma}
\newtheorem{proposition}[theorem]{Proposition}
\newtheorem*{claim*}{Claim}
\theoremstyle{definition}
\newtheorem{definition}[theorem]{Definition}
\newtheorem{example}[theorem]{Example}
\newtheorem{examples}[theorem]{Examples}
\newtheorem{remark}[theorem]{Remark}
\newtheorem*{remark*}{Remark}
\newtheorem{remarks}[theorem]{Remarks}
\newtheorem*{remarks*}{Remarks}
\newtheorem*{assumption*}{Assumption}
\newtheorem{construction}[theorem]{Construction}
\newtheorem{notation}[theorem]{Notation}
\tikzset{
  % style to apply some styles to each segment of a path
  on each segment/.style={
    decorate,
    decoration={
      show path construction,
      moveto code={},
      lineto code={
        \path [#1]
        (\tikzinputsegmentfirst) -- (\tikzinputsegmentlast);
      },
      curveto code={
        \path [#1] (\tikzinputsegmentfirst)
        .. controls
        (\tikzinputsegmentsupporta) and (\tikzinputsegmentsupportb)
        ..
        (\tikzinputsegmentlast);
      },
      closepath code={
        \path [#1]
        (\tikzinputsegmentfirst) -- (\tikzinputsegmentlast);
      },
    },
  },
  % style to add an arrow in the middle of a path
  mid arrow/.style={postaction={decorate,decoration={
        markings,
        mark=at position .5 with {\arrow[#1]{stealth}}
      }}},
}
\tikzset{
	cross/.style={cross out, draw=black, minimum size=2*(#1-\pgflinewidth), inner sep=0pt, outer sep=0pt},
	%default radius will be 1pt. 
	cross/.default={1pt}
}
\tikzset{
  symbol/.style={
    draw=none,
    every to/.append style={
      edge node={node [sloped, allow upside down, auto=false]{$#1$}}}
  }
}
\title[A correspondence between surjective local homeomorphisms and a family of separated graphs]{A correspondence between surjective local homeomorphisms and a family of separated graphs}
\author{Pere Ara}
\address[P. Ara]{Departament de Matem\`atiques, Edifici Cc, Universitat Aut\`onoma de Barcelona, 08193 Cerdanyola del Vall\`es (Barcelona), Spain, and}
\address{Centre de Recerca Matem\`atica, Edifici Cc, Campus de Bellaterra, 08193 Cerdanyola del Vall\`es (Barcelona), Spain.}
\email{Pere.Ara@uab.cat}
\author{Joan Claramunt}
\address[J. Claramunt]{Departamento de Matem\'aticas, Universidad Carlos III de Madrid, Av. de la Universidad 30, 28911 Legan\'es (Madrid), Spain.}
\email{jclaramu@math.uc3m.es}
\subjclass[2020]{37B10, 46L55, 16S88}
\keywords{local homeomorphism, Bratteli diagram, separated graph, groupoid, $C^*$-algebra, Steinberg algebra}
\thanks{Both authors were partially supported by the project PID2020-113047GB-I00/AEI/10.13039/501100011033. The first-named author was also partially supported by the Spanish State Research Agency, through the Severo Ochoa and Mar\'ia de Maeztu Program for Centers and Units of Excellence in R$\&$D (CEX2020-001084-M). The second named author was also partially supported by the ERC Starting Grant ``Limits of Structures in Algebra and Combinatorics'' No. 805495.}
\date{\today}
\begin{document}

\pagestyle{plain}
 
\begin{abstract}
We present a graph-theoretic model for dynamical systems $(X,\sigma)$ given by a surjective local homeomorphism $\sigma$ on a totally disconnected compact metrizable space $X$. In order to make the dynamics appear explicitly in the graph, we use two-colored Bratteli separated graphs as the graphs used to encode the information. In fact, our construction gives a \textit{bijective} correspondence between such dynamical systems and a subclass of separated graphs which we call \textit{l-diagrams}. This construction generalizes the well-known shifts of finite type, and leads naturally to the definition of a \textit{generalized finite shift}. It turns out that any dynamical system $(X,\sigma)$ of our interest is the inverse limit of a sequence of generalized finite shifts. We also present a detailed study of the corresponding Steinberg and $C^*$ algebras associated with the dynamical system $(X,\sigma)$, and we use the above approximation of $(X,\sigma)$ to write these algebras as colimits of the associated algebras of the corresponding generalized finite shifts, which we call \textit{generalized finite shift algebras}.
\end{abstract}

\maketitle

\tableofcontents

\normalsize

\section{Introduction}\label{section-introduction}

Dynamical systems consisting of a pair $(X,f)$, where $X$ is a topological space and $f\colon X\to X$  is a continuous map, have attracted a great deal of attention along the years. In particular, an important line of investigation consists of approximating a dynamical system by means of other dynamical systems defined by combinatorial data. In many occasions, invariants associated to the original dynamical system can be computed using these combinatorial approximations. In this respect, symbolic dynamics has shown to provide a very useful source of approximating systems, see \cite{Bruin2022, DK2018, LM2021}.
An early example occurs with the existence of a Markov partition on several classes of dynamical systems, which is proved to imply that the system under study is a factor of a corresponding shift associated to the Markov partition, see \cite[Proposition 6.5.8]{LM2021}.

It has recently been shown in \cite{GM2020} that any dynamical system $(X,f)$, where $X$ is a compact Hausdorff space, can be expressed as an inverse limit of one-sided shifts of finite type, and that the important property of shadowing holds for $(X,f)$ if and only if the inverse system satisfies the Mittag-Leffler property. See also \cite{shimomura2014} and \cite{shimomura2020} for results concerning the approximation of zero-dimensional systems in terms of various combinatorial structures.

In a different setting, Bratteli diagrams were used in \cite{Bratteli1972} to classify approximately finite-dimensional $C^*$-algebras. Herman, Putnam and Skau \cite{HPS1992} showed that there is a bijective correspondence (modulo suitable equivalence relations) between essentially minimal homeomorphisms $f$ on the Cantor set $X$ and a class of {\it ordered} Bratteli diagrams.  These results were later used to obtain classification theorems for the corresponding crossed product $C^*$-algebras $C^*(X)\rtimes \Z$ associated to the action of $\Z$ on $X$ induced by the homeomorphism $f$, see for instance \cite{GPS1995}. This kind of combinatorial structure has been studied in more generality, for some dynamical systems not necessarily essentially minimal, see for example \cite{BNS2021}. In addition, the above correspondences have been recently upgraded to suitable category equivalences by Amini, Elliott and Golestani, see \cite{AEG2015} and \cite{AEG2021}.

In this paper we develop a new approximation method for any dynamical system $(X,\sigma)$, where $\sigma$ is a surjective local homeomorphism and $X$ is a totally disconnected compact metrizable space. This is a very large class of dynamical systems, including all homeomorphisms on totally disconnected compact metrizable spaces, in particular all minimal Cantor systems \cite{Putnam2018}, all one-sided shifts of finite type, all two-sided shifts, covering maps of compact totally disconnected metrizable spaces (see \cite{D95}, \cite{EV06} and Remark \ref{rem:covering-map}), the one-sided shifts associated to topological graphs $E= (E^0,E^1,r,s)$, where $E^0$ and $E^1$ are compact totally disconnected metrizable spaces and $s$ is surjective (see \cite{Katsura2}, \cite{Katsura4}, \cite{KatsuraCKTOPLGR}, \cite{KL17}), one-sided shifts on infinite path spaces of finite ultragraphs with no sinks \cite{GR2019}, and the covers of one-sided subshifts \cite{BC2020}.

We show that our approximation is well-behaved with respect to the associated $C^*$-algebras. Specifically, we prove that the groupoid $C^*$-algebra $C^*(\calG (X,\sigma))$ of the Deaconu-Renault groupoid $\calG (X,\sigma)$ associated to $(X,\sigma)$ is naturally isomorphic to the colimit of the groupoid $C^*$-algebras of the approximating systems, see Theorem \ref{intro-theorem-combined.FINAL.RESULT} below. This groupoid $C^*$-algebra generalizes the crossed product construction mentioned above. A new basic building block emerges from our approach, generalizing both one-sided and two-sided shifts of finite type, which we have termed a {\it generalized shift of finite type}. The main combinatorial tool employed is the theory of separated graphs. A \textit{separated graph} $(E,C)$ consists of a graph $E= (E^0,E^1,r,s)$ together with a prescribed partition $C_v$ of the set of edges arriving at $v$, for each vertex $v \in E^0$. In our work, we use \textit{separated Bratteli diagrams}, which are Bratteli diagrams which are also separated graphs, so that the edges on every layer of the diagram are given a specific separation.

It is worth to mention that for a large class of dynamical systems, including the ones considered in the present paper, it has been shown in \cite{ABCE2023} that one can characterize conjugacy of the dynamical systems in terms of the associated $C^*$-algebras and also in terms of the associated Deaconu-Renault groupoids. See also \cite{BCS2020}, \cite{BCS2023}, \cite{CRST2021} and \cite{GTR2023} for other studies on the interconnections between local homeomorphisms, groupoids and $C^*$-algebras.

Let \textsf{LHomeo} be the family of dynamical systems $(X,\sigma)$ consisting of a totally disconnected compact metrizable space $X$, together with a surjective local homeomorphism $\sigma : X \ra X$.
In the spirit of the work done in \cite{HPS1992}, we provide a bijective correspondence (modulo suitable  equivalence relations) between \textsf{LHomeo} and a class of separated Bratteli diagrams, which we call \textit{$l$-diagrams}, see Definition \ref{definition-L.separated.Bratteli}.

Let $E$ be a finite graph having neither sinks nor sources. It is well-known that out of it one can construct a dynamical system $(X_E,\sigma_E) \in \textsf{LHomeo}$ as follows. The space $X_E$ is taken to be the set of one-sided infinite paths in the graph, endowed with the topology generated by the cylinder sets, and then $\sigma_E$ is defined as the one-sided shift. These are the well-known one-sided shifts of finite type. If we consider the space of bi-infinite paths instead of $X_E$, we obtain the definition of a two-sided shift of finite type, which is a homeomorphism. These are basic objects in symbolic dynamics.

From an arbitrary directed graph $E$ one can construct two important $\C$-algebras in a universal way. The first one is a purely algebraic construction, called the \textit{Leavitt path algebra} of the directed graph $E$, denoted by $L_{\C}(E)$. Leavitt path algebras were independently  introduced by Abrams and Aranda Pino \cite{AA2005} and by the first-named author, Moreno and Pardo \cite{AMP2007}. The second construction, which  chronologically precedes the first, also contains analytic information and is called the \textit{graph $C^*$-algebra} of the directed graph $E$, denoted by $C^*(E)$. Graph $C^*$-algebras were first studied in detail in the papers \cite{KPRR1997} and \cite{KPR1998}. It turns out that $C^*(E)$ is the enveloping $C^*$-algebra of $L_{\C}(E)$ for each directed graph $E$, in the sense of Definition \ref{definition-Steinberg.algebra}.

The connection with the theory of topological groupoids was already explored in \cite{KPRR1997}, where it is shown that the graph $C^*$-algebra of a locally finite graph is isomorphic to the $C^*$-algebra of a suitably defined \textit{graph groupoid} $\calG_E$ associated to $E$. This was later generalized to arbitrary graphs. When the graph is finite and has neither sinks nor sources, the graph groupoid $\calG_E$ turns out to be the Deaconu-Renault groupoid of the one-sided shift $(X_E,\sigma_E)$, and thus we obtain  the isomorphism $C^*(E)\cong C^*(\calG(X_E,\sigma_E))$. This connection was extended to the purely algebraic situation in \cite{Steinberg2010} and \cite{CFST2014} through the consideration of the \textit{Steinberg algebra} $A_{\C}(\calG)$ of an ample \'etale groupoid $\calG$. Concretely, it is shown in \cite[Proposition 4.3]{CFST2014} that there is a natural $*$-algebra isomorphism $L_{\C}(E) \cong A_{\C}(\calG_E)$ for each directed graph $E$. There are multiple  interesting connections between these groupoid algebras and symbolic dynamics, see for instance the recent papers \cite{ABCE2023} and \cite{CDE2020}.

The first-named author and Goodearl introduced in \cite{AG2011} and \cite{AG2012} the Leavitt path algebra $L_{\C}(E,C)$ and the graph $C^*$-algebras $C^*(E,C)$ associated to a separated graph $(E,C)$. These are defined  in a similar way to the Leavitt path algebras and graph $C^*$-algebras of ordinary directed graphs, but taking into account the partitions $\{C_v \mid v \in E^0\}$ in the defining relations of such algebras (see Definitions \ref{definition-leavitt.alg} and \ref{definition-C*.alg} for the precise definitions). Thenceforth there has been a considerable amount of work towards the theory of separated graph algebras \cite{Ara2012,Ara2022,ABC2023,AE2015,AL2018,GR2015,Lolk2017,Lolk2017.2}, not only because they provide a whole rich class of examples of algebras not being realizable as graph algebras, but because they proved to be useful in solving, either in the positive or in the negative, important conjectures (see e.g. \cite{ABP2020,ABP2020.2,ABPS2021,AE2014}).

The relation between dynamical systems and separated graphs was first demonstrated in a paper by the first-named author and Exel \cite{AE2014}. The authors attach to each finite bipartite separated graph $(E,C)$ a partial dynamical system $(\Omega(E,C),\F,\theta)$, where $\F = \F(E^1)$ is the free group on the set of edges, in such a way that the algebraic crossed product $C_{\C}(\Omega(E,C)) \rtimes_{\theta^*}^{\text{alg}} \F$ is canonically isomorphic to a well-determined quotient $L_{\C}(E,C)/J$ of $L_{\C}(E,C)$. They also found similar results in the $C^*$-algebraic setting. As an application of these results, the authors were able to construct non-paradoxical topological actions of discrete groups on Cantor spaces which do not admit an invariant finitely additive measure, thus providing counterexamples to the topological version of Tarski's alternative, see \cite[Section 7]{AE2014} for details. In connection with this work, it was shown in \cite{AL2018} that the study of the above mentioned dynamical systems associated to separated graphs is equivalent to the study of \textit{convex subshifts}, a notion coined by the first-named author and Lolk in \cite{AL2018}, introduced as a far-reaching unification proposal for the well-known one-sided and two-sided subshifts.

The first main result of the paper is the following:

\begin{theorem}[Theorem \ref{theorem-correspondence}]\label{intro-theorem-correspondence}
Out of a dynamical system $(X,\sigma) \in \emph{\textsf{LHomeo}}$, one can construct an $l$-diagram $(F,D)$. Conversely, out of an $l$-diagram $(F,D)$, one can construct a dynamical system $(X,\sigma) \in \emph{\textsf{LHomeo}}$.

Moreover, these correspondences are inverse of each other, so we obtain, modulo appropriate equivalence relations, a bijective correspondence between \emph{\textsf{LHomeo}} and \emph{\textsf{LDiag}}, the class of $l$-diagrams.
\end{theorem}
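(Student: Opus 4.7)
The plan is to construct the two correspondences explicitly and then verify they are mutually inverse up to the appropriate equivalence relations.

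For the forward direction, starting from $(X,\sigma)\in\textsf{LHomeo}$, I would first exploit that $X$ is totally disconnected compact metrizable: such a space admits a decreasing refining sequence of finite clopen partitions $\calP_0\geq\calP_1\geq\cdots$ whose union generates the topology. Because $\sigma$ is a surjective local homeomorphism, each $x\in X$ has a clopen neighbourhood on which $\sigma$ restricts to a homeomorphism onto a clopen image; by compactness, finitely many such sets cover $X$. Refining the partitions if necessary, one can arrange that for every $n$ and every atom $U\in\calP_{n+1}$, the unique atom of $\calP_n$ containing $U$ is well-defined, $\sigma|_U$ is a homeomorphism onto its image, and $\sigma(U)$ is contained in a (necessarily unique) atom of $\calP_n$. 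The vertex set of $F$ at level $n$ consists of the atoms of $\calP_n$, and between consecutive levels there are two species of edges: a containment edge from $U$ to $V$ whenever $U\subseteq V$, and a shift edge from $U$ to $W$ whenever $\sigma(U)\subseteq W$. Declaring the incoming edges at each vertex to be partitioned into these two classes yields a two-coloured separated Bratteli diagram $(F,D)$, and one verifies that the axioms of Definition~\ref{definition-L.separated.Bratteli} are satisfied.

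For the reverse direction, given an $l$-diagram $(F,D)$ I would separate its edges by colour: the containment-coloured subgraph is a Bratteli diagram whose inverse limit is a compact, metrizable, totally disconnected space $X$ carrying its cylinder-set topology. The shift-coloured edges then encode $\sigma$: each shift edge at level $n$ determines a clopen restriction of $\sigma$, taking one level-$n$ cylinder homeomorphically onto another, and the $l$-diagram axioms should guarantee that these local pieces are coherent across levels and disjoint at each level, so that they glue into a continuous surjection $\sigma\colon X\to X$ which is locally a homeomorphism. Surjectivity will follow from a coverage condition on the shift-colour bundles at each vertex; the local homeomorphism property will follow from the separation, which realizes each incoming shift bundle as a clopen partition of the corresponding cylinder.

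To show the constructions are mutually inverse, I would first specify the equivalence relations: on \textsf{LHomeo} the natural one is topological conjugacy, while on \textsf{LDiag} I expect the relevant one to be generated by telescoping (passing to a subsequence of levels) together with isomorphism of separated Bratteli diagrams. Starting from $(X,\sigma)$, running the forward construction and then the reverse recovers the inverse limit of the chosen nested partitions; this is canonically identified with $(X,\sigma)$ because any refining sequence of clopen partitions recovers the Stone space of the clopen algebra, and $\sigma$ is determined by its action on such a family. Starting from $(F,D)$, the reverse construction followed by the forward one produces a system whose canonical clopen partitions are, up to telescoping, the level partitions of $(F,D)$, so that the original $l$-diagram is recovered up to equivalence. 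The main obstacle I anticipate is not the two constructions per se but the bookkeeping needed for the equivalence on \textsf{LDiag}: the choice of refining partitions in the forward construction is not canonical, so the equivalence must be wide enough to identify all such choices, yet narrow enough that equivalent $l$-diagrams give topologically conjugate dynamical systems. Proving both compatibility properties---that telescopings of the data yield conjugate limits, and conversely that two partition choices for a common $(X,\sigma)$ are telescopings of a common refinement---will likely demand the most delicate combinatorial work.
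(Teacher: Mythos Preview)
Your outline captures the right architecture (partitions $\to$ diagram, blue infinite paths $\to$ space, telescoping as the diagram equivalence), but the forward construction as you describe it does not produce an $l$-diagram. You use a single kind of shift edge, $U\to W$ whenever $\sigma(U)\subseteq W$, uniformly at every layer. With this convention each atom emits exactly one red edge, so the diagram cannot record the branching of $\sigma^{-1}$ when $\sigma$ is not injective; worse, the compatibility axiom~(e) of Definition~\ref{definition-L.separated.Bratteli} fails outright, since following two blue edges down from $v$ lands you in $\{U'':U''\subseteq v\}$, whereas following two of your red edges lands you in $\{U'':\sigma^2(U'')\subseteq v\}$. The paper's fix is an essential and non-obvious \emph{alternating} convention (Construction~\ref{construction-l.diag}): at even layers a red edge $Z'\to Z$ means $Z'\subseteq\sigma(Z)$ (so many $Z$'s can receive edges from the same $Z'$, one per preimage branch), while at odd layers $Z''\to Z'$ means $\sigma(Z'')\subseteq Z'$; the odd-layer separation is then refined to $\{B_v\}\cup\{R(f):s(f)=v\}$, grouping red edges by which even-layer edge they continue. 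This alternation is exactly what makes the romb axioms (e)--(g) hold (Lemma~\ref{lemma-FD.is.Ldiagram}), and it requires the stronger refinement condition~(e) of Definition~\ref{definition-sigma.refined.partition} on the partitions, involving the auxiliary partition $\calP^\sigma$ that you do not introduce.

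Correspondingly, your reverse construction is too coarse: a red edge does not by itself ``take one cylinder homeomorphically onto another''. The paper builds $\sigma$ on a blue infinite path by repeatedly invoking the romb bijections of Proposition~\ref{proposition-decomposition.odd.rombs.even}(ii) and Definition~\ref{definition-L.separated.Bratteli}(f) to zig-zag between blue and red pairs two layers at a time (Construction~\ref{construction-local.homeo}); surjectivity and the exact count of preimages come from axiom~(g), and local injectivity from the uniqueness in those romb conditions. Finally, because of the even/odd asymmetry, telescoping of $l$-diagrams carries a parity constraint (condition~(CR) in Definition~\ref{definition-equiv.telesc.l.diagrams}), which you would need to impose for the equivalence-of-partitions argument to go through.
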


In particular, if we restrict our attention to homeomorphisms, the corresponding diagrams have a simpler form. We call such separated Bratteli diagrams \textit{$h$-diagrams} (see Definition \ref{definition-H.separated.Bratteli}) and we denote by \textsf{Homeo} the subclass of \textsf{LHomeo} consisting of those dynamical systems $(X,\sigma)$ such that $\sigma$ is a homeomorphism.

\begin{theorem}[Theorem \ref{theorem-equivalence.homeo}]\label{intro-theorem-equivalence.homeo}
The restriction of the above correspondences gives a bijective correspondence between the subclass of homeomorphisms $\emph{\textsf{Homeo}}$ and the subclass \emph{\textsf{HDiag}} of $h$-diagrams.
\end{theorem}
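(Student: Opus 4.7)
The plan is to verify that the bijection of Theorem \ref{intro-theorem-correspondence} restricts to the subclasses \textsf{Homeo} and \textsf{HDiag}. Since the correspondence between \textsf{LHomeo} and \textsf{LDiag} is already established (together with its inverse), it suffices to check, in both directions, that the additional condition defining \textsf{Homeo}, namely injectivity of $\sigma$ (equivalently, $\sigma$ being a homeomorphism, since it is already a surjective local homeomorphism on a compact Hausdorff space), matches the additional condition defining an $h$-diagram as given in Definition \ref{definition-H.separated.Bratteli}.

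First I would take $(X,\sigma) \in \textsf{Homeo}$ and analyze the $l$-diagram $(F,D)$ constructed from it. The vertices of $F$ at each level encode a clopen partition of $X$ refining the preceding one, and the two colors of edges record, respectively, the refinement of partitions and the action of $\sigma$ on partition elements. The surjectivity and local homeomorphism properties of $\sigma$ are already captured by the $l$-diagram axioms. Injectivity of $\sigma$ adds the further requirement that each element of the refined partition has a single $\sigma$-preimage inside each element of the coarser partition, which should translate directly into the condition that the relevant colored partition of the separation consists of singletons at every vertex of $F$, i.e.\ the defining condition of an $h$-diagram. Conversely, starting from $(F,D) \in \textsf{HDiag}$, I would run the inverse construction of Theorem \ref{intro-theorem-correspondence} and verify that the extra combinatorial singleton condition forces the reconstructed $\sigma$ to be injective: the inverse construction expresses the $\sigma$-preimage of a basic clopen set at level $n$ as a finite union indexed by the edges in one color class, and the singleton hypothesis makes this preimage unique, so $\sigma$ becomes a continuous bijection on a compact Hausdorff space and hence a homeomorphism.

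The main obstacle is locating precisely where in the construction of Theorem \ref{intro-theorem-correspondence} injectivity of $\sigma$ manifests itself combinatorially, and then showing that the singleton condition on the distinguished color of the separation is equivalent to injectivity not just at a single level but coherently across all levels of the diagram. This amounts to a careful bookkeeping of how clopen sets, preimages, and the two edge partitions interact level by level; once the equivalence is matched at one level, it propagates through the diagram by induction, using that the clopen partitions associated to the levels of $F$ generate the topology of $X$. With this bookkeeping in place, both restrictions of the correspondence become immediate, yielding the desired bijection between \textsf{Homeo} and \textsf{HDiag}.
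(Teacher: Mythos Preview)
Your overall strategy---show that each direction of the correspondence from Theorem \ref{intro-theorem-correspondence} carries the subclass to the subclass---is exactly what the paper does, via Theorems \ref{theorem-homeo.H.diag} and \ref{theorem-H.diag.homeo}. But two points need correction.

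First, your description of the defining condition of an $h$-diagram is off. An $h$-diagram is \emph{not} characterized by a ``singleton'' condition on color classes. Looking at Definition \ref{definition-H.separated.Bratteli}, the sets $B_v$ and $R_v$ may well have several elements; what distinguishes an $h$-diagram from a general $l$-diagram is that the separation $D_v = \{B_v, R_v\}$ has exactly \emph{two} members at \emph{every} vertex, including at odd layers. In a general $l$-diagram the odd-layer separation is $D_v = \{B_v\} \cup \{R(f) : s(f) = v,\ f \in R_{r(f)}\}$, which may have many red pieces. Equivalently, an $h$-diagram requires that each vertex is the source of exactly one red edge. The link to injectivity of $\sigma$ is then: when $\sigma$ is a homeomorphism, $\calP^{\sigma} = \{\sigma(Z) : Z \in \calP\}$ is itself a partition, so each $Z' \in \calP_{2j+1}$ lies in $\sigma(Z)$ for exactly one $Z \in \calP_{2j}$, producing exactly one red edge with source $Z'$ (this is the computation in the proof of Theorem \ref{theorem-homeo.H.diag}). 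Your phrase ``each element of the refined partition has a single $\sigma$-preimage inside each element of the coarser partition'' does not capture this; that statement is true for any local homeomorphism.

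Second, you are missing the saturation step. The correspondence of Theorem \ref{intro-theorem-correspondence} is between \emph{equivalence classes}, so to restrict it you must know that \textsf{HDiag} is a union of $\sim_l$-classes; otherwise an $h$-diagram could be $\sim_l$-equivalent to an $l$-diagram outside \textsf{HDiag}, and the restriction would not be well defined. The paper handles this via Lemma \ref{lemma-F'D'.H.diagram} (a contraction of an $h$-diagram is again an $h$-diagram), and notes the analogous triviality that \textsf{Homeo} is saturated under topological conjugacy. Your sketch does not address this, and the ``level-by-level induction'' you allude to would not substitute for it.
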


It follows from Theorem \ref{intro-theorem-equivalence.homeo} and \cite[Theorem 4.7]{HPS1992} that there is a bijective map between equivalence classes of essentially simple ordered Bratteli diagrams and a subset of equivalence classes of $h$-diagrams. It would be interesting to determine explicitly the range of this bijective map. Moreover, one can try to build a {\it category} of $h$-diagrams, in the spirit of \cite{AEG2021}, and then extend the above bijection to a suitable equivalence of categories. Another line of future research is extending the techniques developed in the present paper to cover the case of partial (local) homeomorphisms, and relate this extension with the work done by Giordano, Gonçalves and Starling in \cite{GGS2017}.

\medskip

Let $(E,C)$ be a finite bipartite separated graph satisfying the conditions needed to be the first layer of an $l$-diagram. These conditions are explicitly stated in Definition \ref{definition-general.finite.shift}, where  these separated graphs are named {\it generalized finite shift graphs}. By applying a process called the \textit{canonical resolution} of the bipartite separated graph we obtain an $l$-diagram $(F,D)$ (Proposition \ref{proposition-multiresolution.is.Ldiagram}) which, by an application of Theorem \ref{intro-theorem-correspondence}, gives rise to a dynamical system $(X,\sigma) \in \textsf{LHomeo}$. Those dynamical systems arising by applying this construction are named \textit{generalized finite shifts}, since its class includes both one-sided and two-sided finite shifts. Our next main result states that any dynamical system $(X,\sigma) \in \textsf{LHomeo}$ can be approximated, in a sense made precise below, by generalized finite shifts. This is done by working with an explicit realization of the generalized finite shifts, which is based on the work done in \cite{AL2018}.

\begin{theorem}[Theorem \ref{theorem-inverse.limit}]\label{intro-theorem-inverse.limit}
Given $(X,\sigma) \in \emph{\textsf{LHomeo}}$, there exists an inverse system $\{(X_n,\sigma_n),\psi_{n,m}\}$ in \emph{\textsf{LHomeo}} consisting of generalized finite shift graphs such that
$$(X,\sigma) \cong \varprojlim (X_n,\sigma_n).$$
\end{theorem}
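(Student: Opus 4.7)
The plan is to transfer the problem to the combinatorial side via Theorem \ref{intro-theorem-correspondence}, work with truncations of the $l$-diagram, and then translate back. Given $(X,\sigma) \in \textsf{LHomeo}$, let $(F,D)$ denote its associated $l$-diagram with level decomposition $F^0 = \bigsqcup_{n \geq 0} F^0_n$. For each $n \geq 1$, I would truncate $(F,D)$ at level $n$, extracting the finite bipartite separated graph $(E_n, C_n)$ whose source vertices are $F^0_{n-1}$, target vertices are $F^0_n$, and whose separation is inherited from $D$. By the defining properties of an $l$-diagram (Definition \ref{definition-L.separated.Bratteli}), each $(E_n, C_n)$ satisfies the axioms of a generalized finite shift graph (Definition \ref{definition-general.finite.shift}), so applying the canonical resolution yields an $l$-diagram $(F_n, D_n)$ whose image under Theorem \ref{intro-theorem-correspondence} is, by definition, a generalized finite shift $(X_n, \sigma_n)$.

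Next I would construct the bonding maps. For $m \geq n$, the truncation at level $n$ embeds $(E_n, C_n)$ as the top part of $(E_m, C_m)$; moreover the canonical resolution is functorial enough that one gets a morphism $(F_m, D_m) \twoheadrightarrow (F_n, D_n)$ of $l$-diagrams obtained by collapsing the first $m-n$ layers via $\sigma_m^{m-n}$. Translating through Theorem \ref{intro-theorem-correspondence}, this morphism should yield a surjective local homeomorphism $\psi_{n,m} \colon X_m \to X_n$ that intertwines $\sigma_m$ and $\sigma_n$. Associativity $\psi_{n,k} = \psi_{n,m} \circ \psi_{m,k}$ follows from the naturality of the truncation and the functorial behavior of the correspondence.

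For the inverse limit identification, for each $n$ there is a canonical projection $\pi_n \colon X \to X_n$ coming from the fact that $(F,D)$ sits above $(F_n, D_n)$: an infinite path in $F$ naturally restricts, via the first $n$ layers, to a path that extends uniquely into the canonical resolution of $(E_n, C_n)$ and thereby determines a point in $X_n$. The $\pi_n$ are compatible with the $\psi_{n,m}$, so they induce $\Phi \colon X \to \varprojlim (X_n, \sigma_n)$. I would show $\Phi$ is a homeomorphism intertwining the dynamics: injectivity holds because an infinite path in $F$ is determined by its truncations; surjectivity is a diagonal/compactness argument using that a coherent sequence $(x_n) \in \varprojlim X_n$ assembles, level by level, to a well-defined infinite path in $F$; continuity is immediate from the definition of the product topology; and the dynamical intertwining $\Phi \circ \sigma = \wh{\sigma} \circ \Phi$ follows from the corresponding intertwinings at each level.

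The main obstacle is the second step: pinning down the connecting maps $\psi_{n,m}$ explicitly enough to verify that they are surjective local homeomorphisms and that $\psi_{n,m} \circ \sigma_m = \sigma_n \circ \psi_{n,m}$. This requires using the explicit realization of the generalized finite shifts following \cite{AL2018}, identifying the points of $X_m$ with certain (possibly bi-infinite) admissible sequences constrained by $(E_m, C_m)$, and then checking that truncation at level $n$ descends to well-defined admissible sequences for $(E_n, C_n)$. Once this is carried out, the identification of $(X,\sigma)$ with the inverse limit is essentially a matter of unwinding the definitions.
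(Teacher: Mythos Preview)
Your proposal has a genuine gap at the very first step. You take \emph{every} layer of the $l$-diagram $(F,D)$ as a bipartite separated graph $(E_n,C_n)$ and assert that each satisfies the axioms of a generalized finite shift graph. But by Definition~\ref{definition-L.separated.Bratteli}(b), for a vertex $v$ in an \emph{odd} layer $F^{0,2j-1}$ the separation is $D_v = \{B_v\} \cup \{R(f) \mid f \in R_{r(f)},\, s(f)=v\}$, which typically has more than two elements (one blue set together with one red set $R(f)$ for each red edge $f$ entering $v$). Definition~\ref{definition-general.finite.shift}(a), however, requires $C_v = \{B_v, R_v\}$ to consist of exactly two sets. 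Hence the odd layers are not generalized finite shift graphs, and you cannot apply the canonical-resolution machinery (Proposition~\ref{proposition-multiresolution.is.Ldiagram}) to them. The paper works only with the \emph{even} layers $(E(i),C(i)) = (F^{0,2i}\cup F^{0,2i+1}, F^{1,2i})$, which do satisfy the axioms (Proposition~\ref{proposition-even.labels.sigma.cylinder.dec}).

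There is a second, more structural gap: your construction of the connecting maps $\psi_{n,m}$ and of the projections $\pi_n$ is not right. The phrase ``collapsing the first $m-n$ layers via $\sigma_m^{m-n}$'' does not correspond to an operation on $l$-diagrams, and the claim that a finite initial segment of a blue path in $F$ ``extends uniquely into the canonical resolution of $(E_n,C_n)$'' is false: the canonical resolution $(F(n),D(n))$ is an entirely different infinite diagram whose layers below the first bear no direct relation to those of $(F,D)$. The paper does not build the maps combinatorially at all. Instead it establishes a \emph{universal property}: the generalized finite shift $(X_i,\sigma_i)$ carries a universal $(E(i),C(i))$-structure (Theorem~\ref{theorem-universal.(E,C).structure}), $(X,\sigma)$ and $(X_{i+1},\sigma_{i+1})$ each carry an $(E(i),C(i))$-structure (Propositions~\ref{proposition-even.labels.sigma.cylinder.dec} and~\ref{proposition-equivariant.maps.factors}), and the maps $\psi_i$ and $\psi_{i,i+1}$ are the \emph{unique} equivariant maps provided by universality. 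Compatibility $\psi_i = \psi_{i,i+1}\circ\psi_{i+1}$ then follows from uniqueness, and the bijectivity of $\psi\colon X\to \varprojlim X_i$ is proved using the explicit description of these universal maps in terms of configurations (Remark~\ref{remark-universal.map.described}). This universal-property mechanism is the key idea you are missing.
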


Let $(X,\sigma) \in \textsf{LHomeo}$ and $(F,D)$ be the corresponding $l$-diagram. In the last section of the paper we apply the results obtained in order to study the relationship between the Steinberg algebra and the $C^*$-algebra of the Deaconu-Renault groupoid associated with $(X,\sigma)$ and a certain corner of an inductive limit of Leavitt path algebras and graph $C^*$-algebras corresponding to the finite levels of $(F,D)$. This study culminates in Theorems \ref{theorem-main.iso.theorem} and \ref{theorem-isomorphism.for.Cstars}, where it is shown that the aforementioned algebras are isomorphic. 

We close the paper by further using Theorem \ref{intro-theorem-inverse.limit} and various other tools to obtain a similar approximation result at the level of the Steinberg algebras and the groupoid $C^*$-algebras. Here $A_K(\calG(X,\sigma))$ denotes the Steinberg algebra of $\calG(X,\sigma)$, the Deaconu-Renault groupoid associated with $(X,\sigma)$, with coefficients in an arbitrary field with involution $(K,*)$.

\begin{theorem}[Theorem \ref{theorem-combined.FINAL.RESULT}]\label{intro-theorem-combined.FINAL.RESULT}
Let $(X,\sigma) \in \emph{\textsf{LHomeo}}$ and write it as an inverse limit of a sequence of generalized finite shifts $\{(X_n,\sigma_n)\}_n$ as in Theorem \ref{intro-theorem-inverse.limit}. Then
$$A_K(\calG(X,\sigma)) \cong \varinjlim A_K(\calG(X_n,\sigma_n)).$$
At the level of $C^*$-algebras, we also get
$$C^*(\calG(X,\sigma)) \cong \varinjlim C^*(\calG(X_n,\sigma_n)).$$
\end{theorem}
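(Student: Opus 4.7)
The plan is to build on Theorems \ref{theorem-main.iso.theorem} and \ref{theorem-isomorphism.for.Cstars}, which already identify $A_K(\calG(X,\sigma))$ and $C^*(\calG(X,\sigma))$ with corners of inductive limits of Leavitt path algebras and graph $C^*$-algebras attached to the finite truncations of the $l$-diagram $(F,D)$; the task is to reinterpret those inductive systems in terms of the generalized finite shifts $(X_n,\sigma_n)$ supplied by Theorem \ref{intro-theorem-inverse.limit}. Throughout I write $\pi_n\colon X\to X_n$ for the projections coming from the inverse limit and $\psi_{n,m}\colon X_m\to X_n$ (for $m\ge n$) for the bonding maps; all of these are surjective local homeomorphisms intertwining the shifts.

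First, I would verify that passage to the Deaconu--Renault groupoid is functorial on \textsf{LHomeo}: an intertwining surjective local homeomorphism $\phi\colon(Y,\tau)\to(Z,\rho)$ induces an étale groupoid homomorphism $\widetilde\phi\colon\calG(Y,\tau)\to\calG(Z,\rho)$ by $(y,k,y')\mapsto(\phi(y),k,\phi(y'))$, which is proper because $Y$ is compact. Pullback $f\mapsto f\circ\widetilde\phi$ of locally constant compactly supported functions is then a $*$-algebra homomorphism $A_K(\calG(Z,\rho))\to A_K(\calG(Y,\tau))$. Applied to the $\psi_{n,m}$ and $\pi_n$, this produces a direct system $\{A_K(\calG(X_n,\sigma_n)),\iota_{n,m}\}$ with compatible maps $\iota_n\colon A_K(\calG(X_n,\sigma_n))\to A_K(\calG(X,\sigma))$, yielding a $*$-homomorphism $\iota\colon\varinjlim A_K(\calG(X_n,\sigma_n))\to A_K(\calG(X,\sigma))$.

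Second, I would show that $\iota$ is bijective. For injectivity, the key observation is that each $\widetilde\pi_n$ is surjective on arrows: given $(x_n,k,y_n)\in\calG(X_n,\sigma_n)$ with $\sigma_n^a(x_n)=\sigma_n^b(y_n)$, one lifts the common image to a point of $X$ and uses the local-homeomorphism property of $\sigma^a,\sigma^b$ to lift $x_n$ and $y_n$ coherently; étaleness and properness of $\widetilde\pi_n$ then make pullback injective, and a routine argument for direct limits propagates this to $\iota$. For surjectivity, the decisive point is that every compact open bisection of $\calG(X,\sigma)$ has the form $\widetilde\pi_n^{-1}(B)$ for some $n$ and some compact open bisection $B$ of $\calG(X_n,\sigma_n)$. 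This rests on the inverse-limit topology on $X$, under which every clopen set of $X$ is $\pi_n^{-1}$ of a clopen set of $X_n$ for some $n$, combined with the fact that compact open bisections of Deaconu--Renault groupoids on totally disconnected spaces are generated by such cylinder data. Since characteristic functions of compact open bisections span $A_K(\calG(X,\sigma))$, surjectivity follows.

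Finally, for the $C^*$-algebraic statement, I would extend each $\iota_n$ to a $*$-homomorphism $C^*(\calG(X_n,\sigma_n))\to C^*(\calG(X,\sigma))$ via the universal property of the full groupoid $C^*$-algebra, and then invoke the standard fact that if a directed family of dense $*$-subalgebras of a $C^*$-algebra realizes it as an algebraic inductive limit of $*$-algebras, then the completion coincides with the $C^*$-algebraic inductive limit. Combined with the Steinberg-algebra isomorphism already proved, this yields $C^*(\calG(X,\sigma))\cong\varinjlim C^*(\calG(X_n,\sigma_n))$. I expect the main technical obstacle to be the bisection-basis statement used for surjectivity: one must check carefully that every compact open bisection of $\calG(X,\sigma)$ is pulled back from some finite level, which requires a detailed analysis of the interplay between the inverse-limit presentation of $X$ and the local-homeomorphism structure of $\sigma$; the remaining steps are essentially functorial.
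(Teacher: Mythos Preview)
Your approach diverges significantly from the paper's and contains a genuine gap at the first step. The assertion that pullback $f\mapsto f\circ\widetilde\phi$ along a proper continuous groupoid homomorphism is automatically a $*$-algebra homomorphism is false: convolution is a sum over source fibers, so $\phi^*(f*g)=\phi^*(f)*\phi^*(g)$ holds precisely when $\widetilde\phi$ restricts to a bijection $\calG(Y,\tau)y\to\calG(Z,\rho)\phi(y)$ for each $y\in Y$. This is not a consequence of $\phi$ being an intertwining surjective local homeomorphism---for instance, take $Y=Z=\{0,1\}^{\N_0}$ with the one-sided shift and $\phi$ equal to the shift itself; then $\widetilde\phi$ is $2$-to-$1$ on source fibers. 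In the paper this source-fiber bijection is exactly Lemma~\ref{lemma-preimagesOK}, proved only for the universal $(E,C)$-equivariant maps via the explicit description in Remark~\ref{remark-universal.map.described}; Theorem~\ref{theorem-extending.to.Steinberg} then uses it to show that $\psi^*$ is multiplicative. Your proposal also asserts that the projections $\pi_n\colon X\to X_n$ are surjective local homeomorphisms, which is neither established nor needed in the paper, and your injectivity argument depends on it.

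The paper's route avoids both your direct bijectivity arguments and your acknowledged ``main technical obstacle''. It already has isomorphisms $\varphi\colon\calA_\infty\to A_K(\calG(X,\sigma))$ and $\varphi^{(n)}\colon\LVab_K(E(n),C(n))\to A_K(\calG(X_n,\sigma_n))$ from Theorem~\ref{theorem-main.iso.theorem} (the latter applied to the canonical resolution $(F(n),D(n))$). It then builds the commutative diagram~\eqref{equation-commutative.diagram.FINAL}: the maps $\phi_n^{\mathrm{ab}}$ are \emph{defined} so that the right-hand squares commute, and the left-hand squares are verified by an explicit computation of $\psi_{n,n+1}^*\circ\varphi_0^{(n)}$ and $\varphi_0^{(n+1)}\circ\phi_n$ on the generators $fe^*$. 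Since $\calA_\infty$ is by construction the colimit of the $LV_K(E(n),C(n))$ and the quotient maps $\pi_n$ onto $\LVab_K(E(n),C(n))$ are surjective, $\calA_\infty$ is also the colimit of the $\LVab_K(E(n),C(n))$; the isomorphisms $\varphi^{(n)}$ then transport this to the desired colimit of Steinberg algebras. No analysis of which bisections of $\calG(X,\sigma)$ are pulled back from finite levels is required.
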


The results of this paper, in conjunction with the results in \cite{AE2014} and \cite{AE2015}, will be used in forthcoming work by the authors, to study both the stable and the non-stable $K$-theory of the algebras attached to the dynamical systems $(X, \sigma)\in \textsf{LHomeo}$, as well as the type semigroup of $(X,\sigma)$. 

The paper is organized as follows. In Section \ref{section-preliminaries} we provide the reader some basic concepts such as the definition of a separated graph, and its associated Leavitt path algebra and $C^*$-algebra. We also recall the construction of the Deaconu-Renault groupoid associated with a dynamical system $(X,\sigma)$ formed by a surjective local homeomorphism $\sigma$ on a totally disconnected compact metrizable space $X$, and the construction of its groupoid $C^*$-algebra, and its Steinberg algebra over an arbitrary field with involution. We present in Theorem \ref{theorem-correspondence} the bijective correspondence between surjective local homeomorphisms on totally disconnected compact metrizable spaces and a certain subfamily of the family of separated graphs, called $l$-diagrams, in Section \ref{section-correspondence.local.homeo}. Section \ref{section-homeos} is then devoted to characterize the class of $l$-diagrams which arises from the class of dynamical systems given by homeomorphisms on totally disconnected compact metrizable spaces, called $h$-diagrams. In Section \ref{section-general.shifts.finite.type} we show how shifts of finite type fit in our theory, and we present a generalization of them, which we call \textit{generalized finite shifts}. We provide yet another different point of view of such dynamical systems using the work from \cite{AL2018}, culminating in Theorem \ref{theorem-inverse.limit} which shows that any dynamical system of our kind is an inverse limit of a sequence of generalized finite shits. We use this `approximation theorem' in order to give similar characterizations, in terms of colimits, of the Steinberg algebra (respectively, $C^*$-algebra) of the Deaconu-Renault groupoid associated with the dynamical system in terms of the sequence of Steinberg algebras (respectively, $C^*$-algebras) of the Deaconu-Renault groupoids associated with the generalized finite shifts (Theorem \ref{theorem-combined.FINAL.RESULT}). This is done in Section \ref{section-approx.algebras}.

\section{Preliminaries}\label{section-preliminaries}

In this section we provide background definitions and concepts needed throughout the paper.

\subsection{Separated graphs}\label{subsection-separated.graphs}

We follow \cite{AE2014}. A \textit{separated graph} is a pair $(E,C)$ where $E = (E^0,E^1,r,s)$ is a directed graph ($E^0$ is the set of vertices, $E^1$ the set of edges, and $r,s\colon E^1\to E^0$ are the range and source maps, respectively), and $C = \bigsqcup_{v \in E^0} C_v$, where each $C_v$ is a partition of the set of edges $r^{-1}(v)$ into pairwise disjoint, non-empty subsets, for each vertex $v \in E^0$. We set $C_v= \emptyset$ in case $r^{-1}(v) = \emptyset$. We say that $(E,C)$ is \textit{finitely separated} if all the sets in $C$ are finite, that is, if $|X| <\infty$ for all $X\in C$.

A \textit{bipartite separated graph} is a separated graph $(E,C)$ where we can write $E^0 = E^{0,0} \sqcup E^{0,1}$ in such a way that $s(E^1) = E^{0,1}$ and  $r(E^1) = E^{0,0}$. 

We will follow the conventions from \cite{AE2014} and \cite{AL2018} concerning sources and ranges of edges of a graph, and the relations (E), (SCK1) and (SCK2) below. In particular, a non-trivial path in the graph $E$ will be a sequence $e_1e_2\cdots e_n$ of edges in $E$ such that $s(e_i)= r(e_{i+1})$ for $i=1,\dots , n-1$. (A trivial path consists of just one vertex $v$ in $E$.) We warn the reader that different conventions are used in various other sources.   

We can now define our algebras of interest, namely the Leavitt path algebra and the $C^*$-algebra of a separated graph $(E,C)$.

\begin{definition}\label{definition-leavitt.alg}
The \textit{Leavitt path algebra} of the separated graph $(E,C)$ with coefficients in a field $K$ with involution $*$, denoted by $L_K(E,C)$, is the $*$-algebra with generators $E^0 \sqcup E^1 \sqcup (E^1)^*$ together with the relations
\begin{enumerate}
\item[(V)] $vw = \delta_{v,w} v$, and $v = v^*$ for $v,w \in E^0$,
\item[(E)] $r(e)e = e s(e) = e$ for $e \in E^1$,
\item[(SCK1)] $e^* f = \delta_{e,f} s(e)$ for $e,f \in Y$, for all $Y\in C$,
\item[(SCK2)] $v = \sum_{e \in Y} ee^*$ for every $Y \in C_v$ finite, $v \in E^0$.
\end{enumerate}
\end{definition}

\begin{definition}\label{definition-C*.alg}
The \textit{$C^*$-algebra} of the separated graph $(E,C)$, denoted by $C^*(E,C)$, is the universal $C^*$-algebra with generators $E^0 \sqcup E^1 \sqcup (E^1)^*$ together with the relations (V),(E),(SCK1) and (SCK2) from Definition \ref{definition-leavitt.alg}. In other words, $C^*(E,C)$ is the enveloping $C^*$-algebra of $L_{\C}(E,C)$.
\end{definition}

\subsection{Groupoids and associated algebras}\label{subsection-groupoids.algebras}

Given a groupoid $\calG$, we will write $\calG^{(2)}$ for the set of pairs of composable elements, and by $\calG^{(0)}$ for its set of units. The range and source maps will be denoted by $r : \calG \ra \calG^{(0)}$ and $s : \calG \ra \calG^{(0)}$, respectively.

In this paper, a \textit{topological groupoid} will be a groupoid $\calG$ possessing a locally compact Hausdorff topology, in which the range and source maps $r,s$, the inverse map and the multiplication map are all continuous maps. We refer to \cite{Renault1980, SSW2020, Steinberg2010, CFST2014} for general background on topological groupoids and their associated $C^*$-algebras.

\begin{definition}\label{definition-local.homeo}
Let $X$ and $Y$ be topological spaces. A \textit{local homeomorphism} from $X$ to $Y$ is a continuous map $h\colon X\to Y$ such that every $x\in X$ has an open neighborhood $U$ such that $h(U)$ is open in $Y$ and $h|_U$ is a homeomorphism from $U$ onto its image $h(U)$. Whenever $X=Y$ we say that $h$ is a local homeomorphism on $X$. 
\end{definition}

A topological groupoid $\calG$ is \textit{étale} if the range map $r\colon \calG \to  \calG^{(0)}$ is a local homeomorphism. Then the source map is also a local homeomorphism and $\calG^{(0)}$ is an open subset of $\calG$ (see \cite[Lemma 8.4.2]{SSW2020}).

An open subset $\calW \subseteq \calG$ is called an \textit{open bisection} if both $r(\calW),s(\calW)$ are open subsets of $\calG^{(0)}$, and $r|_{\calW}, s|_{\calW}$ are homeomorphisms onto their respective images. Note that a topological groupoid is étale if and only if its topology has a basis of open bisections. 

A topological groupoid $\calG$ is called \textit{ample} if its topology admits a basis consisting of compact open bisections. Clearly all ample groupoids are étale.

From now on $\calG$ will denote a second countable, Hausdorff and ample groupoid, thus admitting a countable basis for the topology consisting of compact open bisections.

\begin{definition}\label{definition-Steinberg.algebra}
Let $(K,*)$ be a field endowed with an involution $*$. The \textit{Steinberg algebra} $A_K(\calG)$ of $\calG$ is the $*$-algebra of all the locally constant functions $f \colon \calG \to K$ with compact support. The multiplication is just the convolution product
$$(f \ast g)(\gamma) = \sum_{s(\alpha)=s(\gamma)} f(\gamma \alpha^{-1}) g(\alpha)$$
for $f,g \in A_K(\calG), \gamma \in \calG$, and the involution is given by the rule
$$f^*(\gamma) = f(\gamma^{-1})^*$$
for $f \in A_K(\calG), \gamma \in \calG$.

The \textit{full groupoid $C^*$-algebra} of $\calG$, denoted by $C^*(\calG)$, is the enveloping $C^*$-algebra of $A_{\C}(\calG)$, in the sense that there is an injective $*$-homomorphism $\iota \colon A_{\C}(\calG)\to C^*(\calG)$ such that any $*$-homomorphism from $A_{\C}(\calG)$ to a $C^*$-algebra $B$ uniquely extends to a $*$-homomorphism from $C^*(\calG)$ to $B$. For second countable, ample Hausdorff groupoids, this definition agrees with the usual definition of the full groupoid $C^*$-algebra given by Renault in \cite{Renault1980}, by \cite[Lemma 9.2.4]{SSW2020} and \cite[Theorem 7.1(1)]{CZ2022}.
\end{definition}

Every element of $A_K(\calG)$ can be written as a finite linear combination of characteristic functions $\mathds{1}_{U_i}$, where $\{U_i\}$ are mutually disjoint compact open bisections. If $U$ and $V$ are two compact open bisections in $\calG$, then we have
$$\mathds{1}_U \cdot \mathds{1}_V = \mathds{1}_{UV}$$
in $A_K(\calG)$, where
$$UV := \{uv \in \calG \mid u \in U,v \in V, s(u) = r(v)\}$$
is again a compact open bisection.

\subsection{Dynamical systems}\label{subsection-dynamical.systems}

We will concentrate on dynamical systems $(X,\sigma)$ formed by a totally disconnected compact metrizable space $X$ together with a surjective local homeomorphism $\sigma \colon X \to X$ (see Definition \ref{definition-local.homeo}).

\begin{definition}\label{definition-partition}
By a \textit{partition} of $X$ we mean a finite family $\calP$ consisting on non-empty, pairwise disjoint clopen subsets of $X$, such that $X = \bigsqcup_{Z \in \calP} Z$.
\end{definition}

Given two partitions $\calP_1,\calP_2$ of $X$, we say that $\calP_2$ is \textit{finer} than $\calP_1$, written as $\calP_1 \precsim \calP_2$, if every element $Z \in \calP_2$ is contained in a (necessarily unique) element $Z' \in \calP_1$, that is $Z \subseteq Z'$. Given two partitions $\calP_1,\calP_2$ of $X$, there is a canonical way of constructing a new partition which refines both $\calP_1,\calP_2$; this new partition is given by the \textit{wedge} of $\calP_1$ and $\calP_2$, and is defined by
$$\calP_1 \vee \calP_2 := \{Z_1 \cap Z_2 \mid Z_1 \in \calP_1,Z_2 \in \calP_2 \text{ with } Z_1 \cap Z_2 \neq \emptyset\}.$$
The \textit{diameter} of an element $Z \in \calP$, being $\calP$ a partition of $X$, is defined by
$$\text{diam}(Z) := \sup\{ d(x,y) \mid x,y \in Z\},$$
where $d$ is the distance witnessing the metrizability of $X$. The \textit{diameter} of the partition $\calP$ is thus
$$\text{diam}(\calP) := \max\{\text{diam}(Z) \mid Z \in \calP\}.$$

We now recall the definition of the {\it Deaconu-Renault groupoid} associated with the dynamical system $(X,\sigma)$. Define
$$\calG(X,\sigma) := \bigcup_{m,n \in \N_0}\{(x,m-n,y) \mid x,y \in X, \sigma^m(x) = \sigma^n(y)\} \subseteq X \times \Z \times X.$$
This set is given the structure of a groupoid via the following. The set of composable elements will be
$$\calG(X,\sigma)^{(2)} = \{((x,p,y),(z,q,t)) \mid y = z\} \subseteq \calG(X,\sigma)^2.$$
The multiplication is then given by $(x,p,y) \cdot (y,q,z) = (x,p+q,z)$, and the inverse of an element is simply $(x,p,y)^{-1} = (y,-p,x)$. Thus the unit space of this groupoid is
$$\calG(X,\sigma)^{(0)} = \{(x,0,x) \mid x \in X\} \simeq X.$$
The range and source maps $r,s : \calG(X,\sigma) \ra X$ are given, respectively, by $r(x,p,y) = x$ and $s(x,p,y) = y$.

This groupoid becomes a topological Hausdorff groupoid in the topology generated by the basis of cylinder sets $\calB = \{Z(U,m,n,V) \mid U,V \subseteq X \text{ open, and }m,n \in \N_0\}$, where each cylinder $Z(U,m,n,V)$ is defined by
$$Z(U,m,n,V) := \{(x,m-n,y) \mid x \in U, y \in V, \sigma^m(x) = \sigma^n(y)\}.$$
In fact, $\calG(X,\sigma)$ becomes an étale groupoid. More specifically, by fixing $m,n \in \N_0$ and letting $U,V \subseteq X$ be open subsets such that both $\sigma^m|_U, \sigma^n|_V$ are homeomorphisms onto their images, then $Z = Z(U,m,n,V)$ is such that $r|_Z,s|_Z$ are homeomorphisms onto open subsets of $\calG(X,\sigma)$. In particular, they are also local homeomorphisms onto open subsets of $\calG(X,\sigma)^{(0)} = X$, and so $Z$ is in fact an open bisection.

Moreover, since we can take $U,V$ to be clopen subsets of $X$ (as $X$ is totally disconnected), we can take $Z(U,m,n,V)$ to be also compact. As a consequence, $\calG(X,\sigma)$ is an ample, Hausdorff groupoid, with a basis of open compact bisections consisting of the cylinder sets $Z(U,m,n,V)$ such that $m,n \in \N_0$ and $U,V \subseteq X$ are clopen with the property that $\sigma^m|_U, \sigma^n|_V$ are homeomorphisms onto their images.

\section{The correspondence between local homeomorphisms and \texorpdfstring{$l$}{}-diagrams}\label{section-correspondence.local.homeo}

In this section we establish a bijective correspondence between a specific family of separated graphs and surjective local homeomorphisms of a second countable totally disconnected compact space $X$.

We first describe the type of separated graphs we consider. It is a subclass of the class of all separated Bratteli diagrams considered in \cite{AL2018}. We recall the definition of these graphs.

\begin{definition}\label{definition-separated.Bratteli}
A \textit{separated (or colored) Bratteli diagram} is an infinite separated graph $(F,D)$ with the following properties:
\begin{enumerate}[(a),leftmargin=0.7cm]
\item The vertex set $F^0$ is the union of finite, non-empty, pairwise disjoint sets $F^{0,j}$, $j \ge 0$.
\item The edge set $F^1$ is the union of finite, non-empty, pairwise disjoint sets $F^{1,j}$, $j\ge 0$.
\item The range and source maps satisfy $r(F^{1,j})=F^{0,j}$ and $s(F^{1,j})=F^{0,j+1}$ for all $j\ge 0$, respectively.
\end{enumerate}
\end{definition}
Note that a Bratteli diagram is just a separated Bratteli diagram with the trivial separation.

Now we can define our class of graphs.

\begin{definition}\label{definition-L.separated.Bratteli}
Let $(F,D)$ be a separated Bratteli diagram. We say that $(F,D)$ is an \textit{$l$-diagram} if the following hold:
\begin{enumerate}[(a),leftmargin=0.7cm]
\item (Separations for the even layers) For each $j \geq 0$ and $v \in F^{0,2j}$, the separation $D_v$ is given by
$$D_v= \{ B_v,R_v \}$$
for some (non-empty) subsets $B_v,R_v \subseteq r^{-1}(v)$. The elements of $B_v$ will be called \textit{blue edges}, and the elements of $R_v$ \textit{red edges}.
\item (Separations for the odd layers) For each $j \ge 1$ and $v\in F^{0,2j-1}$, the separation $D_v$ is given by
$$D_v = \{B_v\}\cup \{ R(f) \mid f \in R_{r(f)}, s(f) = v\}$$
for some (non-empty) subsets $B_v,R(f) \subseteq r^{-1}(v)$. The elements of $B_v$ will be called \textit{blue edges}. We write $R_v= \bigcup _{f\in s^{-1}(v)\cap R_{r(f)}} R(f)$, and call the elements of $R_v$ generically \textit{red edges}.
\item (Vertices at even layers) For $j \geq 1$, we have
$$F^{0,2j} = \bigsqcup _{v\in F^{0,2j-1}} s(B_v) = \bigsqcup_{v\in F^{0,2j-1}} s(R_v).$$
Moreover, for any $v \in F^{0,2j-1}$, we have $s(e) \ne s(e')$ for all distinct edges $e,e' \in B_v$, and similarly $s(f) \ne s(f')$ for all distinct edges $f,f' \in R_v$.
\item (Vertices at the first odd layer) We have
$$(1) \text{ } F^{0,1} = \bigsqcup_{v\in F^{0,0}} s(B_v), \quad \text{ and also } \quad (2) \text{ } F^{0,1} = \bigcup_{v\in F^{0,0}} s(R_v).$$
Moreover, $s(e) \ne s(e')$ for all distinct edges $e,e'$ with $e\in B_{v_1}$, $e'\in B_{v_2}$, for any $v_1,v_2 \in F^{0,2j}$ with $j \geq 0$.
\item (Compatibility at even layers) For each $j \ge 0$ and $v \in F^{0,2j}$, we have
$$\bigsqcup _{e\in B_v} s(B_{s(e)}) = \bigsqcup_{f\in R_v} s(R(f)).$$
\item  (Rombs at odd layers, red edges) For each $j\ge 1$, each $v \in F^{0,2j-1}$ and each pair $(f_0,f_1)$ of red edges such that $r(f_0) = v$ and $s(f_0) = r(f_1)$, there exists a (unique) pair $(e_0,e_1)$ of blue edges such that $r(e_0) = r(f_0) = v$, $s(e_0) = r(e_1)$ and $s(e_1) = s(f_1)$.
\item (Rombs at odd layers, blue edges) For each $j\ge 1$, each $v \in F^{0,2j-1}$, each red edge $g \in R_{r(g)}$ such that $s(g)=v$, and each pair $(e_0,e_1)$ of blue edges such that $r(e_0) = v$ and $s(e_0) = r(e_1)$, there exists a unique pair $(f_0,f_1)$ of red edges such that $f_0\in R(g)$, $r(f_0) = r(e_0) = v$, $s(f_0) = r(f_1)$ and $s(f_1) = s(e_1)$. 
\end{enumerate}
The family of $l$-diagrams will be denoted by \textsf{LDiag}.
\end{definition}

From the very definition of an $l$-diagram we can infer two properties: the first one is that the decompositions given in condition (d) above can be extended to any odd layer; the second one is that one can automatically construct rombs at even layers, using condition (e) above together with the first property already mentioned. These are proven in the next proposition.

\begin{proposition}\label{proposition-decomposition.odd.rombs.even}
Let $(F,D)$ be an $l$-diagram.
\begin{enumerate}[(i),leftmargin=0.7cm]
\item \emph{(Vertices at odd layers)} For $j \geq 1$, we have
$$F^{0,2j-1} = \bigsqcup_{v\in F^{0,2j-2}} s(B_v) = \bigcup_{v\in F^{0,2j-2}} s(R_v).$$
\item \emph{(Rombs at even layers)} Let $j \geq 0$ and $v \in F^{0,2j}$. Given a pair $(e_0,e_1)$ of blue edges such that $r(e_0) = v$ and $r(e_1) = s(e_0)$, there is a unique pair $(f_0,f_1)$ of red edges such that $r(f_0) = v$, $r(f_1) = s(f_0)$, $s(f_1) = s(e_1)$ and $f_1 \in R(f_0)$.

Conversely, given a pair $(f_0,f_1)$ of red edges such that $r(f_0) = v$ and $f_1 \in R(f_0)$, there is a unique pair $(e_0,e_1)$ of blue edges such that $r(e_0) = v$, $r(e_1) = s(e_0)$ and $s(e_1)= s(f_1)$.
\begin{figure}[H]
\begin{tikzpicture}
 \path [draw=blue,postaction={on each segment={mid arrow=blue}}]
 (0,0) -- node[below,blue]{$e_1$} (-2,1)
 (-2,1) -- node[above,blue]{$e_0$} (0,2)
 ;
 \path [draw=red,postaction={on each segment={mid arrow=red}}]
 (0,0) -- node[below,red]{$f_1$} (2,1)
 (2,1) -- node[above,red]{$f_0$} (0,2)
 ;
 \node[circle,fill=black,scale=0.4,label=$v$] (T) at (0,2) {};
 \draw[black,thick,dashed] (-3,0) -- (3,0);
 \draw[black,thick,dashed] (-3,1) -- (3,1);
 \draw[black,thick,dashed] (-3,2) -- (3,2);
 \node[label=$F^{0,2j}$] (N) at (-3.5,2) {};
 \node[label=$F^{0,2j+1}$] (N+1) at (-3.5,1) {};
 \node[label=$F^{0,2j+2}$] (N+2) at (-3.5,0) {};
\end{tikzpicture}
%\caption{Schematics}
\label{figure-schematics1}
\end{figure}
\end{enumerate}
\end{proposition}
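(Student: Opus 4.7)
My plan for proving Proposition~\ref{proposition-decomposition.odd.rombs.even} is as follows. For part (i) I would proceed by induction on $j \geq 1$, noting that the base case $j=1$ is literally conditions (d)(1) and (d)(2). For the inductive step from $j$ to $j+1$, the disjointness of the blue union in $F^{0,2j+1}$ is immediate from the ``moreover'' clause in (d), which applies at every even layer (covering both $v_1 = v_2$ and $v_1 \neq v_2$ in $F^{0,2j}$). The essential content is coverage, which I would reduce to the romb conditions (f) and (g) applied at the odd layer $F^{0,2j-1}$.

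For the blue coverage, given $w \in F^{0,2j+1}$ I first pick some $e \in F^{1,2j}$ with $s(e) = w$ (available since $s(F^{1,2j}) = F^{0,2j+1}$) and set $u := r(e) \in F^{0,2j}$. If $e \in B_u$ there is nothing to do; otherwise $e \in R_u$, and (c) at index $j$ produces a unique $v_1 \in F^{0,2j-1}$ together with $h \in R_{v_1}$ such that $s(h) = u$. The pair $(h,e)$ is then a red-red path starting at the odd vertex $v_1$ in the exact form required by (f), so (f) returns a blue-blue pair $(e_0, e_1)$ at $v_1$ with $s(e_1) = s(e) = w$; hence $e_1 \in B_{r(e_1)}$ with $r(e_1) \in F^{0,2j}$ is the sought blue edge into $w$. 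For the red coverage the argument is parallel but needs the inductive hypothesis: starting with $e \in B_u$, condition (c) at $j$ produces $v_0 \in F^{0,2j-1}$ and $e_{v_0} \in B_{v_0}$ with $s(e_{v_0}) = u$, giving a blue-blue path $(e_{v_0}, e)$ at $v_0$. The red half of the inductive hypothesis applied to $F^{0,2j-1}$ supplies a red edge $g \in R_{r(g)}$ with $s(g) = v_0$, which is precisely the extra input that condition (g) requires in order to convert $(e_{v_0}, e)$ into a red-red pair $(f_0, f_1)$ at $v_0$ with $f_0 \in R(g)$ and $s(f_1) = w$; thus $f_1$ is the sought red edge. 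The main subtlety --- and the reason the red case genuinely needs the induction --- is producing the red edge $g$ out of $v_0$: without the red part of the inductive hypothesis we have no way to guarantee its existence.

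Part (ii) will follow directly from condition (e), without invoking (i). Given a blue-blue pair $(e_0, e_1)$ with $r(e_0) = v$ and $r(e_1) = s(e_0)$, the vertex $s(e_1)$ lies in $s(B_{s(e_0)}) \subseteq \bigsqcup_{e \in B_v} s(B_{s(e)})$; condition (e) identifies this with $\bigsqcup_{f \in R_v} s(R(f))$, so $s(e_1)$ selects a unique $f_0 \in R_v$ and then a unique $f_1 \in R(f_0)$ with $s(f_1) = s(e_1)$. The relation $r(f_1) = s(f_0)$ is automatic since $R(f_0) \subseteq r^{-1}(s(f_0))$, and the uniqueness of $(f_0, f_1)$ follows from the disjointness of the unions in (e) combined with the injectivity of $s$ on the red set at the odd vertex $s(f_0)$, guaranteed by the ``moreover'' clause in (c). The converse direction --- producing the unique blue-blue pair from a given red-red pair --- is obtained by reading exactly the same argument with the two sides of (e) interchanged.
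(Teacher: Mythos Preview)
Your proof is correct and follows essentially the same approach as the paper. For part (i) both you and the paper induct on $j$, use the Bratteli property to get some edge out of the given vertex, and then invoke the romb conditions (f) and (g) at the odd layer just above (together with (c) and the inductive hypothesis) to produce the missing-colour edge; the only cosmetic difference is that you first establish blue coverage of $F^{0,2j+1}$ and then feed the resulting blue edge into the red argument, whereas the paper treats the two colours symmetrically in a single case split. For part (ii) both proofs read off existence and uniqueness directly from the disjoint union in condition (e); your uniqueness argument packages things via the disjointness of the indexing in (e) plus the injectivity of $s$ from (c), while the paper unwinds the same facts edge by edge and, for the converse direction, explicitly flags the last part of (d) --- which in your phrasing is absorbed into the disjointness of the blue side of (e).
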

\begin{proof}
\begin{enumerate}[(i),leftmargin=0.6cm]
\item We can prove this by induction. The case $j = 1$ holds by definition, so assume the statement is true for all indices up to some $j-1 \geq 1$ and let us prove it for $j$. Given a vertex $v \in F^{0,2j-1}$, being $(F,D)$ a separated Bratteli diagram implies that there exists some edge $e \in F^{1,2j-2}$ with source $v$, which can be either blue or red.

In case $e = e_1 \in B_{r(e_1)}$ is a blue edge, then condition (c) above applied to $F^{0,2j-2}$ tells us that there is another blue edge $e_0 \in B_{r(e_0)}$ such that $s(e_0) = r(e_1)$. Moreover $r(e_0) \in F^{0,2j-3}$, thus by the induction hypothesis there is some red edge $g \in R_{r(g)}$ such that $s(g) = r(e_0)$. Now we can apply condition (g) to get a unique pair $(f_0,f_1)$ of red edges such that $f_0 \in R(g)$, $r(f_0) = r(e_0)$, $s(f_0) = r(f_1)$ and $s(f_1) = s(e_1) = v$. This says precisely that $v$ is the source of some red edge from $F^{1,2j-2}$.

In case $e = f_1 \in R_{r(f_1)}$ is a red edge, then condition (c) again tells us that there is another red edge $f_0 \in R_{r(f_0)}$ such that $s(f_0) = r(f_1)$. We apply condition (f) to get a unique pair $(e_0,e_1)$ of blue edges such that $r(e_0) = r(f_0)$, $s(e_0) = r(e_1)$ and $s(e_1) = s(f_1) = v$. This says precisely that $v$ is the source of some blue edge from $F^{1,2j-2}$.

This proves that
$$F^{0,2j-1} = \bigcup_{v\in F^{0,2j-2}} s(B_v) = \bigcup_{v\in F^{0,2j-2}} s(R_v).$$
The disjointness of the first union follows from the last part of property (d).

\item Let $(e_0,e_1)$ be a pair of blue edges such that $r(e_0) = v$ and $r(e_1) = s(e_0)$. By condition (e) we have
$$s(e_1) \in \bigsqcup_{e \in B_v} s(B_{s(e)}) = \bigsqcup_{f \in R_v} s(R(f)),$$
so there exists a pair of red edges $(f_0,f_1)$ such that $s(e_1) = s(f_1)$, with $f_1 \in R (f_0)$ and $f_0 \in R_v$ (so that $r(f_1) = s(f_0)$ and $r(f_0) = v$). The pair $(f_0,f_1)$ satisfies the required properties. To prove uniqueness, assume that there exists another pair $(f'_0,f'_1)$ of red edges such that $r(f'_0)=v$, $s(f'_0)=r(f'_1)$, $s(f'_1) = s(e_1) = s(f_1)$ and $f_1'\in R(f_0')$. Condition (c) ensures that $f'_1 = f_1$; in particular $s(f'_0) = r(f'_1) = r(f_1) = s(f_0)$. Since $f_1'=  f_1\in R(f_0)\cap R(f_0')$, we conclude that $f_0 = f_0'$. That is, the pair $(f_0,f_1)$ is unique.

The proof of the converse is analogous, although the last part of condition (d) is needed. 
\end{enumerate}
\end{proof}

\begin{notation}\label{notation-romb}
We will sometimes write $(e_0,e_1,f_0,f_1)$ when referring to the romb associated to either the blue pair of edges $(e_0,e_1)$ or to the red pair of edges $(f_0,f_1)$.
\end{notation}

\begin{remarks}\label{remark-about.def.of.Ldiagram}
\begin{enumerate}[(1),leftmargin=0.7cm]
\item By condition (c) in Definition \ref{definition-L.separated.Bratteli}, we deduce that for each $j \ge 1$ and $v \in F^{0,2j}$ there are exactly two different edges $e,f$ such that $s(e) = s(f) = v$, one of them, say $e$, belongs to a blue set $e \in B_{r(e)}$ and the other, $f$, belongs to $R(g)$ for a unique red edge $g$, that is, there is a unique $g\in R_{r(g)}$ such that $s(g)= r(f)$ and $f\in R(g)$.

On the other hand, by part (i) of Proposition \ref{proposition-decomposition.odd.rombs.even} and the last part of condition (d), we deduce that for each $j \geq 1$ and $v \in F^{0,2j-1}$ there is exactly one blue edge $e \in B_{r(e)}$ such that $s(e) = v$. There also exists a red edge $f \in R_{r(f)}$ such that $s(f) = v$, but such red edge may \textit{not} be the unique red edge departing from $v$.
\item We have exactly 2 ``colors'' at each vertex $v$ from an even layer $F^{0,2j}$, namely the blue edges $B_v$ and the red edges $R_v$.

On the other hand, by remark (1) above, we have $|s^{-1}(v)|$ different ``colors'' at each vertex $v \in F^{2j-1}$, $j \geq 1$, each one corresponding to a different edge departing from $v$.

\item The disjointness of both unions in condition (e) of Definition \ref{definition-L.separated.Bratteli} is automatic, due to condition (c) of the same definition and remark (1) above.

\item The uniqueness of the pair $(e_0,e_1)$ in condition (f) of Definition \ref{definition-L.separated.Bratteli} is also automatic, due to remark (1) above.

\item An $l$-diagram cannot have double red edges, except possibly at the first layer. More concretely, for $n \geq 2$, given vertices $v \in F^{0,n-1}$ and $w \in F^{0,n}$, there is at most one red edge $f$ such that $s(f) = w$ and $r(f) = v$. This is clear if $n$ is even by remark (1). Assume then that $n = 2j+1$ for some $j \geq 1$, and suppose that there are two red edges $f_1,f'_1$ such that $s(f_1) = s(f'_1) = w$ and $r(f_1) = r(f'_1) = v$. Let $f_0$ be the unique red edge such that $s(f_0) = v$, so there is a unique red edge $g \in R_{r(g)}$ such that $f_0 \in R(g)$ (see again remark (1)). By condition (f) in Definition \ref{definition-L.separated.Bratteli}, there is a unique pair $(e_0,e_1)$ of blue edges such that $r(e_0) = r(f_0), s(e_0) = r(e_1)$ and $s(e_1) = s(f_1) = w$. But now $(f_0,f_1)$ and $(f_0,f'_1)$ are two pairs of red edges such that $f_0 \in R(g)$, $r(f_0) = r(e_0), s(f_0) = r(f_1) = r(f'_1)$ and $s(f_1) = s(f'_1) = s(e_1)$, and this contradicts the uniqueness statement in condition (g) of Definition \ref{definition-L.separated.Bratteli}. This shows that there is at most one red edge from $w$ to $v$.
\end{enumerate}
\end{remarks}

\begin{example}\label{example-ldiagram}

Figure \ref{figure-example0} presents the first three layers of an $l$-diagram. Here we have, for example, that the separation $D_u$ associated with the vertex $u \in F^{0,1}$ is written as
$$D_u = \{B_u,R(f_1),R(f_2)\},$$
where the set $B_u$ consists of the two blue edges having range $u$, $R(f_1)$ consists of the unique straight red edge having range $u$ (which we call $g_1$), and $R(f_2)$ consists of the unique dotted red edge having range $u$. Similarly,
$$D_v = \{B_v,R_v\},$$
where here $B_v$ consists of the unique blue edge having range $v$, and $R_v$ consists of the two dotted red edges having range $v$. It is straightforward to check that conditions (a)-(g) from Definition \ref{definition-L.separated.Bratteli} are fulfilled, at least for these first three levels.
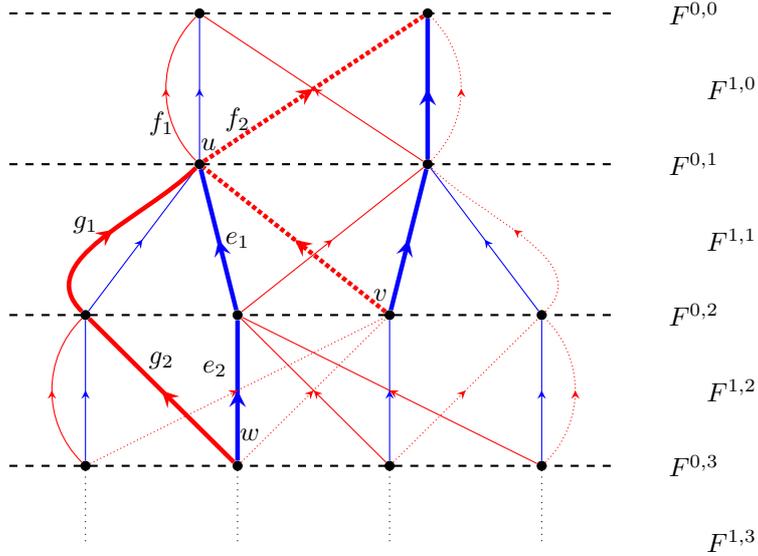
\begin{figure}[ht]
\begin{tikzpicture}
	\draw[black,thick,dashed] (-4,3) -- (4,3);
	\draw[black,thick,dashed] (-4,1) -- (4,1);
	\draw[black,thick,dashed] (-4,-1) -- (4,-1);
	\draw[black,thick,dashed] (-4,-3) -- (4,-3);
	\node[circle,fill=black,scale=0.4] (A1) at (-1.5,3) {};
	\node[scale=0.5,label=$f_1$] (F1) at (-2,1.2) {};
	\node[scale=0.5,label=$f_2$] (F2) at (-1,1.25) {};
	\node[scale=0.5,label=$g_1$] (F3) at (-3,-0.1) {};
	\node[scale=0.5,label=$e_1$] (F4) at (-1,-0.3) {};
	\node[scale=0.5,label=$g_2$] (F5) at (-2,-1.9) {};
	\node[scale=0.5,label=$e_2$] (F6) at (-1.3,-2) {};
	\node[circle,fill=black,scale=0.4] (A2) at (1.5,3) {};
	\node[circle,fill=black,scale=0.4,label={[xshift=0.12cm, yshift=0.0cm]$u$}] (B1) at (-1.5,1) {};
	\node[circle,fill=black,scale=0.4] (B2) at (1.5,1) {};
	\node[circle,fill=black,scale=0.4] (C1) at (-3,-1) {};
	\node[circle,fill=black,scale=0.4] (C2) at (-1,-1) {};
	\node[circle,fill=black,scale=0.4,label={[xshift=-0.12cm, yshift=0.0cm]$v$}] (C3) at (1,-1) {};
	\node[circle,fill=black,scale=0.4] (C4) at (3,-1) {};
	\node[circle,fill=black,scale=0.4] (D1) at (-3,-3) {};
	\node[circle,fill=black,scale=0.4,label={[xshift=0.16cm, yshift=0.15cm]$w$}] (D2) at (-1,-3) {};
	\node[circle,fill=black,scale=0.4] (D3) at (1,-3) {};
	\node[circle,fill=black,scale=0.4] (D4) at (3,-3) {};
	%blue edges
	\draw[-,blue,postaction={on each segment={mid arrow=blue}}] (B1) to (A1);
	\draw[line width=1.7pt,-,blue,postaction={on each segment={mid arrow=blue}}] (B2) to (A2);
	\draw[-,blue,postaction={on each segment={mid arrow=blue}}] (C1) to (B1);
	\draw[line width=1.7pt,-,blue,postaction={on each segment={mid arrow=blue}}] (C2) to (B1);
	\draw[line width=1.7pt,-,blue,postaction={on each segment={mid arrow=blue}}] (C3) to (B2);
	\draw[-,blue,postaction={on each segment={mid arrow=blue}}] (C4) to (B2);
	\draw[-,blue,postaction={on each segment={mid arrow=blue}}] (D1) to (C1);
	\draw[line width=1.7pt,-,blue,postaction={on each segment={mid arrow=blue}}] (D2) to (C2);
	\draw[-,blue,postaction={on each segment={mid arrow=blue}}] (D3) to (C3);
	\draw[-,blue,postaction={on each segment={mid arrow=blue}}] (D4) to (C4);
	%red edges
	\draw[-,red,postaction={on each segment={mid arrow=red}}] (B1) to [out=135,in=225] (A1);
	\draw[line width=1.7pt,densely dotted,red,postaction={on each segment={mid arrow=red}}] (B1) to (A2);
	\draw[-,red,postaction={on each segment={mid arrow=red}}] (B2) to (A1);
	\draw[densely dotted,red,postaction={on each segment={mid arrow=red}}] (B2) to [out=45,in=315] (A2);
	\draw[line width=1.7pt,-,red,postaction={on each segment={mid arrow=red}}] (C1) to [out=135,in=225] (B1);
	\draw[-,red,postaction={on each segment={mid arrow=red}}] (C2) to (B2);
	\draw[line width=1.7pt,densely dotted,red,postaction={on each segment={mid arrow=red}}] (C3) to (B1);
	\draw[densely dotted,red,postaction={on each segment={mid arrow=red}}] (C4) to [out=45,in=315] (B2);
	\draw[-,red,postaction={on each segment={mid arrow=red}}] (D1) to [out=135,in=225] (C1);
	\draw[line width=1.7pt,-,red,postaction={on each segment={mid arrow=red}}] (D2) to (C1);
	\draw[densely dotted,red,postaction={on each segment={mid arrow=red}}] (D1) to (C3);
	\draw[densely dotted,red,postaction={on each segment={mid arrow=red}}] (D2) to (C3);
	\draw[-,red,postaction={on each segment={mid arrow=red}}] (D3) to (C2);
	\draw[densely dotted,red,postaction={on each segment={mid arrow=red}}] (D3) to (C4);
	\draw[-,red,postaction={on each segment={mid arrow=red}}] (D4) to (C2);
	\draw[densely dotted,red,postaction={on each segment={mid arrow=red}}] (D4) to [out=45,in=315] (C4);
	\node at (5,3){$F^{0,0}$};
	\node at (5.5,2){$F^{1,0}$};
	\node at (5,1){$F^{0,1}$};
	\node at (5.5,0){$F^{1,1}$};
	\node at (5,-1){$F^{0,2}$};
	\node at (5.5,-2){$F^{1,2}$};
	\node at (5,-3){$F^{0,3}$};
	\node at (5.5,-4){$F^{1,3}$};
	\draw[black,dotted] (-3,-4) -- (-3,-3);
	\draw[black,dotted] (-1,-4) -- (-1,-3);
	\draw[black,dotted] (1,-4) -- (1,-3);
	\draw[black,dotted] (3,-4) -- (3,-3);
\end{tikzpicture}
\caption{The first three levels of an $l$-diagram.}
\label{figure-example0}
\end{figure}
In the same Figure \ref{figure-example0} we show two examples of rombs. For instance, the first romb, based on the vertex $w \in F^{0,3}$ and ending at the vertex $u \in F^{0,1}$, corresponds to condition (f) for the pair of red edges $(g_1,g_2)$ determining a unique pair of blue edges $(e_1,e_2)$ satisfying the romb condition, and also corresponds to condition (g) for $g = f_1$ and the pair of blue edges $(e_1,e_2)$ determining a unique pair of red edges $(g_1,g_2)$ satisfying the romb condition and such that $g_1 \in R(f_1)$. The second romb, based on the vertex $v$, is a consequence of Proposition \ref{proposition-decomposition.odd.rombs.even}.
\end{example}

Our correspondence will associate, to every $l$-diagram $(F,D)$, a dynamical system $(X,\sigma)$ where $X$ is a totally disconnected metrizable compact space, and $\sigma : X \ra X$ is a surjective local homeomorphism; and, conversely, an $l$-diagram $(F,D)$ will be associated to every such dynamical system $(X,\sigma)$.

\begin{notation}\label{notation-Lhomeo}
For ease of notation, we will denote by $\textsf{LHomeo}$ the family of dynamical systems $(X,\sigma)$ where $X$ is a totally disconnected metrizable compact space, and $\sigma : X \ra X$ is a surjective local homeomorphism.
\end{notation}

Of course, the aforementioned constructions will hold `up to equivalences' in appropriate senses.

\begin{definition}\label{definition-equiv.top.conj}
Let $\sigma_1 : X_1 \ra X_1$, $\sigma_2 : X_2 \ra X_2$ be two surjective local homeomorphisms defined on totally disconnected metrizable compact spaces $X_1,X_2$. We say that $(X_1,\sigma_1)$ and $(X_2,\sigma_2)$ are \textit{topologically conjugate} if there exists a homeomorphism $h : X_1 \ra X_2$ intertwining $\sigma_1$ and $\sigma_2$, that is $h \circ \sigma_1 = \sigma_2 \circ h$.
\end{definition}

We write $(X_1,\sigma_1) \sim_{t.c.} (X_2,\sigma_2)$ in case the two dynamical systems are topologically conjugate. It is clear that $\sim_{t.c.}$ is an equivalence relation.

\begin{remark}
The definition of conjugacy from \cite[Definition 2.1]{ABCE2023} requires the additional condition that $\sigma _1 \circ h^{-1} = h^{-1}\circ \sigma_2 $. However this condition is automatic in our situation, as can be seen by pre- and post-composing the identity $h \circ \sigma_1 = \sigma_2\circ h$ with $h^{-1}$ (see also \cite[Lemma 2.6]{ABCE2023}).
\end{remark}

For $l$-diagrams, the notion of equivalence that fits in our context (apart from the natural notion of `up to isomorphism' which we will define later) is the notion of \textit{telescoping}, or \textit{contraction}. For an $l$-diagram $(F,D)$ and a given pair $j < l$ of non-negative integers, we denote
$$P_{j,l} = \{(e_j,e_{j+1},\dots,e_{l-1}) \mid e_i \in F^{1,i}, s(e_i) = r(e_{i+1})\} \subseteq \prod_{i=j}^{l-1} F^{1,i}.$$
The source of $\ol{e} = (e_j,e_{j+1},\dots,e_{l-1}) \in P_{j,l}$ is $s(\ol{e}) := s(e_{l-1})$, and its range is $r(\ol{e}) := r(e_j)$.

\begin{definition}\label{definition-equiv.telesc.l.diagrams}
Given an $l$-diagram $(F,D)$ and a sequence $(m_n)_{n \geq 0}$, $0 \le  m_0 < m_1 < m_2 < \cdots$ in $\N \cup \{0\}$, with $m_0$ even and satisfying the property
\begin{equation}\tag{CR}
m_{n+1} - m_n \equiv 1 \text{ }(\text{mod }2),
\end{equation}
we define the \textit{telescoping} (or \textit{contraction}) of $(F,D)$ with respect to $(m_n)_{n \geq 0}$ as the separated graph $(F',D')$ where:
\begin{enumerate}[(a),leftmargin=0.7cm]
\item $(F')^{0,n} := F^{0,m_n}$ for $n \geq 0$.
\item $(F')^{1,n} := P_{m_n,m_{n+1}}^B \sqcup P_{m_n,m_{n+1}}^R$ for $n \geq 0$, where for a given pair $j<l$ of non-negative integers,
$$P^B_{j,l} := \{(e_j,\dots,e_{l-1}) \in P_{j,l} \mid e_i \in B_{r(e_i)}, j \leq i \leq l-1 \},$$
$$P^R_{j,l} := \{(f_j,\dots,f_{l-1}) \in P_{j,l} \mid  f_i \in R_{r(f_i)}, \text{ }j \leq i \leq l-1 \text{ and } f_{i+1} \in R(f_i), \text{ $i$ even} \}.$$
\item For $v \in (F')^{0,2j}$ with $j \geq 0$, $D'_v := \{B'_v, R'_v\}$, where
$$B'_v = \{ \ol{e} \in P_{m_{2j},m_{2j+1}}^B \mid r(\ol{e}) = v\}, \quad R'_v = \{ \ol{f} \in P_{m_{2j},m_{2j+1}}^R \mid r(\ol{f}) = v\}.$$
\item For $v \in (F')^{0,2j+1}$ with $j \geq 0$, $D'_v := \{B'_v\} \cup \{R'(\hat{f}) \mid \hat{f} \in (F')^{1,2j}, s(\hat{f}) = v, \hat{f} \in R'_{r(\hat{f})}\}$, where
$$B'_v = \{ \ol{e} \in P_{m_{2j+1},m_{2j+2}}^B \mid r(\ol{e}) = v\},$$
$$R'(\hat{f}) = \{ \ol{f} = (f_{m_{2j+1}},\dots,f_{m_{2j+2}-1}) \in P_{m_{2j+1},m_{2j+2}}^R \mid f_{m_{2j+1}} \in R(f'_{m_{2j+1}-1})\},$$
where $\hat{f} = (f'_{m_{2j}},\dots,f'_{m_{2j+1}-1})$.
\end{enumerate}
\end{definition}
We note that both $m_n$ and $n$ have the same parity due to condition (CR) and the fact that $m_0$ is even. 

\begin{lemma}\label{lemma-F'D'.L.diagram}
$(F',D')$ as defined in Definition \ref{definition-equiv.telesc.l.diagrams} is an $l$-diagram.
\end{lemma}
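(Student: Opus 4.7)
The plan is to verify each of the seven defining properties (a)-(g) of Definition \ref{definition-L.separated.Bratteli} for $(F',D')$ by reducing them to the corresponding properties of $(F,D)$, Proposition \ref{proposition-decomposition.odd.rombs.even}, and the parity condition (CR).

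First I would check that $(F',D')$ is a separated Bratteli diagram: finiteness and non-emptiness of the vertex and edge sets, as well as the range/source requirements between consecutive layers, follow from the iterated existence of blue and red paths between layers of $F$ provided by (c), (d) and Proposition \ref{proposition-decomposition.odd.rombs.even}(i). Properties (a), (b) then reduce to verifying that the prescribed $D'_v$ partitions $r^{-1}(v)\cap(F')^{1,n}$. For even layers this is immediate from $(F')^{1,2j} = P^B_{m_{2j},m_{2j+1}}\sqcup P^R_{m_{2j},m_{2j+1}}$; for odd layers one needs to see that the classes $R'(\hat f)$ partition the red paths with range $v$, which uses that the first edge of such a path lies in a unique class $R(f)$ of the partition of $R_v$ in $F$, and that this $f$ extends uniquely to a full $\hat f\in (F')^{1,2j}$ via the iterated uniqueness of red predecessors at even-layer vertices of $F$.

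For (c), (d) and the analogue of Proposition \ref{proposition-decomposition.odd.rombs.even}(i) for $(F',D')$, I would argue inductively that every vertex of $(F')^{0,n}$ is the source of a unique upward-going blue path of length $m_n - m_{n-1}$, which immediately yields the disjoint decompositions by blue ranges, the first part of (d), and the ``moreover'' clause on distinct blue sources. For reds, the upward continuation is unique at even-layer base vertices but may branch at odd-layer base vertices, leading to the disjoint unions in (c) and the non-disjoint union in (d)(2).

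The heart of the proof is the verification of the romb conditions (e), (f) and (g), which I would handle by iterating the corresponding romb conditions of $(F,D)$ with careful parity bookkeeping. For (e) I would take a blue path of length $m_{2j+2}-m_{2j}$ starting at $v\in(F')^{0,2j} = F^{0,m_{2j}}$, pair its edges consecutively, and note that each pair is based at an even-layer vertex of $F$ (since $m_{2j}$ is even); Proposition \ref{proposition-decomposition.odd.rombs.even}(ii) converts each pair into a unique red pair with $f_{2k+1}\in R(f_{2k})$, and the collection of these constraints is exactly the $P^R$-constraint set for the combined path, giving a source-preserving bijection between blue and red paths. For (f), the combined red path $\ol f_0\cdot\ol f_1$ starts at $v\in(F')^{0,2j-1} = F^{0,m_{2j-1}}$, and pairing its edges now produces pairs based at odd-layer vertices (since $m_{2j-1}$ is odd), to which condition (f) of $(F,D)$ directly applies to yield a unique blue pair at each step. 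For (g) the same pairing works, but at each step one must supply a distinguished red edge with source at the current odd-layer base; I would provide it inductively, using the top edge $g_{m_{2j-1}-1}$ of the given $\ol g$ at the first pair and the second edge of the previously constructed red pair at later pairs. The constraints produced are $f_{i+1}\in R(f_i)$ at precisely the even indices $i\in[m_{2j-1},m_{2j+1}-2]$, together with the initial $f_{m_{2j-1}}\in R(g_{m_{2j-1}-1})$, which match the $P^R$-constraints of the telescoped path and the defining condition for $R'(\ol g)$, respectively.

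The main obstacle is this parity bookkeeping: one must align the even/odd structure of the layers of $F$ with the positions of the $P^R$-constraints in telescoped paths and with the vertex-parity hypotheses of (f), (g), and Proposition \ref{proposition-decomposition.odd.rombs.even}(ii) in $(F,D)$. The assumption that $m_0$ is even together with (CR) ensures that these parities line up correctly at every iteration step.
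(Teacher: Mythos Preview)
Your proposal is correct and follows essentially the same architecture as the paper's proof: verify each of (a)--(g) for $(F',D')$ by pairing edges of telescoped paths two at a time and iterating the corresponding romb condition of $(F,D)$, with the parity condition (CR) guaranteeing that each pair is based at a vertex of the correct parity in $F$. Your bookkeeping of which $R(\cdot)$-constraints are produced at each step and why they match exactly the $P^R$-constraints of the telescoped paths (plus the initial condition defining $R'(\ol g)$) is in fact spelled out more explicitly than in the paper, which leaves this verification implicit. One minor tactical difference: for the uniqueness in (g), the paper first passes through condition (f) to show that any candidate red path $(\widetilde f^{(0)},\widetilde f^{(1)})$ has the same intermediate ranges $r(\widetilde f_{i,0})=r(f_{i,0})$ as the constructed one, and only then applies the uniqueness clause of (g) inductively from the top; your proposal applies the uniqueness clause of (g) directly and inductively from the top, which works just as well since the blue pairs $(e_{i,0},e_{i,1})$ are already given and the $P^R$-constraints force $\widetilde f_{i+1,0}\in R(\widetilde f_{i,1})$ at each step.
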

\begin{proof}
First observe that $(F',D')$ is a separated Bratteli diagram since $(F,D)$ is. To prove that it is indeed an $l$-diagram, we need to check conditions (a)-(g) from Definition \ref{definition-L.separated.Bratteli}. Conditions (a) and (b) follow by definition.
\begin{enumerate}[(a),start=3,leftmargin=0.6cm]
\item We first show that, for $j \geq 1$,
$$\bigsqcup_{v \in F^{0,m_{2j-1}}} s(B'_v) \mathrel{\overset{\makebox[0pt]{\mbox{\normalfont\tiny\sffamily (c.1)}}}{=}} F^{0,m_{2j}} \mathrel{\overset{\makebox[0pt]{\mbox{\normalfont\tiny\sffamily (c.2)}}}{=}} \bigsqcup_{v \in F^{0,m_{2j-1}}} s(R'_v).$$
The inclusion $\subseteq$ in {\normalfont\tiny\sffamily (c.1)} is obvious. For the other one we use condition (c) of Definition \ref{definition-L.separated.Bratteli} and part (i) of Proposition \ref{proposition-decomposition.odd.rombs.even}. Given $v \in F^{0,m_{2j}}$, there exists $v_1 \in F^{0,m_{2j}-1}$ and $e_{m_{2j}-1} \in B_{v_1}$ such that $v = s(e_{m_{2j}-1})$. Now since $v_1 \in F^{0,m_{2j}-1}$, by part (i) of Proposition \ref{proposition-decomposition.odd.rombs.even} there exists $v_2 \in F^{0,m_{2j}-2}$ and $e_{m_{2j}-2} \in B_{v_2}$ such that $v_1 = s(e_{m_{2j}-2})$. Proceeding by induction, we construct a sequence $(e_{m_{2j}-i})_{i=1}^{m_{2j}-m_{2j-1}}$ of blue edges satisfying $s(e_{m_{2j}-i-1}) = r(e_{m_{2j}-i})$ and $v = s(e_{m_{2j}-1})$. Then $e := (e_{m_{2j-1}},\dots,e_{m_{2j}-1}) \in P^B_{m_{2j-1},m_{2j}}$ is such that $s(e) = v$, so the inclusion is now proved. Now in order to prove that the union is indeed disjoint, take $v \in s(B'_{v_1}) \cap s(B'_{v_2})$ with $v_1 \neq v_2 \in F^{0,m_{2j-1}}$. There exist then $e_1 \in B'_{v_1}$ and $e_2 \in B'_{v_2}$ with $s(e_1) = v = s(e_2)$. Let $N$ be the integer such that 
$(e_1)_N \neq (e_2)_N$ and $(e_1)_j = (e_2)_j$ for  $j>N$.
It exists since $v_1 \neq v_2$, so necessarily $e_1 \neq e_2$. But then $s((e_1)_N) = s((e_2)_N)$ and $(e_1)_N \neq (e_2)_N$, and this contradicts either condition (c) of Definition \ref{definition-L.separated.Bratteli} or part (i) of Proposition \ref{proposition-decomposition.odd.rombs.even} at the $N^{\text{th}}$ layer, depending on the parity of $N$.

The same arguments hold exactly for {\normalfont\tiny\sffamily (c.2)}, but special care needs to be taken when proving disjointness of the union. This is because we need to make a distinction in terms of the parity of $N$ which is of different nature to the above distinction. Given $v \in s(R'_{v_1}) \cap s(R'_{v_2})$ with $v_1 \neq v_2 \in F^{0,m_{2j-1}}$, there exist $f_1 \in R'_{v_1}$ and $f_2 \in R'_{v_2}$ with $s(f_1) = v = s(f_2)$. We can again consider the integer $N$ such that 
$(f_1)_N \neq (f_2)_N$ and $(f_1)_j = (f_2)_j$ for  $j>N$. Thus $s((f_1)_N) = s((f_2)_N)$ and $(f_1)_N \neq (f_2)_N$. If $N$ is odd, the contradiction arises due to condition (c) of Definition \ref{definition-L.separated.Bratteli}. If $N$ is even, then $s((f_1)_N) = s((f_2)_N)$ belongs to an odd layer, thus $(f_1)_{N+1} = (f_2)_{N+1} \in R((f_1)_N) \cap R((f_2)_N)$ by definition of $P_{m_{2j-1},m_{2j}}^B$. This is a contradiction because $R((f_1)_N) \cap R((f_2)_N) = \emptyset$ since $(f_1)_N \neq (f_2)_N$.\\

Now for the second part. Take any $v \in (F')^{0,2j-1} = F^{0,m_{2j-1}}$ and $e_1,e_2 \in B'_v$. If it were the case that $s(e_1) = s(e_2)$, then by applying properties (c) and (d) of Definition \ref{definition-L.separated.Bratteli} at the different layers in between $F^{0,m_{2j-1}}$ and $F^{0,m_{2j}}$ we would deduce that $e_1 = e_2$. An argument similar to the one given in the above paragraph shows the same property for $f_1,f_2 \in R'_v$, analyzing the parity of the different layers.

\item The first two parts (1) and (2) follow \textit{mutatis mutandis} the same arguments as in (c) above, using conditions (c) and (d) of Definition \ref{definition-L.separated.Bratteli} and part (i) of Proposition \ref{proposition-decomposition.odd.rombs.even} at the different layers.

The second property follows from the same results again, applied to the different layers.

\item We need to show that, for $v \in F^{0,m_{2j}}$,
$$\bigsqcup_{e \in B'_v} s(B'_{s(e)}) = \bigsqcup_{f \in R'_v} s(R'(f)).$$
Note that we do not need to prove disjointness of the unions, since these will follow by Remark \ref{remark-about.def.of.Ldiagram} (3). Let $w = s(e^{(1)})$, with $e^{(1)} = (e_{m_{2j+1}}^{(1)},\dots,e_{m_{2j+2}-1}^{(1)}) \in B'_{s(e^{(0)})}$ and $e^{(0)} = (e_{m_{2j}}^{(0)},\dots,e_{m_{2j+1}-1}^{(0)}) \in B'_v$. Note that $(e^{(0)},e^{(1)}) \in P^B_{m_{2j},m_{2j+2}}$, and
$$m_{2j+2} - m_{2j} \equiv (2j+2)-2j \equiv 0 \text{ (mod }2),$$
so we have an \textit{even} number of edges from $w$ to $v$ through the blue path $(e^{(0)},e^{(1)})$. We can then write the path as $(e^{(0)},e^{(1)}) = (e_{1,0},e_{1,1},\dots,e_{n_j,0},e_{n_j,1})$, being $n_j$ the natural number satisfying $2n_j = m_{2j+2}-m_{2j}$. For each layer $1 \leq i \leq n_j$, we can make use of condition (e) of Definition \ref{definition-L.separated.Bratteli} to conclude that there exist pairs of \textit{red} edges $(f_{i,0},f_{i,1})$ such that $f_{i,0} \in R_{r(e_{i,0})}, f_{i,1} \in R(f_{i,0})$, and $s(f_{i,1}) = s(e_{i,1})$. We then construct two red paths
$$(f^{(0)},f^{(1)}) = (f_{1,0},f_{1,1},\dots,f_{n_j,0},f_{n_j,1})$$
of lengths $m_{2j+2}-m_{2j+1}$ and $m_{2j+1}-m_{2j}$ respectively, satisfying $f^{(0)} \in R'_v$ and $f^{(1)} \in R'(f^{(0)})$. Since $w = s(f^{(1)})$, we see that $w \in s(R'(f^{(0)}))$ with $f^{(0)} \in R'_v$, thus completing the proof of the first inclusion. The other inclusion is completely analogous, and so we omit it.

\item Let $v \in (F')^{0,2j-1} = F^{0,m_{2j-1}}$ and $(f^{(0)},f^{(1)})$ a pair of red edges such that $r(f^{(0)}) = v$ and $s(f^{(0)}) = r(f^{(1)}) \in F^{0,m_{2j}}$. Note that $w := s(f^{(1)}) \in F^{0,m_{2j+1}}$ and 
$$m_{2j+1} - m_{2j-1} \equiv (2j+1)-(2j-1) \equiv 0 \text{ (mod }2),$$
so we have an \textit{even} number of edges from $w$ to $v$ through the red path $(f^{(0)},f^{(1)})$. We can then write the path as $(f^{(0)},f^{(1)}) = (f_{1,0},f_{1,1},\dots,f_{n_j,0},f_{n_j,1})$, being $n_j$ the natural number satisfying $2n_j = m_{2j+1}-m_{2j-1}$. For each layer $1 \leq i \leq n_j$, we can make use of condition (f) of Definition \ref{definition-L.separated.Bratteli} to conclude that there exist pairs of \textit{blue} edges $(e_{i,0},e_{i,1})$ such that $r(e_{i,0}) = r(f_{i,0})$, $s(e_{i,0}) = r(e_{i,1})$ and $s(e_{i,1}) = s(f_{i,1}) = r(f_{i+1,0}) = r(e_{i+1,0})$. We then construct two blue paths
$$(e^{(0)},e^{(1)}) = (e_{1,0},e_{1,1},\dots,e_{n_j,0},e_{n_j,1})$$
of lengths $m_{2j+1}-m_{2j}$ and $m_{2j}-m_{2j-1}$ respectively, satisfying $r(e^{(0)}) = r(e_{1,0}) = r(f_{1,0}) = v$, $s(e^{(0)}) = r(e^{(1)})$ and $s(e^{(1)}) = s(e_{n_j,1}) = s(f_{n_j,1}) = w$. As noticed in Remark \ref{remark-about.def.of.Ldiagram} (4), it is not necessary to prove uniqueness of this blue pair of edges $(e^{(0)},e^{(1)})$.
\item Let $v \in (F')^{0,2j-1} = F^{0,m_{2j-1}}$, a red edge $g \in R'_{r(g)}$ such that $s(g) = v$, and $(e^{(0)},e^{(1)})$ a pair of blue edges such that $r(e^{(0)}) = v$ and $s(e^{(0)}) = r(e^{(1)}) \in F^{0,m_{2j}}$. Note that $w := s(e^{(1)}) \in F^{0,m_{2j+1}}$, and again 
$$m_{2j+1} - m_{2j-1} \equiv (2j+1)-(2j-1) \equiv 0 \text{ (mod }2),$$
so we have an \textit{even} number of edges from $w$ to $v$ through the blue path $(e^{(0)},e^{(1)})$. We can then write the path as $(e^{(0)},e^{(1)}) = (e_{1,0},e_{1,1},\dots,e_{n_j,0},e_{n_j,1})$, being $n_j$ the natural number satisfying $2n_j = m_{2j+1}-m_{2j-1}$. For each layer $1 \leq i \leq n_j$, we can make use of condition (g) of Definition \ref{definition-L.separated.Bratteli} to conclude that there exist unique pairs of \textit{red} edges $(f_{i,0},f_{i,1})$ such that $f_{1,0} \in R(g_0), f_{i,0} \in R(f_{i-1,1})$ for $2 \leq i \leq n_j$, and $r(f_{i,0}) = r(e_{i,0}), s(f_{i,0}) = r(f_{i,1})$ and $s(f_{i,1}) = s(e_{i,1})$. Here $g_0$ is the first red edge of the path $g$, i.e. the one having source $v$. We then construct two red paths
$$(f^{(0)},f^{(1)}) = (f_{1,0},f_{1,1},\dots,f_{n_j,0},f_{n_j,1})$$
with $f^{(0)}\in P^R_{m_{2j-1},m_{2j}}$ and $f^{(1)}\in P^R_{m_{2j},m_{2j+1}}$, satisfying $r(f^{(0)}) = r(f_{1,0}) = r(e_{1,0}) = v$, $s(f^{(0)}) = r(f^{(1)})$ and $s(f^{(1)}) = s(f_{n_j,1}) = s(e_{n_j,1}) = w$. Moreover, $f^{(0)} \in R'(g)$ by the definition of $R'(g)$ (Definition \ref{definition-equiv.telesc.l.diagrams} (d)). This is the desired red pair.

To prove uniqueness, assume that $(\wt{f}^{(0)},\wt{f}^{(1)})$ is another pair of red edges satisfying the same conditions as the pair $(f^{(0)},f^{(1)})$, namely that $\wt{f}^{(0)} \in R'(g)$, $r(\wt{f}^{(0)}) = r(e^{(0)}) = v$, $s(\wt{f}^{(0)}) = r(\wt{f}^{(1)})$ and $s(\wt{f}^{(1)}) = s(e^{(1)})$. Write it as
$$(\wt{f}^{(0)},\wt{f}^{(1)}) = (\wt{f}_{1,0},\wt{f}_{1,1},\dots,\wt{f}_{n_j,0},\wt{f}_{n_j,1}).$$
We apply condition (f) of Definition \ref{definition-L.separated.Bratteli} to each pair of red edges $(\wt{f}_{i,0},\wt{f}_{i,1})$ to conclude that there exist pairs $(\wt{e}_{i,0},\wt{e}_{i,1})$ of blue edges such that $r(\wt{e}_{i,0}) = r(\wt{f}_{i,0})$, $s(\wt{e}_{i,0}) = r(\wt{e}_{i,1})$ and $s(\wt{e}_{i,1}) = s(\wt{f}_{i,1})$. Now,
$$s(\wt{e}_{n_j,1}) = s(\wt{f}^{(1)}) = s(e^{(1)}) = s(e_{n_j,1}) \in F^{0,m_{2j+1}}.$$
By condition (d) of Definition \ref{definition-L.separated.Bratteli}, we necessarily have $\wt{e}_{n_j,1} = e_{n_j,1}$. Thus in particular $s(\wt{e}_{n_j,0}) = r(\wt{e}_{n_j,1}) = r(e_{n_j,1}) = s(e_{n_j,0}) \in F^{0,m_{2j+1}-1}$. Now condition (c) of Definition \ref{definition-L.separated.Bratteli} ensures that $\wt{e}_{n_j,0} = e_{n_j,0}$ necessarily. Therefore $r(\wt{f}_{n_j,0}) = r(\wt{e}_{n_j,0}) = r(e_{n_j,0}) = r(f_{n_j,0})$. We continue in this way to get that $\wt{e}_{i,j}= e_{i,j}$ for all $i,j$, and thus 
$$r(f_{i,0})= r(e_{i,0})= r(\wt{e}_{i,0}) = r(\wt{f}_{i,0}) \quad \text{ for all } 1 \leq i \leq n_j.$$
Now observe that $\wt{f}_{i,0}\in R(g_0)$, and thus 
by the uniqueness part of condition (g) of Definition \ref{definition-L.separated.Bratteli}, we get that the pairs $(\wt{f}_{1,0},\wt{f}_{1,1})$ and $(f_{1,0},f_{1,1})$ are the same. By applying this argument inductively (using that $f^{(0)},f^{(1)},\wt{f}^{(0)}, \wt{f}^{(1)}$ belong to $P^R_{j,l}$ for suitable $j,l$), we end up concluding that $\wt{f}^{(0)} = f^{(0)}$ and $\wt{f}^{(1)} = f^{(1)}$. This proves the uniqueness property we wanted to show.
\end{enumerate}
The lemma is proved.
\end{proof}

\begin{definition}\label{definition-equiv.rel.Ldiagrams}
We let $\sim_l$ denote the equivalence relation on $l$-diagrams generated by `up to isomorphism' equivalence and `up to contraction' equivalence, defined as follows.
\begin{enumerate}[(a),leftmargin=0.7cm]
\item (Isomorphism of $l$-diagrams) The $l$-diagrams $(F_1,D_1)$ and $(F_2,D_2)$ are said to be \textit{isomorphic}, written $(F_1,D_1) \cong (F_2,D_2)$, if there exists $\phi : (F_1,D_1) \ra (F_2,D_2)$ an isomorphism of separated diagrams (see \cite{AG2012}) such that, for any $j \geq 0$, we have
$$\phi_1( (B_1)_v ) = (B_2)_{\phi_0(v)} \quad \text{ and } \quad \phi_1( (R_1)_v ) = (R_2)_{\phi_0(v)}$$
in case $v \in (F_1)^{0,2j}$, and
$$\phi_1( (B_1)_v ) = (B_2)_{\phi_0(v)} \quad \text{ and } \quad \phi_1( R_1(f) ) = R_2(\phi_1(f))$$
in case $v \in (F_1)^{0,2j+1}$, with $f \in R_{r(f)}$ and $s(f) = v$. Here $R_1(f) \subseteq (R_1)_v$ and $R_2(\phi_1(f)) \subseteq (R_2)_{\phi_0(v)}$.
\item (Contraction of $l$-diagrams) The $l$-diagrams $(F_1,D_1)$ and $(F_2,D_2)$ are said to be \textit{equivalent by contraction}, written $(F_1,D_1) \sim_{\text{ctr}} (F_2,D_2)$, if there exists a sequence of $l$-diagrams $\{(E_i,C_i)\}_{i = 1}^n$ with $(E_1,C_1) = (F_1,D_1)$ and $(E_n,C_n) = (F_2,D_2)$ so that $(E_{i+1},C_{i+1})$ is obtained from $(E_i,C_i)$ by contraction (as defined in Definition \ref{definition-equiv.telesc.l.diagrams}), or vice versa. Symbolically, we will write
\begin{equation*}
\begin{tikzcd}[every arrow/.append style={shift left}]
 (E_1,C_1) \arrow{r} & (E_2,C_2) \arrow{l} \arrow{r} & \cdots \arrow{l} \arrow{r} & (E_n,C_n) \arrow{l}
\end{tikzcd}
\end{equation*}
in case $(E_1,C_1) \sim_{\text{ctr}} (E_n,C_n)$.
\end{enumerate}
\end{definition}

It is clear that both relations $\cong$ and $\sim_{\text{ctr}}$ define equivalence relations, so that $\sim_l$ is well-defined.

\begin{lemma}\label{lemma-composition.of.contractions.Ldiagram}
$(E_1,C_1) \sim_{\emph{ctr}} (E_n,C_n)$ if and only if there exists a sequence of $l$-diagrams $\{(E_i,C_i)\}_{i=2}^{n-1}$ such that
\begin{equation*}
\begin{tikzcd}
 (E_1,C_1) \arrow{r} & (E_2,C_2) & (E_3,C_3) \arrow{l} \arrow{r} & \cdots \arrow[r,shift left=1ex,dotted]{} & (E_n,C_n) \arrow[l,shift left=0.5ex,dotted]{}
 \end{tikzcd}
\end{equation*}
That is, each $(E_{2i},C_{2i})$ can be obtained by contracting $(E_{2i-1},C_{2i-1})$ and also by contracting $(E_{2i+1},C_{2i+1})$.
\end{lemma}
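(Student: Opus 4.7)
For the backward implication, the zigzag appearing in the statement is itself a particular instance of the general zigzag in Definition~\ref{definition-equiv.rel.Ldiagrams}(b), so its existence directly witnesses $(E_1,C_1) \sim_{\text{ctr}} (E_n,C_n)$.

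For the forward implication, my plan rests on two ingredients. The first is that contractions compose: if $(F,D)$ telescopes to $(F',D')$ via $(m_n)_{n\ge 0}$ and $(F',D')$ telescopes further to $(F'',D'')$ via $(m'_n)_{n\ge 0}$, then $(F,D)$ should telescope directly to $(F'',D'')$ via the composite sequence $(m_{m'_n})_{n\ge 0}$. The parity condition (CR) is easily verified from the observation (already made after Definition~\ref{definition-equiv.telesc.l.diagrams}) that $m_k \equiv k \pmod 2$ for every $k$: this yields $m_{m'_0} \equiv m'_0 \equiv 0 \pmod 2$ and $m_{m'_{n+1}} - m_{m'_n} \equiv m'_{n+1} - m'_n \equiv 1 \pmod 2$. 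The second ingredient is that the identity telescope $m_n = n$ reproduces $(F,D)$ itself, a direct unwinding of Definition~\ref{definition-equiv.telesc.l.diagrams} using that at every layer of an $l$-diagram the edges split into blue and red families (Remark~\ref{remark-about.def.of.Ldiagram}(1)).

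With these tools in hand, I would start from any zigzag witnessing $(E_1,C_1) \sim_{\text{ctr}} (E_n,C_n)$ and perform two cleanups. First, whenever the zigzag contains two consecutive arrows pointing in the same direction, I replace them by the single direct contraction obtained by composition, erasing the middle term; iterating until no such pair remains yields a zigzag with strictly alternating arrow directions. Second, I adjust the endpoints: if the alternating zigzag does not already begin with $\to$, I prepend the trivial step $(E_1,C_1) \to (E_1,C_1)$, and analogously if it does not end with $\leftarrow$ I append $(E_n,C_n) \leftarrow (E_n,C_n)$. A direct parity count on the number of arrows shows that, after at most these two paddings, the zigzag has odd length with arrows in the pattern $\to\leftarrow\to\leftarrow\cdots\to\leftarrow$, which places every even-indexed term as the common target of contractions from both of its odd-indexed neighbors, exactly as required.

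The main obstacle I anticipate is verifying composition of contractions \emph{beyond} the level of parities, i.e.\ that telescoping $(F,D)$ with $(m_{m'_n})$ really produces $(F'',D'')$ on the nose. This requires a concatenation bijection between length-$(m'_{n+1}-m'_n)$ colored paths in $F'$ and length-$(m_{m'_{n+1}} - m_{m'_n})$ colored paths in $F$, and a matching of the three kinds of separation data $B'_v$, $R'_v$ and $R'(\hat f)$ of Definition~\ref{definition-equiv.telesc.l.diagrams}. The subtle point is the coherence of the red partitions $R(\cdot)$ under this bijection, which should follow from the uniqueness assertions in conditions (f) and (g) of Definition~\ref{definition-L.separated.Bratteli}.
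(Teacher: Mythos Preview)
Your approach is correct and matches the paper's: the key observation is that composition of contractions is again a contraction, and you verify the parity conditions for the composite sequence exactly as the paper does (the paper writes $M_n = m'_{m_n}$ with the roles of $m$ and $m'$ swapped relative to your notation, but the argument is identical). Your explicit discussion of collapsing consecutive same-direction arrows and padding the endpoints with the identity telescoping is more detailed than the paper, which simply asserts ``it suffices to prove that the composition of two contractions is again a contraction'' and leaves the zigzag rearrangement implicit; as for the separation-matching concern you raise at the end, the paper dismisses it with ``the rest is now clear,'' treating the concatenation bijection on colored paths as routine.
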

\begin{proof}
It suffices to prove that the composition of two contractions is again a contraction. To this aim, suppose that $(F_3,D_3)$ is a contraction of $(F_2,D_2)$ and $(F_2,D_2)$ is a contraction of $(F_1,D_1)$. Thus there exist strictly increasing sequences $(m_n)_{n\geq 0}$ and $(m'_n)_{n \geq 0}$ in $\N \cup \{0\}$, with $m_0$ and $m'_0$ even and satisfying (CR), and such that $(F_2,D_2) = (F'_1,D'_1)$ and $(F_3,D_3) = (F'_2,D'_2)$. Then the sequence $M_n := m'_{m_n}$ also satisfies (CR):
$$M_{n+1} - M_n = m'_{m_{n+1}} - m'_{m_n} \equiv m_{n+1} - m_n \equiv 1 \text{ (mod }2).$$
Moreover, $M_0 = m'_{m_0} \equiv m_0 \equiv 0 \text{ (mod }2)$, and thus $M_0$ is also even. The rest is now clear.
\end{proof}

Our first aim is to prove the following theorem.

\begin{theorem}\label{theorem-correspondence}
There exists a bijective correspondence between equivalence classes of $l$-diagrams under $\sim_l$ and topological conjugacy classes of surjective local homeomorphisms on totally disconnected, compact metrizable spaces.

More concretely, there are well-defined maps
$$\Phi : \emph{\textsf{LHomeo}}/\sim_{t.c.} \lra \emph{\textsf{LDiag}}/\sim_l, \quad \Psi : \emph{\textsf{LDiag}}/\sim_l \lra \emph{\textsf{LHomeo}}/\sim_{t.c.}$$
which are inverse of each other.
\end{theorem}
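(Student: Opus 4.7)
The plan is to construct the maps $\Phi$ and $\Psi$ explicitly, verify they descend to the stated equivalence classes, and show they are mutually inverse. The verification itself will require substantial preparation, distributed across subsequent sections; what follows is an outline of the strategy.

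For $\Psi$, given an $l$-diagram $(F,D)$, I would set $X := \Psi(F,D)$ to be the space of infinite paths $(v_0, v_1, v_2, \dots)$ with $v_n \in F^{0,n}$ following blue edges at every step. By condition (c) and Proposition \ref{proposition-decomposition.odd.rombs.even}(i), the blue subgraph is a tree rooted at $F^{0,0}$, so $X$, viewed as a closed subset of $\prod_n F^{0,n}$, is compact, totally disconnected, and metrizable. The map $\sigma$ is then read off from the red data: for each $j$, the pair of blue edges $(v_{2j} \to v_{2j+1}, v_{2j+1} \to v_{2j+2})$ determines, via the rhombus conditions (f)--(g) together with the compatibility (e), a unique pair of red edges sharing the same apex $v_{2j+2}$, and this yields the coordinates of $\sigma(x)$. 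The rhombus axioms are precisely what makes $\sigma$ well-defined, continuous, and locally a homeomorphism; surjectivity follows from property (d).

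For $\Phi$, given $(X,\sigma) \in \textsf{LHomeo}$, compactness together with the local homeomorphism property of $\sigma$ yields a sequence of partitions $\calP_0 \precsim \calP_1 \precsim \cdots$ of $X$ with $\text{diam}(\calP_n) \to 0$ and such that $\sigma|_Z$ is a homeomorphism onto its image for every $Z$ in every $\calP_n$. Set $F^{0,2n} := \calP_n$, with an intermediate layer $F^{0,2n+1}$ defined from a refinement of $\calP_n$ adapted to $\sigma$; blue edges record inclusion of atoms, while red edges record the action of $\sigma$ relating atoms of consecutive partitions. Each axiom (a)--(g) of Definition \ref{definition-L.separated.Bratteli} is then a direct consequence of either the refinement structure or the dynamical properties of $\sigma$. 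Different partition choices yield $l$-diagrams that differ by telescoping, so, invoking Lemma \ref{lemma-composition.of.contractions.Ldiagram}, $\Phi$ is well-defined on $\sim_{t.c.}$-classes; conversely, both isomorphisms and contractions of $l$-diagrams preserve the resulting dynamical system up to topological conjugacy, so $\Psi$ descends to $\sim_l$-classes.

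The compositions $\Psi \circ \Phi$ and $\Phi \circ \Psi$ are then both the identity: starting from $(X,\sigma)$, coding points of $X$ by the partition tower from $\Phi$ reconstructs $X$ precisely because the diameters vanish, and the induced $\sigma$ matches the original; starting from $(F,D)$, the cylinder partitions coming from the blue levels of $\Psi(F,D)$ recover $(F,D)$ up to contraction. The main obstacle I anticipate is the precise matching between the combinatorial data at the odd layers of an $l$-diagram and the intermediate partition witnessing a single step of $\sigma$. Showing that axioms (f) and (g) capture one application of $\sigma$ neither too tightly nor too loosely is where the care is needed, and is also where the distinction between isomorphism and contraction surfaces; this makes the verification that $\Phi \circ \Psi$ equals the identity on equivalence classes the most delicate part of the argument.
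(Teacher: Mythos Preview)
Your outline matches the paper's strategy closely: $\Psi$ is built from infinite blue paths with $\sigma$ extracted via the romb conditions, $\Phi$ is built from a tower of partitions with blue edges recording inclusion and red edges recording the action of $\sigma$, and well-definedness is handled through telescoping. The one organizational difference worth flagging is your indexing $F^{0,2n} := \calP_n$ with odd layers constructed ad hoc; the paper instead sets $F^{0,n} := \calP_n$ for \emph{all} $n$, where $\{\calP_n\}$ is what it calls a $\sigma$-refined sequence of partitions (Definition~\ref{definition-sigma.refined.partition}), imposing $\calP_{2j}\vee \calP_{2j}^{\sigma} \precsim \calP_{2j+1}$ and $\calP_{2j+1}\vee \sigma^{-1}(\calP_{2j+1}) \precsim \calP_{2j+2}$---this uniform treatment absorbs your ``intermediate layer adapted to $\sigma$'' into the partition sequence itself and makes the verification of axioms (a)--(g) and of the telescoping invariance considerably cleaner. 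Your description of $\sigma$ on $\Psi(F,D)$ is also slightly compressed: the paper's Construction~\ref{construction-local.homeo} uses rombs twice---first at even layers (via Proposition~\ref{proposition-decomposition.odd.rombs.even}(ii), which follows from (e)) to pass from the blue path to a red path, then at odd layers (via condition~(f)) to pass from that red path back to a new blue path shifted by one---so when you fill in details, be sure both steps are present.
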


The rest of the section is devoted to the proof of Theorem \ref{theorem-correspondence}.

\subsection{From \textsf{LHomeo} to \textsf{LDiag}}\label{subsection-from.local.homeo.to.Ldiagram}

We start by associating an $l$-diagram to a surjective local homeomorphism. Let $(X,\sigma) \in \textsf{LHomeo}$. First, some general remarks about $\sigma$.

\begin{lemma}\label{lemma-properties.local.homeo}
There always exists a partition $\calP$ of $X$ such that the restriction $\sigma |_Z$ is a homeomorphism for any $Z \in \calP$. Moreover, each $\sigma(Z)$ is also clopen.

As a consequence, the set $\sigma^{-1}(\{x\})$ is finite for any $x \in X$, and in fact uniformly bounded by $|\calP|$ above.
\end{lemma}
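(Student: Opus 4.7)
My plan is to use the standard technique of locally trivializing $\sigma$ on clopen neighbourhoods (exploiting total disconnectedness), covering $X$ by finitely many such sets using compactness, and then disjointifying to get a partition.

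In detail, I would proceed as follows. First, for every point $x \in X$, since $\sigma$ is a local homeomorphism, there is an open neighbourhood $U_x$ of $x$ such that $\sigma|_{U_x}$ is a homeomorphism onto the open set $\sigma(U_x)$. Because $X$ is compact Hausdorff and totally disconnected, it is zero-dimensional, so I can shrink $U_x$ to a clopen neighbourhood $V_x \subseteq U_x$ of $x$. Then $\sigma|_{V_x}$ is still a homeomorphism onto its image, and since $V_x$ is open in $U_x$, the image $\sigma(V_x)$ is open in the open set $\sigma(U_x)$, hence open in $X$; being the continuous image of the compact set $V_x$, it is also closed. Thus $\sigma(V_x)$ is clopen.

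Next, compactness of $X$ gives a finite subcover $V_{x_1}, \dots, V_{x_n}$. I disjointify in the standard way: set
$$Z_i := V_{x_i} \setminus \bigcup_{j<i} V_{x_j} \qquad (i=1,\dots,n),$$
discard any empty $Z_i$, and let $\calP = \{Z_i : Z_i \ne \emptyset\}$. Each $Z_i$ is a finite Boolean combination of clopen sets, hence clopen; by construction $\calP$ is a partition of $X$ by non-empty clopen sets. Since $Z_i \subseteq V_{x_i}$ and $\sigma|_{V_{x_i}}$ is a homeomorphism, the restriction $\sigma|_{Z_i}$ is a homeomorphism onto $\sigma(Z_i)$, and $\sigma(Z_i)$ is the image of a clopen subset of $V_{x_i}$ under the homeomorphism $\sigma|_{V_{x_i}}$, hence clopen in the clopen set $\sigma(V_{x_i})$, hence clopen in $X$.

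For the final consequence, fix $x \in X$. Since $\sigma|_{Z}$ is injective for each $Z \in \calP$, the fibre $\sigma^{-1}(\{x\})$ can contain at most one point of each element of $\calP$, so $|\sigma^{-1}(\{x\})| \le |\calP|$, as required. I do not anticipate any real obstacle here; the only point that needs care is the shrinking step, where one must invoke zero-dimensionality of $X$ in order to replace the abstract open neighbourhood coming from the local-homeomorphism property with a clopen one, so that the finite disjointification yields a genuine clopen partition rather than merely a Borel partition.
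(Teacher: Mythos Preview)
Your proof is correct and follows essentially the same approach as the paper: shrink the local-homeomorphism neighbourhoods to clopen sets using zero-dimensionality, extract a finite cover by compactness, disjointify to a clopen partition, and use injectivity on each piece to bound fibre sizes. The only cosmetic difference is that the paper argues $\sigma(Z)$ is clopen via ``open map + compact image in Hausdorff'', whereas you route it through the homeomorphism $\sigma|_{V_{x_i}}$; both are fine.
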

\begin{proof}
Since $\sigma$ is a local homeomorphism, for any $x \in X$ we can find an open set $U_x$ such that $\sigma(U_x)$ is open in $X$ and $\sigma |_{U_x}$ is a homeomorphism. In fact, since the space $X$ is totally disconnected, we can assume that each $U_x$ is clopen. Then $X = \bigcup_{x \in X} U_x$, so by compactness we can find a finite subfamily $\{U_1,\dots,U_n\}$ of $\{U_x\}_{x \in X}$ such that $X = \bigcup_{i=1}^n U_i$, each $U_i$ is clopen and $\sigma|_{U_i}$ is a homeomorphism. By disjontifying $\{U_1,\dots,U_n\}$, we can construct another finite family $\{V_1,\dots,V_m\}$ such that its elements are clopen and pairwise disjoint, $X = \bigcup_{i=1}^m V_i$, and each $\sigma|_{V_i}$ is a homeomorphism. This family is our desired partition $\calP$, so $|\calP| = m$.

Note that $\sigma(V_i)$ is open in $X$ for $1 \leq i \leq m$. Moreover, since each $V_i$ is closed in $X$, it is in particular compact, thus $\sigma(V_i)$ is also compact. Since $X$ is a Hausdorff space, this implies that $\sigma(V_i)$ is closed. That is, each $\sigma(V_i)$ is clopen.

To conclude, assume that there is some $x \in X$ with at least $m+1$ different pre-images $y_1,\dots,y_{m+1} \in X$. Since $X = \bigcup_{i=1}^m V_i$, there are at least two pre-images of $x$ that lie inside the same $V_i$, say $y_1,y_2 \in V_i$. But this contradicts the fact that $\sigma$ is injective when restricted to $V_i$. The lemma now follows. 
\end{proof}

Let $N(\sigma) := \text{sup}\{|\sigma^{-1}(\{x\})| \mid x \in X\}$, which is a finite quantity by the lemma above. For each $1 \leq i \leq N(\sigma)$, define
$$\calU_i := \{x \in X \mid |\sigma^{-1}(\{x\})| = i\}.$$
Since $\sigma$ is surjective, the family $\{\calU_1,\dots,\calU_{N(\sigma)}\}$ covers $X$. In fact, they form a partition of $X$ into clopen sets, as we prove in the following lemma.

\begin{lemma}\label{lemma-each.Ui.clopen}
The family $\{\calU_1,\dots,\calU_{N(\sigma)}\}$ forms a partition of $X$ into clopen sets.
\end{lemma}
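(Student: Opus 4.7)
The plan is to leverage the clopen partition $\calP=\{V_1,\dots,V_m\}$ produced by Lemma~\ref{lemma-properties.local.homeo} to rewrite each $\calU_i$ as an explicit Boolean combination of clopen sets, from which the clopen property follows at once. Pairwise disjointness and covering are essentially tautological, so the whole content of the statement lies in showing that each $\calU_i$ is open and closed.

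First I would observe the following fiber-counting identity: for every $x\in X$,
\[
|\sigma^{-1}(\{x\})|=\#\{\, j\in\{1,\dots,m\}\mid x\in\sigma(V_j)\,\}.
\]
Indeed, since $\sigma|_{V_j}$ is a homeomorphism onto $\sigma(V_j)$, the set $V_j$ contains exactly one preimage of $x$ when $x\in\sigma(V_j)$ and none otherwise; as the $V_j$ partition $X$, summing over $j$ gives the total number of preimages of $x$.

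Using this identity, $\calU_i$ is exactly the set of points lying in precisely $i$ of the sets $\sigma(V_1),\dots,\sigma(V_m)$, so one can write
\[
\calU_i=\bigcup_{\substack{S\subseteq\{1,\dots,m\}\\|S|=i}}\Bigl(\,\bigcap_{j\in S}\sigma(V_j)\;\cap\;\bigcap_{j\notin S}\bigl(X\setminus\sigma(V_j)\bigr)\Bigr).
\]
By Lemma~\ref{lemma-properties.local.homeo} each $\sigma(V_j)$ is clopen, hence each of the finitely many summands on the right-hand side is clopen, and therefore so is $\calU_i$.

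Finally, the $\calU_i$ are clearly pairwise disjoint because the cardinality of $\sigma^{-1}(\{x\})$ is a single well-defined integer for each $x\in X$. They cover $X$ because $\sigma$ is surjective, so $|\sigma^{-1}(\{x\})|\geq 1$ for every $x\in X$, while $|\sigma^{-1}(\{x\})|\leq N(\sigma)$ by definition of $N(\sigma)$; discarding any $\calU_i$ that happens to be empty yields the desired partition. No real obstacle arises; the one conceptual step to get right is the fiber-counting identity, after which everything reduces to a finite Boolean combination of clopen sets.
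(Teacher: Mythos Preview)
Your proof is correct and follows essentially the same approach as the paper: both use the partition $\calP$ from Lemma~\ref{lemma-properties.local.homeo}, the fact that each $\sigma(V_j)$ is clopen, and the observation that the preimage count of $x$ equals the number of $j$ with $x\in\sigma(V_j)$, then express the target sets as finite Boolean combinations of the clopen sets $\sigma(V_j)$. The only cosmetic difference is that the paper first shows each $\calU_{\geq i}=\bigcup_{Z_1,\dots,Z_i\text{ distinct}}\sigma(Z_1)\cap\cdots\cap\sigma(Z_i)$ is clopen and then recovers $\calU_i=\calU_{\geq i}\setminus\calU_{\geq i+1}$, whereas you write $\calU_i$ directly as the locus of points lying in exactly $i$ of the $\sigma(V_j)$; both are equivalent formulations of the same argument.
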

\begin{proof}
It is clear that the sets $\calU_1,\dots , \calU_{N(\sigma)}$ are  disjoint and cover $X$. Thus it is enough to show that each $\calU_i$ is clopen. We will in fact show that each set
$$\calU_{\geq i} := \{x \in X \mid |\sigma^{-1}(\{x\})| \geq i\},$$
for $1 \leq i \leq N(\sigma)$, is clopen, from which the result will follow immediately since $\calU_i = \calU_{\geq i} \backslash \calU_{\geq i+1}$ for $1 \leq i < N(\sigma)$, and $\calU_{N(\sigma)} = \calU_{\geq N(\sigma)}$.

By letting $\calP$ be any partition satisfying the properties of Lemma \ref{lemma-properties.local.homeo}, we see that
$$\calU_{\geq i} = \bigcup_{\substack{(Z_1,\dots,Z_i) \in \calP^i \\ \text{all different}}} \sigma(Z_1) \cap \cdots \cap \sigma(Z_i).$$
Indeed, given $x \in \calU_{\geq i}$, there exist at least $i$ different pre-images of $x$, namely $y_1,\dots,y_i$. Since $\sigma$ restricts to a homeomorphism on each $Z \in \calP$, it is clear that $y_j \in Z_j$ for different $Z_j$'s. Thus $x \in \sigma(Z_1) \cap \cdots \cap \sigma(Z_i)$ for $(Z_1,\dots,Z_i) \in \calP^i$ all different. The converse inclusion is trivial, since any element in $\sigma(Z_1) \cap \cdots \cap \sigma(Z_i)$ for $(Z_1,\dots,Z_i) \in \calP^i$ all different has, by definition, at least $i$ different pre-images.

The result follows since each $\sigma(Z)$ is clopen.
\end{proof}

\begin{remark}\label{rem:covering-map}
Any covering map of topological spaces (in the sense of \cite{clark2023}) is a surjective local homeomorphism, and any surjective local homeomorphism on a compact space is a covering map. Hence the results from \cite{D95} and \cite{EV06} apply to the family of dynamical systems studied in the present paper. In particular it follows from \cite[Theorem 9.1]{EV06} that, if $(X,\sigma)\in \textsf{LHomeo}$, then the groupoid $C^*$-algebra $C^*(\calG (X,\sigma))$ is isomorphic to the crossed product $C(X)\rtimes _{\alpha, \mathcal L} \mathbb N$, where $\alpha \colon C(X)\to C(X)$ is the $*$-homomorphism given by $\alpha (f)= f\circ \alpha$ and $\mathcal L \colon C(X) \to C(X)$ is the transfer operator
$$\mathcal L (f)(x) = \frac{1}{|\sigma^{-1}(x)|} \sum_{y\in \sigma^{-1}(x)} f(y).$$
Note that in \cite[Example 4.5]{clark2023} it is constructed an example of a surjective local homeomorphism $f\colon X\to Y$ which is not a covering map, where $X$ and $Y$ are totally disconnected, locally compact, second-countable spaces. Take $Z:= \bigsqcup _{n=0}^{\infty} Y_n \sqcup \bigsqcup_{m=0}^{\infty} X_m$, where $Y_n= Y$ for all $n\ge 0$ and $X_m= X$ for all $m\ge 0$.  Denoting by $y_n$ the copy of $y\in Y$ in $Y_n$, and by $x_m$ the copy of $x\in X$ in $X_m$, we define $\sigma \colon Z \to Z$ by $\sigma (y_m) = y_{m+1}$ for $m\ge 0$, $\sigma (x_n)= x_{n-1}$ for all $n\ge 1$, and $\sigma (x_0) = f(x)_0$. Then $Z$ is a totally disconnected, locally compact, second countable space, and $\sigma$ is a surjective local homeomorphism on $Z$ which is not a covering map. Another nice example of a surjective local homeomorphism which is not a covering map is the map $f\colon \C \setminus \{0\} \to \C\setminus \{0\}$ defined by $f(z) = e^z$.
\end{remark}

\begin{definition}\label{definition-new.refined.partitions}
For a partition $\calP$ of $X$ such that $\sigma$ restricts to a homeomorphism on each $Z \in \calP$, we define new partitions
\begin{enumerate}[(a),leftmargin=0.7cm]
\item $\calP^{\sigma} := \bigcup_{k=1}^{N(\sigma)} \{ \sigma(Z_1) \cap \cdots \cap \sigma(Z_k) \backslash \calU_{\geq k+1} \neq \emptyset \mid Z_i \in \calP \text{ all different}\}$;
\item $\sigma^{-1}(\calP) := \{\sigma^{-1}(Z) \mid Z \in \calP\}$.
\end{enumerate}
\end{definition}

The next lemma shows that $\calP^{\sigma}$ and $\sigma^{-1}(\calP)$ are indeed well-defined.

\begin{lemma}\label{lemma-new.refined.partitions}
The sets $\calP^{\sigma}$ and $\sigma^{-1}(\calP)$ constitute partitions of $X$. Also, if $\calP_1$ and $\calP_2$ are partitions of $X$, then
$$(\calP_1 \vee \calP_2)^{\sigma} = \calP_1^{\sigma} \vee \calP_2^{\sigma} \quad \text{ and } \quad \sigma^{-1}(\calP_1 \vee \calP_2) = \sigma^{-1}(\calP_1) \vee \sigma^{-1}(\calP_2).$$
\end{lemma}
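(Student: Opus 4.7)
The plan is to handle the four assertions in sequence: the two partition claims first, then the two compatibility identities with $\vee$. For $\sigma^{-1}(\calP)$, each $\sigma^{-1}(Z)$ is clopen (by continuity of $\sigma$ and clopenness of $Z$) and nonempty (by surjectivity of $\sigma$), while $X=\sigma^{-1}(X)=\bigsqcup_{Z\in\calP}\sigma^{-1}(Z)$ furnishes disjointness and cover. For $\calP^\sigma$, clopenness of each candidate element follows by combining Lemma \ref{lemma-properties.local.homeo} (giving each $\sigma(Z_i)$ clopen) with Lemma \ref{lemma-each.Ui.clopen}, whose proof shows that each $\calU_{\geq k+1}$ is clopen. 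To see that the nonempty such sets cover $X$ and are pairwise disjoint, I would, for each $x\in X$, set $k:=|\sigma^{-1}(x)|$; the $k$ pre-images lie in $k$ pairwise distinct elements $Z_1,\dots,Z_k\in\calP$ by injectivity of $\sigma|_Z$, whence $x\in \sigma(Z_1)\cap\cdots\cap\sigma(Z_k)\setminus \calU_{\geq k+1}$. Conversely, if $x$ lies in such a set, both $k$ and the unordered family $\{Z_1,\dots,Z_k\}$ are intrinsically determined by the fiber $\sigma^{-1}(x)$, which gives pairwise disjointness.

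The identity $\sigma^{-1}(\calP_1\vee\calP_2)=\sigma^{-1}(\calP_1)\vee\sigma^{-1}(\calP_2)$ is then immediate from $\sigma^{-1}(Z_1\cap Z_2)=\sigma^{-1}(Z_1)\cap\sigma^{-1}(Z_2)$, which also preserves the nonempty-intersection condition. For $(\calP_1\vee\calP_2)^\sigma=\calP_1^\sigma\vee\calP_2^\sigma$, my approach is to characterize each partition by its equivalence classes using the description above: $x$ and $x'$ lie in the same $\calP^\sigma$-class iff $|\sigma^{-1}(x)|=|\sigma^{-1}(x')|$ and the families $\{Z\in\calP : Z\cap\sigma^{-1}(x)\neq\emptyset\}$ and $\{Z\in\calP : Z\cap\sigma^{-1}(x')\neq\emptyset\}$ coincide. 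The inclusion $(\calP_1\vee\calP_2)^\sigma\succsim\calP_1^\sigma\vee\calP_2^\sigma$ is the easier direction: projecting each $\calP_1\vee\calP_2$-label onto its $\calP_l$-component shows that every $(\calP_1\vee\calP_2)^\sigma$-class refines each $\calP_l^\sigma$-class, and hence the wedge of the latter two.

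The main obstacle will be the reverse inclusion: showing that if $x$ and $x'$ share both their $\calP_1^\sigma$-type and their $\calP_2^\sigma$-type, then they share their $(\calP_1\vee\calP_2)^\sigma$-type. This amounts to producing a single bijection $\phi:\sigma^{-1}(x)\to\sigma^{-1}(x')$ that preserves \emph{simultaneously} the $\calP_1$-element and the $\calP_2$-element of each pre-image. The strategy is to trace each $y\in\sigma^{-1}(x)$ through its associated pair $(Z^{(1)}_y,Z^{(2)}_y)\in\calP_1\times\calP_2$, identify the unique $y'\in\sigma^{-1}(x')$ that the $\calP_l^\sigma$-type equalities force to sit in $Z^{(l)}_y$ for each $l=1,2$, and use injectivity of $\sigma$ on $Z^{(1)}_y\cap Z^{(2)}_y\in\calP_1\vee\calP_2$ to conclude that the same $y'$ serves both $l$'s, yielding the required coherence of $\phi$.
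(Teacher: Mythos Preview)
Your treatment of the two partition claims and of the identity $\sigma^{-1}(\calP_1\vee\calP_2)=\sigma^{-1}(\calP_1)\vee\sigma^{-1}(\calP_2)$ is correct and matches the paper's approach (which in fact dismisses these last identities as ``straightforward'' without further argument). Your fiber-based characterization of $\calP^\sigma$ is also correct and is a clean way to organize both covering and disjointness.

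The genuine problem lies in the reverse inclusion for $(\calP_1\vee\calP_2)^{\sigma}=\calP_1^{\sigma}\vee\calP_2^{\sigma}$. Your strategy is to take $y\in\sigma^{-1}(x)$, let $y'_1\in\sigma^{-1}(x')$ be the unique preimage in $Z^{(1)}_y$ and $y'_2\in\sigma^{-1}(x')$ the unique preimage in $Z^{(2)}_y$, and then argue that $y'_1=y'_2$ using injectivity of $\sigma$ on $Z^{(1)}_y\cap Z^{(2)}_y$. But injectivity only tells you that $Z^{(1)}_y\cap Z^{(2)}_y$ contains \emph{at most} one preimage of $x'$; it gives no reason why $y'_1$ or $y'_2$ should land there at all. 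In fact this step cannot be repaired, because the asserted equality is false in general. Take $X=\{0,1\}^{\N_0}$ with the one-sided shift $\sigma$, $\calP_1=\{[0],[1]\}$, and $\calP_2=\{[00]\cup[11],\,[01]\cup[10]\}$; both satisfy the injectivity hypothesis. Every point has exactly two preimages, one in each element of $\calP_1$ and one in each element of $\calP_2$, so $\calP_1^{\sigma}=\calP_2^{\sigma}=\{X\}$ and hence $\calP_1^{\sigma}\vee\calP_2^{\sigma}=\{X\}$. But $\calP_1\vee\calP_2=\{[00],[01],[10],[11]\}$, and for $x=x_0x_1\cdots$ the two preimages lie in $[0x_0]$ and $[1x_0]$, so $(\calP_1\vee\calP_2)^{\sigma}=\{[0],[1]\}\neq\{X\}$.

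Thus only the refinement $\calP_1^{\sigma}\vee\calP_2^{\sigma}\precsim(\calP_1\vee\calP_2)^{\sigma}$ holds in general, which is precisely the direction you established. The paper's own proof simply declares the identity ``straightforward'' and gives no argument, so this gap is not peculiar to your proposal; but you should be aware that the approach you outlined for the reverse inclusion is circular and that the full equality, as stated, does not hold.
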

\begin{proof}
The statement for $\sigma^{-1}(\calP)$ is clear. Let us prove it for $\calP^{\sigma}$.

First, note that each non-empty $\ol{Z} = \sigma(Z_1) \cap \cdots \cap \sigma(Z_k) \backslash \calU_{\geq k+1} \in \calP^{\sigma}$ is clopen in $X$, since $\calU_{\geq k+1}$ is clopen and each $\sigma(Z_i)$ is clopen too.

Second, take $\ol{Z}_1 = \sigma(Z_1) \cap \cdots \cap \sigma(Z_k) \backslash \calU_{\geq k+1}$ and $\ol{Z}_2 = \sigma(Z'_1) \cap \cdots \cap \sigma(Z'_{k'}) \backslash \calU_{\geq k'+1}$ elements from $\calP^{\sigma}$, so that $(Z_1,\dots,Z_k)$ are all different, and so are $(Z'_1,\dots,Z'_{k'})$. If say $k > k'$, then $(Z_1,\dots,Z_k)$ constitute $k > k'$ different pairwise disjoint clopen sets, thus $\sigma(Z_1) \cap \cdots \cap \sigma(Z_k) \subseteq \calU_{\geq k'+1}$. But then
$$\ol{Z}_1 \cap \ol{Z}_2 = \sigma(Z_1) \cap \cdots \cap \sigma(Z_k) \cap \sigma(Z'_1) \cap \cdots \cap \sigma(Z'_{k'}) \backslash (\calU_{\geq k+1} \cup \calU_{\geq k'+1}) \subseteq \calU_{\geq k'+1} \backslash \calU_{\geq k'+1} = \emptyset.$$
The argument for $k' > k$ is similar, so we may assume that $k' = k$. But then
$$\ol{Z}_1 \cap \ol{Z}_2 = \sigma(Z_1) \cap \cdots \cap \sigma(Z_k) \cap \sigma(Z'_1) \cap \cdots \cap \sigma(Z'_k) \backslash \calU_{\geq k+1}.$$
If some $Z_i$ is different from all $Z'_j$'s, then $(Z_i,Z'_1,\dots,Z'_k)$ are all different, thus $\sigma(Z_i) \cap \sigma(Z'_1) \cap \cdots \cap \sigma(Z'_k) \subseteq \calU_{\geq k+1}$. This gives, as before, $\ol{Z}_1 \cap \ol{Z}_2 = \emptyset$. This proves that the elements of $\calP^{\sigma}$ are pairwise disjoint.

To conclude, simply note that $\{ \sigma(Z_1) \cap \cdots \cap \sigma(Z_k) \backslash \calU_{\geq k+1} \neq \emptyset \mid Z_i \in \calP \text{ all different}\}$ gives a partition of $\calU_k$, thus $\bigcup_{\ol{Z} \in \calP^{\sigma}} \ol{Z} = \calU_1 \sqcup \cdots \sqcup \calU_{N(\sigma)} = X$.

The rest of the lemma is straightforward.
\end{proof}

We now define the key concept of a $\sigma$-refined sequence of partitions.

\begin{definition}\label{definition-sigma.refined.partition}
We say that a sequence of partitions $\{\calP_n\}_{n \ge 0}$ of $X$ is a \textit{$\sigma$-refined sequence of partitions} of $X$ if the following properties hold:  
\begin{enumerate}[(a),leftmargin=0.7cm]
\item $\sigma$ restricts to a homeomorphism $\sigma |_Z$ for any $Z \in \calP_0$ (the existence of such a partition is guaranteed by Lemma \ref{lemma-properties.local.homeo}).
\item $\calP_n \precsim \calP_{n+1}$, that is $\calP_{n+1}$ is finer than $\calP_n$, for all $n \geq 0$.
\item The union $\bigcup_{n \geq 0} \calP_n$ generates the topology of $X$.
\item $\text{diam}(\calP_n)$ tends to $0$ as $n \ra \infty$.
\item We have
$$\calP_{2j} \vee \calP_{2j}^{\sigma} \precsim \calP_{2j+1}, \quad \text{and} \quad \calP_{2j+1} \vee \sigma^{-1}(\calP_{2j+1}) \precsim \calP_{2j+2}$$
for all $j \geq 0$.
\end{enumerate}	
Note that conditions (a) and (b) together imply that, for any $n \geq 0$, $\sigma$ restricts to a homeomorphism $\sigma |_Z$ for any $Z \in \calP_n$.
\end{definition}

\begin{remark}\label{rem:tame-defining-sequences}
Since $X$ is compact and all our partitions are finite, it is straightforward to check that any $\sigma$-refined sequence of partitions of $X$ in the sense of Definition \ref{definition-sigma.refined.partition} is a  complete, tame defining sequence in the sense of \cite[Definitions 3.1.1 and 3.1.8]{DGS2021}.
\end{remark}

\begin{lemma}\label{lemma-refining.partition.local}
Let $(X,\sigma) \in \emph{\textsf{LHomeo}}$. Then $X$ admits a $\sigma$-refined sequence of partitions.
\end{lemma}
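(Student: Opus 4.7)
The plan is a standard inductive construction: starting from a base partition provided by Lemma \ref{lemma-properties.local.homeo}, at each stage $n$ I refine the previous partition so as to simultaneously (i) respect the parity-dependent wedge condition imposed by (e) and (ii) shrink diameters below $1/(n+1)$. Total disconnectedness, compactness and metrizability of $X$ furnish the refinements: every clopen subset of $X$ admits finite partitions into clopen subsets of arbitrarily small diameter, since each point has arbitrarily small clopen neighborhoods, and finitely many of them cover a given clopen set by compactness; a standard disjointification then yields the partition.

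Concretely, I would fix a metric $d$ witnessing the metrizability of $X$ and take $\calP_0$ to be a partition as in Lemma \ref{lemma-properties.local.homeo}, further refined (if needed) into clopen pieces of diameter less than $1$; refining does not spoil (a), because each piece $Z$ of the refinement sits inside a piece $Z'$ on which $\sigma$ is a homeomorphism, and so $\sigma|_Z$ remains a homeomorphism onto the clopen set $\sigma(Z)$. Inductively, assuming $\calP_n$ has been built so that (a), (b), (d) hold up to stage $n$, define
$$\calQ_n := \begin{cases} \calP_n \vee \calP_n^\sigma, & n \text{ even},\\ \calP_n \vee \sigma^{-1}(\calP_n), & n \text{ odd.} \end{cases}$$
By the remark after Definition \ref{definition-sigma.refined.partition}, $\sigma$ restricts to a homeomorphism on every $Z\in\calP_n$, so Lemmas \ref{lemma-new.refined.partitions} and \ref{lemma-each.Ui.clopen} guarantee that $\calQ_n$ is a well-defined partition of $X$ into clopen sets. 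Now refine each $Z \in \calQ_n$ into a finite clopen partition of pieces of diameter less than $1/(n+1)$, and let $\calP_{n+1}$ be the union of all these refinements.

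Verifying the five conditions is then routine: property (a) holds by construction of $\calP_0$; property (b) is immediate since $\calP_{n+1}$ refines $\calQ_n$, which in turn refines $\calP_n$; property (d) holds because $\operatorname{diam}(\calP_n) < 1/n$ for $n\ge 1$; and property (e) holds because $\calP_{n+1}$ refines $\calQ_n$, which is exactly the wedge required by the parity of $n$. Property (c) is then automatic from (d) combined with the metric topology: for any open $U\subseteq X$ and $x\in U$, choose $\varepsilon>0$ with $B(x,\varepsilon)\subseteq U$ and then $n$ with $1/n<\varepsilon$; the unique $Z\in\calP_n$ containing $x$ satisfies $Z\subseteq B(x,\varepsilon)\subseteq U$, so $U$ is a union of elements of $\bigcup_n\calP_n$.

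The main (and rather mild) obstacle is confirming that small-diameter clopen refinements of a given clopen set always exist; but this is exactly the content of zero-dimensionality plus compactness, so no new machinery is needed. The bookkeeping of parities in the wedge condition is the only place where attention is required, and the construction above makes that transparent.
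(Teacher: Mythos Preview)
Your proposal is correct and follows essentially the same approach as the paper: inductively wedge in the parity-dependent partition required by condition~(e), then further refine to force the diameters down. The only cosmetic difference is that the paper packages the diameter control by wedging with an auxiliary sequence $\calP'_n$ already satisfying (a)--(d), whereas you carry out the small-diameter refinement explicitly at each step using zero-dimensionality and compactness; the two constructions are interchangeable.
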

\begin{proof}
Take a sequence of partitions $\calP'_n$ satisfying conditions (a)-(d) in Definition \ref{definition-sigma.refined.partition}. It always exists by our assumptions on the space $X$. Now define new partitions inductively by
\begin{align*}
& \calP_0 := \calP'_0; \\
& \calP_{2j+1} := \calP'_{2j+1} \vee \calP_{2j} \vee \calP_{2j}^{\sigma} & \text{ for all } j \geq 0; \\
& \calP_{2j+2} := \calP'_{2j+2} \vee \calP_{2j+1} \vee \sigma^{-1}(\calP_{2j+1}) & \text{ for all } j \geq 0.
\end{align*}
It is clear that the sequence $\{\calP_n\}_{n \geq 0}$ satisfies the required properties.
\end{proof}

Thus from now on we will assume that a $\sigma$-refined sequence of partitions $\{\calP_n\}_{n \geq 0}$ is given.\\

\begin{examples}\label{examples-cantor.1}
\begin{enumerate}[1., leftmargin=0.7cm]
\item Take $X = \{0,1\}^{\Z}$ as a model for the Cantor set, and define the two-sided shift
$$\varphi \colon X \to X, \quad \varphi(x)_i = x_{i+1}.$$
A basis for the topology of $X$ is again given by the cylinder sets: for integers $r,l \geq 0$ and $\varepsilon_{-l},...,\varepsilon_r \in \{0,1\}$, these are given by
$$[\varepsilon_{-l} \cdots \underline{\varepsilon_0} \cdots \varepsilon_r] := \{x \in X \mid x_i = \varepsilon_i \text{ for } -l \leq i \leq r\}.$$
With this topology, $(X,\varphi) \in \textsf{LHomeo}$, and in fact $\varphi$ is even a homeomorphism. The sequence of partitions $\{\calP_n\}_{n \geq 0}$, given, for $n \geq 0$, by
$$\calP_{2n} = \{[\varepsilon_{-n} \cdots \underline{\varepsilon_0} \cdots \varepsilon_n] \mid \varepsilon_i \in \{0,1\} \text{ for } -n \leq i \leq n\},$$
$$\calP_{2n+1} = \{[\varepsilon_{-n-1} \varepsilon_{-n} \cdots \underline{\varepsilon_0} \cdots \varepsilon_n ] \mid \varepsilon_i \in \{0,1\} \text{ for } -n-1 \leq i \leq n\},$$
is a $\varphi$-refined sequence of partitions of $X$.
\item Take $X^+ = \{0,1\}^{\N_0}$ as another model for the Cantor set, but consider now the one-sided shift
$$\sigma \colon X^+ \to X^+, \quad \sigma(x)_i = x_{i+1}.$$
A basis for the topology of $X^+$ is again given by the cylinder sets, where now $l = 0$. Even though $\sigma$ is not a homeomorphism, it is certainly a surjective local homeomorphism, so $(X^+,\sigma) \in \textsf{LHomeo}$. The sequence of partitions $\{\calP_n\}_{n \geq 0}$, given, for $n \geq 0$, by
$$\calP_n = \{[\underline{\varepsilon_0} \cdots \varepsilon_n] \mid \varepsilon_i \in \{0,1\} \text{ for } 0 \leq i \leq n \},$$
is a $\sigma$-refined sequence of partitions of $X^+$.
\end{enumerate}
\end{examples}

The main idea to construct the $l$-diagram $(F,D)$ from $(X,\sigma)$ is the following: the vertices will correspond to the sets $Z \in \calP_n$, with $\calP_n$ a $\sigma$-refined sequence of partitions; the blue edges will correspond to inclusion maps $Z \subseteq Z'$, and the red edges will correspond to the action of $\sigma$ or $\sigma^{-1}$, which in the latter case will give rise to various red edges departing from the same vertex.

\begin{construction}\label{construction-l.diag}
Take $\{\calP_n\}_{n \geq 0}$ to be any $\sigma$-refined sequence of partitions of $X$. We construct $(F,D)$ inductively, as follows.

\begin{enumerate}[(a),leftmargin=0.7cm]

\item (Vertices) For $n \geq 0$ we set $F^{0,n} = \calP_n$, so that the vertices at the $n^{\text{th}}$ layer are the clopen sets in the partition $\calP_n$.

\item (Edges) For $n \geq 0$, we let $F^{1,n} = B^{(n)} \sqcup R^{(n)}$, where the set $B^{(n)}$ will consist on the blue edges, and the set $R^{(n)}$ will be the set of red edges, which we now define. We also set, for $Z \in F^{0,n}$,
$$B_Z = \{ e \in B^{(n)} \mid r(e) = Z\}, \quad R_Z = \{ f \in R^{(n)} \mid r(f) = Z\}.$$

\item[(b.1)] (Blue edges) The blue edges are defined as follows. For any pair $Z \in \calP_n, Z' \in \calP_{n+1}$ such that $Z' \subseteq Z$, we write down a blue edge $e(Z',Z)$ with source $Z'$ and range $Z$. Thus 
$$B^{(n)} = \{ e(Z',Z) \mid Z \in \calP_n,Z'\in \calP_{n+1} \text{ such that } Z' \subseteq Z\}.$$

\item[(b.2)] (Red edges) For the definition of red edges, we need to distinguish between \textit{even} and \textit{odd} layers.

We start with an even layer $n = 2j, j \geq 0$. For each pair $Z \in \calP_{2j}$ and $Z' \in \calP_{2j+1}$ such that $Z' \subseteq \sigma(Z)$, we write down a red edge $f(Z',Z)$ with source $Z'$ and range $Z$. Thus
$$R^{(2j)} = \{ f(Z',Z) \mid Z \in \calP_{2j},Z'\in \calP_{2j+1} \text{ such that } Z' \subseteq \sigma(Z)\}.$$
Note that it may be the case that there are different red edges departing from the same vertex $Z'$, simply because $Z'$ is in principle contained in an intersection $\sigma(Z_1) \cap \cdots \cap \sigma(Z_l)$ for some $Z_i \in \calP_{2j}$.

Now we pay our attention to an odd layer $n = 2j+1, j \geq 0$. For $Z' \in \calP_{2j+1}$ and $Z'' \in \calP_{2j+2}$ such that $\sigma(Z'') \subseteq Z'$ (or equivalently $Z'' \subseteq \sigma^{-1}(Z')$), we write down a red edge $f(Z'',Z')$ with source $Z''$ and range $Z'$. Thus
$$R^{(2j+1)} = \{ f(Z'',Z') \mid Z' \in \calP_{2j+1},Z''\in \calP_{2j+2} \text{ such that } Z'' \subseteq \sigma^{-1}(Z')\}.$$
Note that here, in contrast to what happens for even layers, there is only one red edge departing from each vertex $Z'' \in \calP_{2j+2}$.

\item (Separations) We again distinguish between even and odd layers. For an even layer $n = 2j$, $j \geq 0$, the separation for $Z \in F^{0,2j}$ is defined to be
$$D_Z = \{B_Z,R_Z\}.$$
For an odd layer $n = 2j+1$, $j \geq 0$, the separation for $Z' \in F^{0,2j+1}$ is defined to be
$$D_{Z'} = \{B_{Z'}\} \cup \{R(f) \mid s(f) = Z', f \in R_{r(f)}\},$$
where here $R(f)$ denotes the set of all those red edges $g \in R_{Z'}$ such that $s(g) \subseteq r(f)$. Thus $R_{Z'} = \bigcup_{f \in s^{-1}(Z') \cap R_{r(f)}} R(f)$.
\end{enumerate}
\end{construction}

\begin{figure}[H]
\begin{tikzpicture}
	\draw[black,thick,dashed] (-5,0) -- (5,0);
	\draw[black,thick,dashed] (-5,2) -- (5,2);
	\draw[black,thick,dashed] (-5,4) -- (5,4);
	\node[circle,fill=black,scale=0.4] (B1) at (-5,0) {};
	\node[circle,fill=black,scale=0.4] (B2) at (-3,0) {};
	\node[circle,fill=black,scale=0.4] (B3) at (-2,0) {};
	\node[circle,fill=black,scale=0.4] (B4) at (-1,0) {};
	\node[circle,fill=black,scale=0.4] (B5) at (1,0) {};
	\node[circle,fill=black,scale=0.4] (B6) at (2,0) {};
	\node[circle,fill=black,scale=0.4] (B7) at (3,0) {};
	\node[circle,fill=black,scale=0.4] (B8) at (5,0) {};
	\node[circle,fill=black,scale=0.4] (M1) at (-4,2) {};
	\node[circle,fill=black,scale=0.4] (M2) at (-1,2) {};
	\node[circle,fill=black,scale=0.4] (M3) at (0,2) {};
	\node[circle,fill=black,scale=0.4] (M4) at (1,2) {};
	\node[circle,fill=black,scale=0.4] (M5) at (4,2) {};
	\node[circle,fill=black,scale=0.4] (T1) at (-2.5,4) {};
	\node[circle,fill=black,scale=0.4] (T2) at (0,4) {};
	\node[circle,fill=black,scale=0.4] (T3) at (2.5,4) {};
	\node[label=$Z \in F^{0,2j}$] (2N) at (-5,4) {};
	\node[label=$Z' \in F^{0,2j+1}$] (2N+1) at (-5,2) {};
	\node[label=$Z'' \in F^{0,2j+2}$] (2N+2) at (-5,0) {};
	%
	%-------------------
	%blue edges
	\draw[-,blue,postaction={on each segment={mid arrow=blue}}] (M1) to (T1);
	\draw[-,blue,postaction={on each segment={mid arrow=blue}}] (M2) to (T1);
	\draw[-,blue,postaction={on each segment={mid arrow=blue}}] (M3) to (T2);
	\draw[-,blue,postaction={on each segment={mid arrow=blue}}] (M4) to (T3);
	\draw[-,blue,postaction={on each segment={mid arrow=blue}}] (M5) to (T3);
	\draw[-,blue,postaction={on each segment={mid arrow=blue}}] (B1) to (M1);
	\draw[-,blue,postaction={on each segment={mid arrow=blue}}] (B2) to (M1);
	\draw[-,blue,postaction={on each segment={mid arrow=blue}}] (B3) to (M2);
	\draw[-,blue,postaction={on each segment={mid arrow=blue}}] (B4) to (M3);
	\draw[-,blue,postaction={on each segment={mid arrow=blue}}] (B5) to (M3);
	\draw[-,blue,postaction={on each segment={mid arrow=blue}}] (B6) to (M4);
	\draw[-,blue,postaction={on each segment={mid arrow=blue}}] (B7) to (M5);
	\draw[-,blue,postaction={on each segment={mid arrow=blue}}] (B8) to (M5);
	%red edges
	\draw[-,red,postaction={on each segment={mid arrow=red}}] (M3) to (T1);
	\draw[-,red,postaction={on each segment={mid arrow=red}}] (M5) to (T1);
	\draw[densely dotted,red,postaction={on each segment={mid arrow=red}}] (M1) to (T2);
	\draw[densely dotted,red,postaction={on each segment={mid arrow=red}}] (M2) to (T2);
	\draw[densely dotted,red,postaction={on each segment={mid arrow=red}}] (M4) to (T2);
	\draw[densely dotted,red,postaction={on each segment={mid arrow=red}}] (M5) to (T2);
	\draw[dashed,red,postaction={on each segment={mid arrow=red}}] (M1) to (T3);
	\draw[dashed,red,postaction={on each segment={mid arrow=red}}] (M3) to (T3);
	\draw[-,red,postaction={on each segment={mid arrow=red}}] (B1) to (M5);
	\draw[-,red,postaction={on each segment={mid arrow=red}}] (B2) to (M3);
	\draw[-,red,postaction={on each segment={mid arrow=red}}] (B3) to (M5);
	\draw[densely dotted,red,postaction={on each segment={mid arrow=red}}] (B4) to (M2);
	\draw[densely dotted,red,postaction={on each segment={mid arrow=red}}] (B5) to (M4);
	\draw[dashed,red,postaction={on each segment={mid arrow=red}}] (B6) to (M1);
	\draw[dashed,red,postaction={on each segment={mid arrow=red}}] (B7) to (M3);
	\draw[dashed,red,postaction={on each segment={mid arrow=red}}] (B8) to (M1);
	%\draw[densely dotted,red,postaction={on each segment={mid arrow=red}}] (D2) to (C3);
	%\draw[-,red,postaction={on each segment={mid arrow=red}}] (D3) to (C2);
	%\draw[densely dotted,red,postaction={on each segment={mid arrow=red}}] (D3) to (C4);
	%\draw[-,red,postaction={on each segment={mid arrow=red}}] (D4) to (C2);
	%\draw[densely dotted,red,postaction={on each segment={mid arrow=red}}] (D4) to [out=45,in=315] (C4);
	%-----------------------
  \draw[decorate,decoration={brace,amplitude=10pt,mirror}] 
    (5.2,0) -- (5.2,2);
%	\node at (7.7,1.6){$Z' \in F^{0,2j+1}, Z'' \in F^{0,2j+2}$};
	\node at (8,1.33){$Z'' \begingroup\color{blue} \ra \endgroup Z' \text{ if } Z'' \subseteq Z'$};
	\node at (8,0.67){$Z'' \begingroup\color{red} \ra \endgroup Z' \text{ if } \sigma(Z'') \subseteq Z'$};	
	\draw[decorate,decoration={brace,amplitude=10pt,mirror}] 
    (5.2,2) -- (5.2,4);
%	\node at (7.7,4){$Z \in F^{0,2j}, Z' \in F^{0,2j+1}$};
	\node at (8,3.33){$Z' \begingroup\color{blue} \ra \endgroup Z \text{ if } Z' \subseteq Z$};
	\node at (8,2.67){$Z' \begingroup\color{red} \ra \endgroup Z \text{ if } Z' \subseteq \sigma(Z)$};
\end{tikzpicture}
\caption{Construction of the edges of $(F,D)$.}
\label{figure-schematics3}
\end{figure}
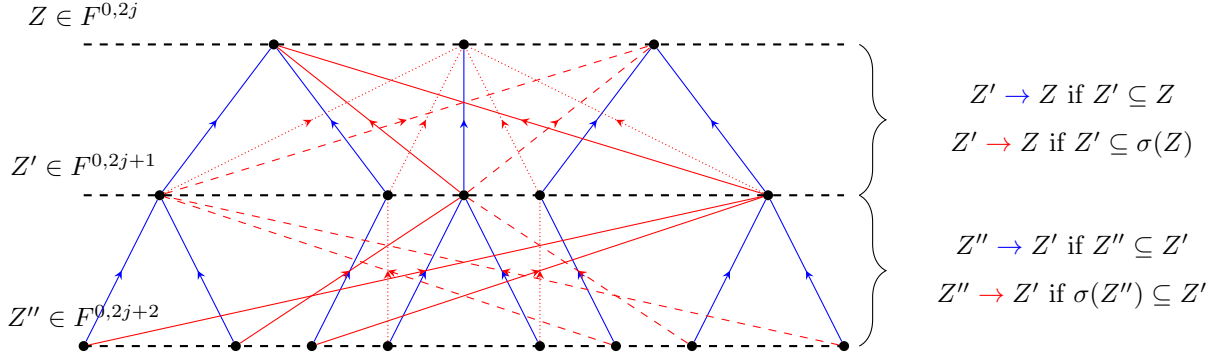

In this way we build a separated Bratteli diagram $(F,D)$. We now show that $(F,D)$ is an $l$-diagram. Indeed, it is an $l$-diagram of a special type, since by definition it does not have double red edges in any of its layers. This motivates the following definition.

\begin{definition}\label{definition-refined.Ldiagram}
Let $(F,D)$ be an $l$-diagram. We say that $(F,D)$ is a {\it refined $l$-diagram} if for any two vertices $w\in F^{0,n}$ and $v\in F^{0,n-1}$, $n\ge 1$, there is at most one red edge $f$ such that $s(f)= w$ and $r(f)=v$. By Remark \ref{remark-about.def.of.Ldiagram} (5), this is equivalent to asking the property only at the first layer, that is, for $n=1$.
\end{definition}

The following lemma will be useful in the sequel.

\begin{lemma}\label{lemma-group.of.f}
Let $\{ \calP_n \}_{n \geq 0}$ be a  $\sigma$-refined sequence of partitions of $X$.
\begin{enumerate}[i),leftmargin=0.7cm]
\item For $j\ge 0$, let $Z_1 \in \calP_{2j}$, and let $g \in R_{Z_1}$, with $s(g) = Z$. Then $Z\subseteq \sigma(Z_1)$, and we have
$$(\sigma|_{Z_1})^{-1}(Z) = \sigma^{-1}(Z)\cap Z_1 = \bigsqcup_{f\in R(g)} s(f).$$
\item For $j\ge 1$, let $Z \in \calP_{2j-1}$, $Z'' \in \calP_{2j+1}$ and $Z_1 \in \calP_{2j-2}$ be such that
$$Z'' \subseteq Z \subseteq \sigma(Z_1).$$
Then there is a unique $Z' \in \calP_{2j}$ such that
$$Z'\subseteq Z_1 \qquad \text{and} \qquad Z'' \subseteq \sigma(Z') \subseteq Z\subseteq \sigma(Z_1).$$
\end{enumerate}
\end{lemma}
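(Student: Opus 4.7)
The plan is to unpack both parts directly from Construction \ref{construction-l.diag} combined with the refinement properties (a)--(e) of the $\sigma$-refined sequence $\{\calP_n\}$ given in Definition \ref{definition-sigma.refined.partition}.

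For part (i), the edge $g \in R_{Z_1}$ at the even layer $2j$ is, by Construction \ref{construction-l.diag}(b.2), of the form $g = f(Z,Z_1)$ with $Z = s(g) \in \calP_{2j+1}$ and $Z \subseteq \sigma(Z_1)$, which gives the first assertion. Since $\sigma|_{Z_1}$ is a homeomorphism and $Z \subseteq \sigma(Z_1)$, the equality $(\sigma|_{Z_1})^{-1}(Z) = \sigma^{-1}(Z) \cap Z_1$ is immediate. For the final equality, I will unpack $R(g)$ from Construction \ref{construction-l.diag}(c): its elements are precisely the red edges $f(Z'',Z)$ at the odd layer $2j+1$ whose source $Z'' \in \calP_{2j+2}$ satisfies both $Z'' \subseteq \sigma^{-1}(Z)$ (from (b.2)) and $Z'' \subseteq r(g) = Z_1$. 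Property (e) ensures $\calP_{2j+2}$ refines $\sigma^{-1}(\calP_{2j+1})$ and $\calP_{2j+1}$, the latter in turn refining $\calP_{2j}$; hence $\sigma^{-1}(Z) \cap Z_1$ decomposes as the disjoint union of exactly those blocks of $\calP_{2j+2}$ appearing as sources of edges in $R(g)$.

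For part (ii), $Z'$ will be obtained by pulling back through $\sigma|_{Z_1}$. Choose any $x \in Z''$; since $Z'' \subseteq \sigma(Z_1)$ it has a unique $\sigma|_{Z_1}$-preimage $y \in Z_1$. Let $Z' \in \calP_{2j}$ be the block containing $y$. Three things must be verified: (a) $Z' \subseteq Z_1$, which holds because $\calP_{2j}$ refines $\calP_{2j-2}$ (by iterated application of (b) in Definition \ref{definition-sigma.refined.partition}); (b) $\sigma(Z') \subseteq Z$, which follows because property (e) gives $\calP_{2j} \succsim \sigma^{-1}(\calP_{2j-1})$, forcing $Z'$ into the unique block $\sigma^{-1}(Z) \in \sigma^{-1}(\calP_{2j-1})$ containing $y$; and (c) $Z'' \subseteq \sigma(Z')$.

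Step (c) is the main obstacle and requires analyzing the structure of $\calP_{2j}^\sigma$. The strategy is to show $\sigma(Z')$ is a union of blocks of $\calP_{2j}^\sigma$: if a block $A = \sigma(W_1) \cap \cdots \cap \sigma(W_k) \setminus \calU_{\geq k+1}$ meets $\sigma(Z')$, then every point of $A$ lies in $\calU_k$ and has preimages in exactly the same $k$ blocks $W_1,\dots,W_k$ of $\calP_{2j}$; a common preimage in $Z'$ forces $Z' \in \{W_1,\dots,W_k\}$, so every point of $A$ has a preimage in $Z'$, i.e.\ $A \subseteq \sigma(Z')$. Since $\calP_{2j+1}$ refines $\calP_{2j}^\sigma$ (property (e) at levels $2j$--$2j+1$), $\sigma(Z')$ is likewise a union of blocks of $\calP_{2j+1}$, and $Z''$ is one such block meeting it through $x$, whence $Z'' \subseteq \sigma(Z')$. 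Uniqueness of $Z'$ is then automatic: any candidate must contain the unique $\sigma|_{Z_1}$-preimage $y$ of a fixed $x \in Z''$, so all candidates coincide. The most technical ingredient throughout is tracking which refinement from Definition \ref{definition-sigma.refined.partition} is invoked at each step, with the combinatorial analysis of $\calP_{2j}^\sigma$ in step (c) being the crux.
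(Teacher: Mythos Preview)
Your argument is correct in both parts. Part (i) is essentially identical to the paper's proof.

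For part (ii) you take a genuinely different route. You construct $Z'$ element-wise, as the $\calP_{2j}$-block containing the $\sigma|_{Z_1}$-preimage of a chosen point $x\in Z''$, and then prove directly that $\sigma(Z')$ is saturated for $\calP_{2j}^{\sigma}$ (hence for $\calP_{2j+1}$), which forces $Z''\subseteq\sigma(Z')$. The paper instead feeds part (i) back into part (ii): from $Z\subseteq\sigma(Z_1)$ it takes the red edge $g$ with $s(g)=Z$, $r(g)=Z_1$, applies $\sigma$ to the decomposition in (i) to obtain $Z=\bigsqcup_{f\in R(g)}\sigma(s(f))$ as a disjoint union of images of $\calP_{2j}$-blocks lying inside $Z_1$, and then uses the $\calP_{2j}^{\sigma}$-block containing $Z''$ to see that $Z''$ lands entirely in one of these images $\sigma(s(f_0))$, setting $Z':=s(f_0)$. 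Your version is self-contained for (ii) and isolates the underlying topological fact (saturation of $\sigma(Z')$ with respect to $\calP_{2j}^{\sigma}$); the paper's version reuses (i) and the edge language of the $l$-diagram under construction. Both hinge on the same refinement $\calP_{2j}^{\sigma}\precsim\calP_{2j+1}$ from property (e).
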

\begin{proof}
\begin{enumerate}[i),leftmargin=0.6cm]
\item First note that $Z\subseteq \sigma(Z_1)$ by definition of the red edges at even layers.

Now note that since $\calP_{2j} \vee \sigma^{-1}(\calP _{2j+1}) \precsim \calP_{2j+1} \vee \sigma^{-1}(\calP _{2j+1}) \precsim \calP_{2j+2}$, we can write the set $\sigma^{-1}(Z)\cap Z_1$ as a disjoint union of sets $Z'' \in \calP_{2j+2}$. If $Z'' \in \calP_{2j+2}$ and $Z''\subseteq \sigma^{-1}(Z)\cap Z_1$, then there is a unique red edge $f$ such that $s(f)=Z''$ and $r(f)= Z$, and moreover $s(f)= Z''\subseteq Z_1 = r(g)$, so that $f\in R(g)$. Conversely any $f\in R(g)$ gives rise to a set $Z'':=s(f)\in \calP_{2j+2}$ with $Z''\subseteq \sigma^{-1}(Z)\cap Z_1$. This shows the result.

\item Take $j \geq 1$, $Z \in \calP_{2j-1}$, $Z'' \in \calP_{2j+1}$, and $Z_1 \in \calP_{2j-2}$ such that $Z'' \subseteq Z \subseteq \sigma(Z_1)$. Since $\calP_{2j}^{\sigma}$ is coarser than $\calP_{2j+1}$, we can find sets $Z'_1, Z'_2,\dots, Z'_k \in \calP_{2j}$ such that
$$Z'' \subseteq \sigma(Z'_1) \cap \cdots \cap \sigma(Z'_k) \setminus \calU_{\ge k+1}.$$
We have that $Z'' \subseteq \sigma(Z'_i)$ for $1 \leq i \leq k$ and $Z'' \cap \sigma(\ol{Z}) = \emptyset$ for all $\ol{Z} \in \calP_{2j}$ distinct from $Z'_1, \dots, Z'_k$. On the other hand note that, since $Z \subseteq \sigma(Z_1)$, there is a unique red edge $g$ such that $s(g) = Z$ and $r(g) = Z_1$.  By i), we have
$$(\sigma|_{Z_1})^{-1}(Z) = \sigma^{-1}(Z) \cap Z_1 =  \bigsqcup_{f \in R(g)} s(f).$$
Since $\sigma$ is injective on all sets of the partitions $\calP_n$, we obtain that
$$Z = \bigsqcup_{f \in R(g)} \sigma(s(f)).$$
Since $Z'' \subseteq Z$, it follows that $Z'' \cap \sigma(s(f)) \ne \emptyset$ for some $f \in R(g)$. But by the property mentioned above, this means that $s(f)$ is one of the sets $Z_1',\dots, Z_k'$ and thus $Z'' \subseteq \sigma(s(f))$. Since the sets $\sigma(s(f))$ are mutually disjoint for $f \in R(g)$, it follows that there is exactly one $f \in R(g)$ such that $Z'' \subseteq \sigma(s(f))$. Let $f_0$ be this unique edge in $R(g)$, and set $Z' := s(f_0)$. Since $f_0 \in R(g)$ we have $Z' = s(f_0) \subseteq r(g)= Z_1$ and moreover
$$Z'' \subseteq \sigma(s(f_0))= \sigma(Z') \subseteq Z \subseteq \sigma(Z_1),$$
as desired.
\end{enumerate}
\end{proof}

\begin{lemma}\label{lemma-FD.is.Ldiagram}
The separated graph $(F,D)$ associated to a $\sigma$-refined sequence of partitions $\{\calP_n\}_{n \geq 0}$ is a refined $l$-diagram.
\end{lemma}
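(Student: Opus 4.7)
The plan is to verify, one at a time, axioms (a)--(g) of Definition \ref{definition-L.separated.Bratteli} together with the refined condition of Definition \ref{definition-refined.Ldiagram} for the separated graph $(F,D)$ produced by Construction \ref{construction-l.diag}. The refined condition is immediate, since each red edge is labelled by its source and range $f(Z',Z)$. Conditions (a), (c) and (d) follow by unwinding the construction, using the refinements $\calP_n \precsim \calP_{n+1}$, $\sigma^{-1}(\calP_{2j+1}) \precsim \calP_{2j+2}$ and $\calP_{2j}^\sigma \precsim \calP_{2j+1}$ (from Definition \ref{definition-sigma.refined.partition}(e)) together with surjectivity of $\sigma$; the various disjointness and uniqueness clauses reduce to the fact that distinct elements of a partition are disjoint. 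The one non-routine fact I would isolate and reuse repeatedly is the following upgrade: if $Z \in \calP_{2j}$ and $Z' \in \calP_{2j+1}$ satisfy $Z' \cap \sigma(Z) \ne \emptyset$, then $Z' \subseteq \sigma(Z)$. Indeed, $\sigma(Z)$ decomposes as the disjoint union of the sets $\sigma(Z) \cap \sigma(Z_2) \cap \cdots \cap \sigma(Z_k) \setminus \calU_{\ge k+1}$ over $k \ge 1$ and $Z_2, \dots, Z_k \in \calP_{2j}$ distinct from $Z$, which realizes $\sigma(Z)$ as a disjoint union of elements of $\calP_{2j}^\sigma$, and hence, via $\calP_{2j}^\sigma \precsim \calP_{2j+1}$, as a disjoint union of elements of $\calP_{2j+1}$.

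With this upgrade in hand, the separation axiom (b) at odd-layer vertices reduces to the statement that each red edge $g = f(Z'',Z')$ into $Z'$ lies in exactly one $R(f(Z',Z))$: one takes $Z \in \calP_{2j}$ to be the unique element containing $Z''$, so that $\sigma(Z'') \subseteq Z' \cap \sigma(Z)$ and the upgrade gives $Z' \subseteq \sigma(Z)$; then $f(Z',Z)$ is a genuine red edge and $g \in R(f(Z',Z))$, while disjointness of distinct $R(f)$'s follows from disjointness of distinct elements of $\calP_{2j}$. The same upgrade reduces condition (e) to identifying both sides of the claimed equality with $\{Z'' \in \calP_{2j+2} : Z'' \subseteq Z\}$. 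For condition (f), given composable red edges $(f_0,f_1)$ with $r(f_0) = v \in \calP_{2j-1}$ and $s(f_1) = Z_{00} \in \calP_{2j+1}$, the required blue intermediate vertex $W \in \calP_{2j}$ is the unique element of $\calP_{2j}$ containing $Z_{00}$; the inclusion $W \subseteq v$ follows because $Z_{00} \subseteq \sigma(s(f_0)) \subseteq v$ together with $\calP_{2j-1} \precsim \calP_{2j}$ force $W \cap v \ne \emptyset$, hence $W \subseteq v$. Finally, condition (g) is obtained by applying Lemma \ref{lemma-group.of.f}(ii): given a red edge $g$ with $s(g) = v \in \calP_{2j-1}$ and $r(g) = Z_1 \in \calP_{2j-2}$, and composable blue edges $(e_0,e_1)$ with $s(e_1) = Z_{00} \in \calP_{2j+1}$, the inclusions $Z_{00} \subseteq v \subseteq \sigma(Z_1)$ allow the lemma to produce a unique $Z' \in \calP_{2j}$ with $Z' \subseteq Z_1$ and $Z_{00} \subseteq \sigma(Z') \subseteq v$; the sought red pair is then $(f(Z',v), f(Z_{00},Z'))$, and its uniqueness transfers from that of $Z'$.

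The main obstacle is the careful bookkeeping around the two sets of refinement relations in play (the plain $\calP_n \precsim \calP_{n+1}$ and the dynamical $\calP_{2j}\vee \calP_{2j}^\sigma \precsim \calP_{2j+1}$ and $\calP_{2j+1}\vee \sigma^{-1}(\calP_{2j+1}) \precsim \calP_{2j+2}$), together with the need to select the correct partition element at each step. Once the upgrade lemma above is isolated, the only genuinely non-mechanical ingredient is the application of Lemma \ref{lemma-group.of.f}(ii) in (g); the remainder is diagram-chasing against Construction \ref{construction-l.diag}.
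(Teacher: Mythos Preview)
Your proposal is correct and follows essentially the same route as the paper. The paper verifies (a)--(g) in order, invoking exactly the refinement relations you name, and in particular uses your ``upgrade'' (that $Z'\cap\sigma(Z)\ne\emptyset$ forces $Z'\subseteq\sigma(Z)$ via $\calP_{2j}^\sigma\precsim\calP_{2j+1}$) inside its proof of (e), and Lemma~\ref{lemma-group.of.f}(ii) for (g). Your only deviation is cosmetic: you explicitly isolate the upgrade as a reusable tool and verify more carefully that the sets $R(f)$ at odd-layer vertices genuinely partition $R_{Z'}$, whereas the paper bundles this under ``(a) and (b) follow by definition''; neither changes the substance of the argument.
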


\begin{proof} The condition of not having double red edges follows directly from the definition. We need to check conditions (a)-(g) from Definition \ref{definition-L.separated.Bratteli}. Conditions (a) and (b) follow by definition.
\begin{enumerate}[(a),start=3,leftmargin=0.6cm]
\item We first show that, for $j \geq 1$,
$$\bigsqcup_{Z \in \calP_{2j-1}} s(B_{Z}) \mathrel{\overset{\makebox[0pt]{\mbox{\normalfont\tiny\sffamily (c.1)}}}{=}} \calP_{2j} \mathrel{\overset{\makebox[0pt]{\mbox{\normalfont\tiny\sffamily (c.2)}}}{=}} \bigsqcup_{Z \in \calP_{2j-1}} s(R_{Z}).$$
We prove the non-trivial inclusion in {\normalfont\tiny\sffamily (c.1)}. Given $Z' \in \calP_{2j}$, we can find a (unique) $Z \in \calP_{2j-1}$ such that $Z' \subseteq Z$ since $\calP_{2j-1}$ is coarser than $\calP_{2j}$. Therefore there exists, by definition, a blue edge $e(Z',Z)$ with source $Z'$ and range $Z$. This proves {\normalfont\tiny\sffamily (c.1)}. Disjointness of the left union follows from the uniqueness of the set $Z$. Indeed, given $Z' \in s(B_{Z_1}) \cap s(B_{Z_2})$ with $Z_1 \neq Z_2 \in \calP_{2j-1}$, we have $Z' \subseteq Z_1 \cap Z_2 = \emptyset$, a contradiction.

Let us now prove the non-trivial inclusion in {\normalfont\tiny\sffamily (c.2)}. Since $\sigma^{-1}(\calP_{2j-1})$ is a partition coarser than $\calP_{2j}$, given $Z' \in \calP_{2j}$ we can find $Z \in \calP_{2j-1}$ such that $Z' \subseteq \sigma^{-1}(Z)$, equivalently $\sigma(Z') \subseteq Z$. This says that there is a red edge $f(Z',Z)$ with source $Z'$ and range $Z$. This proves the desired inclusion. The disjointness of the right union follows by analogous arguments as in the above paragraph.

The second part of condition (c) follows by the definition of the edges in $(F,D)$.

\item The fact that
$$\bigsqcup_{Z \in \calP_0} s(B_Z) = \calP_1 = \bigcup_{Z \in \calP_0} s(R_Z)$$
follows from identical arguments as in (c), using now the fact that the partition $\calP_0$ is coarser than the partition $\calP_1$. Disjointness of the left union is proved in a similar way as in (c).

The second part of condition (d) follows by the definition of the blue edges in $(F,D)$.

\item We need to show that, for $j \geq 0$ and $Z \in \calP_{2j}$,
$$\bigcup_{e \in B_Z} s(B_{s(e)}) = \bigcup_{f \in R_Z} s(R(f)).$$
Disjointness of the unions will follow from condition (c) (see Remark \ref{remark-about.def.of.Ldiagram} (3)).

Let $Z'' = s(e_1)$ for some $e_1 \in B_{s(e_0)}$ and some $e_0 \in B_Z$, so that $Z'' \subseteq s(e_0) \subseteq Z$. Since $\sigma^{-1}(\calP_{2j+1})$ is a coarser partition than $\calP_{2j+2}$ we can find $Z' \in \calP_{2j+1}$ such that $\sigma(Z'') \subseteq Z'$. Thus there exists a red edge $f_1 = f(Z'',Z') \in R_{Z'}$ such that $Z'' = s(f_1)$. Note now that $Z' \cap \sigma(Z) \neq \emptyset$, so by the construction of the partition $\calP_{2j}^{\sigma}$ and the fact that $\calP_{2j}^{\sigma}$ is coarser than $\calP_{2j+1}$, we necessarily have $Z' \subseteq \sigma(Z)$. Therefore there exists another red edge $f_0 = f(Z',Z) \in R_{Z}$ with $Z' = s(f_0)$. Since $s(f_1) = Z'' \subseteq Z = r(f_0)$, we have $f_1 \in R(f_0)$. This proves the inclusion $\subseteq$.

For the other inclusion, let $Z'' = s(f_1)$ for some $f_1 \in R(f_0)$ and some $f_0 \in R_Z$, so that $Z'' = s(f_1) \subseteq r(f_0) = Z$. Since $\calP_{2j+1}$ is coarser than $\calP_{2j+2}$, we can find $Z' \in \calP_{2j+1}$ with $Z'' \subseteq Z'$. But then $Z' \cap Z \neq \emptyset$, so we necessarily have that $Z' \subseteq Z$ by the fact that $\calP_{2j}$ is coarser than $\calP_{2j+1}$. This means precisely that there exist blue edges $e_0 \in B_Z$ with $Z' = s(e_0)$ and $e_1 \in B_{s(e_0)}$ with $Z'' = s(e_1)$. This concludes the proof of the desired inclusion.

\item Take $j \geq 1$, $Z \in \calP_{2j-1}$ and a pair $(f_0,f_1)$ of red edges such that $f_0 \in R_Z, f_1 \in R_{s(f_0)}$. By letting $Z'' := s(f_1)$, we have by definition $Z'' \subseteq \sigma(\ol{Z}') \subseteq Z$ where $\ol{Z}' := s(f_0) = r(f_1)$. Since $\calP_{2j}$ is coarser than $\calP_{2j+1}$, we can find $Z' \in \calP_{2j}$ with $Z'' \subseteq Z'$. But then $Z' \cap Z \neq \emptyset$, so necessarily $Z' \subseteq Z$ since $\calP_{2j-1}$ is coarser than $\calP_{2j}$. This means precisely that there exist blue edges $e_0 \in B_Z$ with $Z' = s(e_0)$ and $e_1 \in B_{s(e_0)}$ with $Z'' = s(e_1)$. The pair of blue edges $(e_0,e_1)$ is unique by Remark \ref{remark-about.def.of.Ldiagram} (4). This shows condition (f).

\item Take $j \geq 1$, $Z \in \calP_{2j-1}$, a red edge $g \in R_{r(g)}$ with $Z = s(g)$ and $Z_1 := r(g)$, and a pair $(e_0,e_1)$ of blue edges such that $e_0 \in B_Z$ and $s(e_0) = r(e_1)$. By letting $Z'' := s(e_1)$, we have by definition $Z'' \subseteq \ol{Z}' \subseteq Z$ where $\ol{Z}' := s(e_0) = r(e_1)$. We obtain the chain
$$Z'' \subseteq \ol{Z}' \subseteq Z \subseteq \sigma (Z_1).$$
By Lemma \ref{lemma-group.of.f} (ii), there is a unique $Z' \in \calP_{2j}$ such that $Z' \subseteq Z_1$ and
$$Z'' \subseteq \sigma(Z') \subseteq Z \subseteq \sigma(Z_1).$$
Let $f_0$ be the red edge such that $Z' = s(f_0)$ and $Z = r(f_0)$, and observe that $f_0 \in R(g)$ because $s(f_0) \subseteq r(g)$. We then have $Z'' \subseteq \sigma (s(f_0))$, and this means that there is a unique red edge $f_1$ such that $Z'' = s(f_1)$ and $Z' = r(f_1) = s(f_0)$. We have that $r(f_0) = Z = r(e_0)$ and $s(f_1) = Z'' = s(e_1)$. Therefore we have proved that there is a unique pair $(f_0,f_1)$ of red edges such that $f_0 \in R(g)$, $r(f_0) = r(e_0)$, $s(f_0) = r(f_1)$ and $s(f_1) = s(e_1)$, as required.

\end{enumerate}
This proves the lemma.
\end{proof}

\begin{examples}\label{examples-cantor.2}
This is a continuation of Examples \ref{examples-cantor.1}.

\begin{enumerate}[1., leftmargin=0.7cm]
\item Take $X = \{0,1\}^{\Z}$ and $\varphi \colon X \to X$ to be the two-sided shift. Using the $\varphi$-refined sequence of partitions $\{\calP_n\}_{n \geq 0}$ given in Examples \ref{examples-cantor.1} 1), we obtain the $l$-diagram shown in Figure \ref{figure-example.cantor.1}.

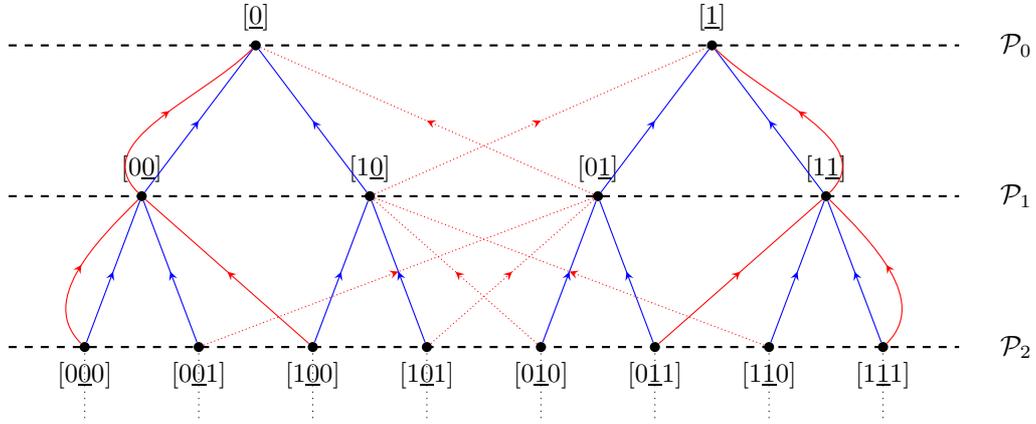
\begin{figure}[ht]
\begin{tikzpicture}
	\draw[black,thick,dashed] (-6.25,3) -- (6.25,3);
	\draw[black,thick,dashed] (-6.25,1) -- (6.25,1);
	\draw[black,thick,dashed] (-6.25,-1) -- (6.25,-1);
	\node[circle,fill=black,scale=0.4,label={[xshift=0.0cm, yshift=0.0cm]$[\underline{0}]$}] (0) at (-3,3) {};
	\node[circle,fill=black,scale=0.4,label={[xshift=0.0cm, yshift=0.0cm]$[\underline{1}]$}] (1) at (3,3) {};
	\node[circle,fill=black,scale=0.4,label={[xshift=0.0cm, yshift=0.0cm]$[0\underline{0}]$}] (00) at (-4.5,1) {};
	\node[circle,fill=black,scale=0.4,label={[xshift=0.0cm, yshift=0.0cm]$[1\underline{0}]$}] (01) at (-1.5,1) {};
	\node[circle,fill=black,scale=0.4,label={[xshift=0.0cm, yshift=0.0cm]$[0\underline{1}]$}] (10) at (1.5,1) {};
	\node[circle,fill=black,scale=0.4,label={[xshift=0.0cm, yshift=0.0cm]$[1\underline{1}]$}] (11) at (4.5,1) {};
	\node[circle,fill=black,scale=0.4,label=below:{$[0\underline{0}0]$}] (000) at (-5.25,-1) {};
	\node[circle,fill=black,scale=0.4,label=below:{$[0\underline{0}1]$}] (100) at (-3.75,-1) {};
	\node[circle,fill=black,scale=0.4,label=below:{$[1\underline{0}0]$}] (001) at (-2.25,-1) {};
	\node[circle,fill=black,scale=0.4,label=below:{$[1\underline{0}1]$}] (101) at (-0.75,-1) {};
	\node[circle,fill=black,scale=0.4,label=below:{$[0\underline{1}0]$}] (010) at (0.75,-1) {};
	\node[circle,fill=black,scale=0.4,label=below:{$[0\underline{1}1]$}] (110) at (2.25,-1) {};
	\node[circle,fill=black,scale=0.4,label=below:{$[1\underline{1}0]$}] (011) at (3.75,-1) {};
	\node[circle,fill=black,scale=0.4,label=below:{$[1\underline{1}1]$}] (111) at (5.25,-1) {};
	%blue edges
	\draw[-,blue,postaction={on each segment={mid arrow=blue}}] (00) to (0);
	\draw[-,blue,postaction={on each segment={mid arrow=blue}}] (01) to (0);
	\draw[-,blue,postaction={on each segment={mid arrow=blue}}] (10) to (1);
	\draw[-,blue,postaction={on each segment={mid arrow=blue}}] (11) to (1);
	\draw[-,blue,postaction={on each segment={mid arrow=blue}}] (000) to (00);
	\draw[-,blue,postaction={on each segment={mid arrow=blue}}] (100) to (00);
	\draw[-,blue,postaction={on each segment={mid arrow=blue}}] (001) to (01);
	\draw[-,blue,postaction={on each segment={mid arrow=blue}}] (101) to (01);
	\draw[-,blue,postaction={on each segment={mid arrow=blue}}] (010) to (10);
	\draw[-,blue,postaction={on each segment={mid arrow=blue}}] (110) to (10);
	\draw[-,blue,postaction={on each segment={mid arrow=blue}}] (011) to (11);
	\draw[-,blue,postaction={on each segment={mid arrow=blue}}] (111) to (11);
	%red edges
	\draw[-,red,postaction={on each segment={mid arrow=red}}] (00) to [out=135,in=225] (0);
	\draw[densely dotted,red,postaction={on each segment={mid arrow=red}}] (10) to (0);
	\draw[densely dotted,red,postaction={on each segment={mid arrow=red}}] (01) to (1);
	\draw[-,red,postaction={on each segment={mid arrow=red}}] (11) to [out=45,in=315] (1);
	\draw[-,red,postaction={on each segment={mid arrow=red}}] (000) to [out=135,in=225] (00);
	\draw[-,red,postaction={on each segment={mid arrow=red}}] (001) to (00);
	\draw[densely dotted,red,postaction={on each segment={mid arrow=red}}] (010) to (01);
	\draw[densely dotted,red,postaction={on each segment={mid arrow=red}}] (011) to (01);
	\draw[densely dotted,red,postaction={on each segment={mid arrow=red}}] (100) to (10);
	\draw[densely dotted,red,postaction={on each segment={mid arrow=red}}] (101) to (10);
	\draw[-,red,postaction={on each segment={mid arrow=red}}] (110) to (11);
	\draw[-,red,postaction={on each segment={mid arrow=red}}] (111) to [out=45,in=315] (11);
	\node at (7,3){$\calP_0$};
	\node at (7,1){$\calP_1$};
	\node at (7,-1){$\calP_2$};
	\draw[black,dotted] (-5.25,-1) -- (-5.25,-2);
	\draw[black,dotted] (-3.75,-1) -- (-3.75,-2);
	\draw[black,dotted] (-2.25,-1) -- (-2.25,-2);
	\draw[black,dotted] (-0.75,-1) -- (-0.75,-2);
	\draw[black,dotted] (0.75,-1) -- (0.75,-2);
	\draw[black,dotted] (2.25,-1) -- (2.25,-2);
	\draw[black,dotted] (3.75,-1) -- (3.75,-2);
	\draw[black,dotted] (5.25,-1) -- (5.25,-2);
\end{tikzpicture}
\caption{First two levels of the $l$-diagram associated with $(X,\varphi)$.}
\label{figure-example.cantor.1}
\end{figure}

\item Take $X^+ = \{0,1\}^{\N_0}$ and $\sigma \colon X^+ \to X^+$ to be the one-sided shift. Using the $\sigma$-refined sequence of partitions $\{\calP_n\}_{n \geq 0}$ given in Examples \ref{examples-cantor.1} 2), we obtain the $l$-diagram shown in Figure \ref{figure-example.cantor.2}.

\begin{figure}[ht]
\begin{tikzpicture}
	\draw[black,thick,dashed] (-6.25,3) -- (6.25,3);
	\draw[black,thick,dashed] (-6.25,1) -- (6.25,1);
	\draw[black,thick,dashed] (-6.25,-1) -- (6.25,-1);
	\node[circle,fill=black,scale=0.4,label={[xshift=0.0cm, yshift=0.0cm]$[\underline{0}]$}] (0) at (-3,3) {};
	\node[circle,fill=black,scale=0.4,label={[xshift=0.0cm, yshift=0.0cm]$[\underline{1}]$}] (1) at (3,3) {};
	\node[circle,fill=black,scale=0.4,label={[xshift=0.0cm, yshift=0.0cm]$[\underline{0}0]$}] (00) at (-4.5,1) {};
	\node[circle,fill=black,scale=0.4,label={[xshift=0.0cm, yshift=0.0cm]$[\underline{0}1]$}] (01) at (-1.5,1) {};
	\node[circle,fill=black,scale=0.4,label={[xshift=0.0cm, yshift=0.0cm]$[\underline{1}0]$}] (10) at (1.5,1) {};
	\node[circle,fill=black,scale=0.4,label={[xshift=0.0cm, yshift=0.0cm]$[\underline{1}1]$}] (11) at (4.5,1) {};
	\node[circle,fill=black,scale=0.4,label=below:{$[\underline{0}00]$}] (000) at (-5.25,-1) {};
	\node[circle,fill=black,scale=0.4,label=below:{$[\underline{0}01]$}] (001) at (-3.75,-1) {};
	\node[circle,fill=black,scale=0.4,label=below:{$[\underline{0}10]$}] (010) at (-2.25,-1) {};
	\node[circle,fill=black,scale=0.4,label=below:{$[\underline{0}11]$}] (011) at (-0.75,-1) {};
	\node[circle,fill=black,scale=0.4,label=below:{$[\underline{1}00]$}] (100) at (0.75,-1) {};
	\node[circle,fill=black,scale=0.4,label=below:{$[\underline{1}01]$}] (101) at (2.25,-1) {};
	\node[circle,fill=black,scale=0.4,label=below:{$[\underline{1}10]$}] (110) at (3.75,-1) {};
	\node[circle,fill=black,scale=0.4,label=below:{$[\underline{1}11]$}] (111) at (5.25,-1) {};
	%blue edges
	\draw[-,blue,postaction={on each segment={mid arrow=blue}}] (00) to (0);
	\draw[-,blue,postaction={on each segment={mid arrow=blue}}] (01) to (0);
	\draw[-,blue,postaction={on each segment={mid arrow=blue}}] (10) to (1);
	\draw[-,blue,postaction={on each segment={mid arrow=blue}}] (11) to (1);
	\draw[-,blue,postaction={on each segment={mid arrow=blue}}] (000) to (00);
	\draw[-,blue,postaction={on each segment={mid arrow=blue}}] (001) to (00);
	\draw[-,blue,postaction={on each segment={mid arrow=blue}}] (010) to (01);
	\draw[-,blue,postaction={on each segment={mid arrow=blue}}] (011) to (01);
	\draw[-,blue,postaction={on each segment={mid arrow=blue}}] (100) to (10);
	\draw[-,blue,postaction={on each segment={mid arrow=blue}}] (101) to (10);
	\draw[-,blue,postaction={on each segment={mid arrow=blue}}] (110) to (11);
	\draw[-,blue,postaction={on each segment={mid arrow=blue}}] (111) to (11);
	%red edges
	\draw[-,red,postaction={on each segment={mid arrow=red}}] (00) to [out=135,in=225] (0);
	\draw[densely dotted,red,postaction={on each segment={mid arrow=red}}] (00) to (1);
	\draw[-,red,postaction={on each segment={mid arrow=red}}] (01) to [out=160,in=270] (0);
	\draw[densely dotted,red,postaction={on each segment={mid arrow=red}}] (01) to (1);
	\draw[-,red,postaction={on each segment={mid arrow=red}}] (10) to (0);
	\draw[densely dotted,red,postaction={on each segment={mid arrow=red}}] (10) to [out=20,in=270] (1);
	\draw[-,red,postaction={on each segment={mid arrow=red}}] (11) to (0);
	\draw[densely dotted,red,postaction={on each segment={mid arrow=red}}] (11) to [out=45,in=315] (1);
	\draw[-,red,postaction={on each segment={mid arrow=red}}] (000) to [out=135,in=225] (00);
	\draw[-,red,postaction={on each segment={mid arrow=red}}] (001) to (01);
	\draw[-,red,postaction={on each segment={mid arrow=red}}] (010) to (10);
	\draw[-,red,postaction={on each segment={mid arrow=red}}] (011) to (11);
	\draw[densely dotted,red,postaction={on each segment={mid arrow=red}}] (100) to (00);	
	\draw[densely dotted,red,postaction={on each segment={mid arrow=red}}] (101) to (01);
	\draw[densely dotted,red,postaction={on each segment={mid arrow=red}}] (110) to (10);
	\draw[densely dotted,red,postaction={on each segment={mid arrow=red}}] (111) to [out=45,in=360-45] (11);
	\node at (7,3){$\calP_0$};
	\node at (7,1){$\calP_1$};
	\node at (7,-1){$\calP_2$};
	\draw[black,dotted] (-5.25,-1) -- (-5.25,-2);
	\draw[black,dotted] (-3.75,-1) -- (-3.75,-2);
	\draw[black,dotted] (-2.25,-1) -- (-2.25,-2);
	\draw[black,dotted] (-0.75,-1) -- (-0.75,-2);
	\draw[black,dotted] (0.75,-1) -- (0.75,-2);
	\draw[black,dotted] (2.25,-1) -- (2.25,-2);
	\draw[black,dotted] (3.75,-1) -- (3.75,-2);
	\draw[black,dotted] (5.25,-1) -- (5.25,-2);
\end{tikzpicture}
\caption{First two levels of the $l$-diagram associated with $(X^+,\sigma)$.}
\label{figure-example.cantor.2}
\end{figure}
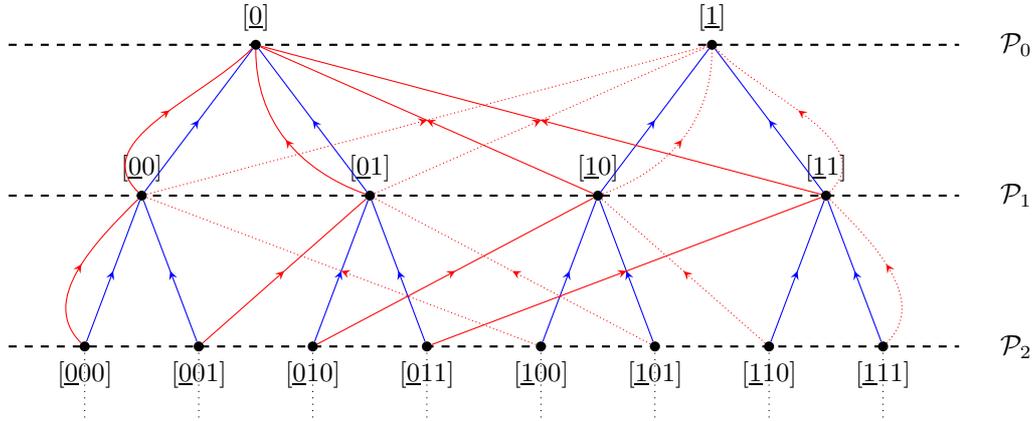

\end{enumerate}
\end{examples}

We now proceed by showing that the equivalence class, under $\sim_l$, of the $l$-diagram constructed from $(X,\sigma)$ does not depend on the choice of the $\sigma$-refined sequence of partitions $\{\calP_n\}_{n \geq 0}$. First, a proposition.

\begin{proposition}\label{proposition-iso.telescoping.Ldiagram}
Let $\{\calP_n\}_{n \geq 0}$ be a $\sigma$-refined sequence of partitions of $X$. Let $(F,D)$ be the $l$-diagram obtained by applying the construction \ref{construction-l.diag}. Let also $(m_n)_{n \geq 0}$ be a sequence in $\N \cup \{0\}$ with $0 \leq m_0 < m_1 < \cdots$, $m_0$ even and satisfying \emph{(CR)}, and let
\begin{enumerate}[$(I)$,leftmargin=0.7cm]
\item $(F',D')$ be the $l$-diagram obtained by telescoping $(F,D)$ with respect to $(m_n)_{n \geq 0}$;
\item $(F'',D'')$ be the $l$-diagram obtained from the $\sigma$-refined sequence of partitions $\{\calP_{m_n}\}_{n \geq 0}$ by applying the construction \ref{construction-l.diag}.
\end{enumerate}
Then $(F',D') \cong (F'',D'')$ as $l$-diagrams.
\end{proposition}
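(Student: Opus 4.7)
The strategy is to construct an isomorphism $\phi : (F'',D'') \to (F',D')$ of $l$-diagrams. As a preliminary step, one verifies that $\{\calP_{m_n}\}_{n \geq 0}$ is itself a $\sigma$-refined sequence of partitions: the parity condition (CR) together with $m_0$ even forces $m_n$ to have the same parity as $n$, and the compatibilities required by Definition \ref{definition-sigma.refined.partition}(e) follow from the inclusions $\calP_{m_{2j}}^\sigma \precsim \calP_{m_{2j}+1} \precsim \calP_{m_{2j+1}}$ and $\sigma^{-1}(\calP_{m_{2j+1}}) \precsim \calP_{m_{2j+1}+1} \precsim \calP_{m_{2j+2}}$, obtained by applying condition (e) of the original sequence at the indices $m_{2j}$ and $m_{2j+1}$. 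Since $(F'')^{0,n} = \calP_{m_n} = F^{0,m_n} = (F')^{0,n}$, I take $\phi_0$ to be the identity on vertices. On blue edges, $\phi_1$ is forced by nestedness: the blue edge $e(Z'',Z')$ in $(F'')^{1,n}$ is sent to the unique blue path $(e(W_{i+1},W_i))_{m_n \le i < m_{n+1}}$ in $F$, where $W_i$ is the unique element of $\calP_i$ containing $Z''$; bijectivity onto $P^B_{m_n,m_{n+1}}$ is immediate.

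The core of the argument is the construction of $\phi_1$ on red edges together with its bijectivity. Fix a red edge $f(Z_b,Z_a)$ in $(F'')^{1,2j}$, write $m_{2j+1}-m_{2j}=2k+1$, and set $W_0 := Z_a$, $W_{2k+1} := Z_b$. The constraints imposed by membership in $P^R_{m_{2j},m_{2j+1}}$, together with the compatibility $W_{i+2} \subseteq W_i$ whenever $f_i$ sits at an even layer, force the nested chain $Z_b \subseteq W_{2k-1} \subseteq \cdots \subseteq W_1 \subseteq \sigma(Z_a)$, so each odd-indexed $W_{2i+1}$ must be the unique element of $\calP_{m_{2j}+2i+1}$ containing $Z_b$. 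The remaining even-indexed $W_{2i}$ for $1 \le i \le k$ are then produced and shown to be unique by applying Lemma \ref{lemma-group.of.f}(ii) iteratively to the triples $(W_{2i-2},W_{2i-1},W_{2i+1})$, which simultaneously yields existence and uniqueness. An analogous argument, with the parities of the constraints swapped and the intermediate vertices now characterized either as unique sets of the relevant $\calP_m$ containing $Z_c$ or as unique sets of $\calP_m$ containing a $\sigma$-image of a later vertex (using condition (e) of the $\sigma$-refined sequence at the appropriate indices), handles red edges at odd layers of $F''$.

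Finally, $\phi$ must preserve the separations. Color preservation is clear, so the nontrivial verification is that $g \in R''(f) \iff \phi_1(g) \in R'(\phi_1(f))$ for a red edge $f$ from $Z_b$ to $Z_a$ in $(F'')^{1,2j}$ and a red edge $g$ from $Z_c$ to $Z_b$ in $(F'')^{1,2j+1}$. Unwinding the definitions, the left-hand side reads $Z_c \subseteq Z_a$, while the right-hand side reads $W^g_1 \subseteq W^f$, where $W^g_1 \in \calP_{m_{2j+1}+1}$ is the first intermediate vertex of $\phi_1(g)$ (the unique set containing $Z_c$) and $W^f \in \calP_{m_{2j+1}-1}$ is the last intermediate vertex of $\phi_1(f)$ (characterized by $W^f \subseteq Z_a$ and $Z_b \subseteq \sigma(W^f)$). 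The forward implication is clear, and the main obstacle is the converse: assuming $Z_c \subseteq Z_a$, let $W^c \in \calP_{m_{2j+1}-1}$ be the unique set containing $Z_c$, so $W^c \subseteq Z_a$; if $W^c \ne W^f$ they are disjoint, and since $\sigma|_{Z_a}$ is injective (by Definition \ref{definition-sigma.refined.partition}(a)--(b)) the $\sigma$-images $\sigma(W^c)$ and $\sigma(W^f)$ are also disjoint, contradicting the non-empty inclusion $\sigma(Z_c) \subseteq \sigma(W^c) \cap Z_b \subseteq \sigma(W^c) \cap \sigma(W^f)$. Thus $W^c = W^f$, and $W^g_1 \subseteq W^c = W^f$ by refinement.
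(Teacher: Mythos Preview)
Your proof is correct and follows essentially the same approach as the paper: identify vertices, build the blue-edge bijection by nestedness, and build the red-edge bijections by determining the intermediate vertices of a red path from its endpoints via a combination of nestedness and Lemma~\ref{lemma-group.of.f}(ii). The only organizational difference is in the odd-layer step: the paper constructs the inverse there \emph{with} the data of the preceding edge $g$ (the set $\wt{Z}_1$) baked in, so that the $R'(g)\leftrightarrow R''(\ol g)$ compatibility is obtained along the way, whereas you build the odd-layer bijection independently and then verify the separation compatibility in a separate final step. Your injectivity argument for that final step (using $\sigma|_{Z_a}$ injective to force $W^c=W^f$) is a clean replacement for the paper's more diffuse verification, and in fact makes explicit a point the paper leaves to the reader.
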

\begin{proof} First of all, note that $(F,D)$ and $(F',D')$ make sense by Lemmas \ref{lemma-FD.is.Ldiagram} and \ref{lemma-F'D'.L.diagram} respectively. Also $(F'',D'')$ makes sense by Lemma \ref{lemma-FD.is.Ldiagram} since the sequence $(m_n)_{n \geq 0}$ is strictly increasing, and thus $\{\calP_{m_n}\}_{n \geq 0}$ is indeed a $\sigma$-refined sequence of partitions.

Clearly the vertex sets for $(F',D')$ and $(F'',D'')$ are the same, since $(F')^{0,n} = F^{0,m_n} = \calP_{m_n} = (F'')^{0,n}$. We now define a bijective correspondence between $(F')^{1,n} = P^B_{m_n,m_{n+1}} \sqcup P^R_{m_n,m_{n+1}}$ and $(F'')^{1,n} = B''^{(n)} \sqcup R''^{(n)}$. 

\begin{enumerate}[(i),leftmargin=0.6cm]
\item Take $e = (e_{m_n},\dots,e_{m_{n+1}-1}) \in P^B_{m_n,m_{n+1}}$. Put $Z = r(e) = r(e_{m_n}) \in \calP_{m_n}$ and $Z' = s(e) = s(e_{m_{n+1}-1}) \in \calP_{m_{n+1}}$. By construction, there exists a sequence $(Z_0,Z_1,\dots,Z_{m_{n+1}-m_n})$ of clopen sets, with $Z_i \in \calP_{m_{n+1}-i}$, satisfying
$$Z' = Z_0 \subseteq Z_1 \subseteq \cdots \subseteq Z_{m_{n+1}-m_n-1} \subseteq Z_{m_{n+1}-m_n} = Z,$$
and also $s(e_{m_{n+1}-i}) = Z_{i-1}$, $r(e_{m_{n+1}-i}) = Z_i$ for each $1 \leq i \leq m_{n+1}-m_n$. Therefore $Z' \subseteq Z$, so $e$ corresponds to a \textit{unique} blue edge $\ol{e} \in (F'')^{1,n}$ with source $Z'$ and range $Z$. This defines an injective map
$$P_{m_n,m_{n+1}}^B \ra B''^{(n)}, \quad e \mapsto \ol{e}.$$
To show bijectivity of this map, take $\ol{e} \in B''^{(n)}$ with source $Z' \in \calP_{m_{n+1}}$ and range $Z \in \calP_{m_n}$. Since $\calP_m$ is coarser than $\calP_{m+1}$ for all $m \geq 0$, there exists a \textit{unique} sequence $(Z_0,\dots,Z_{m_{n+1}-m_n})$ of clopen sets, with $Z_i \in \calP_{m_{n+1}-i}$, such that
$$Z' = Z_0 \subseteq Z_1 \subseteq \cdots \subseteq Z_{m_{n+1}-m_n-1} \subseteq Z_{m_{n+1}-m_n} = Z.$$
This corresponds to a \textit{unique} blue path $e = (e_{m_n},\dots,e_{m_{n+1}-1})$ with $r(e_{m_{n+1}-i}) = Z_i = s(e_{m_{n+1}-i-1})$ for all $1 \leq i \leq m_{n+1}-m_n-1$, and $Z = r(e), Z' = s(e)$. Therefore $e \in P^B_{m_n,m_{n+1}}$. It is clear that $e$ maps to $\ol{e}$ under the above map $P^B_{m_n,m_{n+1}} \ra B''^{(n)}$. This proves the desired bijectivity.

\item We now deal with the red edges for the even layers, so $n = 2j$ for $j \geq 0$. Note that in this case $m_n$ is also even. Take $f = (f_{m_{2j}},\dots,f_{m_{2j+1}-1}) \in P^R_{m_{2j},m_{2j+1}}$. Put $Z = r(f) = r(f_{m_{2j}}) \in \calP_{m_{2j}}$ and $Z' = s(f) = s(f_{m_{2j+1}-1}) \in \calP_{m_{2j+1}}$. By construction, there exists a sequence $(Z_0,Z_1,\dots,Z_{m_{2j+1}-m_{2j}})$ of clopen sets, with $Z_i \in \calP_{m_{2j+1}-i}$, satisfying
$$Z' = Z_0 \subseteq \sigma(Z_1) \subseteq Z_2 \subseteq \cdots \subseteq Z_{m_{2j+1}-m_{2j}-1} \subseteq \sigma(Z_{m_{2j+1}-m_{2j}}) = \sigma(Z),$$
$s(f_{m_{2j+1}-i}) = Z_{i-1}$ and $r(f_{m_{2j+1}-i}) = Z_i$ for each $1 \leq i \leq m_{2j+1}-m_{2j}$, and also
$$Z_1 \subseteq Z_3 \subseteq \cdots \subseteq Z_{m_{2j+1}-m_{2j}-2} \subseteq Z_{m_{2j+1}-m_{2j}} = Z.$$
Therefore $Z' \subseteq \sigma(Z)$, so $f$ corresponds to a \textit{unique} red edge $\ol{f} \in (F'')^{1,2j}$ with source $Z'$ and range $Z$. This defines an injective map
$$P_{m_{2j},m_{2j+1}}^R \ra R''^{(2j)}, \quad f \mapsto \ol{f}.$$
We now proceed to show its bijectivity. Take $\ol{f} \in R''^{(2j)}$ with source $Z' \in \calP_{m_{2j+1}}$ and range $Z \in \calP_{m_{2j}}$.

\begin{claim*}
There exists a \textit{unique} sequence $(Z_0,\dots,Z_{m_{2j+1}-m_{2j}})$ of clopen sets $Z_i \in \calP_{m_{2j+1}-i}$ such that
$$Z' = Z_0 \subseteq \sigma(Z_1) \subseteq Z_2 \subseteq \cdots \subseteq Z_{m_{2j+1}-m_{2j}-1} \subseteq \sigma(Z_{m_{2j+1}-m_{2j}}) = \sigma(Z)$$
and
$$Z_1 \subseteq Z_3 \subseteq \cdots \subseteq Z_{m_{2j+1}-m_{2j}-2} \subseteq Z_{m_{2j+1}-m_{2j}} = Z.$$
\end{claim*}
\begin{proof}
First we set $Z_0:= Z'$ and $Z_{m_{2j+1}-m_{2j}}:=Z$. Now since the partition $\calP_{m}$ is coarser than the partition $\calP_{m+1}$ for all $m \geq 0$, we see that there are unique $Z_2 \in \calP_{m_{2j+1}-2}$, $Z_4 \in \calP_{m_{2j+1}-4}, \dots, Z_{m_{2j+1}-m_{2j}-1} \in \calP_{m_{2j}+1}$ with 
$$Z' = Z_{0}\subseteq Z_2 \subseteq Z_4 \subseteq \cdots \subseteq Z_{m_{2j+1}-m_{2j}-1}.$$
Since $Z' \subseteq \sigma(Z)$ and $\calP_{m_{2j}}^{\sigma}$ is coarser than $\calP_{m_{2j}+1}$, it follows that $Z_{m_{2j+1}-m_{2j}-1} \subseteq \sigma(Z)$. Therefore we have the inclusions
$$Z_{m_{2j+1}-m_{2j}-3} \subseteq Z_{m_{2j+1}-m_{2j}-1} \subseteq \sigma(Z).$$
By Lemma \ref{lemma-group.of.f} (ii), there is a unique $Z_{m_{2j+1}-m_{2j}-2} \in \calP_{m_{2j}+2}$ such that $Z_{m_{2j+1}-m_{2j}-2} \subseteq Z$ and
$$Z_{m_{2j+1}-m_{2j}-3} \subseteq \sigma(Z_{m_{2j+1}-m_{2j}-2}) \subseteq Z_{m_{2j+1}-m_{2j}-1} \subseteq \sigma(Z_{m_{2j+1}-m_{2j}}) = \sigma(Z).$$
Iteration of this argument produces the desired sequence $(Z_0,\dots,Z_{m_{2j+1}-m_{2j}})$. This proves the claim.
\end{proof}

The sequence just constructed corresponds to a \textit{unique} red path $f = (f_{m_{2j}},\dots,f_{m_{2j+1}-1})$ with $r(f_{m_{2j+1}-i}) = Z_i = s(f_{m_{2j+1}-i-1})$ for all $1 \leq i \leq m_{2j+1} - m_{2j} - 1$, and $Z = r(f), Z' = s(f)$. Therefore $f \in P^R_{m_{2j},m_{2j+1}}$. It is clear that $f$ maps to $\ol{f}$ under the above map $P_{m_{2j},m_{2j+1}}^R \ra R''^{(2j)}$. Bijectivity now follows.

\item Now for the red edges at the odd layers, so $n = 2j-1$ for $j \geq 1$. Note that in this case $m_n$ is also odd. Take $f = (f_{m_{2j-1}},\dots,f_{m_{2j}-1}) \in P^R_{m_{2j-1},m_{2j}}$. Put $Z = r(f) = r(f_{m_{2j-1}}) \in \calP_{m_{2j-1}}$ and $Z' = s(f) = s(f_{m_{2j}-1}) \in \calP_{m_{2j}}$. Take also $g = (g_{m_{2j-2}},\dots,g_{m_{2j-1}-1}) \in P^R_{m_{2j-2},m_{2j-1}}$ to be such that $f \in R'(g)$. By construction, there exists a sequence $(Z_0,Z_1,\dots,Z_{m_{2j}-m_{2j-1}})$ of clopen sets, with $Z_i \in \calP_{m_{2j}-i}$, satisfying
$$\sigma(Z') = \sigma(Z_0) \subseteq Z_1 \subseteq \sigma(Z_2) \subseteq \cdots \subseteq \sigma(Z_{m_{2j}-m_{2j-1}-1}) \subseteq Z_{m_{2j}-m_{2j-1}} = Z,$$
$s(f_{m_{2j}-i}) = Z_{i-1}$ and $r(f_{m_{2j}-i}) = Z_i$ for each $1 \leq i \leq m_{2j}-m_{2j-1}$, and also
$$Z' = Z_0 \subseteq Z_2 \subseteq Z_4 \subseteq \cdots \subseteq Z_{m_{2j}-m_{2j-1}-1}.$$
Therefore $\sigma(Z') \subseteq Z$, so $f$ corresponds to a \textit{unique} red edge $\ol{f} \in (F'')^{1,2j-1}$ with source $Z'$ and range $Z$. This defines an injective map
$$P_{m_{2j-1},m_{2j}}^R \ra R''^{(2j-1)}, \quad f \mapsto \ol{f}.$$
Moreover, since $g \in P^R_{m_{2j-2},m_{2j-1}}$, there exists a sequence $(\wt{Z}_0, \wt{Z}_1,\dots , \wt{Z}_{m_{2j-1}-m_{2j-2}})$ of clopen sets $\wt{Z}_i \in \calP_{m_{2j-1}-i}$ such that
$$Z = \wt{Z}_0 \subseteq \sigma(\wt{Z}_1) \subseteq \wt{Z}_2 \subseteq \cdots \subseteq \wt{Z}_{m_{2j-1}-m_{2j-2}-1} \subseteq \sigma(\wt{Z}_{m_{2j-1}-m_{2j-2}}) = \sigma(\wt{Z})$$
and
$$\wt{Z}_1 \subseteq \wt{Z}_3 \subseteq \cdots \subseteq \wt{Z}_{m_{2j-1}-m_{2j-2}} = \wt{Z},$$
where $Z = s(g)$, $\wt{Z} = r(g)$, and $s(g_{m_{2j-1}-i}) = \wt{Z}_{i-1}$, $r(g_{m_{2j-1}-i}) = \wt{Z}_i$ for each $1 \leq i \leq m_{2j-1}-m_{2j-2}$. Since $f \in R'(g)$, it follows by the definition of $R'(g)$ (Definition \ref{definition-equiv.telesc.l.diagrams} (d)) that $f_{m_{2j-1}} \in R(g_{m_{2j-1}-1})$, and so
$$Z_{m_{2j}-m_{2j-1}-1} = s(f_{m_{2j-1}}) \subseteq r(g_{m_{2j-1}-1}) = \wt{Z}_1.$$
Combining this with the previous inclusions we obtain
$$s(\ol{f}) = Z' =Z_0 \subseteq Z_2 \subseteq \cdots \subseteq Z_{m_{2j}-m_{2j-1}-1} \subseteq \wt{Z}_{1} \subseteq \cdots \subseteq \wt{Z} = r(\ol{g}),$$
where $\ol{g}$ is the unique red edge in $(F'')^{1,2j-2}$ constructed from $g$ (see (ii) above). This shows that $\ol{f}\in R''(\ol{g})$ in the $l$-diagram $(F'',D'')$.

We now show bijectivity of the map $P_{m_{2j-1},m_{2j}}^R \ra R''^{(2j-1)}$. Take $\ol{f} \in R''^{(2j-1)}$ with source $Z' \in \calP_{m_{2j}}$ and range $Z \in \calP_{m_{2j-1}}$. Take also $\ol{g} \in R''^{(2j-2)}$ such that $\ol{f} \in R''(\ol{g})$. Since $m_{2j-2}$ is even, we know from (ii) above that there is a unique sequence $(\wt{Z}_0, \wt{Z}_1,\dots , \wt{Z}_{m_{2j-1}-m_{2j-2}})$ of clopen sets $\wt{Z}_i \in \calP_{m_{2j-1}-i}$ such that
$$Z = \wt{Z}_0 \subseteq \sigma(\wt{Z}_1) \subseteq \wt{Z}_2 \subseteq \cdots \subseteq \wt{Z}_{m_{2j-1}-m_{2j-2}-1} \subseteq \sigma(\wt{Z}_{m_{2j-1}-m_{2j-2}}) = \sigma(\wt{Z})$$
and
$$\wt{Z}_1 \subseteq \wt{Z}_3 \subseteq \cdots \subseteq \wt{Z}_{m_{2j-1}-m_{2j-2}} = \wt{Z},$$
where $Z = s(\ol{g})$, $\wt{Z} = r(\ol{g})$. Let $g \in P^R_{m_{2j-2},m_{2j-1}}$ be the corresponding red edge.

\begin{claim*}
There exists a \textit{unique} sequence $(Z_0,\dots,Z_{m_{2j}-m_{2j-1}})$ of clopen sets $Z_i \in \calP_{m_{2j}-i}$ such that
$$\sigma(Z') = \sigma(Z_0) \subseteq Z_1 \subseteq \sigma(Z_2) \subseteq \cdots \subseteq \sigma(Z_{m_{2j}-m_{2j-1}-1}) \subseteq Z_{m_{2j}-m_{2j-1}} = Z$$
and
$$Z' = Z_0 \subseteq Z_2 \subseteq Z_4 \subseteq \cdots \subseteq Z_{m_{2j}-m_{2j-1}-1} \subseteq \wt{Z}_1.$$
\end{claim*}
\begin{proof}
Note that the hypothesis that $\ol{f}\in R''(\ol{g})$ gives us the condition
$$Z' = s(\ol{f})\subseteq r(\ol{g}) = \wt{Z}.$$
Since $\sigma^{-1} (\calP_{m_{2j}-1}) $ is coarser than $\calP_{m_{2j}}$, there is a unique $Z_1 \in \calP_{m_{2j}-1}$ such that $\sigma (Z') \subseteq Z_1$. We can now find $Z_{3},Z_5,\dots , Z_{m_{2j}-m_{2j-1}}$ such that $Z_{2l+1} \in \calP_{m_{2j}-(2l+1)}$ and
$$Z_1 \subseteq Z_3 \subseteq Z_5 \subseteq \cdots \subseteq Z_{m_{2j}-m_{2j-1}}.$$
Observe that $\sigma(Z') \subseteq Z_1 \subseteq Z_{m_{2j}-m_{2j-1}}$ and also $\sigma(Z') \subseteq Z$ by hypothesis. Since $\calP_{m_{2j-1}}$ is a partition, we must have $Z_{m_{2j}-m_{2j-1}}= Z$.

Now note that we have inclusions
$$Z_{m_{2j}-m_{2j-1}-2} \subseteq Z \subseteq \sigma(\wt{Z}_1).$$
By Lemma \ref{lemma-group.of.f} (ii), we get a unique $Z_{m_{2j}-m_{2j-1}-1} \in \calP_{m_{2j-1}+1}$ such that $Z_{m_{2j}-m_{2j-1}-1} \subseteq \wt{Z}_1$ and
$$Z_{m_{2j}-m_{2j-1}-2} \subseteq \sigma(Z_{{m_2j}-m_{2j-1}-1}) \subseteq Z \subseteq \sigma(\wt{Z}_1).$$
Continuing in this way, we find unique clopen sets $Z_{2}, Z_4, \dots , Z_{m_{2j}-m_{2j-1}-3}$ such that	$Z_{2l} \in \calP_{m_{2j} -2l}$ for $1 \leq l \leq \frac{m_{2j}-m_{2j-1}-3}{2}$ and
$$Z_{2} \subseteq Z_4 \subseteq \cdots \subseteq Z_{m_{2j}-m_{2j-1}-3} \subseteq Z_{m_{2j}-m_{2j-1}-1},$$
$$Z_1 \subseteq \sigma(Z_2) \subseteq Z_3 \subseteq \cdots \subseteq \sigma(Z_{m_{2j}-m_{2j-1}-1}) \subseteq Z \subseteq \sigma(\wt{Z}_1).$$

We show that $Z' \subseteq Z_2$. Indeed, we have $\sigma(Z') \subseteq Z_1 \subseteq \sigma(Z_2)$. Moreover we have that $Z' \subseteq \wt{Z}$ by our hypothesis that $\ol{f} \in R''(\ol{g})$, and also we have the inclusions
$$Z_2 \subseteq Z_4 \subseteq \cdots \subseteq Z_{m_{2j}-m_{2j-1}-1} \subseteq \wt{Z}_1 \subseteq \wt{Z}_3 \subseteq \cdots \subseteq \wt{Z}_{m_{2j-1}-m_{2j-2}} = \wt{Z}.$$
Hence both $Z'$ and $Z_2$ are subsets of $\wt{Z}$. Since the restriction of $\sigma$ to $\wt{Z}$ is injective and $\sigma(Z') \subseteq \sigma(Z_2)$, we get that $Z'\subseteq Z_2$. Hence taking $Z_0:= Z'$ we complete our proof of the existence and uniqueness of the sequence $(Z_0,\dots , Z_{m_{2j}-m_{2j-1}})$, thus proving the claim.
\end{proof}

The sequence just constructed corresponds to a \textit{unique} red path $f = (f_{m_{2j-1}},\dots,f_{m_{2j}-1})$ with $r(f_{m_{2j}-i}) = Z_i = s(f_{m_{2j}-i-1})$ for all $1 \leq i \leq m_{2j}-m_{2j-1}-1$, and $Z = r(f), Z' = s(f)$. Therefore $f \in P^R_{m_{2j-1},m_{2j}}$, and moreover $f \in R'(g)$ because $Z_{m_{2j}-m_{2j-1}-1} \subseteq \wt{Z}_1$. It is clear that $f$ maps to $\ol{f}$ under the map $P_{m_{2j-1},m_{2j}}^R \ra R''^{(2j-1)}$.
\end{enumerate}
One can easily check that all the defined bijective maps preserve the separations as in Definition \ref{definition-equiv.rel.Ldiagrams} (a). The result follows.
\end{proof}

\begin{theorem}\label{theorem-independence.of.partitions.Ldiagram}
The equivalence class, under $\sim_l$, of the $l$-diagram constructed from $(X,\sigma)$ by using the construction \ref{construction-l.diag} does not depend on the choice of the $\sigma$-refined sequence of partitions $\{\calP_n\}_{n \geq 0}$.
\end{theorem}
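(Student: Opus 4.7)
My plan is to use Proposition \ref{proposition-iso.telescoping.Ldiagram} twice, via a common $\sigma$-refined sequence $\{\calR_n\}$ that interleaves the two given sequences. Let $\{\calP_n\}_{n \geq 0}$ and $\{\calQ_n\}_{n \geq 0}$ be two $\sigma$-refined sequences of partitions of $X$, giving rise to the $l$-diagrams $(F_\calP, D_\calP)$ and $(F_\calQ, D_\calQ)$. I shall construct a third $\sigma$-refined sequence $\{\calR_n\}$ together with strictly increasing index sequences $(n_k), (p_k)$ and $(m_l), (q_l)$ such that $\calR_{p_k} = \calP_{n_k}$ and $\calR_{q_l} = \calQ_{m_l}$, where both $(p_k)$ and $(n_k)$ satisfy (CR), and similarly for $(q_l), (m_l)$. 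Granted such a $\{\calR_n\}$, Proposition \ref{proposition-iso.telescoping.Ldiagram} will imply that the telescoping of $(F_\calR, D_\calR)$ along $(p_k)$ is isomorphic as an $l$-diagram to the telescoping of $(F_\calP, D_\calP)$ along $(n_k)$ (both being the $l$-diagram built from $\{\calP_{n_k}\}$), so $(F_\calR, D_\calR) \sim_l (F_\calP, D_\calP)$. The symmetric argument gives $(F_\calR, D_\calR) \sim_l (F_\calQ, D_\calQ)$, and transitivity of $\sim_l$ concludes.

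The construction of $\{\calR_n\}$ rests on a compactness observation: since $\bigcup_n \calP_n$ is a basis for the topology of $X$ and each piece of a partition is compact and clopen, for any clopen partition $\calP$ of $X$ there is some $N$ with $\calP_N \precsim \calP$; the same holds for $\{\calQ_n\}$. I would then build $\{\calR_n\}$ with the block pattern
\begin{equation*}
\calR_{4k} = \calP_{n_{2k}}, \quad \calR_{4k+1} = \calP_{n_{2k+1}}, \quad \calR_{4k+2} = \calQ_{m_{2k}}, \quad \calR_{4k+3} = \calQ_{m_{2k+1}},
\end{equation*}
starting with $n_0 = 0$ and $n_1 = 1$, and inductively choosing the next index in each block to be large enough (and of the appropriate parity) to satisfy condition (e) of Definition \ref{definition-sigma.refined.partition}. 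For example, having placed $\calR_{4k+1} = \calP_{n_{2k+1}}$, one picks $m_{2k}$ even with $m_{2k} > m_{2k-1}$ and $\calQ_{m_{2k}} \succsim \calP_{n_{2k+1}} \vee \sigma^{-1}(\calP_{n_{2k+1}})$, which is possible by the compactness observation. The remaining conditions of Definition \ref{definition-sigma.refined.partition} for $\{\calR_n\}$ are immediate: $\calR_0 = \calP_0$ gives (a), refinement is by construction, and the presence of cofinal subsequences of both $\{\calP_n\}$ and $\{\calQ_n\}$ inside $\{\calR_n\}$ secures (c) and (d).

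The main technical obstacle I foresee is parity bookkeeping imposed by (CR). The condition $m_{n+1} - m_n \equiv 1 \pmod 2$ together with $m_0$ even forces $n_k$ to have the same parity as $k$ (for the $\{\calP_n\}$ contraction) and forces $p_k$ to have the same parity as $k$ (for the $\{\calR_n\}$ contraction), so $n_k$ and $p_k$ must share parity. A naive strict alternation of $\calP$s and $\calQ$s in $\{\calR_n\}$ fails this test, which is why the blocks above have size two: they yield $p_{2k} = 4k$ and $p_{2k+1} = 4k+1$ with consecutive differences $1, 3, 1, 3, \ldots$, all odd; meanwhile the inductive selection forces $n_{2k}$ even and $n_{2k+1}$ odd, giving consistent parities on both sides. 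An identical bookkeeping works for $(m_l, q_l)$ starting from $q_0 = 2$. Once this is set up, the two applications of Proposition \ref{proposition-iso.telescoping.Ldiagram} deliver the claimed equivalence, yielding the well-definedness of the $\sim_l$-class assigned to $(X,\sigma)$.
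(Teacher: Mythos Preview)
Your approach is correct and essentially the same as the paper's: both build a common $\sigma$-refined sequence interleaving subsequences of the two given ones and then invoke Proposition~\ref{proposition-iso.telescoping.Ldiagram} twice (the paper first passes to subsequences achieving $\calP_n \precsim \wt{\calP}_{n+1} \precsim \calP_{n+2}$ and then uses an explicit period-$3$ interleaving with indices $3n$, whereas you use a period-$4$ block pattern with inductively chosen indices---a cosmetic difference in execution of the same idea). One notational slip: in your compactness observation the inequality should read $\calP \precsim \calP_N$ (i.e., $\calP_N$ refines $\calP$), consistent with your correct later usage of $\succsim$.
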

\begin{proof}
Suppose we have two $\sigma$-refined sequences of partitions $\{\calP_n\}_{n \geq 0}$, $\{\wt{\calP}_n\}_{n \geq 0}$. Passing to subsequences (satisfying (CR) and with the first element of the sequence being an even number) does not affect the equivalence class of the resulting $l$-diagram by Proposition \ref{proposition-iso.telescoping.Ldiagram}, so we may assume that
$$\calP_n \precsim \wt{\calP}_{n+1}\precsim \calP_{n+2} \quad \text{for all }n \geq 0.$$
We can create a new sequence of partitions $\{\ol{\calP}_n\}_{n \geq 0}$ by
\begin{equation*}
\ol{\calP}_n := \begin{cases}
\calP_{3n} & \text{for }n \equiv 0 \text{ (mod }3); \\
\wt{\calP}_{3n} & \text{otherwise}.
\end{cases}
\end{equation*}
This sequence of partitions also satisfies the required properties:
\begin{enumerate}[(a),leftmargin=0.6cm]
\item Since $\ol{\calP}_0 = \calP_0$, $\sigma$ restricts to a homeomorphism $\sigma|_Z$ for each $Z \in \ol{\calP}_0$.
\item For $n \geq 0$, we have
%$$\ol{\calP}_{3n} = \calP_{9n} \precsim \wt{\calP}_{9n+1} \precsim \wt{\calP}_{9n+3} = \ol{\calP}_{3n+1},$$
%$$\ol{\calP}_{3n+1} = \wt{\calP}_{9n+3} \precsim \wt{\calP}_{9n+6} = \ol{\calP}_{3n+2}, \text{ and}$$
%$$\ol{\calP}_{3n+2} = \wt{\calP}_{9n+6} \precsim \calP_{9n+7} \precsim \calP_{9n+9} = \ol{\calP}_{3n+3}.$$
$$\begin{cases}
\ol{\calP}_{3n} = \calP_{9n} \precsim \wt{\calP}_{9n+1} \precsim \wt{\calP}_{9n+3} = \ol{\calP}_{3n+1}, \\
\ol{\calP}_{3n+1} = \wt{\calP}_{9n+3} \precsim \wt{\calP}_{9n+6} = \ol{\calP}_{3n+2}, \\
\ol{\calP}_{3n+2} = \wt{\calP}_{9n+6} \precsim \calP_{9n+7} \precsim \calP_{9n+9} = \ol{\calP}_{3n+3}.
\end{cases}$$
\item Clearly the union $\bigcup_{n \geq 0} \ol{\calP}_n$ generates the topology of $X$, since both $\bigcup_{n \geq 0} \calP_n$ and $\bigcup_{n \geq 0} \wt{\calP}_n$ generate the topology of $X$.
\item We have $\text{diam}(\ol{\calP}_n) \leq \text{diam}(\calP_{3n+1})$, which tends to zero as $n \ra \infty$.
\item We compute
\begin{equation*}
\ol{\calP}_{2n} \vee \ol{\calP}_{2n}^{\sigma} = \begin{cases} \calP_{6n} \vee \calP_{6n}^{\sigma} \precsim \calP_{6n+1} \precsim \wt{\calP}_{6n+3} = \ol{\calP}_{2n+1} & \text{if } 2n \equiv 0 \text{ (mod }3), \\
\wt{\calP}_{6n} \vee \wt{\calP}_{6n}^{\sigma} \precsim \wt{\calP}_{6n+1} \precsim \wt{\calP}_{6n+3} = \ol{\calP}_{2n+1} & \text{if } 2n \equiv 1 \text{ (mod }3), \\
\wt{\calP}_{6n} \vee \wt{\calP}_{6n}^{\sigma} \precsim \wt{\calP}_{6n+1} \precsim \calP_{6n+3} = \ol{\calP}_{2n+1} & \text{if } 2n \equiv 2 \text{ (mod }3).
\end{cases}
\end{equation*}
In either case
$$\ol{\calP}_{2n} \vee \ol{\calP}_{2n}^{\sigma} \precsim \ol{\calP}_{2n+1}.$$
%Otherwise,
%\begin{equation*}
%\ol{\calP}_{2n} \vee \ol{\calP}_{2n}^{\sigma} = \wt{\calP}_{6n} \vee \wt{\calP}_{6n}^{\sigma} \precsim \wt{\calP}_{6n+1} \precsim \begin{cases}
%\wt{\calP}_{6n+3} = \ol{\calP}_{2n+1} & \text{if } 2n \equiv 1 \text{ (mod }3), \\
%\calP_{6n+3} = \ol{\calP}_{2n+1} & \text{if } 2n \equiv 2 \text{ (mod }3).
%\end{cases}
%\end{equation*}
\item Also,
\begin{equation*}
\ol{\calP}_{2n+1} \vee \sigma^{-1}(\ol{\calP}_{2n+1}) = \begin{cases} \wt{\calP}_{6n+3} \vee \sigma^{-1}(\wt{\calP}_{6n+3}) \precsim \wt{\calP}_{6n+4} \precsim \wt{\calP}_{6n+6} = \ol{\calP}_{2n+2} & \text{if } 2n \equiv 0 \text{ (mod }3), \\
\wt{\calP}_{6n+3} \vee \sigma^{-1}(\wt{\calP}_{6n+3}) \precsim \wt{\calP}_{6n+4} \precsim \calP_{6n+6} = \ol{\calP}_{2n+2} & \text{if } 2n \equiv 1 \text{ (mod }3), \\
\calP_{6n+3} \vee \sigma^{-1}(\calP_{6n+3}) \precsim \calP_{6n+4} \precsim \wt{\calP}_{6n+6} = \ol{\calP}_{2n+2} & \text{if } 2n \equiv 2 \text{ (mod }3).
\end{cases}
\end{equation*}
In either case
$$\ol{\calP}_{2n+1} \vee \sigma^{-1}(\ol{\calP}_{2n+1}) \precsim \ol{\calP}_{2n+2}.$$
%Assume first that $2n \equiv 2 \text{ (mod }3)$. In that case
%$$\ol{\calP}_{2n+1} \vee \sigma^{-1}(\ol{\calP}_{2n+1}) = \calP_{6n+3} \vee \sigma^{-1}(\calP_{6n+3}) \precsim \calP_{6n+4} \precsim \wt{\calP}_{6n+6} = \ol{\calP}_{2n+2}.$$
%Otherwise,
%\begin{equation*}
%\ol{\calP}_{2n+1} \vee \sigma^{-1}(\ol{\calP}_{2n+1}) = \wt{\calP}_{6n+3} \vee \sigma^{-1}(\wt{\calP}_{6n+3}) \precsim \wt{\calP}_{6n+4} \precsim \begin{cases}
%\wt{\calP}_{6n+6} = \ol{\calP}_{2n+2} & \text{if } 2n \equiv 0 \text{ (mod }3), \\
%\calP_{6n+6} = \ol{\calP}_{2n+2} & \text{if } 2n \equiv 1 \text{ (mod }3).
%\end{cases}
%\end{equation*}
\end{enumerate}
We let $(F_1,D_1), (F_2,D_2)$ and $(F,D)$ be the $l$-diagrams constructed from the $\sigma$-refined sequences of partitions $\{\calP_n\}_{n \geq 0}, \{\wt{\calP}_n\}_{n \geq 0}$ and $\{\ol{\calP}_n\}_{n \geq 0}$, respectively, using the construction given in \ref{construction-l.diag}. Consider the two sequences
$$(m_n)_{n \geq 0} := (0,3,6,9,\dots,3n,\dots), \qquad (\wt{m}_n)_{n \geq 0} := (2,5,\dots,3n+2,\dots),$$
which clearly satisfy (CR), and $m_0 \equiv \wt{m}_0 \equiv 0 \text{ (mod }2)$. The contraction of $(F,D)$ with respect to $(m_n)_{n \geq 0}$ gives an $l$-diagram with associated partitions
$$\calP_0, \calP_9, \calP_{18}, \calP_{27},\dots, \calP_{9n},\dots$$
so it is itself a contraction of $(F_1,D_1)$ by Proposition \ref{proposition-iso.telescoping.Ldiagram}. Hence $(F_1,D_1) \sim_{l} (F,D)$. Also, the contraction of $(F,D)$ with respect to $(\wt{m}_n)_{n \geq 0}$ gives an $l$-diagram with associated partitions
$$\wt{\calP}_6, \wt{\calP}_{15}, \wt{\calP}_{24},\dots, \wt{\calP}_{9n+6},\dots$$
so it is itself a contraction of $(F_2,D_2)$ by Proposition \ref{proposition-iso.telescoping.Ldiagram} again. Hence $(F_2,D_2) \sim_{l} (F,D)$.

Altogether we deduce that
$$(F_1,D_1) \sim_l (F,D) \sim_l (F_2,D_2),$$
as required.
\end{proof}

Using the results obtained so far, we obtain a well-defined map
$$\wt{\Phi} : \textsf{LHomeo} \lra \textsf{LDiag}/\sim_l, \quad (X,\sigma) \mapsto [(F,D)]$$
where $(F,D)$ is the $l$-diagram obtained from $(X,\sigma)$ by applying the construction \ref{construction-l.diag}. Indeed, $\wt{\Phi}(X,\sigma)$ does not depend on the choice of the $\sigma$-refined sequence of partitions used to define it by Theorem \ref{theorem-independence.of.partitions.Ldiagram}.

In the next theorem we prove the last thing left in order to properly define $\Phi$, namely that the equivalence class of $(F,D)$ does not depend on the representative chosen for the class of $(X,\sigma)$ under the equivalence relation $\sim_{t.c.}$.

\begin{theorem}\label{theorem-independence.of.top.conj.Ldiagram}
Suppose we have $(X_1,\sigma_1),(X_2,\sigma_2) \in \emph{\textsf{LHomeo}}$ which are topologically conjugate through the homeomorphism $h : X_1 \ra X_2$. Then $\wt{\Phi}(X_1,\sigma_1) = \wt{\Phi}(X_2,\sigma_2)$.
%\begin{equation*}
%\xymatrix{
%X_1 \ar[r]^{\sigma_1} \ar@{}[d]|*[@]{\cong}^{\text{ }h} & X_1 \ar@{}[d]|*[@]{\cong}^{\text{ }h} \\
%X_2 \ar[r]^{\sigma_2} & X_2
%}
%\end{equation*}
%being $h : X_1 \ra X_2$ a homeomorphism, then the corresponding $l$-diagrams $(F_1,D_1)$ and $(F_2,D_2)$ are equivalent under $\sim_l$.
\end{theorem}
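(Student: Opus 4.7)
The plan is to transport the partition data from $X_1$ to $X_2$ via the conjugating homeomorphism $h$, and then observe that the two construction procedures from \ref{construction-l.diag} produce isomorphic $l$-diagrams. Since $\wt{\Phi}(X_i,\sigma_i)$ does not depend on the choice of $\sigma_i$-refined sequence of partitions by Theorem \ref{theorem-independence.of.partitions.Ldiagram}, this will suffice.

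First, I will fix a $\sigma_1$-refined sequence of partitions $\{\calP_n\}_{n\ge 0}$ of $X_1$, guaranteed to exist by Lemma \ref{lemma-refining.partition.local}, and define $\wt{\calP}_n := h(\calP_n) = \{h(Z) \mid Z \in \calP_n\}$. I need to verify that $\{\wt{\calP}_n\}_{n\ge 0}$ is a $\sigma_2$-refined sequence of partitions of $X_2$ in the sense of Definition \ref{definition-sigma.refined.partition}. Since $h$ is a bijective homeomorphism, each $\wt{\calP}_n$ is a partition of $X_2$ into clopen sets, and refinement of partitions is preserved. For (a), if $\sigma_1|_Z$ is a homeomorphism for $Z \in \calP_0$, then $\sigma_2|_{h(Z)} = h \circ \sigma_1 \circ h^{-1}|_{h(Z)}$ is also a homeomorphism. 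For the diameter condition (d), since $X_1$ is compact and $h$ is continuous, $h$ is uniformly continuous, so $\operatorname{diam}(\wt{\calP}_n) \to 0$ whenever $\operatorname{diam}(\calP_n) \to 0$. For (c), $\bigcup_n \wt{\calP}_n = h(\bigcup_n \calP_n)$ generates the topology of $X_2$ since $h$ is a homeomorphism. The compatibility (e) reduces, via the two identities
\[
h(\sigma_1(Z)) = \sigma_2(h(Z)) \quad \text{and} \quad h(\sigma_1^{-1}(Z)) = \sigma_2^{-1}(h(Z)),
\]
to the compatibility of $\calP_n$ with $\sigma_1$. The second identity follows directly from $h \circ \sigma_1 = \sigma_2 \circ h$ (applied to show both inclusions), while the first is immediate. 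Consequently $h(\calP_{2j}^{\sigma_1}) = \wt{\calP}_{2j}^{\sigma_2}$ and $h(\sigma_1^{-1}(\calP_{2j+1})) = \sigma_2^{-1}(\wt{\calP}_{2j+1})$, so the refinement relations in (e) are preserved.

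Next, let $(F,D)$ and $(\wt{F},\wt{D})$ be the $l$-diagrams constructed from $\{\calP_n\}$ and $\{\wt{\calP}_n\}$ respectively via \ref{construction-l.diag}. I will define $\phi: (F,D) \to (\wt{F},\wt{D})$ on vertices by $\phi_0(Z) := h(Z)$ for $Z \in \calP_n = F^{0,n}$. On blue edges, $\phi_1(e(Z',Z)) := e(h(Z'),h(Z))$, which is well-defined since $Z' \subseteq Z$ iff $h(Z') \subseteq h(Z)$. On red edges at even layer $2j$, $\phi_1(f(Z',Z)) := f(h(Z'),h(Z))$ is well-defined because $Z' \subseteq \sigma_1(Z)$ iff $h(Z') \subseteq h(\sigma_1(Z)) = \sigma_2(h(Z))$. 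On red edges at odd layer $2j+1$, $\phi_1(f(Z'',Z')) := f(h(Z''),h(Z'))$ is well-defined because $Z'' \subseteq \sigma_1^{-1}(Z')$ iff $h(Z'') \subseteq h(\sigma_1^{-1}(Z')) = \sigma_2^{-1}(h(Z'))$. The map $\phi$ is bijective on vertices and edges (with inverse induced by $h^{-1}$), preserves ranges and sources, and respects the separations $B_Z, R_Z, R(f)$ since these are defined purely in terms of the combinatorics of blue/red edges, which $\phi$ preserves by construction.

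Thus $(F,D) \cong (\wt{F},\wt{D})$ as $l$-diagrams, hence $\wt{\Phi}(X_1,\sigma_1) = [(F,D)] = [(\wt{F},\wt{D})] = \wt{\Phi}(X_2,\sigma_2)$. The only subtle steps are checking the diameter condition (which requires uniform continuity of $h$) and verifying that $h$ intertwines the set-level operations $\sigma^{\pm 1}$ on clopen sets, but both reduce directly to $h \circ \sigma_1 = \sigma_2 \circ h$ and the fact that $h$ is a homeomorphism between compact metrizable spaces.
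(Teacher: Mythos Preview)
Your proof is correct and is actually more direct than the paper's. You transport a single $\sigma_1$-refined sequence $\{\calP_n\}$ forward through $h$ to obtain a $\sigma_2$-refined sequence $\{h(\calP_n)\}$, observe that Construction~\ref{construction-l.diag} applied to the two sequences yields isomorphic $l$-diagrams, and then appeal to Theorem~\ref{theorem-independence.of.partitions.Ldiagram} to conclude. The paper instead fixes arbitrary $\sigma_i$-refined sequences on each side and builds new sequences $\calP^{(1)}_n = \calP_n \vee h^{-1}(\calP'_n)$ and $\calP^{(2)}_n = h(\calP_n) \vee \calP'_n$ with $h(\calP^{(1)}_n) = \calP^{(2)}_n$, re-verifying all the axioms for these wedge partitions; only at the very end does it read off the isomorphism of diagrams from the conjugacy relations, exactly as you do. Your route avoids the redundant wedge step entirely, at the cost of leaning on Theorem~\ref{theorem-independence.of.partitions.Ldiagram} (which the paper's proof does not formally invoke, though its logical role is the same).

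One small point worth spelling out: your derivation of $h(\calP_{2j}^{\sigma_1}) = \wt{\calP}_{2j}^{\sigma_2}$ needs not only $h(\sigma_1(Z)) = \sigma_2(h(Z))$ but also $h(\calU^{(1)}_{\ge k}) = \calU^{(2)}_{\ge k}$ for the sets appearing in Definition~\ref{definition-new.refined.partitions}(a). This is immediate from the conjugacy (a homeomorphism intertwining $\sigma_1$ and $\sigma_2$ preserves preimage cardinalities), and the paper notes it explicitly, but you should mention it since the identity $h\circ\sigma_1 = \sigma_2\circ h$ alone is not literally enough.
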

\begin{proof}
First, notice that $N(\sigma_1) = N(\sigma_2) =: N$ since $h$ is a homeomorphism. Also, $h(\calU^{(1)}_k) = \calU^{(2)}_k$ for each $1 \leq k \leq N$, being
$$\calU^{(i)}_k = \{x \in X_i \mid |\sigma^{-1}_i(\{x\})| = k\}.$$

Take $\{\calP_n\}_{n \geq 0}$ a $\sigma_1$-refined sequence of partitions of $X_1$, and $\{\calP'_n\}_{n \geq 0}$ a $\sigma_2$-refined sequence of partitions of $X_2$. Let $(F_i,D_i)$ denote the $l$-diagram constructed from $(X_i,\sigma_i)$ using the construction \ref{construction-l.diag}, for $i=1,2$. We define, for $n \geq 0$, new partitions
$$\calP^{(1)}_n := \calP_n \vee h^{-1}(\calP'_n) \text{ for $X_1$, and }\text{ } \calP^{(2)}_n := h(\calP_n) \vee \calP'_n \text{ for $X_2$.}$$
Then the sequence $\{\calP^{(i)}_n\}_{n \geq 0}$ is a $\sigma_{i}$-refined sequence of partitions of $X_i$, for $i=1,2$:
\begin{enumerate}[(a),leftmargin=0.6cm]
\item Since $\sigma_1$ (resp. $\sigma_2$) restricts to a homeomorphism over each $Z \in \calP_0$ (resp. $Z \in \calP'_0$), it is clear that each $\sigma_i$ restricts to a homeomorphism $\sigma_i|_Z$, for $Z \in \calP^{(i)}_0$.
\item $\calP^{(1)}_n \precsim \calP^{(1)}_{n+1}$ and $\calP^{(2)}_n \precsim \calP^{(2)}_{n+1}$, since $\calP_n \precsim \calP_{n+1}$ and $\calP'_n \precsim \calP'_{n+1}$.
\item Their unions generate the topology of $X_1$ and $X_2$, respectively, since this property holds true for the partitions $\{\calP_n\}_{n \geq 0}$ of $X_1$ and $\{\calP_n'\}_{n \geq 0}$ of $X_2$.
\item $\text{diam}(\calP_n^{(1)}) \leq \text{diam}(\calP_n)$ and $\text{diam}(\calP_n^{(2)}) \leq \text{diam}(\calP_n')$, so they both tend to zero as $n \ra \infty$.
\item First, one may notice that $h(\calP^{\sigma_1}) = h(\calP)^{\sigma_2}$ for any $\sigma_1$-refined partition $\calP$ of $X_1$. This follows from the fact that $h(\calU^{(1)}_k) = \calU^{(2)}_k$. Now, for $X_1$ and $n \geq 0$, we compute
\begin{align*}
\calP^{(1)}_{2n} \vee (\calP^{(1)}_{2n})^{\sigma_1} & = \calP_{2n} \vee h^{-1}(\calP'_{2n}) \vee \calP_{2n}^{\sigma_1} \vee (h^{-1}(\calP'_{2n}))^{\sigma_1} \\
& = \calP_{2n} \vee \calP_{2n}^{\sigma_1} \vee h^{-1} ( \calP'_{2n} ) \vee h^{-1} \big( (\calP'_{2n})^{\sigma_2} \big) \\
& = \big( \calP_{2n} \vee \calP_{2n}^{\sigma_1} \big) \vee h^{-1}\big( \calP'_{2n} \vee (\calP'_{2n})^{\sigma_2} \big) \\
& \precsim \calP_{2n+1} \vee h^{-1}(\calP'_{2n+1}) = \calP^{(1)}_{2n+1}
\end{align*}
and
\begin{align*}
\calP^{(1)}_{2n+1} \vee \sigma_1^{-1}(\calP^{(1)}_{2n+1}) & = \calP_{2n+1} \vee h^{-1}(\calP'_{2n+1}) \vee \sigma_1^{-1}(\calP_{2n+1}) \vee \sigma_1^{-1}(h^{-1}(\calP'_{2n+1})) \\
& = \big( \calP_{2n+1} \vee \sigma_1^{-1}(\calP_{2n+1}) \big) \vee h^{-1}\big( \calP'_{2n+1} \vee \sigma_2^{-1}(\calP'_{2n+1}) \big) \\
& \precsim \calP_{2n+2} \vee h^{-1}(\calP'_{2n+2}) = \calP^{(1)}_{2n+2}.
\end{align*}
Similar computations hold for $X_2$, that is, for $n \geq 0$, we have
$$\calP^{(2)}_{2n} \vee (\calP^{(2)}_{2n})^{\sigma_2} \precsim \calP^{(2)}_{2n+1} \quad \text{ and } \quad \calP^{(2)}_{2n+1} \vee \sigma_2^{-1}(\calP^{(2)}_{2n+1}) = \calP^{(2)}_{2n+2}.$$
%For $X_2$, we again compute
%\begin{align*}
%\calP^{(2)}_{2n} \vee (\calP^{(2)}_{2n})^{\sigma_2} & = \calP'_{2n} \vee h(\calP_{2n}) \vee (\calP'_{2n})^{\sigma_2} \vee (h(\calP_{2n}))^{\sigma_2} \\
%& = \calP'_{2n} \vee (\calP'_{2n})^{\sigma_2} \vee h(\calP_{2n}) \vee h \big( \calP_{2n}^{\sigma_1} \big) \\
%& = \big( \calP'_{2n} \vee (\calP'_{2n})^{\sigma_2} \big) \vee h\big( \calP_{2n} \vee \calP_{2n}^{\sigma_1} \big) \\
%& \precsim \calP'_{2n+1} \vee h(\calP_{2n+1}) = \calP^{(2)}_{2n+1}.
%\end{align*}
%and
%\begin{align*}
%\calP^{(2)}_{2n+1} \vee \sigma_2^{-1}(\calP^{(2)}_{2n+1}) & = \calP'_{2n+1} \vee h(\calP_{2n+1}) \vee \sigma_2^{-1}(\calP'_{2n+1}) \vee \sigma_2^{-1}(h(\calP_{2n+1})) \\
%& = \big( \calP'_{2n+1} \vee \sigma_2^{-1}(\calP'_{2n+1}) \big) \vee h\big( \calP_{2n+1} \vee \sigma_1^{-1}(\calP_{2n+1}) \big) \\
%& \precsim \calP'_{2n+2} \vee h(\calP_{2n+2}) = \calP^{(2)}_{2n+2}.
%\end{align*}
\end{enumerate}
Note that $h(\calP^{(1)}_n) = h(\calP_n \vee h^{-1}(\calP'_n)) = h(\calP_n) \vee \calP'_n = \calP^{(2)}_n$. Therefore the vertices of $(F_1,D_1)$ are in bijective correspondence with the vertices of $(F_2,D_2)$ at each layer $n \geq 0$. To conclude the theorem, we need to show that the edges $F_1^{1,n}$ are in bijective correspondence with $F_2^{1,n}$ for all $n \geq 0$, also preserving the separations as in Definition \ref{definition-equiv.rel.Ldiagrams} (a). These follow directly from the following considerations:
\begin{enumerate}[(i),leftmargin=0.6cm]
\item For $Z \in \calP^{(1)}_{2n}$ and $Z' \in \calP^{(1)}_{2n+1}$, $Z' \subseteq Z$ if and only if $h(Z') \subseteq h(Z)$. Also $Z' \subseteq \sigma_1(Z)$ if and only if $h(Z') \subseteq h(\sigma_1(Z)) = \sigma_2(h(Z))$.
\item For $Z' \in \calP^{(1)}_{2n+1}$ and $Z'' \in \calP^{(1)}_{2n+2}$, $Z'' \subseteq Z'$ if and only if $h(Z'') \subseteq h(Z')$. Also $\sigma_1(Z'') \subseteq Z'$ if and only if $\sigma_2(h(Z'')) = h(\sigma_1(Z'')) \subseteq h(Z')$.
\end{enumerate}
This concludes the proof of the theorem.
\end{proof}

We finally obtain the desired map $\Phi$.

\begin{theorem}\label{theorem-first.map}
There exists a well-defined map
$$\Phi : \emph{\textsf{LHomeo}}/\sim_{t.c.} \lra \emph{\textsf{LDiag}}/\sim_l, \quad [(X,\sigma)] \mapsto [(F,D)],$$
where $(F,D)$ is the $l$-diagram obtained from $(X,\sigma)$ by applying the construction \ref{construction-l.diag}.
\end{theorem}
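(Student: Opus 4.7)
The plan is straightforward: this theorem simply assembles the three preceding results into a single well-defined assignment. I would proceed in three short steps corresponding to the three potential obstructions to well-definedness.

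First, given $(X,\sigma) \in \textsf{LHomeo}$, I would invoke Lemma \ref{lemma-refining.partition.local} to fix a $\sigma$-refined sequence of partitions $\{\calP_n\}_{n \geq 0}$ of $X$, and then apply Construction \ref{construction-l.diag} to obtain a separated Bratteli diagram $(F,D)$. Lemma \ref{lemma-FD.is.Ldiagram} guarantees that $(F,D) \in \textsf{LDiag}$, so the assignment $(X,\sigma, \{\calP_n\}) \mapsto [(F,D)] \in \textsf{LDiag}/\sim_l$ is at least defined.

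Second, I would appeal to Theorem \ref{theorem-independence.of.partitions.Ldiagram} to remove the dependence on the choice of $\sigma$-refined sequence of partitions: two different such sequences yield $l$-diagrams in the same $\sim_l$-equivalence class. This shows that the rule $(X,\sigma) \mapsto [(F,D)]$ gives a well-defined map $\wt{\Phi} : \textsf{LHomeo} \to \textsf{LDiag}/\sim_l$, as was already noted in the paragraph preceding the statement of the theorem.

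Third, I would apply Theorem \ref{theorem-independence.of.top.conj.Ldiagram}: if $(X_1,\sigma_1) \sim_{t.c.} (X_2,\sigma_2)$ via some homeomorphism $h$, then $\wt{\Phi}(X_1,\sigma_1) = \wt{\Phi}(X_2,\sigma_2)$ in $\textsf{LDiag}/\sim_l$. Thus $\wt{\Phi}$ factors through the quotient $\textsf{LHomeo}/\sim_{t.c.}$, producing the desired map
\[
\Phi : \textsf{LHomeo}/\sim_{t.c.} \lra \textsf{LDiag}/\sim_l, \quad [(X,\sigma)] \mapsto [(F,D)].
\]
There is no substantive obstacle here; all the hard work has been done in the preceding lemmas and theorems, and the proof is a three-line bookkeeping argument stringing them together.
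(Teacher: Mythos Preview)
Your proposal is correct and follows essentially the same approach as the paper. The paper's proof is even terser—it simply cites the definition of $\wt{\Phi}$ (which already absorbs your first two steps, since $\wt{\Phi}$ was defined just above using Lemmas \ref{lemma-refining.partition.local}, \ref{lemma-FD.is.Ldiagram} and Theorem \ref{theorem-independence.of.partitions.Ldiagram}) together with Theorem \ref{theorem-independence.of.top.conj.Ldiagram}—but the logical content is identical.
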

\begin{proof}
This follows directly from the definition of $\wt{\Phi}$ and Theorem \ref{theorem-independence.of.top.conj.Ldiagram}.
\end{proof}
%To summarize, in this section we have constructed a well-defined map
%$$\Phi : \{(X,\sigma)\}/\sim_{t.c.} \lra \{l\text{-diagrams}\}/\sim_l.$$

%In the next section we aim to construct an inverse to this map, thus establishing a correspondence between surjective local homeomorphisms $\sigma : X \ra X$ on a totally disconnected compact metric space $X$ and $l$-diagrams $(F,D)$, modulo the desired equivalences.

\subsection{From \textsf{LDiag} to \textsf{LHomeo}}\label{subsection-from.Ldiagram.to.local.homeo}

We now describe how to construct a surjective local homeomorphism $(X,\sigma)$ from an $l$-diagram $(F,D)$.

Let $(F,D) \in \textsf{LDiag}$. We first construct the space $X$. It will be given by the set of infinite paths
$$X = \{(e_0,e_1,e_2,\dots) \mid e_i \in B_{r(e_i)}, r(e_{i+1}) = s(e_i) \text{ for all } i \geq 0\}.$$
Note that all edges are blue. The topology on $X$ is the one with basis consisting of the cylinder sets
$$Z(e_0,e_1,\dots,e_n) = \{(x_i)_{i \geq 0} \in X \mid x_0 = e_0,\dots,x_n = e_n\},$$
for $(e_0,\dots,e_n) \in P_{0,n}^B$. This is indeed a basis for a topology because the intersection of two cylinder sets is either empty or again a cylinder set. %The space $X$ becomes a totally disconnected Hausdorff space. Since it is also second countable and regular, it is metrizable (Urysohn's Metrization Theorem).

\begin{lemma}\label{lemma-infinite.paths.compact.L}
The space $X$ is a totally disconnected, compact metrizable space.
\end{lemma}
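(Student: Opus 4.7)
The plan is to realize $X$ as a closed subspace of a compact, totally disconnected, metrizable space, and then verify that the cylinder-set topology on $X$ coincides with the induced subspace topology. Concretely, set $B^{(i)} := \bigsqcup_{v \in F^{0,i}} B_v$ for each $i \ge 0$; by Definition \ref{definition-separated.Bratteli}(b) each $F^{1,i}$ is finite, so $B^{(i)}$ is a finite set. Give $B^{(i)}$ the discrete topology and consider the product $Y := \prod_{i \ge 0} B^{(i)}$. As a countable product of finite discrete spaces, $Y$ is compact, Hausdorff, totally disconnected, and metrizable (a standard Tychonoff/first-countability argument, or alternatively by observing that $Y$ embeds as a closed subset of Cantor space).

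Next I would observe that $X \subseteq Y$ is precisely the set of sequences $(e_i)_{i \ge 0}$ such that, for every $i \ge 0$, $r(e_{i+1}) = s(e_i)$. Each such condition involves only the two coordinates $i$ and $i+1$, and since $B^{(i)}$ and $B^{(i+1)}$ are finite discrete, the set of pairs $(e,e') \in B^{(i)}\times B^{(i+1)}$ with $r(e') = s(e)$ is clopen in $B^{(i)} \times B^{(i+1)}$. Therefore $X$ is an intersection of clopen subsets of $Y$, hence closed in $Y$. As a closed subspace of a compact, totally disconnected, metrizable space, $X$ inherits all three properties at once.

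Finally I would check that the topology on $X$ generated by the cylinder sets $Z(e_0, e_1, \ldots, e_n)$, for $(e_0,\ldots,e_n) \in P^B_{0,n+1}$, coincides with the subspace topology from $Y$. One direction is immediate: $Z(e_0,\ldots,e_n) = X \cap \bigl(\{e_0\} \times \cdots \times \{e_n\} \times \prod_{i > n} B^{(i)}\bigr)$, so every cylinder is relatively open. Conversely, every basic open set of $Y$ fixes finitely many coordinates $e_{i_1}, \ldots, e_{i_k}$; its intersection with $X$ decomposes as the (finite) union of all cylinders $Z(e_0, \ldots, e_N)$ of length $N = \max_j i_j$ whose chosen coordinates agree with the prescribed values and which form a legal blue path. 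The two topologies therefore agree, which completes the proof.

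I do not expect a real obstacle: all the work is packaged in the standard fact that a countable product of finite discrete spaces is a Cantor-type space, together with the observation that the compatibility conditions defining $X$ are local, hence clopen. The only point requiring a little care is the final comparison of topologies, and even there the matching is direct since cylinders of arbitrary length form a cofinal family of basic open sets.
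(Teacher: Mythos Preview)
Your proof is correct and follows essentially the same approach as the paper: realize $X$ as a closed subspace of the product $\prod_{i\ge 0} B^{(i)}$ (the paper uses the slightly larger $\prod_{n\ge 0} F^{1,n}$, but this is immaterial) and inherit all the desired properties from there. Your treatment is in fact a bit more careful than the paper's, since you explicitly verify that the cylinder-set topology coincides with the subspace topology, whereas the paper simply asserts that $X$ ``can be regarded'' as a subspace.
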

\begin{proof}
The decompositions
$$X = \bigsqcup_{(e_0,\dots,e_n) \in P_{0,n}^B} Z(e_0,\dots,e_n),$$
for any $n \geq 0$, imply that each cylinder set is a clopen set. Therefore $X$ is totally disconnected. It is clearly Hausdorff, second countable and regular, hence by Urysohn's Metrization Theorem, $X$ becomes metrizable.

We can regard $X$ as a topological subspace of $\prod_{n \geq 0} F^{1,n}$ endowed with the product topology, where each set $F^{1,n}$ is endowed with the discrete topology. This space is compact by Tychonoff's Theorem, and $X$ is a closed subspace of it. Therefore $X$ is compact.
\end{proof}

We now aim to define a natural surjective local homeomorphism $\sigma : X \ra X$, as follows.

\begin{construction}\label{construction-local.homeo}
Let $(e_0,e_1,e_2,\dots) \in X$. For each $j \ge 0$ we may complete the pair of blue edges $(e_{2j},e_{2j+1})$ to a romb $(e_{2j},e_{2j+1},f_{2j},f_{2j+1})$ using part (ii) of Proposition \ref{proposition-decomposition.odd.rombs.even}, so that $r(f_{2j}) =r(e_{2j})$, $s(f_{2j})=r(f_{2j+1})$, $s(f_{2j+1})=s(e_{2j+1})$ and $f_{2j+1}\in R(f_{2j})$. Note that $s(f_{2j-1}) = r(f_{2j})$ for each $j \ge 1$. So, for $j \ge 1$, we may consider the red pair $(f_{2j-1}, f_{2j})$ and complete it to another romb $(e_{2j-1}', e_{2j}', f_{2j-1}, f_{2j})$ by using condition (f) in Definition \ref{definition-L.separated.Bratteli}. We obtain a well-defined blue path
$$(e_1',e_2',\dots) \in X$$
starting at $v: = r(e_1') \in F^{0,1}$. Now, part (1) of condition (d) in Definition \ref{definition-L.separated.Bratteli} entails that there is $w \in F^{0,0}$ and $e_0' \in B_w$ such that $s(e_0')= v = r(e_1')$. Also, by the last part of this same condition (d) the edge $e_0'$ is unique with this property. Thus
$$(e_0',e_1',e_2',\dots) \in X.$$
We define $\sigma \colon X \to X$ by
$$\sigma(e_0,e_1,e_2,\dots) = (e'_0,e_1',e_2',\dots).$$

\begin{figure}[H]
\begin{tikzpicture}
	\path [draw=blue,postaction={on each segment={mid arrow=blue}}]
	(-1,6) -- node[above,blue]{$e_0$} (0,7.2)
	(0,4.8) -- node[above,blue]{$e_1$} (-1,6)
	(-1,3.6) -- node[above,blue]{$e_2$} (0,4.8)
	(0,2.4) -- node[above,blue]{$e_3$} (-1,3.6)
	(-1,1.2) -- node[above,blue]{$e_4$} (0,2.4)
	(0,0) -- node[above,blue]{$e_5$} (-1,1.2)
	;
	\path [draw=red,postaction={on each segment={mid arrow=red}}]
	(1,6) -- node[above,red]{$f_0$} (0,7.2)
	(0,4.8) -- node[above,red]{$f_1$} (1,6)
	(1,3.6) -- node[above,red]{$f_2$} (0,4.8)
	(0,2.4) -- node[above,red]{$f_3$} (1,3.6)
	(1,1.2) -- node[above,red]{$f_4$} (0,2.4)
	(0,0) -- node[above,red]{$f_5$} (1,1.2)
	;
	\path [draw=blue,dashed,postaction={on each segment={mid arrow=blue}}]
	(1,6) -- node[above,blue]{$e_0'$} (2,7.2)
	(2,4.8) -- node[above,blue]{$e_1'$} (1,6)
	(1,3.6) -- node[above,blue]{$e_2'$} (2,4.8)
	(2,2.4) -- node[above,blue]{$e_3'$} (1,3.6)
	(1,1.2) -- node[above,blue]{$e_4'$} (2,2.4)
	(2,0) -- node[above,blue]{$e_5'$} (1,1.2)
	;
	\draw[black,thick,dashed] (0,-0.5) -- (0,0);
	\draw[black,thick,dashed] (2,-0.5) -- (2,0);
	\draw[black,dashed] (-2,0) -- (3,0);
	\draw[black,dashed] (-2,1.2) -- (3,1.2);
	\draw[black,dashed] (-2,2.4) -- (3,2.4);
	\draw[black,dashed] (-2,3.6) -- (3,3.6);
	\draw[black,dashed] (-2,4.8) -- (3,4.8);
	\draw[black,dashed] (-2,6) -- (3,6);
	\draw[black,dashed] (-2,7.2) -- (3,7.2);
	\node[circle,fill=blue,scale=0.6] (B0) at (0,0) {};
	\node[circle,fill=blue,scale=0.6] (B1) at (-1,1.2) {};
	\node[circle,fill=blue,scale=0.6] (B2) at (0,2.4) {};
	\node[circle,fill=blue,scale=0.6] (B3) at (-1,3.6) {};
	\node[circle,fill=blue,scale=0.6] (B4) at (0,4.8) {};
	\node[circle,fill=blue,scale=0.6] (B5) at (-1,6) {};
	\node[circle,fill=blue,scale=0.6] (B6) at (0,7.2) {};
	\node[fill=black,scale=0.6,label=$v$] (BB'2) at (1,6) {};
	\node[fill=black,scale=0.6] (BB'1) at (1,3.6) {};
	\node[fill=black,scale=0.6] (BB'0) at (1,1.2) {};
	\node[circle,fill=blue,scale=0.6] (B'0) at (2,0) {};
	\node[circle,fill=blue,scale=0.6] (N'1) at (2,2.4) {};
	\node[circle,fill=blue,scale=0.6] (N'2) at (2,4.8) {};
	\node[circle,fill=blue,scale=0.6,label=$w$] (N'3) at (2,7.2) {};
\end{tikzpicture}
\caption{Construction of the map $\sigma$.}
\label{figure-schematics4}
\end{figure}
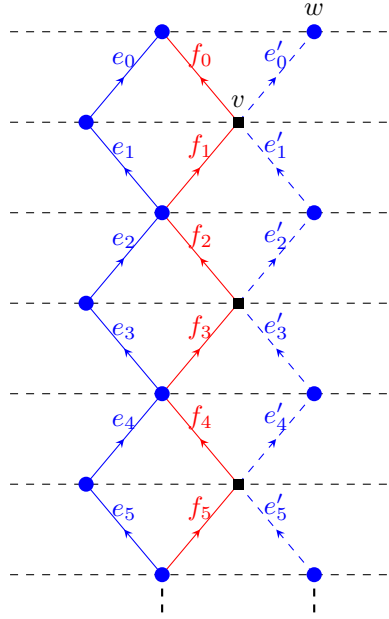
\end{construction}

\begin{lemma}\label{lemma-sigma.surj.local.homeo}
The map $\sigma$ is a surjective local homeomorphism.
\end{lemma}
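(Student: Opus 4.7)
The plan is to verify continuity, surjectivity, and the local homeomorphism property in turn. Continuity follows by tracing dependencies in Construction \ref{construction-local.homeo}: the red pair $(f_{2j-1}, f_{2j})$ used to produce $(e'_{2j-1}, e'_{2j})$ via condition (f) of Definition \ref{definition-L.separated.Bratteli} depends on $(e_{2j-2}, e_{2j-1}, e_{2j}, e_{2j+1})$, since $f_{2j-1}$ comes from the romb on $(e_{2j-2}, e_{2j-1})$ and $f_{2j}$ from the one on $(e_{2j}, e_{2j+1})$ via Proposition \ref{proposition-decomposition.odd.rombs.even}(ii); likewise $e'_0$ is determined by $r(e'_1) = r(f_1)$, which depends only on $(e_0, e_1)$. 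Thus $(e'_0, \dots, e'_{2j})$ is determined by $(e_0, \dots, e_{2j+1})$, and so preimages of cylinder sets are finite unions of cylinders in $X$.

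For surjectivity, given $y = (e'_0, e'_1, e'_2, \dots) \in X$, I would choose any red edge $f_0$ with $s(f_0) = r(e'_1)$ --- such an $f_0$ exists by condition (d)(2) of Definition \ref{definition-L.separated.Bratteli} --- and then inductively apply condition (g) with reference $f_{2j-2}$ (starting with $f_0$) and blue pair $(e'_{2j-1}, e'_{2j})$ to produce the unique red pair $(f_{2j-1}, f_{2j})$ with $f_{2j-1} \in R(f_{2j-2})$; the composability condition $s(f_{2j-2}) = r(e'_{2j-1})$ at step $j$ follows from the previous step (which gives $s(f_{2j-2}) = s(e'_{2j-2})$) together with $y$ being a blue path. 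The resulting invariant $f_{2j+1} \in R(f_{2j})$ allows Proposition \ref{proposition-decomposition.odd.rombs.even}(ii) to convert each red pair $(f_{2j}, f_{2j+1})$ into a blue pair $(e_{2j}, e_{2j+1})$; the concatenation $x := (e_0, e_1, e_2, \dots)$ is a valid element of $X$ by the identities $s(e_{2j-1}) = s(f_{2j-1}) = r(f_{2j}) = r(e_{2j})$, and $\sigma(x) = y$ by a direct verification using condition (f) and Remark \ref{remark-about.def.of.Ldiagram}(1).

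For the local homeomorphism property, fix $x = (e_0, e_1, \dots) \in X$ and let $U := Z(e_0, e_1)$, a clopen neighborhood of $x$. All elements of $U$ share the initial red pair $(f_0, f_1)$ produced by the romb on $(e_0, e_1)$, so if $\sigma(y) = \sigma(z)$ for $y, z \in U$ the surjectivity reconstruction applied with the common $f_0$ yields a unique preimage, giving $y = z$; hence $\sigma|_U$ is injective. To see that $\sigma(U)$ is open, I would show that $y \in \sigma(U)$ iff the red edge produced by condition (g) from $(e'_1, e'_2)$ with reference $f_0$ equals $f_1$, which forces $e'_0$ to be the unique blue edge with source $r(e'_1) = s(f_0)$ and forces $(e'_1, e'_2)$ to be the blue pair produced via condition (f) from $(f_1, f)$ for some $f \in R_{s(f_1)}$. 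By Remark \ref{remark-about.def.of.Ldiagram}(5) and condition (d), distinct $f$'s yield distinct blue pairs $(e'_1(f), e'_2(f))$, so
\[ \sigma(U) = \bigsqcup_{f \in R_{s(f_1)}} Z(e'_0, e'_1(f), e'_2(f)) \]
is a finite disjoint union of cylinder sets, hence clopen. Since $U$ is compact and $\sigma|_U$ is a continuous bijection onto the Hausdorff space $\sigma(U)$, it is a homeomorphism. The main technical subtlety is the recursive bookkeeping of the separation classes $R(\cdot)$ in the surjectivity argument: maintaining the invariant $f_{2j+1} \in R(f_{2j})$ throughout the construction is what permits Proposition \ref{proposition-decomposition.odd.rombs.even}(ii) to be applied at each step, and care is needed to distinguish even-layer rombs (governed by Proposition \ref{proposition-decomposition.odd.rombs.even}(ii)) from odd-layer rombs (governed by the asymmetric pair of conditions (f) and (g)).
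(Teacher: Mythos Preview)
Your proposal is correct and follows essentially the same approach as the paper. Continuity via cylinder-set dependencies, surjectivity via choosing an initial red edge $f_0$ and inductively applying condition (g) then Proposition \ref{proposition-decomposition.odd.rombs.even}(ii), and local injectivity on $Z(e_0,e_1)$ all match the paper's argument. The one minor difference is in establishing openness: the paper shows $\sigma$ is an open map by proving that $\sigma(Z(e_0,\dots,e_{2n-1}))$ contains a cylinder neighborhood of each of its points, whereas you compute $\sigma(Z(e_0,e_1))$ explicitly as a finite union of cylinders and invoke compactness to conclude $\sigma|_{Z(e_0,e_1)}$ is a homeomorphism onto an open set; both routes are equally valid.
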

\begin{proof}
We first show that $\sigma$ is continuous. Let $x = (e_0,e_1,e_2,\dots) \in X$, with $e_i \in B_{r(e_i)}$ for all $i \geq 0$. Let $y = \sigma(x) = (e'_0,e'_1,e'_2,\dots)$ and consider an open neighborhood $Z(e_0',e'_1,e'_2,\dots,e'_{2n})$ of $y$. Then we have
$$\sigma(Z(e_0,e_1,\dots,e_{2n+1})) \subseteq Z(e'_0,e'_1,e'_2,\dots,e'_{2n}),$$
and $Z(e_0,e_1,\dots,e_{2n+1})$ is an open neighborhood of $x$. This shows continuity of $\sigma$.\\

We now show that $\sigma$ is surjective. Indeed we will show that there are a finite number of pre-images of each element $y \in X$.

Let $y =(e_0',e_1',e_2',\dots)\in X$, with $e_i' \in B_{r(e_i')}$ for all $i \geq 0$. By part (2) of condition (d) in Definition \ref{definition-L.separated.Bratteli}, we can choose a red edge $f'_0$ such that $s(f'_0) = s(e_0')$. Note that existence of such $f'_0$ is ensured, but certainly not uniqueness, although we do have a finite number of possibilities for $f'_0$. As we show now, this is the only choice we need.

By recursively applying property (g) in Definition \ref{definition-L.separated.Bratteli}, we construct a sequence of red edges $f'_1,f'_2,\dots$ such that $f'_{2j+1} \in R(f'_{2j})$ and $(e'_{2j+1},e'_{2j+2},f'_{2j+1},f'_{2j+2})$ is a romb for all $j \geq 0$. This can be done since each $s(f'_{2j}) \in F^{0,2j+1}$. Note that the sequence $f'_1,f'_2,\dots$ is uniquely determined by $y$ and by $f'_0$. Finally we use part (ii) of Proposition \ref{proposition-decomposition.odd.rombs.even} to build pairs of blue edges $(e_{2j},e_{2j+1})$ such that $(e_{2j},e_{2j+1},f'_{2j},f'_{2j+1})$ is a romb, for all $j \ge 0$. Then $x = (e_0,e_1,e_2,\dots) \in X$ is an element such that $\sigma(x) = y$, as required. This shows in particular that $\sigma$ is surjective.

Conversely, if $\sigma(x) = y$, with $x = (e_0,e_1,e_2,\dots)$, and if $(f_0,f_1)$ is the unique pair of red edges such that $f_1 \in R(f_0)$ and $(e_0,e_1,f_0,f_1)$ is a romb, then $x$ is the unique pre-image of $y$ determined by the choice of $f_0$ as the red edge $f$ such that $s(f) = s(e_0')$. This means that there exist exactly $|s^{-1}(s(e_0'))|-1$ pre-images of $y$. Observe also that this implies that $\sigma $ is injective when restricted to $Z(e_0,e_1)$, because $f_0$ is determined by $(e_0,e_1)$. 

To show that $\sigma$ is a local homeomorphism, it suffices to show that $\sigma$ is an open map, since we already know that $\sigma$ is continuous and locally injective. To show that $\sigma $ is an open map, it suffices to prove that for any blue path $(e_0,e_1,\dots ,e_{2n-1})\in P^B_{0,2n}$, with $n\ge 1$, we have that $\sigma (Z(e_0,e_1,\dots , e_{2n-1}))$ is an open subset of $X$. Now, if
$y= (e_0',e_1',\dots , e'_{2n},e'_{2n+1},\dots ) = \sigma (x)$ for some $x\in Z(e_0,e_1,\dots e_{2n-1})$, then following the proof of surjectivity given above we can easily see that 
$$Z(e'_0,\dots ,e'_{2n}) \subseteq \sigma (Z(e_0,e_1,\dots ,e_{2n-1})).$$
This shows that $\sigma$ is an open map, hence it is a local homeomorphism. 
\end{proof}

%\begin{remark}\label{remark-quasi.local.homeo}
%We observe that only conditions (a)-(f) (excluding part (2) of condition %(d)) are needed in order to define the continuous mapping $\sigma \colon %X \to X$. Part (2) of condition (d) and condition (g) are needed to show %that $\sigma$ is a local homeomorphism. We will refer to a separated %Bratelli diagram satisfying (a)-(f) (excluding part (2) of condition (d)) %in Definition \ref{definition-L.separated.Bratteli} as a %\textit{$ql$-diagram}.
%\end{remark}

Using the above construction, we obtain a well-defined map
$$\wt{\Psi} : \textsf{LDiag} \lra \textsf{LHomeo}/\sim_{t.c.}, \quad (F,D) \mapsto [(X,\sigma)]$$
where $(X,\sigma)$ is the dynamical system obtained from $(F,D)$ by applying the construction \ref{construction-local.homeo}. In the next theorem we show that the equivalence class of $(X,\sigma)$ does not depend on the representative chosen for the class $(F,D)$ under the equivalence relation $\sim_l$.

\begin{theorem}\label{theorem-independence.of.Lclass}
Let $(F_1,D_1), (F_2,D_2) \in \emph{\textsf{LDiag}}$ be two equivalent $l$-diagrams under $\sim_l$. Write $(X_i,\sigma_i)$ for the dynamical system obtained from $(F_i,D_i)$ by applying the construction \ref{construction-local.homeo}, for $i=1,2$. Then there exists a homeomorphism $h : X_1 \ra X_2$ such that $h \circ \sigma_1 = \sigma_2 \circ h$.%, i.e. the diagram
%\begin{equation*}
%\xymatrix{
%X_1 \ar[r]^{\sigma_1} \ar@{}[d]|*[@]{\cong}^{\text{ }h} & X_1 \ar@{}[d]|*[@]{\cong}^{\text{ }h} \\
%X_2 \ar[r]^{\sigma_2} & X_2
%}
%\end{equation*}
%commutes.
\end{theorem}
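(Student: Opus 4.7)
The plan is to exploit the fact that $\sim_l$ is, by Definition \ref{definition-equiv.rel.Ldiagrams}, the equivalence relation generated by two elementary moves: isomorphism of $l$-diagrams and contraction. Since topological conjugacy is itself an equivalence relation, it suffices to verify the theorem when $(F_1,D_1)$ and $(F_2,D_2)$ are related by a single move of either type, and then iterate.

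For the \emph{isomorphism} case, let $\phi \colon (F_1,D_1) \to (F_2,D_2)$ be an isomorphism of $l$-diagrams. Since $\phi_1$ sends blue edges to blue edges and preserves source and range, it induces a map $h \colon X_1 \to X_2$ by coordinate-wise application: $h(e_0,e_1,\ldots) = (\phi_1(e_0),\phi_1(e_1),\ldots)$. The fact that $\phi_1$ gives a bijection on each $F^{1,n}$ preserving the cylinder structure shows that $h$ is a homeomorphism. To check $h\circ\sigma_1 = \sigma_2\circ h$, I would use that $\phi$ also preserves the red separations $R(f)$, hence maps rombs to rombs. Consequently, the romb completions $(e_{2j},e_{2j+1},f_{2j},f_{2j+1})$ used in Construction \ref{construction-local.homeo} for $\sigma_1$ map under $\phi$ to the analogous rombs used for $\sigma_2$, and the blue path extracted via condition (f) in $F_2$ coincides with the image under $\phi_1$ of the corresponding path in $F_1$.

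For the \emph{contraction} case, suppose $(F_2,D_2)=(F_1',D_1')$ is the contraction of $(F_1,D_1)$ along a sequence $(m_n)_{n\ge 0}$ with $m_0$ even and satisfying (CR). I would define $h \colon X_1 \to X_2$ by regrouping: for $x=(e_0,e_1,\ldots) \in X_1$, set $h(x) = (\bar e_0,\bar e_1,\ldots)$ where $\bar e_n := (e_{m_n},\ldots,e_{m_{n+1}-1}) \in P^B_{m_n,m_{n+1}}$ is a blue edge of $F_2$. When $m_0 > 0$ this discards the initial segment $(e_0,\ldots,e_{m_0-1})$; however, by condition (c) in Definition \ref{definition-L.separated.Bratteli} together with Proposition \ref{proposition-decomposition.odd.rombs.even}(i), every vertex in $F_1^{0,n}$ is the source of a \emph{unique} blue edge from $F_1^{1,n-1}$. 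Hence the vertex $r(e_{m_0})$ uniquely determines the blue prefix $(e_0,\ldots,e_{m_0-1})$, and $h$ is a bijection. Continuity of $h$ is immediate from the cylinder-set description; because both $X_1$ and $X_2$ are compact Hausdorff (Lemma \ref{lemma-infinite.paths.compact.L}), $h$ is a homeomorphism.

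The main obstacle will be verifying the intertwining $h\circ\sigma_1 = \sigma_2\circ h$. The crux is to show that the unique romb completion in $F_2$ of a blue pair $(\bar e_{2n},\bar e_{2n+1})$ corresponds, upon unrolling each $\bar e_i$ and $\bar f_i$ into a path in $F_1$, to the iterated romb completions in $F_1$ of the underlying pairs $(e_{2j},e_{2j+1})$ for $m_{2n} \le 2j < m_{2n+2}$. This identification of ``big rombs'' with sequences of ``small rombs'' is precisely what is encoded in the compatibility conditions (e)--(g) of Definition \ref{definition-L.separated.Bratteli} together with the verification performed in Lemma \ref{lemma-F'D'.L.diagram} for the contracted diagram; in particular the uniqueness clauses of conditions (f) and (g) force the red encoding of $h(\sigma_1(x))$ and $\sigma_2(h(x))$ to agree level by level. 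Once the two red encodings coincide, applying condition (f) in $F_2$ to recover the new blue path yields exactly the $h$-image of the new blue path in $F_1$, and the unique extension back to level $0$ (via the uniqueness observation above) finishes the argument.
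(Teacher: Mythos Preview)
Your proposal is correct and follows the same reduction as the paper: break $\sim_l$ into elementary moves and handle contraction via the regrouping map $(e_0,e_1,\ldots)\mapsto((e_{m_0},\ldots,e_{m_1-1}),(e_{m_1},\ldots,e_{m_2-1}),\ldots)$. The paper's own proof is in fact much terser---it simply asserts that this regrouping ``clearly gives our desired conjugacy'' without spelling out the romb correspondence---so your outline of the big-romb/small-romb matching via Lemma~\ref{lemma-F'D'.L.diagram}, and your observation that the discarded prefix $(e_0,\ldots,e_{m_0-1})$ is uniquely recoverable from $r(e_{m_0})$, supply exactly the details the paper leaves implicit.
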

\begin{proof}
It suffices to consider the case where $(F_2,D_2)$ is a contraction of $(F_1,D_1)$ with respect to a sequence $(m_n)_{n \geq 0}$ in $\N \cup \{0\}$ with $0 \le m_0 < m_1 < \cdots$, satisfying (CR) and with $m_0$ even. We write $(F,D) := (F_1,D_1), (F',D') := (F_2,D_2)$, and $(X,\sigma) := (X_1,\sigma_1) , (X',\sigma') := (X_2,\sigma_2)$.

We have
\begin{align*}
& X = \{(e_0,e_1,\dots) \mid e_i \in F^{1,i} \text{ blue edges, } r(e_{i+1}) = s(e_i) \text{ for all } i \geq 0\}, \\
& X' = \{((e_{m_0},\dots,e_{m_1-1}),(e_{m_1},\dots,e_{m_2-1}),\dots) \mid (e_{m_0},...,e_{m_1},...,e_{m_2},...) \in X\}.
\end{align*}
There is an obvious identification $X \cong X'$, which is in fact a homeomorphism (it sends cylinder sets to cylinder sets). It clearly gives our desired conjugacy $h$.
\end{proof}

We summarize our construction in the following theorem.

\begin{theorem}\label{theorem-second.map}
There exists a well-defined map
$$\Psi : \emph{\textsf{LDiag}}/\sim_l \lra \emph{\textsf{LHomeo}}/\sim_{t.c.}, \quad [(F,D)] \mapsto [(X,\sigma)],$$
where $(X,\sigma) \in \emph{\textsf{LHomeo}}$ is the dynamical system obtained from $(F,D)$ by applying the construction \ref{construction-local.homeo}.
\end{theorem}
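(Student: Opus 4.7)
The plan is to simply collect together the ingredients that have been assembled in the preceding lemmas and theorems. Given $(F,D) \in \textsf{LDiag}$, Construction \ref{construction-local.homeo} produces a topological space $X$ of infinite blue paths together with a map $\sigma \colon X \to X$. By Lemma \ref{lemma-infinite.paths.compact.L}, the space $X$ is totally disconnected, compact and metrizable, and by Lemma \ref{lemma-sigma.surj.local.homeo} the map $\sigma$ is a surjective local homeomorphism. Consequently the assignment $(F,D) \mapsto (X,\sigma)$ yields a well-defined map
$$\wt{\Psi} : \textsf{LDiag} \lra \textsf{LHomeo}/\sim_{t.c.}, \qquad (F,D) \mapsto [(X,\sigma)].$$

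To obtain the asserted map $\Psi$ on equivalence classes, I must show that $\wt{\Psi}$ factors through the quotient by $\sim_l$. Recall that $\sim_l$ is generated by isomorphism of $l$-diagrams and by contraction (Definition \ref{definition-equiv.rel.Ldiagrams}), so it is enough to verify invariance under each of these two elementary moves. If $(F_1,D_1) \cong (F_2,D_2)$ via an isomorphism of separated diagrams preserving the blue/red partitions, then the induced bijection on blue paths is a homeomorphism $X_1 \cong X_2$ that intertwines the rombs used to define $\sigma_i$, so it conjugates $\sigma_1$ with $\sigma_2$. If $(F_2,D_2)$ is a contraction of $(F_1,D_1)$, then Theorem \ref{theorem-independence.of.Lclass} directly gives a homeomorphism $h \colon X_1 \to X_2$ with $h\circ \sigma_1 = \sigma_2 \circ h$. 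Combining these two cases (and using Lemma \ref{lemma-composition.of.contractions.Ldiagram} to reduce any $\sim_l$-equivalence to a finite sequence of isomorphisms and contractions in both directions), we conclude that $\wt{\Psi}(F_1,D_1) = \wt{\Psi}(F_2,D_2)$ whenever $(F_1,D_1) \sim_l (F_2,D_2)$.

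Therefore $\wt{\Psi}$ descends to a well-defined map
$$\Psi : \textsf{LDiag}/\sim_l \lra \textsf{LHomeo}/\sim_{t.c.}, \qquad [(F,D)] \mapsto [(X,\sigma)],$$
as claimed. Since every non-trivial step has been verified in the preceding lemmas and in Theorem \ref{theorem-independence.of.Lclass}, there is no genuine obstacle here; the theorem is essentially a repackaging of the work done in this subsection, mirroring the way Theorem \ref{theorem-first.map} packaged the construction of $\Phi$ in Subsection \ref{subsection-from.local.homeo.to.Ldiagram}.
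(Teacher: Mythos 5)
Your proposal is correct and follows essentially the same route as the paper: the paper defines $\wt{\Psi}$ via Construction \ref{construction-local.homeo} together with Lemmas \ref{lemma-infinite.paths.compact.L} and \ref{lemma-sigma.surj.local.homeo}, and then invokes Theorem \ref{theorem-independence.of.Lclass} (which is stated for the full relation $\sim_l$, so it already subsumes the isomorphism case you treat separately) to descend to the quotient. The only cosmetic difference is that you split the $\sim_l$-invariance into its two generating moves explicitly, which the paper folds into Theorem \ref{theorem-independence.of.Lclass}.
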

\begin{proof}
This follows directly from the definition of $\wt{\Psi}$ and Theorem \ref{theorem-independence.of.Lclass}.
\end{proof}

\subsection{Equivalence between both constructions}\label{subsection-equiv.Ldiagrams.local.homeos}

In this section we prove that both constructions \ref{construction-l.diag} and \ref{construction-local.homeo} are inverses of each other, modulo the stated equivalences in \textsf{LHomeo} and \textsf{LDiag}. First, a technical lemma. Recall Definition \ref{definition-refined.Ldiagram} for the notion of a refined $l$-diagram.

\begin{lemma}\label{lemma-refined.and.sigma.refined}
Let $(F,D)$ be an $l$-diagram. Then $(F,D)$ is a refined $l$-diagram if and only if $\sigma|_{Z(v)}$ is injective for all $v \in F^{0,0}$. Here $Z(v)$ is defined naturally as the set of those elements $(e_0,e_1,e_2,\dots) \in X$ such that $r(e_0) = v$.
\end{lemma}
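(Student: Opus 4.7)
The proof is structured around the following key observation: Construction \ref{construction-local.homeo}, read in reverse, exhibits a canonical correspondence between the pre-images under $\sigma$ of a fixed $y = (e'_0, e'_1, e'_2, \dots) \in X$ and certain initial red edges. Following the pre-image construction spelled out in the proof of Lemma \ref{lemma-sigma.surj.local.homeo}, every pre-image $x = (e_0, e_1, e_2, \dots)$ of $y$ arises from a choice of red edge $f_0 \in F^{1,0}$ with $s(f_0) = s(e'_0)$: applying condition (g) of Definition \ref{definition-L.separated.Bratteli} successively (starting with $g = f_0$ and the blue pair $(e'_1, e'_2)$) uniquely produces red edges $f_1, f_2, f_3, \dots$, and applying part (ii) of Proposition \ref{proposition-decomposition.odd.rombs.even} to each red pair $(f_{2j}, f_{2j+1})$ uniquely recovers the blue pairs $(e_{2j}, e_{2j+1})$ that make up $x$. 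Since the romb $(e_0, e_1, f_0, f_1)$ forces $r(e_0) = r(f_0)$, the pre-image $x$ lies in $Z(v)$ exactly when $r(f_0) = v$. Consequently, pre-images of $y$ inside $Z(v)$ are in bijection with the red edges $f_0 \in R_v$ satisfying $s(f_0) = s(e'_0)$.

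Assume now that $(F,D)$ is refined and suppose $x_1, x_2 \in Z(v)$ satisfy $\sigma(x_1) = \sigma(x_2) = y = (e'_0, e'_1, \dots)$. By the correspondence above, the associated initial red edges $f_0^{(1)}, f_0^{(2)}$ are both red edges from $v$ to the vertex $s(e'_0) \in F^{0,1}$. The refined hypothesis, equivalent to the absence of double red edges in $F^{1,0}$ (see Definition \ref{definition-refined.Ldiagram} together with Remark \ref{remark-about.def.of.Ldiagram}(5)), forces $f_0^{(1)} = f_0^{(2)}$, and the uniqueness in the reverse construction then forces $x_1 = x_2$. Hence $\sigma|_{Z(v)}$ is injective.

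For the converse, assume $\sigma|_{Z(v)}$ is injective for every $v \in F^{0,0}$, and suppose for contradiction that $(F,D)$ is not refined. By the equivalent formulation at layer $1$, there exist $v \in F^{0,0}$, $w \in F^{0,1}$ and two distinct red edges $f, f' \in R_v$ with $s(f) = s(f') = w$. By part (1) of condition (d) in Definition \ref{definition-L.separated.Bratteli} there is a unique blue edge $e'_0$ with $s(e'_0) = w$; extend it to an infinite blue path $y = (e'_0, e'_1, e'_2, \dots) \in X$ by successively choosing any $e'_i \in B_{s(e'_{i-1})}$, possible since each $B_u$ is non-empty. Running the pre-image construction with the two initial choices $f_0 = f$ and $f_0 = f'$ yields two distinct pre-images of $y$ in $Z(v)$, contradicting injectivity of $\sigma|_{Z(v)}$. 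The main subtlety of the whole argument is verifying that each step of the pre-image construction is uniquely determined by the initial choice of $f_0$; this is granted by the uniqueness clauses in Definition \ref{definition-L.separated.Bratteli}(g) and Proposition \ref{proposition-decomposition.odd.rombs.even}(ii), and no further obstacle arises.
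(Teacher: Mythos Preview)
Your proof is correct and follows essentially the same approach as the paper's: both arguments reduce the statement to the bijection between pre-images of $y$ and red edges $f_0$ with $s(f_0)=s(e_0')$, which was established in the proof of Lemma~\ref{lemma-sigma.surj.local.homeo}. Your write-up is somewhat more explicit about why the correspondence is a bijection (invoking the uniqueness clauses in Definition~\ref{definition-L.separated.Bratteli}(g) and Proposition~\ref{proposition-decomposition.odd.rombs.even}(ii)), but the logical structure is the same.
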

\begin{proof}
By Remark \ref{remark-about.def.of.Ldiagram} (5) it is enough to show that $\sigma|_{Z(v)}$ is injective for all $v \in F^{0,0}$ if and only if there are no double red edges from $F^{0,1}$ to $F^{0,0}$, that is, given $w \in F^{0,1}$ and $v \in F^{0,0}$, there is at most one red edge $f$ such that $s(f) = w$ and $r(f) = v$.

Suppose first that there are no double red edges from a vertex in $F^{0,1}$ to a vertex in $F^{0,0}$. Let $y = (e_0',e_1',e_2', \dots )\in X$, where $e_i' \in B_{r(e_i')}$ for all $i \ge 0$. By the proof of Lemma \ref{lemma-sigma.surj.local.homeo}, the set of pre-images of $y$ by $\sigma$ is in bijective correspondence with the set of red edges $f_0$ such that $s(f_0) = s(e_0')$. Hence there is at most one element $x \in Z(v)$ such that $\sigma(x) = y$, for each $v \in F^{0,0}$.

Conversely, if there are two red edges $f_0,f_0'$ such that $s(f_0) = s(f_0') = w \in F^{0,1}$ and $r(f_0) = r(f_0') = v \in F^{0,0}$, then any point $y = (e_0',e_1',e_2',\dots ) \in X$ such that $s(e_0') = w$ has at least two pre-images in $Z(v)$, and thus $\sigma|_{Z(v)}$ is not injective.

This concludes the proof of the lemma.
\end{proof}

With the help of the above lemma, we can now show our main result.

\begin{theorem}[Theorem \ref{theorem-correspondence}]\label{theorem-equivalence}
The maps $\Phi : \emph{\textsf{LHomeo}}/\sim_{t.c.} \ra \emph{\textsf{LDiag}}/\sim_l$ and $\Psi : \emph{\textsf{LDiag}}/\sim_l \ra \emph{\textsf{LHomeo}}/\sim_{t.c.}$ defined in Theorems \ref{theorem-first.map} and \ref{theorem-second.map}, respectively, are mutually inverse maps.

Therefore the above procedures establish a bijective correspondence between equivalence classes of $l$-diagrams (via isomorphism and contraction) and topological conjugacy classes of surjective local homeomorphisms on totally disconnected, compact metrizable spaces.
\end{theorem}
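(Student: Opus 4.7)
The theorem has two parts: $\Psi \circ \Phi = \mathrm{id}$ on $\textsf{LHomeo}/\sim_{t.c.}$ and $\Phi \circ \Psi = \mathrm{id}$ on $\textsf{LDiag}/\sim_l$. For the first, fix a representative $(X,\sigma)$ with a chosen $\sigma$-refined sequence of partitions $\{\calP_n\}$, let $(F,D) = \Phi(X,\sigma)$ via Construction \ref{construction-l.diag}, and $(X',\sigma') = \Psi(F,D)$ via Construction \ref{construction-local.homeo}. The plan is to exhibit an explicit conjugacy $h \colon X \to X'$ by setting $h(x) := (e_0(x), e_1(x), \ldots)$, where $e_n(x)$ is the blue edge $e(Z_{n+1}, Z_n)$ determined by the inclusion of the unique elements $Z_{n+1} \in \calP_{n+1}$ and $Z_n \in \calP_n$ containing $x$. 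Continuity is clear from the cylinder basis on $X'$; bijectivity follows from compactness together with $\mathrm{diam}(\calP_n) \to 0$, which shows every blue path corresponds to a unique $x$ via the shrinking nested intersection $\bigcap_n Z_n$. As a continuous bijection of compact Hausdorff spaces, $h$ is a homeomorphism. The intertwining $h \circ \sigma = \sigma' \circ h$ follows by matching the romb construction in $(F,D)$ to the action of $\sigma$ on partition pieces: using Lemma \ref{lemma-group.of.f}, the right-middle vertex $r(f_{2j+1})$ of the romb $(e_{2j}(x), e_{2j+1}(x), f_{2j}, f_{2j+1})$ must be the unique element of $\calP_{2j+1}$ containing $\sigma(x)$, so the new blue path $(e_n')$ produced by Construction \ref{construction-local.homeo} is exactly $(e_n(\sigma(x)))_n$.

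For the converse identity, a crucial preliminary reduction is to assume $(F,D)$ refined in the sense of Definition \ref{definition-refined.Ldiagram}. By Remark \ref{remark-about.def.of.Ldiagram} (5), every $l$-diagram has at most one red edge between vertices at layers $n \ge 2$ and $n-1$, so contracting $(F,D)$ by the sequence $m_n := n + 2$ --- which satisfies $m_0 = 2$ even and (CR) since $m_{n+1} - m_n = 1$ --- yields a refined $l$-diagram equivalent to $(F,D)$ under $\sim_l$. Since $\Psi$ descends to $\sim_l$-classes, we lose no generality in assuming $(F,D)$ refined. Setting $(X,\sigma) := \Psi(F,D)$, the natural candidate is $\calP_n := \{Z(v) : v \in F^{0,n}\}$, where $Z(v)$ is the cylinder on the (unique, by condition (d) and Proposition \ref{proposition-decomposition.odd.rombs.even} (i)) blue path of length $n$ ending at $v$. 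The verification that $\{\calP_n\}$ is $\sigma$-refined rests on: Lemma \ref{lemma-refined.and.sigma.refined} for condition (a); unique-blue-edge refinement for (b); topology and metrics for (c) and (d); and translating conditions (e)--(g) of Definition \ref{definition-L.separated.Bratteli} into the partition compatibilities in (e) of Definition \ref{definition-sigma.refined.partition}. Running Construction \ref{construction-l.diag} on $\{\calP_n\}$ produces an $l$-diagram $(F'', D'')$ which I identify with $(F,D)$ via $v \leftrightarrow Z(v)$, matching blue edges via unique inclusions and red edges via the fact that the $\sigma$-action on cylinder sets is, by construction, encoded by the red edges of $(F,D)$.

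The principal obstacle is the second direction: identifying the red-edge structure of $(F'',D'')$ with that of $(F,D)$ requires translating the abstract romb conditions into concrete descriptions of how $\sigma$ splits cylinder sets, specifically that $\sigma(Z(v)) \supseteq Z(s(f))$ for $f \in R_v$, $v \in F^{0,2j}$, and $\sigma(Z(s(f))) \subseteq Z(r(f))$ for red edges $f$ in odd layers, with these relations being exhaustive. The reduction to refined $l$-diagrams is essential, since without it the natural candidate $\{Z(v)\}_{v \in F^{0,0}}$ fails condition (a) of Definition \ref{definition-sigma.refined.partition}: by Lemma \ref{lemma-refined.and.sigma.refined}, $\sigma|_{Z(v)}$ fails to be injective precisely when there are double red edges from $F^{0,1}$ to $F^{0,0}$.
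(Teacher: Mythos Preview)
Your proposal is correct and follows essentially the same approach as the paper's proof, with the two directions treated in reverse order but with the same constructions: the explicit conjugacy $h(x)=(e_n(x))_n$ for $\Psi\circ\Phi=\mathrm{id}$, and the reduction to refined $l$-diagrams followed by the cylinder-set partition $\calP_n=\{Z(v):v\in F^{0,n}\}$ for $\Phi\circ\Psi=\mathrm{id}$. One small point: your intertwining argument explicitly pins down only the odd-layer vertices $s(f_{2j})=Z_{2j+1}(\sigma(x))$; the paper also verifies the even-layer vertices $Z'_{2j}=Z_{2j}(\sigma(x))$ via the nesting $Z'_{2j+1}\subseteq Z'_{2j}$ combined with $Z_{2j+1}(\sigma(x))\subseteq Z_{2j}(\sigma(x))$, which you should make explicit.
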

\begin{proof}
We first show that $\Phi \circ \Psi = \text{id}_{\textsf{LDiag}/\sim_l}$. Let $(F,D)$ be an $l$-diagram. By Remark \ref{remark-about.def.of.Ldiagram} (5) the $l$-diagram obtained from $(F,D)$ by using the sequence $(2,3,4,5,\dots)$ is a refined $l$-diagram, which of course is $\sim_l$-equivalent to $(F,D)$. Hence we may assume that $(F,D)$ is a refined $l$-diagram.

Let $(X,\sigma)$ be the dynamical system in \textsf{LHomeo} constructed from $(F,D)$ by applying the construction \ref{construction-local.homeo}. Consider the sequence of partitions given by the cylinder sets
$$\calP_0 := \{Z(v) \mid v \in F^{0,0} \}, \quad \calP_{n} := \{Z(e_0,\dots,e_{n-1}) \mid (e_0,\dots,e_{n-1}) \in P_{0,n}^B\}$$
for $n \geq 1$. Note that there is a natural identification between $\calP_n$ and $F^{0,n}$. Recall that, by definition of $\sigma$, we have
$$\sigma(Z(e_0,\dots,e_{2n+1})) \subseteq Z(e'_0,\dots,e'_{2n})$$
for some blue path $(e_0',\dots, e_{2n}')$ of length $2n+1$.

First of all, we show that $\{\calP_n\}_{n\ge 0}$ is a $\sigma$-refined sequence of partitions of $X$ (see Definition \ref{definition-sigma.refined.partition}).

\begin{enumerate}[(a),leftmargin=0.6cm]
\item By Lemma \ref{lemma-refined.and.sigma.refined}, $\sigma|_{Z(v)}$ is injective for all $v \in F^{0,0}$, since we are assuming that $(F,D)$ is a refined $l$-diagram.
\item $\calP_n \precsim \calP_{n+1}$ because $Z(e_0,\dots,e_n,e_{n+1}) \subseteq Z(e_0,\dots,e_n)$.
\item The union $\bigcup_{n \geq 0} \calP_n$ generates the topology of $X$ by definition.
\item We may take the metric determined by the distance function $d$ given by
$$d((e_i),(e_i')) =\frac{1}{2^N},$$
where $N$ is the smallest non-negative integer such that $e_N\ne e_{N}'$ for two different elements $(e_i),(e_i')\in X$. It is then clear that  
$\text{diam}(Z(e_0,\dots,e_n)) \xra{n \ra \infty} 0$.
\item For each $n \ge 0$ we first show that $\sigma^{-1}(\calP _{2n+1}) \precsim \calP_{2n+2}$. Given a blue path $(e_0, e_1,\dots , e_{2n+1})$ of length $2n+2$, we know that there exists a blue path $(e_0',\dots , e_{2n}')$ of length $2n+1$ such that
$$\sigma(Z(e_0,\dots , e_{2n+1})) \subseteq Z(e'_0,\dots,e'_{2n}).$$
Therefore $Z(e_0,\dots , e_{2n+1}) \subseteq \sigma^{-1}(Z(e'_0,\dots,e'_{2n}))$, and so $\sigma^{-1}(\calP_{2n+1}) \precsim \calP_{2n+2}$.

We now show that $\calP_{2n}^{\sigma} \precsim \calP_{2n+1}$. Take a blue path $(e_0,e_1,\dots , e_{2n})$ of length $2n+1$. As seen in the proof of Lemma \ref{lemma-sigma.surj.local.homeo}, each element of $Z(e_0,\dots , e_{2n})$ has exactly $k$ pre-images by $\sigma$, where $k$ is the number of red edges $f_0$ in $F^{1,0}$ such that $s(f_0) = s(e_0)$. Suppose first that $n\ge 1$. Given such an edge $f_{j,0}$, for $1 \leq j \leq k$, there are unique red edges $f_{j,1},f_{j,2},\dots ,f_{j,2n-1},f_{j,2n}$ such that $f_{j,1}\in R(f_{j,0})$, $(e_{2i-1},e_{2i},f_{j,2i-1},f_{j,2i})$ is a romb for $1 \leq i \leq n$, and $f_{j,2i+1} \in R(f_{j,2i})$ for $1 \leq i \leq n-1$. For $0 \leq i \leq n-1$, let $(e_{j,2i}',e_{j,2i+1}',f_{j,2i},f_{j,2i+1})$ be the romb determined by $(f_{j,2i,},f_{j,2i+1})$. We then have
$$Z(e_0,\dots , e_{2n}) \subseteq  \Big( \bigcap_{j=1}^k \sigma ( Z(e'_{j,0}, \dots , e'_{j,2n-1}))\Big) \setminus \calU_{\ge k+1}.$$
Observing that the set in the right-hand side belongs to $\calP_{2n}^{\sigma}$, we obtain that $\calP_{2n}^{\sigma} \precsim \calP_{2n+1}$, as desired.

If $n=0$, then using that $(F,D)$ is a refined $l$-diagram we get
$$Z(e_0)\subseteq \Big( \bigcap_{j=1}^k \sigma ( Z(v_j))\Big) \setminus \calU_{\ge k+1},$$
where $v_j=r(f_j)$, and $f_1,\dots , f_k$ are the distinct red edges such that $s(f_j)= s(e_0)$. 
\end{enumerate}
Therefore $\{\calP_n\}_{n \geq 0}$ is a $\sigma$-refined sequence of partitions of $X$. Let $(F',D')$ be the $l$-diagram obtained from $(X,\sigma)$ by applying the construction \ref{construction-l.diag}. We aim to show that $(F,D) \sim_l (F',D')$.

It is clear that the vertex sets of $(F,D)$ and of $(F',D')$ are identified: given $v \in F^{0,n+1}$ for $n \geq 0$, there exists a unique blue path $(e_0,e_1,\dots,e_n)$ such that $s(e_n) = v$. Such vertex corresponds exactly to the clopen set $Z(e_0,\dots,e_n) \in \calP_{n+1} = (F')^{0,n+1}$. The vertex $v \in F^{0,0}$ corresponds to the clopen $Z(v) \in \calP_0 = (F')^{0,0}$. This correspondence is clearly bijective.

Now for the edges. We distinguish between even and odd layers.
\begin{enumerate}[(i),leftmargin=0.6cm]
\item For an even layer $2n \geq 0$, and given $v \in F^{0,2n}, w \in F^{0,2n+1}$, there exists a blue edge $e \in F^{1,2n}$ with source $w$ and range $v$ if and only if $v = s(e_{2n-1}), w = s(e_{2n})$ for a unique blue path $(e_0,\dots,e_{2n-1},e_{2n})$, if and only if there exists a blue edge $e(Z',Z) \in (F')^{1,2n}$ with source $Z' = Z(e_0,\dots,e_{2n-1},e_{2n})$ and range $Z = Z(e_0,\dots,e_{2n-1})$.

Similarly, by the definition of $\sigma$, there exists a red edge $f \in F^{1,2n}$ with source $w$ and range $v$ if and only if $v = s(e_{2n-1}), w = s(e'_{2n})$ for unique blue paths $(e_0,\dots,e_{2n-1}), (e'_0,\dots,e'_{2n})$ satisfying $Z(e'_0,\dots,e'_{2n}) \subseteq \sigma (Z(e_0,\dots,e_{2n-1}))$, if and only if there exists a red edge $f(Z',Z) \in (F')^{1,2n}$ with source $Z' = Z(e'_0,\dots,e'_{2n})$ and range $Z = Z(e_0,\dots,e_{2n-1})$ (note that since we are assuming that $(F,D)$ is refined, there is at most one red edge between any two vertices of $F$). Let us show the first of these equivalences (note that, when $n=0$, one has to interpret $Z(e_0,\dots , e_{2n-1})$ as $Z(v)$).

Suppose first that $Z(e'_0,\dots,e'_{2n}) \subseteq \sigma (Z(e_0,\dots,e_{2n-1}))$. Taking
$$y = (e_0',\dots ,e_{2n}', e_{2n+1}',\dots ) \in Z(e_0',\dots ,e_{2n}'),$$
we have that there is some $x = (e_0,\dots ,e_{2n-1},e_{2n},\dots) \in Z(e_0,\dots , e_{2n-1})$ such that $\sigma(x) = y$. Let $(e_{2j}, e_{2j+1}, f_{2j}, f_{2j+1})$ be the romb associated to the blue pair $(e_{2j},e_{2j+1})$, for $j \geq 0$. Since $\sigma(x) = y$, we have by definition of $\sigma$ that $(e'_{2j+1},e'_{2j+2},f_{2j+1},f_{2j+2})$ is the romb associated to the red pair $(f_{2j+1}, f_{2j+2})$, for all $j\ge 0$. In particular we get that $s(f_{2n})= s(e_{2n}') = w$ and that $r(f_{2n})= r(e_{2n})= s(e_{2n-1})= v$. Hence we see that there is a red edge $f \in F^{1,2n}$ from $w$ to $v$. 

To show the converse, suppose there is a red edge $f_{2n} \in F^{1,2n}$ such that $s(f_{2n}) = w$ and $r(f_{2n}) = v$, and let
$$y = (e_0',\dots ,e_{2n}', e_{2n+1}',\dots ) \in Z(e_0',\dots , e_{2n}').$$
By property (g) in Definition \ref{definition-L.separated.Bratteli}, we can inductively build red pairs $(f_{2j+1},f_{2j+2})$ for $j \ge n$, such that $f_{2j+1} \in R(f_{2j})$ and  $(e_{2j+1}',e_{2j+2}',f_{2j+1},f_{2j+2})$ is a romb, for all $j \ge n$. Now, for all $j \ge n$, we can build the rombs $(e_{2j},e_{2j+1},f_{2j},f_{2j+1})$ (see part (ii) of Proposition \ref{proposition-decomposition.odd.rombs.even}). Note that $r(e_{2n}) = r(f_{2n}) = v = s(e_{2n-1})$, so that we can consider the point
$$x = (e_0,\dots ,e_{2n-1},e_{2n}, e_{2n+1},\dots) \in Z(e_{0},\dots , e_{2n-1}).$$
Now we have
$$\sigma(x) = (e_0'', \dots , e_{2n}'', e_{2n+1}',e_{2n+2}',\dots).$$
In particular $(e_0'',\dots , e_{2n}'')$ is a blue path with source $r(e_{2n+1}') = s(e_{2n}') = w$, so by uniqueness of the blue paths we get that $(e_0'', \dots , e_{2n}'') = (e_0',\dots , e_{2n}')$, and consequently $\sigma(x) = y$, with $x \in Z(e_0,\dots e_{2n-1})$. This shows the desired inclusion $Z(e_0',\dots e_{2n}') \subseteq \sigma (Z(e_0,\dots , e_{2n-1}))$.

We have shown that there exist bijective correspondences between blue edges in $F^{1,2n}$ and blue edges in $(F')^{1,2n}$, and between red edges in $F^{1,2n}$ and red edges in $(F')^{1,2n}$.

\item For an odd layer $2n+1 \geq 1$, and given $v \in F^{0,2n+1}, w \in F^{0,2n+2}$, there exists a blue edge $e \in F^{1,2n+1}$ with source $w$ and range $v$ if and only if $v = s(e_{2n}), w = s(e_{2n+1})$ for a unique blue path $(e_0,\dots,e_{2n},e_{2n+1})$, if and only if there exists a blue edge $e(Z',Z) \in (F')^{1,2n+1}$ with source $Z' = Z(e_0,\dots,e_{2n},e_{2n+1})$ and range $Z = Z(e_0,\dots,e_{2n})$.

Similarly, by the definition of $\sigma$, there exists a red edge $f \in F^{1,2n+1}$ with source $w$ and range $v$ if and only if $v = s(e'_{2n}), w = s(e_{2n+1})$ for unique blue paths $(e_0,\dots,e_{2n+1}), (e'_0,\dots,e'_{2n})$ satisfying $Z(e_0,\dots,e_{2n+1}) \subseteq \sigma^{-1}(Z(e'_0,\dots,e'_{2n}))$, if and only if there exists a red edge $f(Z'',Z') \in (F')^{1,2n+1}$ with source $Z'' = Z(e_0,\dots,e_{2n+1})$ and range $Z' = Z(e'_0,\dots,e'_{2n})$.

Indeed, if $\sigma(Z(e_0,\dots,e_{2n+1})) \subseteq Z(e'_0,\dots,e'_{2n})$, then it follows directly from the definition of $\sigma$ that there is a red edge $f \in F^{1,2n+1}$ from $w = s(e_{2n+1})$ to $v = s(e_{2n}')$.

Conversely, suppose that there is a red edge $f \in F^{1,2n+1}$ from $w = s(e_{2n+1})$ to $v = s(e_{2n}')$, and let
$$x = (e_0, \dots ,e_{2n+1}, e_{2n+2},\dots ) \in Z(e_0,\dots ,e_{2n+1}).$$
Since $w \in F^{0,2n+2}$ it follows from condition (c) in Definition \ref{definition-L.separated.Bratteli} that there is a unique red edge with source $w$, which must be equal to $f$. Therefore, if $(e_{2n},e_{2n+1},f_{2n},f_{2n+1})$ is the romb determined by $(e_{2n},e_{2n+1})$, then necessarily $f_{2n+1} = f$. If
$$\sigma(x) = (e_0'',\dots , e_{2n}'',e_{2n+1}'',\dots),$$
then, by definition of $\sigma$, we have $s(e_{2n}') =  v= r(f) = r(f_{2n+1}) = r(e_{2n+1}'') = s(e_{2n}'')$. By uniqueness of blue paths, we get that $(e_0'',\dots , e_{2n}'') = (e_0',\dots , e_{2n}')$, and therefore $\sigma(x) \in Z(e_0',\dots, e_{2n}')$, showing that $\sigma(Z(e_0,\dots , e_{2n+1})) \subseteq Z(e_0',\dots, e_{2n}')$.

We have shown that there exist bijective correspondences between blue edges in $F^{1,2n+1}$ and blue edges in $(F')^{1,2n+1}$, and between red edges in $F^{1,2n+1}$ and red edges in $(F')^{1,2n+1}$.
\end{enumerate}
From these one can easily deduce that $(F,D) \sim_l (F',D')$, and so $[(F,D)] = [(F',D')] = \Phi(\Psi([(F,D)]))$, as we wanted to show.\\

We are left to show that $\Psi \circ \Phi = \text{id}_{\textsf{LHomeo}/\sim_{t.c.}}$. Let $(X,\sigma) \in \textsf{LHomeo}$ and take $\{\calP_n\}_{n \geq 0}$ to be any sequence of $\sigma$-refined partitions of $X$. Denote by $(F,D)$ the refined $l$-diagram constructed from $\{\calP_n\}_{n \geq 0}$ by applying the construction \ref{construction-l.diag}, and by $(X',\sigma')$ the dynamical system in \textsf{LHomeo} constructed from $(F,D)$ by applying the construction \ref{construction-local.homeo}. Recall that
$$X' = \{(e_0,e_1,e_2,\dots) \mid e_i \in B_r(e_i), r(e_{i+1}) = s(e_i) \text{ for all } i \geq 0\},$$
and a basis for the topology of $X'$ is given by the cylinder sets $Z(e_0,e_1,\dots,e_n)$. We aim to show that $(X,\sigma)$ and $(X',\sigma')$ are topologically conjugate.

Let us construct the homeomorphism $h : X \ra X'$ as follows. Given $x \in X$, for each $n \geq 0$ let $Z_n(x)$ be the unique clopen set in the partition $\calP_n$ that contains $x$. Then $x \in \bigcap_{n \geq 0} Z_n(x)$, and since $\text{diam}(Z_n(x))$ tend to zero as $n \ra \infty$, necessarily this intersection is exactly $\{x\}$, that is
$$\bigcap_{n \geq 0} Z_n(x) = \{x\}.$$
Let now $e_n(x)$ be the unique blue edge in $F^{1,n}$ such that $r(e_n(x)) = Z_n(x)$ and $s(e_n(x)) = Z_{n+1}(x)$ (it exists since $x \in Z_n(x) \cap Z_{n+1}(x)$, so necessarily $Z_{n+1}(x) \subseteq Z_n(x)$). We define
$$h : X \ra X', \quad x \mapsto (e_0(x),e_1(x),e_2(x),\dots).$$
We show that $h$ is a homeomorphism, and that $h \circ \sigma = \sigma' \circ h$.
\begin{enumerate}[(i),leftmargin=0.6cm]
\item Suppose $x,y \in X$ are such that $e_n(x) = e_n(y)$ for all $n \geq 0$. Then
$$\{x\} = \bigcap_{n \geq 0} Z_n(x) = \bigcap_{n \geq 0} r(e_n(x)) = \bigcap_{n \geq 0} r(e_n(y)) = \bigcap_{n \geq 0} Z_n(y) = \{y\},$$
i.e. $x = y$. This proves injectivity of $h$.
\item Let $e = (e_0,e_1,e_2,\dots) \in X'$. Putting $Z_n = r(e_n)$, we have a decreasing sequence of non-empty clopen sets $Z_0 \supseteq Z_1 \supseteq Z_2 \supseteq \cdots$. Therefore $\bigcap_{n \geq 0} Z_n \neq \emptyset$ by Cantor's Intersection Theorem, and by the assumption that $\text{diam}(Z_n) \xra{n \ra \infty} 0$, it must consist of a single point $\{x\}$. Then $x \in Z_n$ for all $n \geq 0$, so $e_n(x) = e_n$. This proves surjectivity of $h$.
\end{enumerate}
Thus $h$ is bijective. Since $X,X'$ are compact and Hausdorff, in order to prove that $h$ is a homeomorphism it is enough to prove continuity of $h$.
\begin{enumerate}[(i),start=3,leftmargin=0.6cm]
\item Let $x \in h^{-1}(Z(e_0,e_1,\dots,e_n))$, so that $e_i(x) = e_i$ for $0 \leq i \leq n$. That means $x \in Z_0(x),\dots,Z_n(x),Z_{n+1}(x)$, being $Z_i(x) = r(e_i(x))$ and $Z_{n+1}(x) = s(e_n(x))$. But then for any $y \in \bigcap_{i=0}^{n+1} Z_i(x) = Z_{n+1}(x)$ we have that $e_i(y) = e_i(x) = e_i$ for $0\le i \le n$, i.e. $h(y) \in Z(e_0,e_1,\dots,e_n)$. Therefore $x \in Z_{n+1}(x) \subseteq h^{-1}(Z(e_0,e_1,\dots,e_n))$. This shows continuity of $h$.
\end{enumerate}
Hence $h$ is a homeomorphism. To conclude the theorem, we need to prove that $h$ intertwines the surjective local homeomorphisms $\sigma$ and $\sigma'$.
\begin{enumerate}[(i),start=4,leftmargin=0.6cm]
\item Let $x \in X$. Let also $Z_n(\sigma(x)) \in \calP_n$ be such that $\{\sigma(x)\} = \bigcap_{n \geq 0} Z_n(\sigma(x))$, and consider the blue edges $e_n := e_n(\sigma(x))$, so that $r(e_n) = Z_n(\sigma(x))$ and $s(e_n) = Z_{n+1}(\sigma(x))$.
\noindent Let $\{Z_n(x)\}_n$ and $\{e_n(x)\}_n$ be such that $\{x\} = \bigcap_{n \geq 0} Z_n(x), r(e_n(x)) = Z_n(x)$ and $s(e_n(x)) = Z_{n+1}(x)$.

For each $j \geq 0$, we complete the pair of blue edges $(e_{2j}(x),e_{2j+1}(x))$ to a romb $(e_{2j}(x),e_{2j+1}(x),f_{2j},f_{2j+1})$. For $j \geq 1$ we have $s(f_{2j-1}) = r(f_{2j})$, so we may consider the red pair $(f_{2j-1},f_{2j})$ and complete it to another romb $(e_{2j-1}',e_{2j}',f_{2j-1},f_{2j})$. There exists then a unique blue edge $e_0'$ such that $s(e_0') = r(e_1')$. Then by construction of $\sigma'$ we have $\sigma'(h(x)) = (e_0',e_1',e_2',\dots)$. Let $Z_n' := r(e_n')$ for $n \geq 0$. This is a decreasing sequence of clopen subsets $Z_n' \in \calP_n$. Also, by definition, we have
$$\sigma(Z_{2j+2}(x)) \subseteq Z'_{2j+1} \subseteq \sigma(Z_{2j}(x))$$
for all $j \geq 0$. In particular, $\sigma(x) \in \sigma(Z_{2j+2}(x)) \subseteq Z_{2j+1}'$, so by uniqueness $Z_{2j+1}' = Z_{2j+1}(\sigma(x))$. On the other hand, note that
$$Z_{2j}(\sigma(x)) \cap Z_{2j}' \supseteq Z_{2j+1}(\sigma(x)) \cap Z_{2j+1}' = Z_{2j+1}' \neq \emptyset$$
for all $j \geq 0$. By uniqueness again, $Z_{2j}' = Z_{2j}(\sigma(x))$. We have thus concluded that $Z_n' = Z_n(\sigma(x))$ for all $n \geq 0$, and so
$$\sigma'(h(x)) = (e_0',e_1',e_2',\dots) = h(\sigma(x)),$$
as we wanted to show.
\end{enumerate}
This concludes the proof of the theorem.
\end{proof}

\section{Homeomorphisms}\label{section-homeos}

If we restrict our attention to homeomorphisms $\varphi : X \ra X$ of a totally disconnected, compact metrizable space $X$, then by restricting the correspondence from Theorem \ref{theorem-equivalence} we obtain a bijective correspondence between such dynamical systems and a subfamily of the family \textsf{LDiag}, which we call \textsf{HDiag}. In this section we describe such correspondence. We first define the class \textsf{HDiag}.

\begin{definition}\label{definition-H.separated.Bratteli}
Let $(F,D)$ be a separated Bratteli diagram. We say that $(F,D)$ is an \textit{$h$-diagram} if the following hold:
\begin{enumerate}[(a),leftmargin=0.7cm]
%\item (Root) $F^{0,0}$ consists of only one vertex $v$, the \textit{root} of $(F,D)$.
\item (Separation for the layers) For each $n \geq 0$ and $v \in F^{0,n}$, we have
$$D_v= \{ B_v,R_v \}.$$
The elements of $B_v$ will be called \textit{blue edges}, and the elements of $R_v$ \textit{red edges}.
\item (Vertices) For each $n \geq 0$,
$$F^{0,n+1} = \bigsqcup_{v\in F^{0,n}} s(B_v) = \bigsqcup _{v\in F^{0,n}} s(R_v) .$$
Moreover, for any $v \in F^{0,n}$, we have $s(e) \neq s(e')$ for all distinct blue edges $e,e' \in B_v$, and similarly $s(f) \neq s(f')$ for all distinct red edges $f,f' \in R_v$.
\item (Compatibility) For each $n \ge 0$ and $v \in F^{0,n}$, we have 
$$\bigsqcup _{e \in B_v} s(B_{s(e)}) = \bigsqcup_{f \in R_v} s(R_{s(f)}).$$
\end{enumerate}
The family of $h$-diagrams will be denoted by \textsf{HDiag}.
\end{definition}

\begin{remarks}\label{remarks-edges.unique}
\begin{enumerate}[(1),leftmargin=0.7cm]
\item Condition (b) in Definition \ref{definition-H.separated.Bratteli} implies that, for each $n \geq 1$ and $v \in F^{0,n}$, there are exactly two different edges $e \in B_{r(e)}, f \in R_{r(f)}$ such that $s(e) = s(f) = v$.
\item The disjointness of both unions in condition (c) of Definition \ref{definition-H.separated.Bratteli} is automatic, due to condition (b).
\end{enumerate}
\end{remarks}

The class \textsf{HDiag} is a subclass of the diagrams considered in Section \ref{section-correspondence.local.homeo}. To prove this, we need the following technical lemma, which enables us to build rombs at any layer of an $h$-diagram $(F,D)$.

\begin{lemma}\label{lemma-building.rombs}
Let $v \in F^{0,n}$. Given a pair of blue edges $(e_0,e_1)$ such that $r(e_0) = v$ and $r(e_1)=s(e_0)$, there is a unique pair $(f_0,f_1)$ of red edges such that $r(f_0)= v$, $r(f_1)=s(f_0)$ and $s(f_1)= s(e_1)$.

Conversely, given a pair of red edges $(f_0,f_1)$ such that $r(f_0) = v$ and $r(f_1)=s(f_0)$, there is a unique pair $(e_0,e_1)$ of blue edges such that $r(e_0)= v$, $r(e_1)=s(e_0)$ and $s(e_1)= s(f_1)$.
\end{lemma}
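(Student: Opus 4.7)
The plan is to prove existence by the compatibility axiom (c) in Definition \ref{definition-H.separated.Bratteli}, and uniqueness by combining the disjointness in (c) with the injectivity of the source map on each $B_v$ or $R_v$ guaranteed by (b).

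For the existence of $(f_0,f_1)$ given the blue pair $(e_0,e_1)$, I would argue as follows. Since $e_0\in B_v$ and $e_1\in B_{s(e_0)}$, the vertex $s(e_1)$ lies in $s(B_{s(e_0)})$, which is a piece of the left-hand side in the compatibility identity
\[
\bigsqcup_{e\in B_v} s(B_{s(e)}) \;=\; \bigsqcup_{f\in R_v} s(R_{s(f)}).
\]
Hence there exists $f_0\in R_v$ and $f_1\in R_{s(f_0)}$ with $s(f_1)=s(e_1)$, and this pair satisfies $r(f_0)=v$ and $r(f_1)=s(f_0)$, as required.

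For uniqueness, suppose $(f_0,f_1)$ and $(f_0',f_1')$ both satisfy the conclusion. Then $s(f_1)=s(e_1)=s(f_1')$, so the element $s(e_1)$ of the right-hand disjoint union lies simultaneously in $s(R_{s(f_0)})$ and $s(R_{s(f_0')})$. If $f_0\neq f_0'$, then condition (b) applied to $R_v$ gives $s(f_0)\neq s(f_0')$, so the two pieces $s(R_{s(f_0)})$ and $s(R_{s(f_0')})$ are disjoint, contradicting the fact that $s(e_1)$ lies in both. Therefore $f_0=f_0'$, and then $f_1,f_1'\in R_{s(f_0)}$ share the same source, so another application of (b) forces $f_1=f_1'$.

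The converse statement, producing a unique blue pair $(e_0,e_1)$ from a given red pair $(f_0,f_1)$, is proved by the symmetric argument: use the inclusion $s(f_1)\in s(R_{s(f_0)})$ and run the compatibility identity from right to left to obtain existence, and then invoke disjointness plus part (b) of Definition \ref{definition-H.separated.Bratteli} on the blue side to get uniqueness. I do not expect any genuine obstacle here; the only point one must be careful about is to use the version of (b) for both colors (and at two consecutive layers, namely $F^{0,n}$ and $F^{0,n+1}$), which is exactly what Remark \ref{remarks-edges.unique}(2) records.
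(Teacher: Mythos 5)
Your proof is correct and follows essentially the same route as the paper: existence from the compatibility identity in condition (c), uniqueness from condition (b). The only cosmetic difference is the order in the uniqueness step — you pin down $f_0=f_0'$ first via disjointness of the pieces $s(R_{s(f_0)})$, whereas the paper first gets $f_1=f_1'$ from the uniqueness of the red edge with source $s(e_1)$ and then deduces $f_0=f_0'$; both deductions rest on the same axioms.
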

\begin{proof}
By condition (c) in Definition \ref{definition-H.separated.Bratteli} we have
$$s(e_1) \in \bigsqcup_{e \in B_v} s(B_{s(e)}) = \bigsqcup_{f \in R_v} s(R_{s(f)}),$$
so there exist a pair of red edges $(f_0,f_1)$ such that $s(e_1) = s(f_1)$, being $f_1 \in R_{s(f_0)}$ and $f_0 \in R_v$. The pair $(f_0,f_1)$ satisfies the required properties. To prove uniqueness, assume that there exists another pair $(f'_0,f'_1)$ of red edges such that $r(f'_0)=v$, $s(f'_0)=r(f'_1)$ and $s(f'_1) = s(e_1) = s(f_1)$. Condition (b) in Definition \ref{definition-H.separated.Bratteli} ensures that $f'_1 = f_1$; in particular $s(f'_0) = r(f'_1) = r(f_1) = s(f_0)$. Applying condition (b) again gives $f'_0 = f_0$. That is, the pair $(f_0,f_1)$ is unique.

The same arguments apply if one starts with a pair of red edges to conclude existence and uniqueness of a pair of blue edges satisfying the desired properties. 
\end{proof}

\begin{proposition}\label{proposition-hdiag.ldiag}
Every $h$-diagram is an $l$-diagram.
\end{proposition}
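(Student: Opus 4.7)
The plan is to exhibit every $h$-diagram as an $l$-diagram by performing a purely formal relabeling of the separations at odd layers, and then to verify conditions (a)--(g) of Definition \ref{definition-L.separated.Bratteli} one at a time. The key observation I would use throughout is the consequence of Remark \ref{remarks-edges.unique} (1): for each $n \ge 1$ and each $v \in F^{0,n}$ of an $h$-diagram, there is a unique red edge $f_v$ with $s(f_v) = v$. Thus for $j \ge 1$ and $v \in F^{0,2j-1}$, I set $R(f_v) := R_v$, so the separation
$$D_v = \{B_v, R_v\} = \{B_v\} \cup \{R(f) \mid f \in R_{r(f)},\, s(f) = v\}$$
automatically takes the form required at odd layers of an $l$-diagram. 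The separations at even layers already coincide with those prescribed in Definition \ref{definition-L.separated.Bratteli}(a), and no modification is needed there.

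With this relabeling in hand, I would check the conditions in order. Condition (a) (separations at even layers) and condition (b) (separations at odd layers) hold by construction. Condition (c) (vertex decomposition at even layers) is exactly condition (b) of Definition \ref{definition-H.separated.Bratteli} applied at odd indices, and condition (d) (vertex decomposition at the first odd layer) is the same identity applied at $n = 0$, together with the source-injectivity statements, which are built into the $h$-diagram axioms. For condition (e), I would rewrite $s(R(f)) = s(R_{s(f)})$ using the new labeling, so that the required identity $\bigsqcup_{e \in B_v} s(B_{s(e)}) = \bigsqcup_{f \in R_v} s(R(f))$ collapses to the compatibility condition (c) of Definition \ref{definition-H.separated.Bratteli}.

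The final pair of conditions, (f) and (g), is where Lemma \ref{lemma-building.rombs} does the work: it already asserts existence and uniqueness of the complementary red (resp. blue) pair for every blue (resp. red) pair at any layer. For (f) this is a direct application. For (g), the only subtlety is the extra constraint $f_0 \in R(g)$ attached to the red edge $g$ with $s(g) = v$; but since $g$ is the unique such red edge and we have set $R(g) = R_v$, the constraint reduces to $f_0 \in R_v$, which is implied by $r(f_0) = v$. Uniqueness is then provided by the lemma.

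I do not expect any real obstacle: the argument is essentially a translation exercise, because the $h$-diagram axioms are a strictly cleaner specialization (uniqueness of the red predecessor) that trivializes the indexing of the odd-layer separations and the $R(g)$-membership clause in (g). The only step that warrants a brief explicit check is (g), where one must identify the refined separation $R(g)$ with $R_v$ before invoking Lemma \ref{lemma-building.rombs}; everything else is immediate from the definitions.
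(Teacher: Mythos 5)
Your proposal is correct and follows essentially the same route as the paper: the same relabeling $R(f_v) = R_v$ justified by the uniqueness of the red edge emitted at each vertex (Remark \ref{remarks-edges.unique}\,(1)), the same direct translations of conditions (c)--(e) into the $h$-diagram axioms, and the same appeal to Lemma \ref{lemma-building.rombs} for (f) and (g). Your explicit remark that the clause $f_0 \in R(g)$ in condition (g) collapses to $r(f_0)=v$ is a small point the paper's proof leaves implicit, but it is the same argument.
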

\begin{proof}
Indeed, if $(F,D)$ is an $h$-diagram, it is also an $l$-diagram of a special kind, namely the separations for the odd layers consist only of two colors. We need to check conditions (a)-(g) of Definition \ref{definition-L.separated.Bratteli}.

\noindent (a) The separations for the even layers are what they should be by definition.

\noindent (b) By Remark \ref{remarks-edges.unique} (1), for each $j \geq 0$ and $v \in F^{0,2j+1}$ there is exactly one red edge $f_v \in R_{r(f_v)}$ with source $v$. Therefore here $R(f_v) = R_v$, and we have the separation
$$D_v = \{B_v,R_v\} = \{B_v,R(f_v)\}.$$

\noindent (c) Follows directly from condition (b) in Definition \ref{definition-H.separated.Bratteli}.

\noindent (d) Parts (1) and (2) follow directly from condition (b) in Definition \ref{definition-H.separated.Bratteli}. Also, the second part follows from the disjointness of the sets in the same condition (b) and from the second part of that same condition.

\noindent (e) Follows directly from condition (c) in Definition \ref{definition-H.separated.Bratteli}.

\noindent (f)-(g) They follow from Lemma \ref{lemma-building.rombs} above.
\end{proof}

Note that the equivalence relation $\sim_l$ in \textsf{LDiag} restricts to an equivalence relation $\sim_h$ in \textsf{HDiag}. More concretely, the contraction of an $h$-diagram is again an $h$-diagram.

\begin{lemma}\label{lemma-F'D'.H.diagram}
The contraction $(F',D')$ of an $h$-diagram, as defined in Definition \ref{definition-equiv.telesc.l.diagrams}, is an $h$-diagram.
\end{lemma}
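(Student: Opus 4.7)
The plan is to verify the three axioms of Definition \ref{definition-H.separated.Bratteli} for $(F',D')$. Since $(F,D)$ is an $h$-diagram, it is an $l$-diagram by Proposition \ref{proposition-hdiag.ldiag}, and Lemma \ref{lemma-F'D'.L.diagram} already gives that $(F',D')$ is an $l$-diagram. So the task is to upgrade this $l$-diagram structure to an $h$-diagram structure by exploiting the uniform separation of $(F,D)$.

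First I would establish condition (a). At $v \in (F')^{0,2j}$ the separation $D'_v = \{B'_v, R'_v\}$ already has the correct shape by definition of the contraction. The key case is $v \in (F')^{0,2j+1} = F^{0,m_{2j+1}}$, where the $l$-diagram separation reads $D'_v = \{B'_v\} \cup \{R'(\hat{f}) : \hat{f} \in (F')^{1,2j},\ s(\hat{f})=v,\ \hat{f} \in R'_{r(\hat{f})}\}$. I would argue that the second collection is a singleton by iterating condition (b) of Definition \ref{definition-H.separated.Bratteli} backward from $v$: each vertex of $F$ is the source of a unique red edge of $F$, so there is exactly one red path $\hat{f}_v$ of length $m_{2j+1}-m_{2j}$ in $F$ ending at $v$. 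Since $R(f)=R_{s(f)}$ holds trivially in the $h$-diagram $(F,D)$, the contraction formula for $R'(\hat{f}_v)$ then simplifies to the full set of red paths in $P^R_{m_{2j+1},m_{2j+2}}$ with range $v$, which we rename $R'_v$. This yields $D'_v=\{B'_v,R'_v\}$.

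For condition (b), the blue equality $(F')^{0,n+1} = \bigsqcup_{v\in (F')^{0,n}} s(B'_v)$ and the injectivity of $s$ on each $B'_v$ are already part of the $l$-diagram structure of $(F',D')$, via parts (c.1) and the last clause of (d) of Lemma \ref{lemma-F'D'.L.diagram}, part (i) of Proposition \ref{proposition-decomposition.odd.rombs.even}, and part (1) of condition (d) of Definition \ref{definition-L.separated.Bratteli}. For the red version, the $l$-diagram axioms give the covering $(F')^{0,n+1} = \bigcup_{v\in (F')^{0,n}} s(R'_v)$; disjointness and injectivity of $s$ on each $R'_v$ follow from backward-uniqueness of red paths in $(F,D)$: if $w\in s(R'_{v_1})\cap s(R'_{v_2})$ then $w$ is the source of two red paths of length $m_{n+1}-m_n$ in $F$, and iterating condition (b) of $(F,D)$ backward from $w$ forces these paths, and hence $v_1$ and $v_2$, to coincide.

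The delicate step is condition (c). At an even-layer vertex $v$, axiom (e) of Definition \ref{definition-L.separated.Bratteli} for $(F',D')$ reads $\bigsqcup_{e\in B'_v}s(B'_{s(e)}) = \bigsqcup_{f\in R'_v}s(R'(f))$, and the identification $R'(f)=R'_{s(f)}$ from the first step yields the required compatibility. The main obstacle is the odd-layer case $v\in (F')^{0,2j+1}$, since $l$-diagrams only state compatibility at even layers. Here I would invoke the romb axioms (f) and (g) of Definition \ref{definition-L.separated.Bratteli} applied to the unique red edge $g=\hat{f}_v$ with $s(g)=v$ produced above: condition (g) sends each blue pair $(e_0,e_1)$ based at $v$ to a unique red pair $(f_0,f_1)$ with $f_0\in R'(g)=R'_v$ and $s(f_1)=s(e_1)$, while condition (f) provides the inverse correspondence. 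This endpoint-preserving bijection gives the desired equality, with disjointness of the blue side following from the last clause of condition (d) of $l$-diagrams and of the red side from backward-uniqueness in $(F,D)$. The single conceptual obstacle throughout is the uniqueness statement in the first step; once that collapse of the odd-layer separation is in hand, the rest is bookkeeping combining the $l$-diagram axioms of $(F',D')$ with the uniform separation structure of $(F,D)$.
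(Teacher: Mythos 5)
Your proof is correct and follows essentially the same route as the paper's: the paper likewise first observes that in an $h$-diagram every vertex emits a unique red edge, so the contraction's separation data collapses to the two-color form $D'_v=\{B'_v,R'_v\}$ at every layer, and then appeals to the arguments of Lemma \ref{lemma-F'D'.L.diagram}. Your packaging --- invoking Lemma \ref{lemma-F'D'.L.diagram} as a black box and upgrading the resulting $l$-diagram structure via backward-uniqueness of red paths and the romb axioms, rather than re-running the telescoping verification --- is a minor and arguably cleaner variation of the same idea.
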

\begin{proof}
First observe that if $(F,D)$ is an $h$-diagram, then the contraction $(F',D')$ with respect to a sequence $(m_n)_{n \geq 0}$ in $\N \cup \{0\}$ with $0 \leq m_0 < m_1 < \cdots$, $m_0$ even and satisfying (CR) is given by the following data.
\begin{enumerate}[(i),leftmargin=0.6cm]
\item $(F')^{0,n} = F^{0,m_n}$ for $n \geq 0$.
\item $(F')^{1,n} = P_{m_n,m_{n+1}}^B \cup P_{m_n,m_{n+1}}^R$ for $n \geq 0$, where for a given pair $j < l$ of non-negative integers, we have
$$P_{j,l}^B = \{(e_j,\dots,e_{l-1}) \in P_{j,l} \mid e_i \in B_{r(e_i)}, j \leq i \leq l-1\},$$
$$P_{j,l}^R = \{(f_j,\dots,f_{l-1}) \in P_{j,l} \mid f_i \in R_{r(e_i)}, j \leq i \leq l-1\}.$$
\item For $v \in (F')^{0,n}$ with $n \geq 0$, $D_v' = \{B_v',R_v'\}$ being
$$B_v' = \{\ol{e} \in P_{m_n,m_{n+1}}^B \mid r(\ol{e}) = v\}, \quad R_v' = \{\ol{f} \in P_{m_n,m_{n+1}}^R \mid r(\ol{f}) = v\}.$$
\end{enumerate}
Now the proof of the lemma follows identical arguments as in the proof of Lemma \ref{lemma-F'D'.L.diagram}, so we omit it.
\end{proof}

We now describe the correspondence between homeomorphisms on a totally disconnected, compact metrizable space and $h$-diagrams. For ease of notation, we let \textsf{Homeo} be the family of dynamical systems $(X,\varphi)$ being $X$ a totally disconnected, compact metrizable space, and $\varphi : X \ra X$ a homeomorphism. Note that \textsf{Homeo} is a subfamily of \textsf{LHomeo}.

\begin{theorem}\label{theorem-homeo.H.diag}
Let $(X,\varphi) \in \emph{\textsf{Homeo}}$. Then the resulting $l$-diagram $(F,D)$ obtained from $(X,\varphi)$ by applying the construction \ref{construction-l.diag} is in fact an $h$-diagram.
\end{theorem}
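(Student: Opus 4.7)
The plan is to exploit the fact that a homeomorphism $\varphi$ satisfies $N(\varphi)=1$ and admits an honest inverse $\varphi^{-1}$, so that for any partition $\calP$ of $X$ into clopen sets on which $\varphi$ restricts to homeomorphisms, the refinement $\calP^{\varphi}$ collapses to the genuine partition $\{\varphi(Z)\mid Z\in\calP\}$. Since Lemma \ref{lemma-FD.is.Ldiagram} already gives that $(F,D)$ is a refined $l$-diagram, what remains is to verify the three defining properties of an $h$-diagram in Definition \ref{definition-H.separated.Bratteli}. Throughout, I identify each $v\in F^{0,n}$ with the corresponding clopen set in $\calP_n$.

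The first step is to show that the separation at odd layers collapses to a two-color family $\{B_v,R_v\}$. Fix $Z'\in F^{0,2j+1}$. By Construction \ref{construction-l.diag}, the red edges $f\in F^{1,2j}$ with $s(f)=Z'$ are in bijection with those $Z\in\calP_{2j}$ satisfying $Z'\subseteq\varphi(Z)$. Because $\calP_{2j}^{\varphi}=\varphi(\calP_{2j})$ is a genuine partition of $X$ refined by $\calP_{2j+1}$, there is exactly one such $Z$, hence a unique edge $f_{Z'}$. Moreover, for any $g\in R_{Z'}$ with $s(g)=Z''\in\calP_{2j+2}$, the condition $\varphi(Z'')\subseteq Z'$ forces $Z''\subseteq\varphi^{-1}(Z')\subseteq r(f_{Z'})$, so $R(f_{Z'})=R_{Z'}$ in the sense of Construction \ref{construction-l.diag}. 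Thus $D_{Z'}=\{B_{Z'},R_{Z'}\}$, which is condition (a).

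For condition (b), the blue disjointness at every layer and the red disjointness from even to odd layers are already part of Lemma \ref{lemma-FD.is.Ldiagram}. The only new case is red disjointness from odd to even: for $n=2j+1$, one has $s(R_{Z'})=\{Z''\in\calP_{2j+2}\mid Z''\subseteq\varphi^{-1}(Z')\}$, so distinct $Z'_1,Z'_2$ yield disjoint $\varphi^{-1}$-images and therefore disjoint sets $s(R_{Z'_i})$; their union covers $\calP_{2j+2}$ because $\varphi^{-1}(\calP_{2j+1})\precsim\calP_{2j+2}$. For condition (c), the even-layer case follows from condition (e) of the $l$-diagram together with the identity $R(f)=R_{s(f)}$ established above. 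For the odd-layer case $v=Z'\in F^{0,2j+1}$, a direct unpacking of both sides gives
$$\bigsqcup_{e\in B_{Z'}}s(B_{s(e)})=\{Z'''\in\calP_{2j+3}\mid Z'''\subseteq Z'\}$$
and
$$\bigsqcup_{f\in R_{Z'}}s(R_{s(f)})=\bigcup_{\varphi(Z'')\subseteq Z'}\{Z'''\in\calP_{2j+3}\mid Z'''\subseteq\varphi(Z'')\}.$$
The inclusion $\supseteq$ is immediate, while the reverse uses two refinement facts derived from Definition \ref{definition-sigma.refined.partition}(e): first, $\calP_{2j+2}^{\varphi}\precsim\calP_{2j+3}$ assigns to each $Z'''$ a unique $Z''\in\calP_{2j+2}$ with $Z'''\subseteq\varphi(Z'')$; and second, applying $\varphi$ to $\varphi^{-1}(\calP_{2j+1})\precsim\calP_{2j+2}$ yields $\calP_{2j+1}\precsim\varphi(\calP_{2j+2})$, so each $\varphi(Z'')$ sits inside a unique element of $\calP_{2j+1}$. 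When $Z'''\subseteq Z'$, this unique element must be $Z'$, giving $\varphi(Z'')\subseteq Z'$ as required.

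The main obstacle is precisely this last odd-layer verification of condition (c), since all other properties transfer easily from the $l$-diagram structure once $\calP^{\varphi}$ is recognized as a partition. The subtlety is that one must bridge the blue side (inclusions $Z'''\subseteq Z''\subseteq Z'$ inside $X$) and the red side (inclusions $Z'''\subseteq\varphi(Z'')$ with $\varphi(Z'')\subseteq Z'$ inside $X$), and this bridging genuinely needs both directions of compatibility between $\calP_{2j+1}$, $\calP_{2j+2}$ and $\varphi(\calP_{2j+2})$ simultaneously, each of which is available only because $\varphi$ is invertible.
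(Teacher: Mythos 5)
Your proof is correct and follows essentially the same route as the paper's: the key observation that $\calP_n^{\varphi}=\varphi(\calP_n)$ is a genuine partition collapses the odd-layer separations to $\{B_v,R_v\}$, and the remaining verification of conditions (b) and (c) of Definition \ref{definition-H.separated.Bratteli} uses exactly the same refinement relations ($\calP_{2j+2}^{\varphi}\precsim\calP_{2j+3}$ and $\varphi^{-1}(\calP_{2j+1})\precsim\calP_{2j+2}$) that the paper invokes. One tiny attribution slip: the disjointness of $\bigsqcup_{v\in F^{0,2j}}s(R_v)$ is not supplied by Lemma \ref{lemma-FD.is.Ldiagram} (an $l$-diagram only guarantees a non-disjoint union there), but it does follow from the uniqueness of the red edge $f_{Z'}$ that you establish in your first paragraph, so nothing is missing.
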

\begin{proof}
Take $\{\calP_n\}_{n \geq 0}$ to be any $\varphi$-refined sequence of partitions of $X$. We know that $(F,D)$ so obtained is an $l$-diagram by Lemma \ref{lemma-FD.is.Ldiagram}. We recall its construction.

For $n \geq 0$ we have $F^{0,n} = \calP_n$, and $F^{1,n} = B^{(n)} \cup R^{(n)}$, where the set $B^{(n)} = \bigsqcup_{Z \in F^{0,n}} B_Z$ consists of blue edges, and $R^{(n)} = \bigsqcup_{Z \in F^{0,n}}R_Z$ consists of red edges. More precisely, for $Z \in F^{0,n}$,
$$B_Z = \{e(Z',Z) \mid Z' \in \calP_{n+1}\text{ such that } Z' \subseteq Z\}$$
and if $j \geq 0$, then
$$R_Z = \{f(Z',Z) \mid Z' \in \calP_{2j+1}\text{ such that } Z' \subseteq \varphi(Z)\}$$
for $Z \in \calP_{2j}$, and
$$R_{Z'} = \{f(Z'',Z') \mid Z'' \in \calP_{2j+2}\text{ such that } Z'' \subseteq \varphi^{-1}(Z')\}$$
for $Z' \in \calP_{2j+1}$.

For $j \geq 0$, the separations are given by
$$D_Z = \{B_Z,R_Z\}$$
in case $Z \in \calP_{2j}$, and by
$$D_{Z'} = \{B_{Z'}\} \cup \{R(f) \mid s(f) = Z', f \in R_{r(f)}\}$$
in case $Z' \in \calP_{2j+1}$.

Observe that, since $\varphi$ is a homeomorphism,
$$\calP_n^{\varphi} = \{\varphi(Z) \mid Z \in \calP_n\}.$$
Hence for $Z \in \calP_{2j}, Z' \in \calP_{2j+1}$, we have $Z' \subseteq \varphi(Z)$ if and only if $\varphi^{-1}(Z') \subseteq Z$. This means precisely that there is exactly one red edge with source $Z'$, namely $f = f(Z',Z) \in R_Z$. Hence
$$R_{Z'} = R(f)$$
and so the separation at $Z'$ becomes
$$D_{Z'} = \{B_{Z'},R_{Z'}\}.$$

This shows condition (a) in Definition \ref{definition-H.separated.Bratteli}. The first part of condition (b) in Definition \ref{definition-H.separated.Bratteli} follows from condition (c) in Definition \ref{definition-L.separated.Bratteli} together with Proposition \ref{proposition-decomposition.odd.rombs.even} (i) (note that the disjointness of the second union in Proposition \ref{proposition-decomposition.odd.rombs.even} (i) follows in this case by the observation in the above paragraph). The second part of condition (b) follows by the definition of the edges in $(F,D)$, and by the observation in the paragraph above. 

For condition (c) in Definition \ref{definition-H.separated.Bratteli}, we note that by Remark \ref{remarks-edges.unique} (2) it is enough to show that
$$\bigcup_{e \in B_Z} s(B_{s(e)}) = \bigcup_{f \in R_Z}s(R_{s(f)})$$
for each $n \geq 0$ and $Z \in \calP_n$. If $n$ is even, this follows directly from condition (e) of Definition \ref{definition-L.separated.Bratteli}, so we may assume that $n = 2j+1, j \geq 0$.

Let $Z'' \in s(B_{s(e)})$ for some $e \in B_Z$. Since $\calP_{2j+2}^{\varphi} = \varphi(\calP_{2j+2})$ is a partition which is coarser than $\calP_{2j+3}$, we can find $Z' \in \calP_{2j+2}$ such that $Z'' \subseteq \varphi(Z')$. This says that $Z'' \in s(R_{Z'})$. But $\varphi^{-1}(\calP_{2j+1})$ is coarser than the partition $\calP_{2j+2}$, so there exists $Z \in \calP_{2j+1}$ such that $Z' \subseteq \varphi^{-1}(Z)$. Hence $Z'' = s(f')$ with $f' = f(Z'',Z') \in R_{s(f)}$, where $f = f(Z',Z)$. This proves one inclusion; the other is analogous and we will omit it.

This concludes the proof.
\end{proof}

\begin{theorem}\label{theorem-H.diag.homeo}
Let $(F,D) \in \emph{\textsf{HDiag}}$. Then the resulting dynamical system $(X,\varphi) \in \emph{\textsf{LHomeo}}$ obtained from $(F,D)$ by applying the construction \ref{construction-local.homeo} belongs in fact to $\emph{\textsf{Homeo}}$.
\end{theorem}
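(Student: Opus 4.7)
The goal is to show that the surjective local homeomorphism $\varphi$ produced by Construction \ref{construction-local.homeo} is injective, since then — being a continuous bijection between compact Hausdorff spaces — it is automatically a homeomorphism, and hence $(X,\varphi)\in\textsf{Homeo}$. By Proposition \ref{proposition-hdiag.ldiag} the $h$-diagram $(F,D)$ is an $l$-diagram, so Construction \ref{construction-local.homeo} indeed applies and Lemma \ref{lemma-sigma.surj.local.homeo} (together with Lemma \ref{lemma-infinite.paths.compact.L}) gives us all the structural facts we need apart from injectivity.

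The key observation is that in an $h$-diagram, the decomposition $F^{0,n+1} = \bigsqcup_{v \in F^{0,n}} s(R_v)$ together with the requirement that distinct edges in each $R_v$ have distinct sources (condition (b) of Definition \ref{definition-H.separated.Bratteli}) forces every vertex $w$ at any non-initial layer to be the source of exactly one red edge. This fact is recorded in Remark \ref{remarks-edges.unique} (1), and in particular it applies at the first odd layer $F^{0,1}$.

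With this in hand, injectivity of $\varphi$ is immediate from the counting of pre-images in the proof of Lemma \ref{lemma-sigma.surj.local.homeo}: given $y = (e_0', e_1', e_2', \ldots) \in X$, the pre-images of $y$ under $\varphi$ are in bijective correspondence with the red edges $f_0'$ satisfying $s(f_0') = s(e_0') \in F^{0,1}$, since once $f_0'$ is fixed the entire pre-image $x$ is uniquely determined by iterated applications of property (g) in Definition \ref{definition-L.separated.Bratteli} together with part (ii) of Proposition \ref{proposition-decomposition.odd.rombs.even}. The observation above makes such an $f_0'$ unique, so $y$ has exactly one pre-image and $\varphi$ is injective. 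No genuine obstacle arises: once Remark \ref{remarks-edges.unique} (1) is invoked at the first odd layer, the rigidity it provides combines with the pre-image analysis already carried out in Lemma \ref{lemma-sigma.surj.local.homeo} to give bijectivity, and therefore the homeomorphism property, with essentially no additional work.
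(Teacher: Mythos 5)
Your proposal is correct and rests on the same key fact as the paper's proof, namely that in an $h$-diagram every vertex of $F^{0,1}$ emits exactly one red edge (Remark \ref{remarks-edges.unique} (1)), so that the pre-image count from Lemma \ref{lemma-sigma.surj.local.homeo} collapses to one. The paper packages this as an explicit construction of $\varphi^{-1}$ via Lemma \ref{lemma-building.rombs}, whereas you deduce injectivity directly and then invoke compactness (or openness of $\varphi$); this is only a presentational difference.
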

\begin{proof}
We recall the construction of $(X,\varphi)$. The space $X$ is defined as the set of infinite blue paths
$$X = \{(e_0,e_1,e_2,\dots) \mid e_j \in B_{r(e_j)}, r(e_{j+1}) = s(e_j) \text{ for all } j \geq 0\}$$
endowed with a topology having as basis the cylinder sets $\{Z(e_0,\dots,e_n) \mid (e_0,\dots,e_n) \in P_{0,n}^B, n \geq 0\}$, which become clopen sets. The map $\varphi : X \ra X$ is defined as follows.

Let $(e_0,e_1,e_2,\dots) \in X$. For each $j \geq 0$ we complete the pair of blue edges $(e_{2j},e_{2j+1})$ to a romb $(e_{2j},e_{2j+1},f_{2j},f_{2j+1})$. Now, for each $j \geq 1$, we consider the red pair $(f_{2j-1},f_{2j})$ which satisfies $s(f_{2j-1}) = r(f_{2j})$, so we can complete it to another romb $(e_{2j-1}',e_{2j}',f_{2j-1},f_{2j})$. Finally, take $e_0'$ to be the unique blue edge in $F^{1,0}$ such that $s(e_0') = r(e_1')$. Then
$$\varphi(e_0,e_1,e_2,\dots) = (e_0',e_1',e_2',\dots).$$
We already know (Lemmas \ref{lemma-infinite.paths.compact.L} and \ref{construction-local.homeo}) that $X$ is a totally disconnected, compact metrizable space, and that $\varphi$ is a surjective local homeomorphism. In this particular case, we can explicitly construct the inverse of $\varphi$ by reversing the above process. Let us detail its construction.

Take $(e_0',e_1',e_2',\dots) \in X$. By Remarks \ref{remarks-edges.unique} there exists a unique red edge $f_0 \in R_{r(f_0)}$ such that $s(f_0) = s(e_0')$. By recursively applying Lemma \ref{lemma-building.rombs}, we construct a sequence of red edges $f_1, f_2,\dots$ such that, for all $j \geq 0$, $f_{2j+1} \in R_{s(f_{2j})}$ and the quadruple $(e_{2j+1}',e_{2j+2}',f_{2j+1},f_{2j+2})$ is a romb. We use again Lemma \ref{lemma-building.rombs} to build pairs of blue edges $(e_{2j},e_{2j+1})$ such that $(e_{2j},e_{2j+1},f_{2j},f_{2j+1})$ is a romb, for all $j \geq 0$. Define
$$\varphi^{-1}(e_0',e_1',e_2',\dots) = (e_0,e_1,e_2,\dots).$$
An easy inspection shows that $\varphi^{-1}$ is the inverse of $\varphi$ (the key part is the uniqueness property in Lemma \ref{lemma-building.rombs}). Since $\varphi$ was already a local homeomorphism, it must be a homeomorphism.

This proves the theorem.
\end{proof}

Putting everything together, we obtain the following bijective correspondence between homeomorphisms and $h$-diagrams.

\begin{theorem}\label{theorem-equivalence.homeo}
The constructions \ref{construction-l.diag} and \ref{construction-local.homeo} also establish a bijective correspondence between equivalence classes of $h$-diagrams (via isomorphism and contraction) and topological conjugacy classes of homeomorphisms on totally disconnected, compact metrizable spaces.
\end{theorem}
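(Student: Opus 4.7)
The plan is to show that the mutually inverse maps $\Phi$ and $\Psi$ of Theorem \ref{theorem-equivalence} restrict to mutually inverse maps between $\textsf{Homeo}/\sim_{t.c.}$ and $\textsf{HDiag}/\sim_h$. The key is that Theorems \ref{theorem-homeo.H.diag} and \ref{theorem-H.diag.homeo} provide, respectively, that $\Phi$ sends $\textsf{Homeo}$ into $\textsf{HDiag}$ and that $\Psi$ sends $\textsf{HDiag}$ into $\textsf{Homeo}$, while Proposition \ref{proposition-hdiag.ldiag} and Lemma \ref{lemma-F'D'.H.diagram} ensure that the restriction of $\sim_l$ to $\textsf{HDiag}$ coincides with the natural equivalence relation $\sim_h$ on $h$-diagrams.

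First I would verify that $\sim_h$ is well defined and agrees with the restriction of $\sim_l$. For isomorphism, if $(F_1,D_1)\cong(F_2,D_2)$ is an isomorphism of $l$-diagrams between two $h$-diagrams, the blue/red bijections of Definition \ref{definition-equiv.rel.Ldiagrams}(a) automatically preserve the simpler $h$-diagram separations $\{B_v,R_v\}$, so the notions of isomorphism coincide. For contraction, Lemma \ref{lemma-F'D'.H.diagram} shows that contracting an $h$-diagram produces an $h$-diagram; conversely, any contraction in the sense of Definition \ref{definition-equiv.telesc.l.diagrams} applied to an $h$-diagram agrees with the contraction of $h$-diagrams defined in the proof of Lemma \ref{lemma-F'D'.H.diagram}, because at each odd layer $v$ of an $h$-diagram the set $R(f_v)$ coincides with all of $R_v$ (Remark \ref{remarks-edges.unique}(1)), so the red concatenations $P^R_{m_n,m_{n+1}}$ in both definitions give the same edge sets.

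Next I would define the restricted maps. By Theorem \ref{theorem-homeo.H.diag}, for any $(X,\varphi)\in\textsf{Homeo}$ the $l$-diagram $(F,D)$ produced by Construction \ref{construction-l.diag} lies in $\textsf{HDiag}$; combined with the above paragraph this yields a well-defined map
\[
\Phi_h \colon \textsf{Homeo}/\sim_{t.c.} \longrightarrow \textsf{HDiag}/\sim_h,\qquad [(X,\varphi)] \mapsto [(F,D)],
\]
as the restriction of $\Phi$. By Theorem \ref{theorem-H.diag.homeo}, for any $(F,D)\in\textsf{HDiag}$ the dynamical system produced by Construction \ref{construction-local.homeo} lies in $\textsf{Homeo}$; combined again with the equivalence of $\sim_l|_{\textsf{HDiag}}$ and $\sim_h$, this gives a well-defined map
\[
\Psi_h \colon \textsf{HDiag}/\sim_h \longrightarrow \textsf{Homeo}/\sim_{t.c.},\qquad [(F,D)] \mapsto [(X,\varphi)],
\]
as the restriction of $\Psi$.

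Finally, since $\Phi\circ\Psi=\mathrm{id}$ and $\Psi\circ\Phi=\mathrm{id}$ on the ambient equivalence classes by Theorem \ref{theorem-equivalence}, the identities $\Phi_h\circ\Psi_h=\mathrm{id}_{\textsf{HDiag}/\sim_h}$ and $\Psi_h\circ\Phi_h=\mathrm{id}_{\textsf{Homeo}/\sim_{t.c.}}$ follow immediately. The only non-routine checks are the two restriction statements, namely Theorems \ref{theorem-homeo.H.diag} and \ref{theorem-H.diag.homeo}, which have already been established; the main obstacle, which is guaranteeing that the contraction operation does not take us outside of $\textsf{HDiag}$ (so that the restricted equivalence relation is indeed $\sim_h$), is handled by Lemma \ref{lemma-F'D'.H.diagram}.
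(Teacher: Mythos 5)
Your proposal is correct and follows essentially the same route as the paper: the paper's proof simply observes that $\textsf{HDiag}$ is saturated in $\textsf{LDiag}$ with respect to $\sim_l$ (via Lemma \ref{lemma-F'D'.H.diagram}) and that $\textsf{Homeo}$ is saturated in $\textsf{LHomeo}$ with respect to $\sim_{t.c.}$, and then invokes Theorems \ref{theorem-equivalence}, \ref{theorem-homeo.H.diag} and \ref{theorem-H.diag.homeo}, which is precisely the content of your restriction argument. Your extra paragraph checking that the two notions of contraction agree on $h$-diagrams (using Remark \ref{remarks-edges.unique}(1)) is a reasonable elaboration of the same saturation point rather than a different approach.
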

\begin{proof}
Note that, by Lemma \ref{lemma-F'D'.H.diagram}, $\textsf{HDiag}$ is a saturated class in $\textsf{LDiag}$, in the sense that $(F',D')\sim_l (F,D)$ and $(F,D)\in \textsf{HDiag}$ imply that $(F',D')\in \textsf{HDiag}$. 
Similarly, $\textsf{Homeo}$ is a saturated class in $\textsf{LHomeo}$ with respect to $\sim_{t.c.}$. Hence the result follows directly from Theorems \ref{theorem-equivalence}, \ref{theorem-homeo.H.diag} and \ref{theorem-H.diag.homeo}. 
\end{proof}

\begin{example}\label{example-cantor.3}
The example discussed in Examples \ref{examples-cantor.1} 1) and \ref{examples-cantor.2} 1) falls under the umbrella of this section, since in this case $\varphi$ is a homeomorphism. Therefore the $l$-diagram obtained in Examples \ref{examples-cantor.2} 1) gives in fact an example of an $h$-diagram.
\end{example}

\section{Generalizing shifts of finite type}\label{section-general.shifts.finite.type}

In this section we introduce the key concept of a generalized finite shift, a class of dynamical systems
that generalizes both one-sided and two-sided shifts of finite type. We also show that any $(X,\sigma)\in \textsf{LHomeo}$ is an inverse limit of generalized finite shifts.

\subsection{A quick review on shifts of finite type}\label{subsection-shifts.finite.type}

It is well-known that shifts of finite type are codified by finite graphs (see e.g. \cite{LM2021}). We recall the construction here.

Let $E = (E^0,E^1,r_E,s_E)$ be a finite graph, with adjacency matrix $A = A_E$. We will assume that $E^0 = \{1,\dots , n\}$. Here the adjacency matrix $A_E = (a_{ji})$ of $E$ is the square $n \times n$ matrix such that $a_{ji}$ is the number of edges from vertex $j$ to vertex $i$. Then we may build the associated space
$$X_E = \{(e_0,e_1,e_2,\dots) \mid e_i \in E^1 \text{ and } s_E(e_i) = r_E(e_{i+1}) \text{ for all } i \geq 0\}$$
with the topology generated by the cylinder sets $Z(e_0,\dots,e_n)$. In this space we may define the shift map
$$\sigma_E \colon X_E \to X_E$$
by $\sigma_E(e_0,e_1,e_2,\dots) = (e_1,e_2,\dots)$. Note that $X_E$ is a closed invariant subset of the compact space $\prod_{n \geq 0}E^1$, and that it is defined by the \textit{forbidden words} $ef$ for all $e,f \in E^1$ such that $s(e) \ne r(f)$, so that $(X_E,\sigma_E)$ is a \textit{one-sided shift of finite type}. An analogous definition gives the two-sided shift, which is a homeomorphism.

Assuming that $E$ has neither sinks nor sources (so that the matrix $A_E$ has no zero-rows and no zero-columns), we have that $\sigma_E$ is a surjective local homeomorphism.

For $i \in E^0$, write $V_i = \{(e_0, e_1,e_2,\dots)\in X_E \mid r_E(e_0) = i \}$. Let also, for $i,j \in E^0$, $\{f_{ji}^{(k)} \mid 1 \leq k \leq a_{ji}\}$ be the set of edges with initial vertex $j$ and terminal vertex $i$. Set
$$V_{ji}^{(k)} = Z(f_{ji}^{(k)}) = \{(e_0,e_1,e_2,\dots ) \in X_E \mid e_0 = f_{ji}^{(k)}\}.$$
Then $V_i = \bigsqcup_{j=1}^n \bigsqcup_{k=1}^{a_{ji}} V_{ji}^{(k)}$, and $\sigma_E(V_{ji}^{(k)}) = V_j$ for all $i,j,k \in E^0$. Observe that the restriction of $\sigma_E$ to each subset $V_{ji}^{(k)}$ is injective. It turns out that all the dynamical behavior of the one-sided shift $(X_E,\sigma_E)$ is completely determined by these decompositions.

%We can canonically build a finite bipartite separated graph $(E',C')$ associated to $E$ as follows. The graph $E'$ has set of vertices $(E')^0 = (E')^{0,0} \sqcup (E')^{0,1}$ with $(E')^{0,0}$ and $(E')^{0,1}$ disjoint copies of $E^0$. Let $i \mapsto v_i$ be a bijection from $E^0$ onto $(E')^{0,1}$, and $i \mapsto v^i$ a bijection of $E^0$ onto $(E')^{0,0}$. We define $C'_i= \{ B_i,R_i\}$, where $B_i= \{e_i\}$  with $s(e_i)= v_i$ and $r(e_i)= v^i$, and where $R_i$ consists of a copy of $r_E^{-1}(i)$. More precisely, for each $f \in E^1 $ such that $r_E(f) = i$, we define an edge $\ol{f} \in (E')^1$ such that $s(\ol{f})= s_E(f)_i$ and $r(\ol{f})= r_E(f)^i$. Then we define $R_i$ as the set of such edges $\ol{f}$ with $r_E(f)= i$. This defines a separated graph $(E',C')$ that, in case $E$ has neither sinks nor sources, satisfies all the properties needed to be the first layer of an $l$-diagram.\\

%Then the canonical resolution of $(E',C')$ (see Definition \ref{definition-canonical.resolution} from next section) gives back the one-sided shift $(X_E,\sigma)$.\\

%In order to understand our generalization of this concept, explained in the next section, we introduce now the following definitions, for a graph $E$ as before.

\subsection{Generalized finite shifts}\label{subsection-generalized.finite.shifts}

We generalize the above situation as follows.

\begin{definition}\label{definition-sigma.cylinder.dec}
Let $(X,\sigma) \in \textsf{LHomeo}$. We say that $(X,\sigma)$ admits a \textit{$\sigma$-cylinder decomposition} if there exist
\begin{enumerate}[(I),leftmargin=0.7cm]
\item a finite set $\Gamma = \bigsqcup_{i=1}^{n} \Gamma_i$ where each $\Gamma_i \neq \emptyset$;
\item a finite collection $\{Z_{\gamma} \mid \gamma \in \Gamma\}$ of non-empty compact open, mutually disjoint subsets $Z_{\gamma} \subseteq X$;
\item non-negative integers $\{a_{\gamma,i} \mid \gamma \in \Gamma, 1 \leq i \leq n\}$; and
\item a finite collection $\{V_{\gamma, i}^{(k)} \mid \gamma \in \Gamma, 1 \leq i \leq n, 1 \leq k \leq a_{\gamma,i}\}$ of non-empty compact open sets;
\end{enumerate}
such that the following properties hold:
\begin{enumerate}[(a),leftmargin=0.7cm]
\item $X = \bigsqcup_{\gamma \in \Gamma} Z_{\gamma}$.
\item For each $i = 1,\dots,n$, if we write $V_i := \bigsqcup_{\gamma\in \Gamma_i} Z_{\gamma}$, then
$$V_i = \bigsqcup_{\gamma \in \Gamma} \bigsqcup_{k=1}^{a_{\gamma, i}} V_{\gamma, i}^{(k)}.$$
\item $\sigma|_{V_{\gamma, i}^{(k)}}$ is injective and $\sigma(V_{\gamma, i}^{(k)})= Z_{\gamma}$ for all allowable values of $\gamma,i,k$.
\end{enumerate}
The pair $\calZ := (\{Z_{\gamma}\},\{V_{\gamma,i}^{(k)}\})$ will be called a \textit{$\sigma$-cylinder decomposition} of $X$.
\end{definition}

Some immediate consequences of Definition \ref{definition-sigma.cylinder.dec} are the following. Since $V_i \ne \emptyset$ for $1\leq i \leq n$, we have that for each $i = 1,\dots,n$ there exists $\gamma \in \Gamma$ such that $a_{\gamma, i} \ne 0$. Moreover, since $\sigma$ is surjective, we have
$$X = \sigma\Big(\bigcup_{i=1}^n V_i\Big) = \bigcup_{i=1}^n \bigcup_{\gamma\in \Gamma} \bigcup_{k=1}^{a_{\gamma, i}} \sigma(V_{\gamma, i}^{(k)}) = \bigcup_{i=1}^n \bigcup_{\gamma\in \Gamma} \bigcup_{k=1}^{a_{\gamma, i}} Z_{\gamma}  = \bigcup_{\gamma\in \Gamma} \Big( \bigcup_{i=1}^n\bigcup_{k=1}^{a_{\gamma, i}} Z_{\gamma}\Big).$$
Since also $X = \bigsqcup_{\gamma\in \Gamma} Z_{\gamma}$, we have conversely that for each $\gamma \in \Gamma $ there exists $i \in \{ 1,\dots,n \}$ such that $a_{\gamma, i} \ne 0$. These observations show that all rows and columns of the matrix $A := (a_{\gamma, i}) \in M_{\Gamma \times n}(\Z^+)$ are non-zero.

Let also $I \in M_{\Gamma \times n}(\Z^+)$ be the matrix with all entries in $\{0,1\}$ such that in the $i^{th}$ column it has $1$'s exactly at the positions that belong to $\Gamma_i$, for $1 \leq i \leq n$. The pair $(A,I)$ will be called the pair of \textit{red-blue adjacency matrices}, respectively, of a prescribed $\sigma$-cylinder decomposition $\calZ$ of $X$.\\

We consider also a notion of \textit{refined} $\sigma$-cylinder decompositions of $X$. Basically this amounts to have refinements of the two partitions $\{Z_{\gamma}\}$ and $\{V_{\gamma,i}^{(k)}\}$ satisfying a compatibility condition, as follows.

\begin{definition}\label{definition-refined.sigma.cylinder.dec}
Suppose $(X,\sigma) \in \textsf{LHomeo}$. Let $\calZ = (\{Z_{\gamma}\}, \{V_{\gamma,i}^{(k)}\}), \calT = (\{T_{\lambda}\}, \{W_{\lambda,j}^{(l)}\})$ be two $\sigma$-cylinder decompositions; in more detail, suppose that
$$X = \bigsqcup_{\gamma \in \Gamma} Z_{\gamma} = \bigsqcup_{\lambda \in \Lambda} T_{\lambda}, \qquad \Gamma = \bigsqcup_{i=1}^n \Gamma_i, \qquad \Lambda = \bigsqcup_{j=1}^m \Lambda_j,$$
and that, with $V_i = \bigsqcup _{\gamma \in \Gamma_i} Z_{\gamma}$ and $W_j = \bigsqcup _{\lambda \in \Lambda_j} T_{\lambda}$, we have
$$V_i = \bigsqcup_{\gamma\in \Gamma} \bigsqcup_{k=1}^{a_{\gamma, i}} V_{\gamma, i}^{(k)}\quad \text{ and }\quad W_j = \bigsqcup_{\lambda\in \Lambda} \bigsqcup_{l=1}^{b_{\lambda, j}} W_{\lambda, j}^{(l)},$$
such that $\sigma|_{V_{\gamma, i}^{(k)}}$ is injective and $\sigma (V_{\gamma, i}^{(k)})= Z_{\gamma}$, and $\sigma|_{W_{\lambda, j}^{(l)}}$ is injective  and $\sigma (W_{\lambda,j}^{(l)})= T_{\lambda}$ for all allowable values of $\gamma,i,k$ and $\lambda,j,l$ respectively.

Then we say that $\calT$ \textit{refines} $\calZ$ if:
\begin{enumerate}[(a),leftmargin=0.7cm]
\item The partition $\{T_{\lambda}\}$ is a refinement of $\{Z_{\gamma}\}$, that is, we can write
$$Z_{\gamma}= \bigsqcup_{\lambda \in \Lambda (\gamma)} T_{\lambda}, \quad \text{ with } \Lambda = \bigsqcup_{\gamma\in \Gamma} \Lambda (\gamma).$$
\item The partition $\{W_{\lambda,j}^{(l)}\}$ is a refinement of $\{V_{\gamma,i}^{(k)}\}$, that is, with
$$\mathfrak A = \{(\gamma,i,k) \mid \gamma\in \Gamma, 1 \le i \le n, 1 \le k \le a_{\gamma,i}\},\quad \mathfrak B = \{(\lambda,j,l) \mid \lambda \in \Lambda, 1\le j\le m, 1\le l\le b_{\lambda,j} \},$$
we have
$$V_{\gamma,i}^{(k)} = \bigsqcup_{\eta \in \mathfrak B (\gamma,i,k)} W(\eta), \quad \text{ with }  \mathfrak B = \bigsqcup_{\nu\in \mathfrak A} \mathfrak B (\nu),$$
where $W(\eta) = W_{\lambda,j}^{(l)}$ whenever $\eta = (\lambda,j,l) \in \mathfrak B$.
\item The following {\it compatibility condition} is satisfied: let $\pi_{\Lambda } \colon \mathfrak B \to \Lambda$ be the projection onto the first component. Then for each $(\gamma,i,k)\in \mathfrak A$, $\pi_{\Lambda}$ restricts to a bijection from $\mathfrak B (\gamma,i,k)$ onto $\Lambda (\gamma)$.
\end{enumerate} 
\end{definition}

\begin{remark}\label{remark-explanation.of.compatibility}
To see the meaning of the compatibility condition in the above definition, let us apply $\sigma$ to the decomposition $V_{\gamma,i}^{(k)} = \bigsqcup _{\eta \in \mathfrak B (\gamma,i,k)} W(\eta)$ for a given $(\gamma,i,k )\in \mathfrak A$. Since $\sigma|_{V_{\gamma,i}^{(k)}}$ is injective, we have
	$$ Z_{\gamma} = \sigma (V_{\gamma,i}^{(k)}) = \bigsqcup_{\eta\in \mathfrak B (\gamma,i,k)} \sigma (W(\eta)) = \bigsqcup_{\eta\in \mathfrak B (\gamma,i,k)}  T_{\pi_{\Lambda}(\eta)}.$$
Hence the compatibility condition (c) says that this decomposition agrees with the decomposition of $Z_{\gamma}$ from (a).
\end{remark}

We immediately observe transitivity of refinement, which we prove in the next lemma.

\begin{lemma}\label{lemma-refinement.transitivity}
Let $\calZ, \calT$ and $\calS$ be three $\sigma$-cylinder decompositions for $(X,\sigma)$. If $\calS$ refines $\calT$ and $\calT$ refines $\calZ$, then $\calS$ refines $\calZ$.
\end{lemma}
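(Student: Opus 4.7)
The plan is to chase the two refinement conditions through, using the decompositions transitively, and to verify compatibility by composing the two bijections supplied by the hypotheses.

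Set up notation for the three decompositions: write $\calZ$ indexed by $\Gamma=\bigsqcup_{i=1}^n\Gamma_i$, $\calT$ indexed by $\Lambda=\bigsqcup_{j=1}^m\Lambda_j$, $\calS$ indexed by $\mathrm{M}=\bigsqcup_{p=1}^r \mathrm{M}_p$, with the accompanying partitions
\[
Z_\gamma=\bigsqcup_{\lambda\in\Lambda(\gamma)}T_\lambda,\qquad T_\lambda=\bigsqcup_{\mu\in\mathrm{M}(\lambda)}S_\mu
\]
and the index sets $\gotA,\gotB,\gotC$ for the ``$V$/$W$/$U$'' levels, together with the refinement data $V_{\gamma,i}^{(k)}=\bigsqcup_{\eta\in\gotB(\gamma,i,k)}W(\eta)$ and $W_{\lambda,j}^{(l)}=\bigsqcup_{\zeta\in\gotC(\lambda,j,l)}U(\zeta)$. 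Denote the projections by $\pi_\Lambda\colon\gotB\to\Lambda$ and $\pi_{\mathrm{M}}\colon\gotC\to\mathrm{M}$.

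First, I would verify condition (a) of Definition \ref{definition-refined.sigma.cylinder.dec} for the pair $(\calS,\calZ)$ by setting $\mathrm{M}(\gamma):=\bigsqcup_{\lambda\in\Lambda(\gamma)}\mathrm{M}(\lambda)$ and substituting the second decomposition into the first, yielding $Z_\gamma=\bigsqcup_{\mu\in \mathrm{M}(\gamma)}S_\mu$; the union $\mathrm{M}=\bigsqcup_{\gamma\in\Gamma}\mathrm{M}(\gamma)$ is then immediate since both refinements are honest partitions. Next, for condition (b), define
\[
\gotD(\gamma,i,k):=\bigsqcup_{\eta\in\gotB(\gamma,i,k)}\gotC(\eta),
\]
and observe that, again by substitution, $V_{\gamma,i}^{(k)}=\bigsqcup_{\zeta\in\gotD(\gamma,i,k)}U(\zeta)$, with $\gotC=\bigsqcup_{\nu\in\gotA}\gotD(\nu)$ following from the two given disjointness statements applied in succession.

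The only slightly delicate step is the compatibility condition (c). The hypothesis that $\calT$ refines $\calZ$ gives a bijection $\pi_\Lambda\colon\gotB(\gamma,i,k)\xrightarrow{\sim}\Lambda(\gamma)$, and the hypothesis that $\calS$ refines $\calT$ gives, for each $\eta\in\gotB(\gamma,i,k)$, a bijection $\pi_{\mathrm{M}}\colon\gotC(\eta)\xrightarrow{\sim}\mathrm{M}(\pi_\Lambda(\eta))$. Assembling these fiberwise produces a bijection
\[
\gotD(\gamma,i,k)=\bigsqcup_{\eta\in\gotB(\gamma,i,k)}\gotC(\eta)\;\xrightarrow{\sim}\;\bigsqcup_{\lambda\in\Lambda(\gamma)}\mathrm{M}(\lambda)=\mathrm{M}(\gamma),
\]
which by construction is exactly the restriction of $\pi_{\mathrm{M}}$ to $\gotD(\gamma,i,k)$. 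This is the only place where the two compatibility hypotheses get genuinely used, and it is a pure bookkeeping exercise once the index set $\gotD(\gamma,i,k)$ is defined correctly. I do not anticipate any real obstacle, since the dynamical content (injectivity and image statements of $\sigma$ on each piece) is inherited automatically from $\calS$ being a $\sigma$-cylinder decomposition; transitivity concerns only the combinatorics of how the partitions fit together.
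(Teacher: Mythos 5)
Your proof is correct and follows essentially the same route as the paper's: substitute the second partition into the first for conditions (a) and (b), define the combined index set $\gotD(\gamma,i,k)=\bigsqcup_{\eta\in\gotB(\gamma,i,k)}\gotC(\eta)$ (the paper writes this as $\gotC(\nu)$), and obtain (c) by assembling the two given bijections fiberwise. The only differences are notational.
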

\begin{proof}
Write $\calZ = (\{Z_{\gamma}\}, \{V_{\gamma,i}^{(k)}\}), \calT = (\{T_{\lambda}\}, \{W_{\lambda,j}^{(l)}\})$ and $\calS = (\{S_{\delta}\}, \{ U_{\delta,r}^{(s)}\})$. In more detail,
$$X = \bigsqcup_{\gamma \in \Gamma} Z_{\gamma} = \bigsqcup_{\lambda \in \Lambda} T_{\lambda} = \bigsqcup_{\delta \in \Delta} S_{\delta},$$
where $\Gamma = \bigsqcup_{i=1}^n \Gamma_i, \Lambda = \bigsqcup_{j=1}^m \Lambda_j$ and $\Delta = \bigsqcup_{r=1}^{o} \Delta_r$, and that, with $V_i = \bigsqcup_{\gamma \in \Gamma_i} Z_{\gamma}, W_j = \bigsqcup_{\lambda \in \Lambda_j} T_{\lambda}$ and $U_r = \bigsqcup_{\delta \in \Delta_r} S_{\delta}$, we have
$$V_i = \bigsqcup_{\gamma\in \Gamma} \bigsqcup_{k=1}^{a_{\gamma, i}} V_{\gamma, i}^{(k)}, \quad W_j = \bigsqcup_{\lambda\in \Lambda} \bigsqcup_{l=1}^{b_{\lambda, j}} W_{\lambda, j}^{(l)} \quad \text{ and } \quad U_r = \bigsqcup_{\delta \in \Delta} \bigsqcup_{s=1}^{c_{\delta,r}} U_{\delta,r}^{(s)},$$
such that $\sigma|_{V_{\gamma,i}^{(k)}}$ is injective and $\sigma(V_{\gamma, i}^{(k)}) = Z_{\gamma}$, $\sigma|_{W_{\lambda,j}^{(l)}}$ is injective  and $\sigma(W_{\lambda,j}^{(l)})= T_{\lambda}$, and $\sigma|_{U_{\delta,r}^{(s)}}$ is injective and $\sigma(U_{\delta,r}^{(s)})= S_{\delta}$ for all allowable values of $\gamma,i,k$, of $\lambda,j,l$ and of $\delta,r,s$ respectively.

By hypothesis, we can write
$$Z_{\gamma} = \bigsqcup_{\lambda \in \Lambda(\gamma)} T_{\lambda}, \quad \text{ with } \Lambda = \bigsqcup_{\gamma \in \Gamma} \Lambda(\gamma),$$
and
$$T_{\lambda} = \bigsqcup_{\delta \in \Delta(\lambda)} S_{\delta}, \quad \text{ with } \Delta = \bigsqcup_{\lambda \in \Lambda} \Delta(\lambda).$$
Then if we write $\Delta(\gamma) := \bigsqcup_{\lambda \in \Lambda(\gamma)} \Delta(\lambda)$ for $\gamma \in \Gamma$, we obtain that $\Delta = \bigsqcup_{\gamma \in \Gamma} \Delta(\gamma)$ and
$$Z_{\gamma} = \bigsqcup_{\lambda \in \Lambda(\gamma)} \bigsqcup_{\delta \in \Delta(\lambda)} S_{\delta} = \bigsqcup_{\delta \in \Delta(\gamma)} S_{\delta}.$$
This proves part (a) of Definition \ref{definition-refined.sigma.cylinder.dec}. Part (b) is proved similarly, as follows. By hypothesis, if
$$\mathfrak A = \{(\gamma,i,k) \mid \gamma\in \Gamma, 1 \le i \le n, 1 \le k \le a_{\gamma,i}\},$$
$$\mathfrak B = \{(\lambda,j,l) \mid \lambda \in \Lambda, 1\le j\le m, 1\le l\le b_{\lambda,j}\},$$
$$\mathfrak C = \{(\delta,r,s) \mid \delta \in \Delta, 1\leq r\le o, 1\le s\le c_{\delta,r}\},$$
we have
$$V_{\gamma,i}^{(k)} = \bigsqcup_{\eta \in \mathfrak B(\gamma,i,k)} W(\eta), \quad \text{ with }  \mathfrak B = \bigsqcup_{\nu \in \mathfrak A} \mathfrak B(\nu),$$
where $W(\eta) = W_{\lambda,j}^{(l)}$ whenever $\eta = (\lambda,j,l) \in \mathfrak B$, and
$$W_{\lambda,j}^{(l)} = \bigsqcup_{\rho \in \mathfrak C(\lambda,j,l)} U(\rho), \quad \text{ with }  \mathfrak C = \bigsqcup_{\eta \in \mathfrak B} \mathfrak C(\eta),$$
where $U(\rho) = U_{\delta,r}^{(s)}$ whenever $\rho = (\delta,r,s) \in \mathfrak C$. If we define, for $\nu \in \mathfrak A$, the set $\mathfrak C(\nu) := \bigsqcup_{\eta \in \mathfrak B(\nu)} \mathfrak C(\eta)$, then $\mathfrak C = \bigsqcup_{\nu \in \mathfrak A} \mathfrak C(\nu)$ and
$$V_{\gamma,i}^{(k)} = \bigsqcup_{\eta \in \mathfrak B(\gamma,i,k)} \bigsqcup_{\rho \in \mathfrak C(\eta)} U(\rho) = \bigsqcup_{\rho \in \mathfrak C(\gamma,i,k)} U(\rho).$$
This shows part (b), as promised. Finally it remains to prove part (c). Consider $\pi_{\Lambda} : \mathfrak B \ra \Lambda$ and $\pi_{\Delta} : \mathfrak C \ra \Delta$ the projections onto the respective first components. By hypothesis, for each $(\gamma,i,k) \in \mathfrak A$, the restriction of $\pi_{\Lambda}$ on $\mathfrak B(\gamma,i,k)$ gives a bijection $\mathfrak B(\gamma,i,k) \simeq \Lambda(\gamma)$ given by $(\lambda,j,l) \mapsto \lambda$. Also, for each $(\lambda,j,l) \in \mathfrak B$, the restriction of $\pi_{\Delta}$ on $\mathfrak C(\lambda,j,l)$ gives a bijection $\mathfrak C(\lambda,j,l) \simeq \Delta(\lambda)$ given by $(\delta,r,s) \mapsto \delta$.

So, for a fixed $(\gamma,i,k)$, we have
$$\mathfrak C(\gamma,i,k) = \bigsqcup_{(\lambda,j,l) \in \mathfrak B(\gamma,i,k)} \mathfrak C(\lambda,j,l)$$
which, under the above bijections, translates to a bijection
$$\mathfrak C(\gamma,i,k) \simeq \bigsqcup_{\lambda \in \Lambda(\gamma)} \Delta(\lambda) = \Delta(\gamma)$$
under the restriction of $\pi_{\Delta}$ on $\mathfrak C(\gamma,i,k)$. This concludes the proof of the lemma.
\end{proof}

We build a finite bipartite separated graph $(E,C) := (E(\calZ),C(\calZ))$ out of a $\sigma$-cylinder decomposition $\calZ$ of $(X,\sigma)$ as follows. Let $E^0 = E^{0,0} \sqcup E^{0,1}$, where $E^{0,0} = \{v^1,\dots,v^n\}$ and $E^{0,1} = \Gamma$. Now we set, for each $1 \le i \le n$,
$$C_i:=C_{v^i}= \{B_i, R_i\}$$
where $B_i = \{e_{\gamma, i} \mid \gamma \in \Gamma _i\}$, with $s(e_{\gamma,i}) = \gamma$ and $r(e_{\gamma, i}) = v^i$; and $R_i = \{f_{\gamma, i}^{(k)} \mid \gamma \in \Gamma, 1 \leq k \leq a_{\gamma, i}\}$, where $s(f_{\gamma,i}^{(k)}) = \gamma$ and $r(f_{\gamma, i}^{(k)}) = v^i$. Then we have that $(E,C)$ is a finite bipartite separated graph such that there are exactly two colors at each vertex in $E^{0,0}$, $s(e) \ne s(e')$ for all distinct $e \in B_i$ and $e' \in B_{i'}$, and
$$E^{0,1} = \bigsqcup_{i=1}^n s(B_i) = \bigcup_{i=1}^{n} s(R_i).$$
Therefore we see that $(E,C)$ satisfies all the conditions needed to be the first layer of an $l$-diagram.

Conversely, given an $l$-diagram $(F,D)$, we can build a $\sigma$-cylinder decomposition for its corresponding space $(X,\sigma) \in \textsf{LHomeo}$, such that the first layer of $(F,D)$ corresponds to the above construction applied to this $\sigma$-cylinder decomposition. Set $F^{0,0} =\{v^1,\dots , v^n\}$, and then $\Gamma = \bigsqcup_{i=1}^n \Gamma_i$, where $\Gamma_i= B_i:=B_{v^i}$.
Note that $\Gamma$ can be identified with the set $F^{0,1}$ by means of the correspondence $e\leftrightarrow s(e)$. We have
$$X = \bigsqcup_{e \in \Gamma} Z(e) \quad \text{ and } \quad V_i := Z(v^i) = \bigsqcup_{e \in \Gamma_i} Z(e).$$
It remains to build the sets $V_{e,i}^{(k)}$. For $e \in \Gamma$ and $i \in \{1,...,n\}$, let $a_{e,i}$ be the number of red edges from $s(e)$ to $v^i$. The set of red edges from $s(e)$ to $v^i$ can then be labeled as $\{f_{e,i}^{(k)} \mid 1\le k \le a_{e,i}\}$. Fix a red edge $f_{e,i}^{(k)}$. Then $s(f_{e,i}^{(k)}) = s(e)$ and $r(f_{e,i}^{(k)}) = v^i$. By the proof of Lemma \ref{lemma-sigma.surj.local.homeo}, for each $y \in Z(e)$ there exists a unique $x = (e_0,e_1,e_2,\dots) \in Z(v^i) = V_i$ such that $\sigma(x) = y$ and the first component $f_0$ of the romb $(e_0,e_1,f_0,f_1)$ determined by $(e_0,e_1)$ is precisely $f_{e,i}^{(k)}$. We define $V_{e,i}^{(k)}$ to be the set of all these pre-images $x \in Z(v^i)$ of elements of $Z(e)$. Observe that by definition we have that $\sigma|_{V_{e,i}^{(k)}}$ is injective, and that $\sigma(V_{e,i}^{(k)}) = Z(e)$. Moreover, we get
$$V_i = Z(v^i) = \bigsqcup_{e \in \Gamma} \bigsqcup_{k=1}^{a_{e,i}} V_{e,i}^{(k)}.$$
Therefore the decompositions
$$X = \bigsqcup_{e \in \Gamma} Z(e), \quad V_i = \bigsqcup_{e \in \Gamma_i} Z(e), \quad V_i = \bigsqcup_{e \in \Gamma} \bigsqcup_{k=1}^{a_{e,i}} V_{e,i}^{(k)}$$
satisfy all the necessary conditions to form a $\sigma$-cylinder decomposition $\calZ_0 (F,D) = (\{Z(e)\}, \{V_{e,i}^{(k)}\})$ of $X$, and the bipartite separated graph obtained from these decompositions as explained above coincides with the first layer of $(F,D)$.

As a consequence, we have the following result.

\begin{proposition}\label{proposition-existence.sigma.cylinder.dec}
An object $(X,\sigma) \in \emph{\textsf{LHomeo}}$ always admits a $\sigma$-cylinder decomposition.
\end{proposition}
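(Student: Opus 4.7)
The plan is to invoke the bijective correspondence established in Theorem \ref{theorem-correspondence} together with the explicit construction $\calZ_0(F,D)$ described in the paragraph immediately preceding the proposition. Given $(X,\sigma)\in \textsf{LHomeo}$, I would first apply Theorem \ref{theorem-correspondence} to obtain an $l$-diagram $(F,D)$ whose associated dynamical system (constructed via \ref{construction-local.homeo}) is topologically conjugate to $(X,\sigma)$. Since $\sigma$-cylinder decompositions are evidently preserved under topological conjugacy, it suffices to build the decomposition for the model space of $(F,D)$.

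Next, I would recycle verbatim the data laid out just before the statement: enumerate $F^{0,0}=\{v^1,\dots,v^n\}$, set $\Gamma_i := B_{v^i}$ (identified with its image $s(B_{v^i})\subseteq F^{0,1}$), take $\Gamma = \bigsqcup_{i=1}^n \Gamma_i$ (which exhausts $F^{0,1}$ by part (1) of condition (d) of Definition \ref{definition-L.separated.Bratteli}), and define $Z_e := Z(e)$ for $e\in \Gamma$ and $V_i := Z(v^i)$. For each pair $(e,i)$, let $a_{e,i}$ be the number of red edges from $s(e)$ to $v^i$, label these red edges as $\{f_{e,i}^{(k)}\mid 1\le k\le a_{e,i}\}$, and define $V_{e,i}^{(k)}$ as the set of all $x=(e_0,e_1,\dots)\in Z(v^i)$ such that $\sigma(x)\in Z(e)$ and the red edge $f_0$ in the romb $(e_0,e_1,f_0,f_1)$ determined by $(e_0,e_1)$ equals $f_{e,i}^{(k)}$.

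What remains is the routine verification of axioms (a)--(c) of Definition \ref{definition-sigma.cylinder.dec}. Axiom (a) is immediate from the decomposition $X=\bigsqcup_{e\in \Gamma}Z(e)$, and the decomposition $V_i=\bigsqcup_{e\in \Gamma_i}Z(e)$ follows from the last part of condition (d) of Definition \ref{definition-L.separated.Bratteli}. For the second decomposition $V_i = \bigsqcup_{e\in \Gamma}\bigsqcup_{k=1}^{a_{e,i}} V_{e,i}^{(k)}$, the pre-image analysis carried out in the proof of Lemma \ref{lemma-sigma.surj.local.homeo} shows that every $x\in V_i=Z(v^i)$ belongs to exactly one $V_{e,i}^{(k)}$, determined by the unique red edge $f_0$ attached to the romb of $(e_0,e_1)$. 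Axiom (c), namely injectivity of $\sigma|_{V_{e,i}^{(k)}}$ and the equality $\sigma(V_{e,i}^{(k)})=Z(e)$, follows from the same argument together with the uniqueness clause in condition (g) of Definition \ref{definition-L.separated.Bratteli}. Finally, each $V_{e,i}^{(k)}$ is clopen: it is the intersection of the clopen set $Z(v^i)$ with $\sigma^{-1}(Z(e))$, further restricted by fixing the first red edge $f_{e,i}^{(k)}$, a condition which is already encoded by choosing the blue pair $(e_0,e_1)$ in a finite collection of cylinders at the second level.

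The proof is essentially bookkeeping; the only conceptual obstacle, already resolved in the passage preceding the proposition, is recognizing that the red edges from $s(e)$ to $v^i$ precisely parametrize the pre-image stratification of $Z(e)\cap \sigma(V_i)$. Once this dictionary between red edges and pre-image sheets is in place, the axioms of Definition \ref{definition-sigma.cylinder.dec} unfold automatically, and the $(E(\calZ_0(F,D)),C(\calZ_0(F,D)))$ construction recovers the first layer of $(F,D)$, as already observed.
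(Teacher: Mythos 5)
Your proposal is correct and follows the same route as the paper: the paper's proof simply cites Theorem \ref{theorem-equivalence} together with the construction of $\calZ_0(F,D)$ given in the paragraph immediately preceding the proposition, which is precisely the decomposition you reproduce and verify. The only addition you make is to spell out the (true and routine) point that $\sigma$-cylinder decompositions transfer along topological conjugacies and to expand the verification of axioms (a)--(c), which the paper leaves implicit in the phrase ``immediate due to the above construction.''
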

\begin{proof}
This is immediate due to the above construction and Theorem \ref{theorem-equivalence}.
\end{proof}

By the above construction, it seems that first layers of $l$-diagrams play a special role. They deserve a name on its own right.

\begin{definition}\label{definition-general.finite.shift}
We say that a finite bipartite separated graph $(E,C)$ is a {\it generalized finite shift graph} if it satisfies the following properties:
\begin{enumerate}[(a),leftmargin=0.7cm]
\item $C_v = \{B_v,R_v\}$ for each $v\in E^{0,0}$, where $B_v,R_v$ are non-empty;
\item $$(1)\,\,  E^{0,1} = \bigsqcup_{v\in E^{0,0}} s(B_v), \quad \text{ and also } \quad (2) \,\, E^{0,1} = \bigcup_{v \in E^{0,0}} s(R_v).$$
Moreover, $s(e) \ne s(e')$ for all distinct edges $e,e'\in B_v$, for all $v\in E^{0,0}$.
\end{enumerate}
We call the elements of $B := \bigsqcup_{v \in E^{0,0}}B_v$ \textit{blue edges}, and the elements of $R := \bigsqcup_{v \in E^{0,0}}R_v$ \textit{red edges}. The \textit{red adjacency matrix} of the generalized finite shift graph $(E,C)$ is the matrix $A = (a_{w,v})\in M_{E^{0,1} \times E^{0,0}} (\Z^+)$ such that $a_{w,v}$ is the number of red edges from $w$ to $v$. We say that $(E,C)$ is {\it refined} if $A$ is a $\{0,1\}$ matrix. We also consider the \textit{blue adjacency matrix} $I \in M_{E^{0,1}\times E^{0,0}}(\{0,1\})$, with
$$I_{w,v}= \begin{cases}
1 & \text{ if } w=s(e) \text{ for (a unique) } e\in B_{v}, \\
0 & \text{ otherwise}.
\end{cases}$$
The pair of matrices $(A,I)$ characterizes the graph $(E,C)$ completely.
\end{definition}

It follows from the definition of $l$-diagram (Definition \ref{definition-L.separated.Bratteli}) that first layers of $l$-diagrams can be characterized by the pair $(A,I)$ as defined above.

\begin{example}\label{example-general.finite.shift}
The finite bipartite separated graph $(E,C)$ defined by $E^0 = E^{0,0} \sqcup E^{0,1}$ where
$$E^{0,0} = \{v^1,v^2,v^3,v^4\}, \quad E^{0,1} = \{w^1,w^2,w^3,w^4,w^5,w^6\},$$
and
$$B_{v^1} = \{e_{1,1},e_{2,1}\}, \quad B_{v^2} = \{e_{3,2}\}, \quad B_{v^3} = \{e_{4,3}\}, \quad B_{v^4} = \{e_{5,4},e_{6,4}\},$$
$$R_{v^1} = \{f_{1,1}\}, \quad R_{v^2} = \{f_{1,2},f_{2,2},f_{3,2}^{(1)},f_{3,2}^{(2)}\}, \quad R_{v^3} = \{f_{1,3},f_{4,3}^{(1)},f_{4,3}^{(2)},f_{5,3},f_{6,3}\}, \quad R_{v^4} = \{f_{6,4}\},$$
with $s(e_{j,i}) = s(f_{j,i}^{(k)}) = w^j$ and $r(e_{j,i}) = r(f_{j,i}^{(k)}) = v^i$, is a generalized finite shift graph. The graph looks as follows.

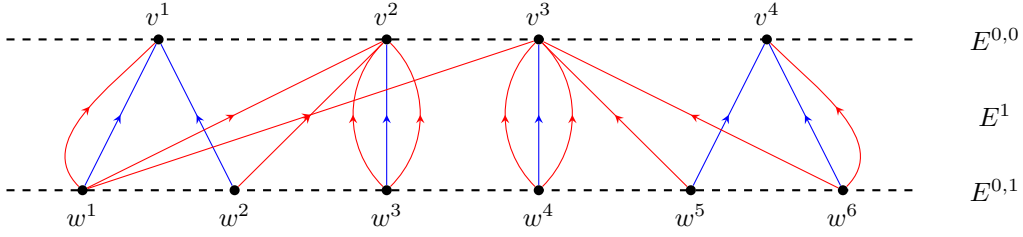
\begin{figure}[H]
\begin{tikzpicture}
	\draw[black,thick,dashed] (-6,0) -- (6,0);
	\draw[black,thick,dashed] (-6,2) -- (6,2);
	\node[label=below:$w^1$,circle,fill=black,scale=0.4] (W1) at (-5,0) {};
	\node[label=below:$w^2$,circle,fill=black,scale=0.4] (W2) at (-3,0) {};
	\node[label=below:$w^3$,circle,fill=black,scale=0.4] (W3) at (-1,0) {};
	\node[label=below:$w^4$,circle,fill=black,scale=0.4] (W4) at (1,0) {};
	\node[label=below:$w^5$,circle,fill=black,scale=0.4] (W5) at (3,0) {};
	\node[label=below:$w^6$,circle,fill=black,scale=0.4] (W6) at (5,0) {};
	\node[label=above:$v^1$,circle,fill=black,scale=0.4] (V1) at (-4,2) {};
	\node[label=above:$v^2$,circle,fill=black,scale=0.4] (V2) at (-1,2) {};
	\node[label=above:$v^3$,circle,fill=black,scale=0.4] (V3) at (1,2) {};
	\node[label=above:$v^4$,circle,fill=black,scale=0.4] (V4) at (4,2) {};
	%blue edges
	\draw[-,blue,postaction={on each segment={mid arrow=blue}}] (W1) to (V1);
	\draw[-,blue,postaction={on each segment={mid arrow=blue}}] (W2) to (V1);
	\draw[-,blue,postaction={on each segment={mid arrow=blue}}] (W3) to (V2);
	\draw[-,blue,postaction={on each segment={mid arrow=blue}}] (W4) to (V3);
	\draw[-,blue,postaction={on each segment={mid arrow=blue}}] (W5) to (V4);
	\draw[-,blue,postaction={on each segment={mid arrow=blue}}] (W6) to (V4);
	%red edges
	\draw[-,red,postaction={on each segment={mid arrow=red}}] (W1) to [out=135,in=225] (V1);	
	\draw[-,red,postaction={on each segment={mid arrow=red}}] (W1) to (V2);
	\draw[-,red,postaction={on each segment={mid arrow=red}}] (W1) to (V3);
	\draw[-,red,postaction={on each segment={mid arrow=red}}] (W2) to (V2);
	\draw[-,red,postaction={on each segment={mid arrow=red}}] (W3) to [out=135,in=225] (V2);
	\draw[-,red,postaction={on each segment={mid arrow=red}}] (W3) to [out=45,in=315] (V2);
	\draw[-,red,postaction={on each segment={mid arrow=red}}] (W4) to [out=135,in=225] (V3);
	\draw[-,red,postaction={on each segment={mid arrow=red}}] (W4) to [out=45,in=315] (V3);
	\draw[-,red,postaction={on each segment={mid arrow=red}}] (W5) to (V3);
	\draw[-,red,postaction={on each segment={mid arrow=red}}] (W6) to (V3);
	\draw[-,red,postaction={on each segment={mid arrow=red}}] (W6) to [out=45,in=315] (V4);	
	\node at (7,2){$E^{0,0}$};
	\node at (7,1){$E^1$};	
	\node at (7,0){$E^{0,1}$};
\end{tikzpicture}
\caption{The generalized finite shift $(E,C)$.}
\label{figure-example1}
\end{figure}

Here the red-blue adjacency matrices are
$$A = \begin{pmatrix} 1 & 1 & 1 & 0 \\ 0 & 1 & 0 & 0 \\ 0 & 2 & 0 & 0 \\ 0 & 0 & 2 & 0 \\ 0 & 0 & 1 & 0 \\ 0 & 0 & 1 & 1 \end{pmatrix}, \quad I = \begin{pmatrix} 1 & 0 & 0 & 0 \\ 1 & 0 & 0 & 0 \\ 0 & 1 & 0 & 0 \\ 0 & 0 & 1 & 0 \\ 0 & 0 & 0 & 1 \\ 0 & 0 & 0 & 1 \end{pmatrix}.$$
\end{example}

\noindent We can now state the following result that summarizes and slightly generalizes our previous discussion.

\begin{proposition}\label{proposition-even.labels.sigma.cylinder.dec}
Let $(F,D)$ be an	$l$-diagram and let $(X,\sigma)$ be the associated surjective local homeomorphism, as defined in Subsection \ref{subsection-from.Ldiagram.to.local.homeo}. Then the first layer of $(F,D)$ is a generalized finite shift graph, and it gives rise to a $\sigma$-cylinder decomposition for $X$.

More generally, the layers corresponding to even vertices $(F^{0,2j}\cup F^{0,2j+1}, F^{1,2j})$, $j \geq 0$, in the $l$-diagram $(F,D)$ are generalized finite shift graphs, which are refined if $j>0$, and give rise to $\sigma$-cylinder decompositions for $X$. For each $j\ge 0$, the pair of red-blue adjacency matrices associated to this $\sigma$-cylinder decomposition agrees with the pair $(A,I)$ of red and blue adjacency matrices of the generalized finite shift graph $(F^{0,2j}\cup F^{0,2j+1}, F^{1,2j})$. Moreover, for $j'>j$,  the $\sigma$-cylinder decomposition corresponding to the level $2j'$ refines the $\sigma$-cylinder decomposition corresponding to the level $2j$.
\end{proposition}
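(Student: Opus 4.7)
The first assertion that each layer $(F^{0,2j}\cup F^{0,2j+1}, F^{1,2j})$ is a generalized finite shift graph (refined when $j\ge 1$) is a direct verification of Definition \ref{definition-general.finite.shift}: condition (a) follows from the separation (a) of Definition \ref{definition-L.separated.Bratteli}; condition (b) from condition (d) of that definition (for $j=0$) or part (i) of Proposition \ref{proposition-decomposition.odd.rombs.even} (for $j\ge 1$), together with the injectivity of $s$ on $B_v$ from the last part of condition (c); refinedness is Remark \ref{remark-about.def.of.Ldiagram} (5). For the $\sigma$-cylinder decomposition $\calZ_j$ at level $2j$, I mirror the construction preceding the proposition: set $Z_\gamma := Z(\gamma)$ for $\gamma\in F^{0,2j+1}$, $V_i := Z(v^i)$ for $v^i\in F^{0,2j}$, and $\Gamma_i := s(B_{v^i})$, so that $V_i = \bigsqcup_{\gamma\in\Gamma_i} Z_\gamma$; for each red edge $f \in R_{v^i}$ with $s(f)=\gamma$, let $V_f$ consist of those $x\in V_i$ whose variable tail-pair $(e_{2j},e_{2j+1})$ yields $f_{2j}=f$ in the even-layer romb of Proposition \ref{proposition-decomposition.odd.rombs.even} (ii). The bijectivity of $(e_{2j},e_{2j+1})\leftrightarrow(f_{2j},f_{2j+1})$ gives $V_i=\bigsqcup_{f\in R_{v^i}} V_f$ with exactly $a_{\gamma,i}$ pieces for each $(\gamma,i)$, matching the red-blue adjacency pair $(A,I)$ by construction.

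The crux is to prove $\sigma|_{V_f}: V_f\to Z_\gamma$ is bijective. Containment $\sigma(V_f)\subseteq Z_\gamma$ is immediate since $s(e_{2j}'(\sigma(x)))=s(f_{2j}(x))=\gamma$. For injectivity, if $\sigma(x_1)=\sigma(x_2)$ for $x_1,x_2\in V_f$, then comparing level-$(2j+2)$ vertices yields $s(f_{2j+1}(x_1))=s(f_{2j+1}(x_2))$; both edges lie in $R(f)\subseteq R_{s(f)}$, so condition (c) of Definition \ref{definition-L.separated.Bratteli} forces equality, Proposition \ref{proposition-decomposition.odd.rombs.even} (ii) identifies $(e_{2j},e_{2j+1})$, and injectivity of $\sigma$ on the cylinder $Z(\bar e_0,\dots,\bar e_{2j-1},e_{2j},e_{2j+1})$ (from the proof of Lemma \ref{lemma-sigma.surj.local.homeo}) yields $x_1=x_2$. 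For surjectivity, given $y\in Z_\gamma$, I construct the pre-image by backward propagation through the romb structure: starting from $f_{2j}=f$ and iterating for $k=j,j-1,\dots,1$, first recover $f_{2k-1}$ as the unique red edge with source $r(f_{2k})$ and range $r(e_{2k-1}'(y))$ via the no-double-red-edges property (Remark \ref{remark-about.def.of.Ldiagram} (5)), then recover $f_{2k-2}$ as the unique red edge $g$ with $s(g)=r(f_{2k-1})$ and $f_{2k-1}\in R(g)$, identified by the color class of $f_{2k-1}$ in the separation $D_{r(f_{2k-1})}$. Forward propagation via Proposition \ref{proposition-decomposition.odd.rombs.even} (ii) together with conditions (f) and (g) of Definition \ref{definition-L.separated.Bratteli} on the tail completes the construction of $x\in V_f$, whose blue path agrees with $v^i$'s unique blue path by part (i) of Proposition \ref{proposition-decomposition.odd.rombs.even}. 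The main obstacle is to justify the identity $r(e_{2k-1}'(y)) = r(f_{2k-1}^{v^i})$ at each level: this follows from a forward argument, for if $x_0\in V_f$ (non-empty by the romb bijection), then $\sigma(x_0)\in Z_\gamma$ has level-$(2k-1)$ vertex $r(f_{2k-1}^{v^i})$ by direct computation from the $\sigma$-construction, and uniqueness of the blue path to $\gamma$ (Proposition \ref{proposition-decomposition.odd.rombs.even} (i)) transfers this to every $y\in Z_\gamma$.

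For the refinement claim ($j'>j$): set-wise, $Z(\lambda)$ for $\lambda\in F^{0,2j'+1}$ refines $Z(\gamma)$ via the nested cylinder structure, and $V_{w^{j'}}$ refines $V_i$ for $w^{j'}\in F^{0,2j'}$ a descendant of $v^i$; for each red edge $f'\in R_{w^{j'}}$ in $F^{1,2j'}$, the set $V_{f'}$ lies in the unique $V_f$ at level $2j$ determined by the blue pair $(e_{2j},e_{2j+1})$ appearing on $w^{j'}$'s blue path from level $0$. The compatibility condition (c) of Definition \ref{definition-refined.sigma.cylinder.dec}, namely that the projection $\lambda\mapsto\gamma$ restricts to a bijection between the refinement pieces of each $V_f$ and the descendants $\Lambda(\gamma)$, then reduces to a combinatorial check using the uniqueness of blue paths and the nested romb structure across levels $2j$ through $2j'$.
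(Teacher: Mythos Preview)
Your approach is essentially the paper's, but two steps need repair.

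\textbf{Injectivity.} The assertion ``comparing level-$(2j+2)$ vertices yields $s(f_{2j+1}(x_1))=s(f_{2j+1}(x_2))$'' does not follow as stated: the level-$(2j+2)$ vertex of $\sigma(x)$ is $s(e'_{2j+1})$, which the odd-layer romb $(e'_{2j+1},e'_{2j+2},f_{2j+1},f_{2j+2})$ relates to $r(f_{2j+1})$ and $s(f_{2j+2})$, not to $s(f_{2j+1})$. What actually works is condition (g), not (c): from $\sigma(x_1)=\sigma(x_2)$ you get equal blue pairs $(e'_{2j+1},e'_{2j+2})$; since both $f_{2j+1}(x_i)\in R(f)$, condition (g) at the odd vertex $s(f)$ forces $(f_{2j+1},f_{2j+2})(x_1)=(f_{2j+1},f_{2j+2})(x_2)$. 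The paper avoids this layer-by-layer bookkeeping entirely: for $j\ge 1$ it invokes the proof of Lemma \ref{lemma-sigma.surj.local.homeo} to observe that pre-images of $y$ are parametrized by the top-level red edge $f_0$, and $f_0$ is determined by the first two edges of the fixed blue path $\ol e'$ to $v^i$. This gives injectivity in one stroke.

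\textbf{Surjectivity.} Your backward propagation to recover $f_{2k-1},f_{2k-2}$ for $k<j$ is correct once you justify existence via a witness $x_0\in V_f$, but it is needlessly circuitous. The paper constructs $x$ directly: set the initial blue segment of $x$ equal to the unique blue path $\ol e'$ ending at $v^i$, propagate \emph{forward only} from $f_{2j}=f$ (via condition (g) and Proposition \ref{proposition-decomposition.odd.rombs.even} (ii)) to obtain $e'_n$ for $n\ge 2j$, and then check $\sigma(x)=y$ by noting that $\sigma(x)$ agrees with $y$ at levels $\ge 2j+1$ by construction and at levels $\le 2j$ by uniqueness of the blue path with source $s(e)=\gamma$. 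No backward step is needed, and the non-emptiness of $V_f$ becomes a consequence rather than a hypothesis.

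The refinement argument is along the same lines as the paper's, which reduces to consecutive even layers via Lemma \ref{lemma-refinement.transitivity} and checks compatibility by building the unique pair $(e'_{2j},e'_{2j+1})$ from $(e_{2j+1},e_{2j+2})$ via condition (g) then Proposition \ref{proposition-decomposition.odd.rombs.even} (ii).
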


\begin{proof}
By definition, it is clear that the even layers $(F^{0,2j}\cup F^{0,2j+1}, F^{1,2j})$, $j \geq 0$, are generalized finite shift graphs, which are refined for $j > 0$ by Remark \ref{remark-about.def.of.Ldiagram} (5). Let us now describe the associated $\sigma$-cylinder decompositions.

We have already defined the $\sigma$-cylinder decomposition for the first layer $(F^{0,0} \cup F^{0,1}, F^{1,0})$, namely $\calZ_0 = \calZ_0(F,D) = (\{Z(e)\}, \{ V_{e,i}^{(k)} \})$. We now define the $\sigma$-cylinder decomposition $\calZ_{2j} = \calZ_{2j}(F,D)$ for an even layer $(F^{0,2j}\cup F^{0,2j+1}, F^{1,2j})$, where $j \ge 1$.

We take as a finite set for the $2j^{\text{th}}$ layer the set
$$\Gamma^{2j} = P^B_{0,2j+1} = \bigsqcup_{\ol{e} \in P^B_{0,2j}} \Gamma^{2j}_{\ol{e}}, \quad \text{ with } \quad \Gamma^{2j}_{\ol{e}} =\{(\ol{e},e) \in P^B_{0,2j+1} \mid e \in B_{s(\ol{e})}\}.$$
Now we have $X = \bigsqcup_{(\ol{e},e) \in \Gamma^{2j}} Z(\ol{e},e)$, and we set
$$V_{\ol{e}} := \bigsqcup_{(\ol{e},e) \in \Gamma^{2j}_{\ol{e}}} Z(\ol{e},e) = \bigsqcup_{e \in B_{s(\ol{e})}} Z(\ol{e},e) = Z(\ol{e}).$$

For the construction of the remaining sets, we proceed as follows. First, recalling that the set of vertices $F^{0,n}$ can be identified with $P^B_{0,n}$ for every $n \ge 0$, we will label the red adjacency matrix of $(F^{0,2j} \cup F^{0,2j+1}, F^{1,2j})$ by the set $\Gamma^{2j} \times P^B_{0,2j}$, so that, for $(\ol{e},e) \in \Gamma^{2j}$ and $\ol{e}'\in P^B_{0,2j}$, $a_{(\ol{e},e),\ol{e}'} = 1$ if and only if there is a red edge from $s(e)$ to $s(\ol{e}')$, and $a_{(\ol{e},e),\ol{e}'} = 0$ otherwise (recall from Remark \ref{remark-about.def.of.Ldiagram} (5) that, since $j \ge 1$, the red adjacency matrix of $(F^{0,2j} \cup F^{0,2j+1}, F^{1,2j})$ is a binary matrix).

Fix $(\ol{e}, e) \in \Gamma^{2j}$ and $\ol{e}' \in P^B_{0,2j}$, with $\ol{e} = (e_0,e_1,\dots, e_{2j-2}, e_{2j-1})$, and $\ol{e}' = (e_0',e_1',\dots, e_{2j-2}', e_{2j-1}')$. We now check that $a_{(\ol{e},e), \ol{e}'} = 1$ if and only if there exists $x \in Z(\ol{e}')$ such that $\sigma(x) \in Z(\ol{e},e)$. Suppose first that there is some $x$ with the stated condition. Then we can write
$$x = (e_0',e_1',\dots, e_{2j-2}',e_{2j-1}',e_{2j}', e_{2j+1}',\dots)$$
for some $e_{2j}',e_{2j+1}',\dots$ blue edges. For each $k \ge 0$, let $(e_{2k}',e_{2k+1}', f_{2k}, f_{2k+1})$ be the romb associated to the pair of blue edges $(e_{2k}',e_{2k+1}')$ (see Proposition \ref{proposition-decomposition.odd.rombs.even} (ii)). Then by the definition of $\sigma$ (Construction \ref{construction-local.homeo}), and since $\sigma(x) \in Z(\ol{e},e)$ by assumption, we have that $(e_{2j-1},e,f_{2j-1},f_{2j})$ is the romb associated to the pair of red edges $(f_{2j-1},f_{2j})$. Hence we have that the red edge $f_{2j}$ satisfies $r(f_{2j}) = r(e_{2j}') = s(e_{2j-1}')$ and also $s(f_{2j}) = s(e)$, and we see that there is a red edge from $s(e)$ to $s(\ol{e}') = s(e_{2j-1}')$, and thus $a_{(\ol{e},e),\ol{e}'} = 1$.

Conversely, suppose that there is a red edge $f_{2j}$ such that $s(f_{2j}) = s(e)$ and $r(f_{2j}) = s(e_{2j-1}')$.  We will show that for each element
$$y = (e_0,e_1,\dots, e_{2j-1}, e, e_{2j+1}, e_{2j+2}, \dots) \in Z(\ol{e},e)$$
there is a unique $x \in Z(\ol{e}')$ such that $\sigma(x) = y$. The proof of this fact resembles the proof of surjectivity in Lemma \ref{lemma-sigma.surj.local.homeo}, although some additional manipulations are needed here. Use Definition \ref{definition-L.separated.Bratteli} (g) and induction to get unique red edges $f_n$ for $n \ge 2j+1$ such that $f_{2k+1} \in R(f_{2k})$ and $(e_{2k+1}, e_{2k+2},f_{2k+1},f_{2k+2})$ is a romb for each $k \ge j$. By Proposition \ref{proposition-decomposition.odd.rombs.even} (ii), there are blue edges $e'_n$ for $n \ge 2j$ such that $(e'_{2k},e'_{2k+1},f_{2k},f_{2k+1})$ is a romb for each $k \ge j$. Note that $r(e_{2j}') = r(f_{2j}) = s(e'_{2j-1})$. Hence we may consider the element
$$x = (e_0',e_1',\dots, e_{2j-2}',e_{2j-1}',e_{2j}',e_{2j+1}',e_{2j+2}',\dots) \in Z(\ol{e}').$$
Then by the construction of $x$ and the definition of $\sigma$, we have that
$$\sigma(x) = (e_0'',e_1'',\dots, e_{2j-2}'', e_{2j-1}'', e_{2j}'', e_{2j+1}, e_{2j+2}, e_{2j+3},\dots).$$
But now by uniqueness of blue edges with the same source we get that $e = e_{2j}''$ and $e_n = e_n''$ for $0 \leq n \leq 2j-1$, showing that $\sigma(x) = y$. The uniqueness of $x \in Z(\ol{e}')$ such that $\sigma(x) = y$ follows from the fact that $j \ge 1$ because, following the proof of Lemma \ref{lemma-sigma.surj.local.homeo}, the pre-images of $y$ are determined by the red edges departing from $s(e_0)$, and thus $x$ will necessarily be the pre-image of $y$ determined by the edge $f_0$, where $(e_0',e_1',f_0,f_1)$ is the romb associated to the pair of blue edges $(e_0',e_1')$, with $f_1 \in R(f_0)$.

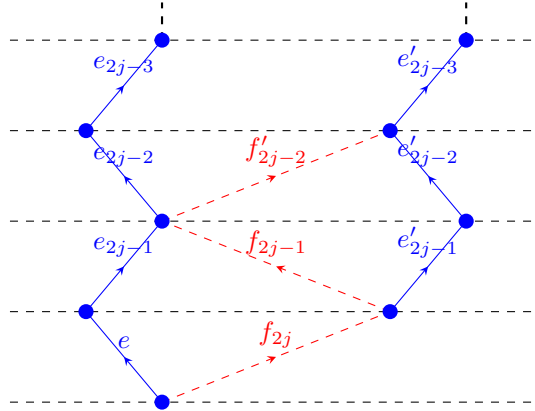
\begin{figure}[H]
	\begin{tikzpicture}
	\path [draw=blue,postaction={on each segment={mid arrow=blue}}]
	(-1,3.6) -- node[above,blue]{$e_{2j-3}$} (0,4.8)
	(0,2.4) -- node[above,blue]{$e_{2j-2}$} (-1,3.6)
	(-1,1.2) -- node[above,blue]{$e_{2j-1}$} (0,2.4)
	(0,0) -- node[above,blue]{$e$} (-1,1.2)
	;
	\path [draw=red,dashed,postaction={on each segment={mid arrow=red}}]
	(0,2.4) -- node[above,red]{$f'_{2j-2}$} (3,3.6)
	(3,1.2) -- node[above,red]{$f_{2j-1}$} (0,2.4)
	(0,0) -- node[above,red]{$f_{2j}$} (3,1.2)
	;
	\path [draw=blue,postaction={on each segment={mid arrow=blue}}]
	(3,3.6) -- node[above,blue]{$e'_{2j-3}$} (4,4.8)
	(4,2.4) -- node[above,blue]{$e'_{2j-2}$} (3,3.6)
	(3,1.2) -- node[above,blue]{$e'_{2j-1}$} (4,2.4)
	;
	\draw[black,thick,dashed] (0,4.8) -- (0,5.3);
	\draw[black,thick,dashed] (4,4.8) -- (4,5.3);
	\draw[black,dashed] (-2,0) -- (5,0);
	\draw[black,dashed] (-2,1.2) -- (5,1.2);
	\draw[black,dashed] (-2,2.4) -- (5,2.4);
	\draw[black,dashed] (-2,3.6) -- (5,3.6);
	\draw[black,dashed] (-2,4.8) -- (5,4.8);
	\node[circle,fill=blue,scale=0.6] (B0) at (0,0) {};
	\node[circle,fill=blue,scale=0.6] (B1) at (-1,1.2) {};
	\node[circle,fill=blue,scale=0.6] (B2) at (0,2.4) {};
	\node[circle,fill=blue,scale=0.6] (B3) at (-1,3.6) {};
	\node[circle,fill=blue,scale=0.6] (B4) at (0,4.8) {};
	\node[circle,fill=blue,scale=0.6] (B'0) at (3,1.2) {};
	\node[circle,fill=blue,scale=0.6] (B'1) at (4,2.4) {};
	\node[circle,fill=blue,scale=0.6] (B'2) at (3,3.6) {};
	\node[circle,fill=blue,scale=0.6] (B'3) at (4,4.8) {};
	\end{tikzpicture}
	\caption{Relation between $(\ol{e},e)$ and $\ol{e}'$ via red edges.}
	\label{figure-schematics5}
\end{figure}

\noindent Let us now continue the proof. If $a_{(\ol{e},e),\ol{e}'} = 1$, we set
$$V_{(\ol{e},e),\ol{e}'} = \{x \in Z(\ol{e}') \mid \sigma(x) \in Z(\ol{e},e)\}.$$
The set $V_{(\ol{e},e),\ol{e}'}$ is left undefined if $a_{(\ol{e},e),\ol{e}'} = 0$. Finally, we define
$$\calZ_{2j} = \calZ_{2j}(F,D) = (\{Z(\ol{e},e) \mid (\ol{e},e) \in \Gamma^{2j} \}, \{V_{(\ol{e},e), \ol{e}'} \mid (\ol{e},e) \in \Gamma^{2j}, \ol{e}' \in P^B_{0,2j} \text{ with } a_{(\ol{e},e),\ol{e}'} = 1\}).$$
Clearly conditions (a) and (b) in Definition \ref{definition-sigma.cylinder.dec} are satisfied. Moreover, by what we have proven above, we have that the restriction of $\sigma$ to each $V_{(\ol{e},e),\ol{e}'}$, with $a_{(\ol{e},e),\ol{e}'} = 1$, is injective, and that $\sigma(V_{(\ol{e},e),\ol{e}'}) = Z(\ol{e},e)$. Therefore condition (c) in Definition \ref{definition-sigma.cylinder.dec} is also satisfied. Hence we have defined $\sigma$-cylinder decompositions $\calZ_{2j}$, whose associated red-blue adjacency matrices agree with the pair $(A,I)$ of red and blue adjacency matrices of the generalized finite shift graph $(F^{0,2j} \cup F^{0,2j+1}, F^{1,2j})$, for each $j \ge 0$.

We now show the part on the refinement of the $\sigma$-cylinder decompositions. By Lemma \ref{lemma-refinement.transitivity}, it is enough to show it in the case where we have two consecutive even layers $(F^{0,2j} \cup F^{0,2j+1}, F^{1,2j})$ and $(F^{0,2j+2} \cup F^{0,2j+3}, F^{1,2j+2})$, $j \geq 0$.

We need to check the conditions in Definition \ref{definition-refined.sigma.cylinder.dec}. Note that
$$\Gamma^{2j+2} = P^B_{2j+3} = \bigsqcup_{\ol{e} \in \Gamma^{2j}} \Gamma^{2j+2}(\ol{e}),$$
with
$$\Gamma^{2j+2}(\ol{e}) = \{(\ol{e},e_{2j+1},e_{2j+2}) \mid e_{2j+1} \in B_{s(\ol{e})}, e_{2j+2} \in B_{s(e_{2j+1})}\},$$
and we have, for $\ol{e} \in \Gamma^{2j}$,
$$Z(\ol{e}) = \bigsqcup_{(\ol{e},e_{2j+1},e_{2j+2}) \in \Gamma^{2j+2}(\ol{e})} Z(\ol{e},e_{2j+1},e_{2j+2}).$$
This proves condition (a) in Definition \ref{definition-refined.sigma.cylinder.dec}. 

We prove (b) and (c) first for $j \ge 1$. In this case
$$\mathfrak A_{2j} = \{((\ol{e},e),\ol{e}') \mid \ol{e},\ol{e}' \in P^B_{0,2j}, e \in B_{s(\ol{e})}, a_{(\ol{e},e),\ol{e}'} = 1\},$$
and we have a similar expression for $\mathfrak A _{2j+2}$. We can then write
$$\mathfrak A_{2j+2} = \bigsqcup_{((\ol{e},e),\ol{e}') \in \mathfrak A_{2j}} \mathfrak A_{2j+2}((\ol{e},e),\ol{e}'),$$
where $\mathfrak A_{2j+2}((\ol{e},e),\ol{e}')$ is the set of all elements of $\mathfrak A _{2j+2}$ of the form $((\ol{e},e,e_{2j+1},e_{2j+2}),(\ol{e}',e'_{2j},e'_{2j+1}))$, for suitable edges $e_{2j+1},e_{2j+2}, e'_{2j}, e_{2j+1}'$.

With these decompositions, we obtain the equality
\begin{equation}\label{equation-refinement.second.partition}
V_{(\ol{e},e),\ol{e}'} = \bigsqcup_{\nu \in \mathfrak A _{2j+2}((\ol{e}, e), \ol{e}')} V_{\nu},
\end{equation}
for each $((\ol{e},e), \ol{e}') \in \mathfrak A _{2j}$, which shows (b). To prove (c), one has to show that given $((\ol{e},e),\ol{e}') \in \mathfrak A_{2j}$, and given $e_{2j+1} \in B_{s(e)}$, $e_{2j+2} \in B_{s(e_{2j+1})}$, there are unique $e'_{2j} \in B_{s(\ol{e}')}$ and $e'_{2j+1} \in B_{s(e'_{2j})}$ such that $((\ol{e},e,e_{2j+1},e_{2j+2}),(\ol{e}',e'_{2j},e'_{2j+1})) \in \mathfrak A_{2j+2}$. The blue edges $e_{2j}',e_{2j+1}'$ are constructed as follows. Let $f_{2j}$ be the unique red edge such that $s(f_{2j}) = s(e)$ and $r(f_{2j}) = s(\ol{e}')$. This edge must exist because $a_{(\ol{e},e),\ol{e}'} = 1$. Now by Definition \ref{definition-L.separated.Bratteli} (g), there is a unique pair of red edges $(f_{2j+1},f_{2j+2})$ such that $f_{2j+1} \in R(f_{2j})$ and $(e_{2j+1},e_{2j+2},f_{2j+1},f_{2j+2})$ is a romb. Now use Proposition \ref{proposition-decomposition.odd.rombs.even} (ii) to get a unique pair of blue edges $(e_{2j}',e_{2j+1}')$ such that $(e_{2j}',e_{2j+1}',f_{2j},f_{2j+1})$ is a romb. Note that $f_{2j+2}$ is a red edge such that $s(f_{2j+2}) = s(e_{2j+2})$ and $r(f_{2j+2}) = s(e_{2j+1}')$. It follows that $(\ol{e}',e_{2j}',e_{2j+1}')$ is the unique path in $P^B_{0,2j+2}$ such that $((\ol{e},e,e_{2j+1},e_{2j+2}),(\ol{e}',e_{2j}',e_{2j+1}')) \in \mathfrak A_{2j+2}$.

It remains to show (b) and (c) in Definition \ref{definition-refined.sigma.cylinder.dec} for $j=0$. In this case, we have the sets $V_{e_0,i}^{(k)}$, where $e_0$ is a blue edge in $F^{1,0}$, $v^i\in F^{0,0}$ and $1\le k\le a_{e_0,i}$. Given such a triple $(e_0,i,k)$, the set $\mathfrak A _2(e_0,i,k)$ is the set of all elements $((e_0,e_1,e_2),(e_0',e_1')) \in \mathfrak A_2$ such that $r(e_0')= v^i$ and the romb associated to the pair of blue edges $(e_0',e_1')$ is of the form $(e_0',e_1',f_0,f_1)$ with $f_0= f_{e_0,i}^{(k)}$. We then have
$$\mathfrak A_2 = \bigsqcup_{(e_0,i,k) \in \mathfrak A_0} \mathfrak A_2 (e_0,i,k)$$
and
$$ V_{e_0,i}^{(k)} = \bigsqcup_{\nu \in \mathfrak A_2(e_0,i,k)} V_{\nu}$$
for $(e_0,i,k) \in \mathfrak A_0$. This shows (b) for $j = 0$. Part (c) for $j=0$ is proven in a similar way as in the case where $j > 0$. Namely, given a triple $(e_0,i,k)$ as before, and given $e_1 \in B_{s(e_0)}$ and $e_2\in B_{s(e_1)}$, take first the unique pair of red edges $(f_1,f_2)$ such that $f_1 \in R(f_{e_0,i}^{(k)})$ and $(e_1,e_2,f_1,f_2)$ is a romb. Then take the unique blue pair $(e_0',e_1')$ so that $(e_0',e_1',f^{(k)}_{e_0,i},f_1)$ is a romb. Then $((e_0,e_1,e_2),(e_0',e_1'))$ is the unique element in $\mathfrak A_2 (e_0,i,k)$ projecting to $(e_0,e_1,e_2)$. This completes the proof.
\end{proof}

\begin{remark}\label{remark-relation.adjacencies}
Note the following relation between red adjacency matrices. For $j \geq 0$, let $A_{2j} = (a_{(\ol{e},e),\ol{e}'}) \in M_{\Gamma^{2j} \times P^B_{0,2j}}(\Z^+)$ be the red adjacency matrix of the generalized finite shift graph $(F^{0,2j} \cup F^{0,2j+1},F^{1,2j})$. Let also, for $n \geq 0$, $P_{n+1}$ be the characteristic matrix of the partitions $\{Z(\ol{e}) \mid \ol{e} \in P^B_{0,n}\}$ and $\{Z(\ol{e},e_0,e_1) \mid (\ol{e},e_0,e_1) \in P^B_{0,n+2}\}$, which is defined by
$$(P_{n+1})_{\ol{e}',(\ol{e},e_0,e_1)} = \begin{cases} 1 & \text{ if } \ol{e}' = \ol{e}, \\ 0 & \text{ otherwise}. \end{cases}$$
Then we have the relation
$$D_{2j} A_{2j} = P_{2j+2} A_{2j+2} P_{2j+1}^T,$$
where $D_{2j}$ is the diagonal $\Gamma^{2j} \times \Gamma^{2j}$ matrix having $|P^B_{2j+1,2j+3}(s(\ol{e}))|$ at the position $(\ol{e},\ol{e})$, for each $\ol{e} \in \Gamma^{2j}$. 

The formula holds because for each $(\ol{e},e_0,e_1) \in \Gamma^{2j+2}$ and each red edge $f_0$ such that $s(f_0)= s(\ol{e})$, there is a unique $(\ol{e}',e_0',e_1') \in P^B_{0,2j+2}$ such that $a_{(\ol{e},e_0,e_1),(\ol{e}',e_0',e_1')} = 1$ and the romb associated to $(e_0',e_1')$ is of the form $(e_0',e_1',f_0,f_1)$, where $(e_0,e_1,f_1,f_2)$ is the romb associated to $(e_0,e_1)$.

\begin{figure}[H]
\begin{tikzpicture}
	\path [draw=blue,postaction={on each segment={mid arrow=blue}}]
	(-1,3.6) -- node[above,blue]{} (0,4.8)
	(0,2.4) -- node[above,blue]{$\ol{e}$} (-1,3.6)
	(-1,1.2) -- node[above,blue]{$e_0$} (0,2.4)
	(0,0) -- node[above,blue]{$e_1$} (-1,1.2)
	;
	\path [draw=red,dashed,postaction={on each segment={mid arrow=red}}]
	(0,2.4) -- node[above,red]{$f_0$} (3,3.6)
	(3,1.2) -- node[above,red]{$f_1$} (0,2.4)
	(0,0) -- node[above,red]{$f_2$} (3,1.2)
	;
	\path [draw=blue,postaction={on each segment={mid arrow=blue}}]
	(3,3.6) -- node[above,blue]{$\ol{e}'$} (4,4.8)
	(4,2.4) -- node[above,blue]{$e'_0$} (3,3.6)
	(3,1.2) -- node[above,blue]{$e'_1$} (4,2.4)
	;
	\draw[black,thick,dashed] (0,4.8) -- (0,5.3);
	\draw[black,thick,dashed] (4,4.8) -- (4,5.3);
	\draw[black,dashed] (-2,0) -- (5,0);
	\draw[black,dashed] (-2,1.2) -- (5,1.2);
	\draw[black,dashed] (-2,2.4) -- (5,2.4);
	\draw[black,dashed] (-2,3.6) -- (5,3.6);
	\draw[black,dashed] (-2,4.8) -- (5,4.8);
	\node[circle,fill=blue,scale=0.6] (B0) at (0,0) {};
	\node[circle,fill=blue,scale=0.6] (B1) at (-1,1.2) {};
	\node[circle,fill=blue,scale=0.6] (B2) at (0,2.4) {};
	\node[circle,fill=blue,scale=0.6] (B3) at (-1,3.6) {};
	\node[circle,fill=blue,scale=0.6] (B4) at (0,4.8) {};
	\node[circle,fill=blue,scale=0.6] (B'0) at (3,1.2) {};
	\node[circle,fill=blue,scale=0.6] (B'1) at (4,2.4) {};
	\node[circle,fill=blue,scale=0.6] (B'2) at (3,3.6) {};
	\node[circle,fill=blue,scale=0.6] (B'3) at (4,4.8) {};
\end{tikzpicture}
\caption{Visual explanation of the formula $D_{2j}A_{2j} = P_{2j+2} A_{2j+2}P_{2j+1}^T$.}
\label{figure-schematics6}
\end{figure}
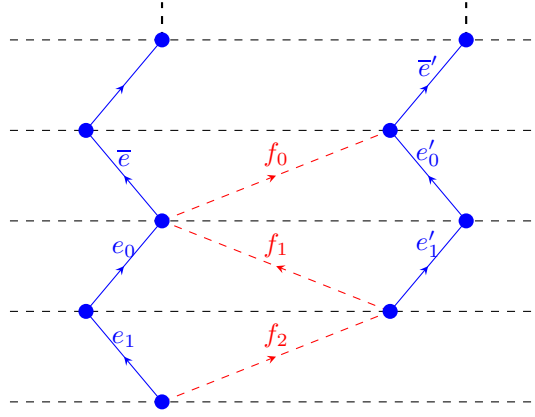
\end{remark}

We will now show that any generalized finite shift graph gives rise to a dynamical system $(X,\sigma) \in \textsf{LHomeo}$. This dynamical system will be called a \textit{generalized finite shift}, and includes both one-sided and two-sided finite shifts. For this purpose we will use the construction of the canonical resolution of a bipartite separated graph. We first recall its definition (see \cite{AL2018} and \cite{AE2014}).

\begin{definition}\label{definition-F.infty.and.others}
Let $(E,C)$ be any finite bipartite separated	graph, and write
$$C_u = \{X_1^u,\dots,X_{k_u}^u\}$$
for all $u \in E^{0,0}$. Then the \textit{$1$-step resolution} of $(E,C)$ is the finite bipartite separated graph denoted by $(E_1,C^1)$, and defined by
\begin{enumerate}[(a),leftmargin=0.7cm]
\item $E_1^{0,0} := E^{0,1}$ and $E_1^{0,1} := \{v(x_1,\dots,x_{k_u}) \mid u \in E^{0,0}, x_j \in X_j^u \text{ for all } 1 \leq j \leq k_u\}$.
\item $E_1^1 := \{\alpha^{x_i}(x_1,\dots,\widehat{x_i},\dots,x_{k_u}) \mid u \in E^{0,0}, 1 \leq i \leq k_u, x_j \in X_j^u \text{ for all } 1 \leq j \leq k_u \}$.
\item The range and source for the edges in $E_1^1$ are defined by
$$r_1(\alpha^{x_i}(x_1,\dots,\widehat{x_i},\dots,x_{k_u})) := s(x_i),$$
$$s_1(\alpha^{x_i}(x_1,\dots,\widehat{x_i},\dots,x_{k_u})) := v(x_1,\dots,x_{k_u}).$$
\item $C^1_v := \{X(x) \mid x \in s^{-1}(v)\}$ for $v \in E^{0,0}_1$, where for $u \in E^{0,0}$, $1 \leq i \leq k_u$, and $x_i\in X_i^u$,
$$X(x_i) := \{\alpha^{x_i}(x_1,\dots,\widehat{x_i},\dots,x_{k_u}) \mid x_j \in X_j^u \text{ for } j \ne i\}.$$
\end{enumerate}
A sequence of finite bipartite separated graphs $\{(E_n,C^n)\}_{n \ge 0}$ with $(E_0,C^0) := (E,C)$ is then defined inductively by letting $(E_{n+1},C^{n+1})$ denote the $1$-step resolution of $(E_n,C^n)$. The bipartite separated graph $(E_n,C^n)$ is called the \textit{$n$-step resolution} of $(E,C)$. Finally $(F,D)$ is the infinite layer graph
$$(F,D) := \bigcup_{n=0}^{\infty} (E_n,C^n).$$
It is clear by construction that $(F,D)$ is a separated Bratteli diagram, called the {\it canonical resolution} of the finite bipartite graph $(E,C)$.
\end{definition}

We now apply this construction to a generalized finite shift graph $(E,C)$ (see Definition \ref{definition-general.finite.shift}).

\begin{proposition}\label{proposition-multiresolution.is.Ldiagram}
Let $(E,C)$ be a generalized finite shift graph. Then the canonical resolution $(F,D)$ of $(E,C)$ is an $l$-diagram.
\end{proposition}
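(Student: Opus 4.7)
The plan is to verify conditions (a)--(g) of Definition \ref{definition-L.separated.Bratteli}, since by construction $(F,D)$ is automatically a separated Bratteli diagram. The organizing idea is to propagate the blue/red labeling of edges at $F^{1,0} = E^1$ (provided by the generalized finite shift structure $\{B_v,R_v\}$) upward through the canonical resolution by declaring that any edge $\alpha^y(\ldots) \in F^{1,n}$ inherits its label from $y \in F^{1,n-1}$, with each red edge at an odd layer also recording the red class $R(f)$ to which it belongs.

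First I would analyze the color class structure at each vertex. For $w \in F^{0,n+1} = E_n^{0,1}$ with $n \ge 1$, write $w = v(y_1,\ldots,y_{k_u})$ where the parent $u \in E_{n-1}^{0,0} = F^{0,n-1}$ lies in a layer of the same parity as $n+1$. By induction, $k_u = 2$ when $u$ is at an even layer and $k_u = 1+m_u$ when $u$ is at an odd layer. The separation at $w$ is $D_w = C^{n+1}_w = \{X(\alpha^{y_i}(\ldots)) \mid 1 \le i \le k_u\}$, indexed by the edges in $F^{1,n}$ with source $w$, each inheriting its label from $y_i$. This immediately gives axioms (a) and (b) (with the first odd layer $F^{0,1}$ handled directly from the generalized finite shift structure). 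Axiom (d) reduces to Definition \ref{definition-general.finite.shift}(b), and axiom (c) follows from the natural bijection between 2-tuples in $F^{0,2j}$ and pairs $(u \in F^{0,2j-2}, x_1 \in B_u, x_2 \in R_u)$: each tuple is the common source of the blue edge $\alpha^{x_1}(x_2)$ and the red edge $\alpha^{x_2}(x_1)$ in $F^{1,2j-1}$, whose ranges $s_{2j-2}(x_1), s_{2j-2}(x_2) \in F^{0,2j-1}$ furnish the two claimed disjoint unions.

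For axiom (e), the plan is to show that both $\bigsqcup_{e \in B_v} s(B_{s(e)})$ and $\bigsqcup_{f \in R_v} s(R(f))$ coincide with the fiber of the canonical parent map $F^{0,2j+2} \to F^{0,2j}$ over $v$. The key computation is that, for $e \in B_v$, the edge $e$ turns out to be precisely the unique blue edge in $F^{1,2j}$ with source $s(e)$, so $B_{s(e)} = X(e) = \{\alpha^e(\rho) \mid \rho \in R_v\}$, giving $s(B_{s(e)}) = \{v'(e,\rho) \mid \rho \in R_v\}$; letting $e$ vary over $B_v$ yields the full fiber $\{v'(e,\rho) \mid e \in B_v, \rho \in R_v\}$. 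A symmetric calculation on the red side yields the same fiber.

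The main obstacle will be the romb conditions (f) and (g) at odd layers. My plan is to exploit the tuple description: a vertex $w \in F^{0,2j+1}$ with parent $v \in F^{0,2j-1}$ can be written as $w = v'(y_0, y_1, \ldots, y_{m_v})$ with $y_0 \in B_v$ and $y_l \in R(h_l)$, and the $1+m_v$ edges in $F^{1,2j}$ arriving at $w$ from below are in canonical bijection with the tuple slots. A red-red path $(g_0,g_1)$ from $w$ to $v$ hits the red class $R(h_l)$ for a specific $l$ determined by $g_0$, pinning down $y_l = g_0$ and the remaining components $y_{l'}$ (for $l' \neq l$) through $g_1$; the matching blue-blue path $(e_0,e_1)$ is then obtained by selecting $e_0 = y_0$ and $e_1 = \alpha^{y_0}(y_1,\ldots,y_{m_v})$, which exists and is unique because $y_0$ is the unique component of $w$'s tuple in the blue slot, and propagates correctly through the 2-tuple structure at $s(e_0) \in F^{0,2j}$. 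For axiom (g), the marker $g$ provides exactly the information needed to fix the index $l$, and the argument runs in reverse. The principal technical difficulty throughout will be to keep the nested $\alpha$-notation across three consecutive levels under control and to handle the base case $j=1$ with $v \in F^{0,1}$ separately, since $F^{0,1}$ has no parent in the 1-step resolution sense.
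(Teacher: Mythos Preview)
Your proposal is correct and follows essentially the same approach as the paper. Both arguments verify axioms (a)--(g) of Definition \ref{definition-L.separated.Bratteli} by induction on the layer, using the explicit tuple/$\alpha$-notation of the canonical resolution; your ``color inheritance'' and ``parent-fiber'' language are just reformulations of the paper's direct computations. One organizational point worth noting: the paper's induction hypothesis is that all conditions hold up through layer $2n$, and then it observes that $(F^{0,2n}\cup F^{0,2n+1}, F^{1,2n})$ is again a generalized finite shift graph, so the base-case analysis of $(E_1,C^1)$ can be recycled verbatim for the next two layers---this makes precise why your step ``the parent of $w=s(g_1)$ must equal $v=r(g_0)$'' works (it uses condition (c) at the previous stage, which the induction hypothesis supplies), and it avoids having to treat $j=1$ separately for (f) and (g).
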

\begin{proof}
Let $(E,C)$ be a generalized finite shift graph, with $E^{0,0}= \{ v^1,\dots , v^n\}$ and red adjacency matrix $A= (a_{w,i})$, where $a_{w,i}$ is the number of red edges from $w \in E^{0,1}$ to $v^i$.

Observe that, by its natural expression as a separated Bratteli diagram, we have $F^{0,n} =E_n^{0,0}$ and $F^{1,n}= E_n^{1}$ for all $n\ge 0$. We also have $D_v = C_v^n$ for $v\in F^{0,n}$. We need to check conditions (a)-(g) in Definition \ref{definition-L.separated.Bratteli}.

Note first that, by definition, property (a) for $j=0$, formulas (1) and (2) in (d), and the second part of (d) for $j=0$ hold. 

We analyze first the graph $(E_1,C^1)$. Observe that by Definition \ref{definition-F.infty.and.others} we have exactly $|s^{-1}(v)|$ elements in $D_v$ for each $v \in E_1^{0,0} = F^{0,1}$. These are of the form
$$D_v = \{B_v\} \cup \{R(f_{v,j}^{(k)}) \mid 1 \leq j \leq n, \text{ } 1 \leq k \leq a_{v,j}\},$$
where $\{f_{v,j}^{(k)}\mid 1 \leq k \leq a_{v,j}\}$ is the set of red edges from $v$ to $v^j$. For $v\in F^{0,1}$, we write
$$R_v = \bigcup_{j=1}^n \bigcup_{k=1}^{a_{v,j}} R(f_{v,j}^{(k)}).$$
These are the red edges at the vertices in $E^{0,1}= F^{0,1}$. Condition (b) is therefore verified for $j=1$.

To verify condition (c) at $j=1$, observe that $F^{0,2}$ is the disjoint union of the vertices corresponding to the different vertices $v^i$, for $1 \leq i \leq n$, where a vertex $w$ in $F^{0,2}$ corresponds to $v^i$ in case there is a (necessarily unique) blue path going from $w$ to $v^i$. So it is enough to concentrate on one such index $i$. The set of vertices corresponding to $v^i$ is 
$$\{v(e_{v,i}, f_{w,i}^{(k)}) \mid e_{v,i} \in B_{v^i}, w \in F^{0,1}, 1 \leq k \leq a_{w,i} \}.$$
Here $B_{v^i} = \{e_{v,i}\}_{v\in s(B_{v^i})}$, with $s(e_{v,i}) = v$ and $r(e_{v,i}) = v^i$. Clearly, for a given vertex $v(e_{v,i}, f_{w,i}^{(k)})$ of this form, there are exactly two edges with source $v(e_{v,i}, f_{w,i}^{(k)})$, namely $\alpha^{e_{v,i}}(f_{w,i}^{(k)})\in B_v$ and $\alpha^{f_{w,i}^{(k)}}(e_{v,i}) \in R(f_{w,i}^{(k)}) \subseteq R_w$. Hence condition (c) holds for $j=1$.

The validity of condition (e) for $j=0$ is clear from the above considerations.

We have verified all the conditions up to the second layer. Now, using induction, we suppose that, for $n>0$, all conditions hold up to the layer $2n$, that is, suppose that all the conditions in Definition \ref{definition-L.separated.Bratteli} involving vertices from $F^{0,j}$ for $0\le j\le  2n$ and edges between them, hold. We will check the conditions for the next two layers. For $v\in F^{0,2n}$ we have $D_v= \{B_v,R_v\}$ because by induction hypothesis (condition (c)), there are exactly two edges departing from $v$, one of them a blue edge and the other a red edge, giving rise (by Definition \ref{definition-F.infty.and.others}) to the two sets $B_v$ and $R_v$ respectively. This gives condition (a) for $j=n$. Moreover, the elements of $B_v$ are of the form $\alpha^e(f_1,\dots , f_p)$, where $e$ is the unique blue edge such that $s(e)= v$, and $f_i\in R(g_i)$, where $g_1,\dots , g_p$ are the different red edges such that $s(g_i)=r(e)$. We then have that
$$s(\alpha^e (f_1,\dots , f_p))= v(e,f_1,\dots , f_p),$$
so clearly we have that the sources of two distinct blue edges are distinct. This verifies the second part of condition (d) for $j=n$. Observe that $(F^{0,2n} \cup F^{0, 2n+1}, F^{1,2n})$ is again a generalized finite shift graph, and so we can prove the same properties as before for the layers between $2n$ and $2n+2$.

It remains to check conditions (f) and (g).

We check condition (f) for $j= n$. Suppose that we have a pair $(f_0,f_1)$ of red edges such that $r(f_0)= v\in F^{0,2n-1}$ and $r(f_1)= s(f_0)$. Then $f_1$ will be of the form
$$f_1 = \alpha^{f_0'}(e_0, g_1, \dots , g_p)$$
where $e_0 \in B_{v}$, $f_0'\in R(h_0)$, and $g_i \in R(h_i)$ for $1 \leq i \leq p$, where $\{h_0,h_1,\dots , h_p\}$ are the different red edges departing from $v$. Observe that $s(f_0') = r(f_1) = s(f_0)$. Hence by property (c) for $j = n$ we need to have $f_0' = f_0$. 
We define $e_1 := \alpha ^{e_0}(f_0, g_1,\dots , g_p)$. We have that $r(e_1) = s(e_0)$, and that $e_1\in B_{r(e_1)}$. Moreover, $s(e_1) = s(f_1) = v(e_0, f_0, g_1, \dots, g_p)$ and $r(e_0) = r(f_0) = v$, as desired. It is clear that the pair $(e_0,e_1)$ is the unique pair of blue edges satisfying these conditions.

We finally deal with condition (g). Suppose that we are given one of the red edges $h_i$, where as before $\{h_0,h_1,\dots , h_p\}$ are the different red edges departing from $v$, and we are also given a pair $(e_0,e_1)$ of blue edges such that $r(e_0) = v$ and $r(e_1) = s(e_0)$. This means that $e_1$ is of the form
$$e_1= \alpha ^{e_0}(g_0,g_1,\dots , g_p),$$
where $g_j\in R(h_j)$ for $0 \leq j \leq p$. We then define $f_0= g_i$ and
$$f_1= \alpha^{g_i}(e_0, g_0,\dots , \hat{g_i},\dots , g_p).$$
Then we get that $(e_0,e_1,f_0,f_1)$ is a romb, and that $(f_0,f_1)$ is unique satisfying this property and the property that $f_0\in R(h_i)$. This shows that condition (g) is satisfied for $j=n$.

This completes the proof of the fact that $(F,D)$ is an $l$-diagram.
\end{proof}

We have observed in the above proof that the separated graphs $(E_{2n},C^{2n})$, $n \geq 0$, are also generalized finite shift graphs. In particular, we can talk about their red-blue adjancency matrices $(A_{2n},I_{2n})$. A similar relation between the different matrices $A_n$ as the one described in Remark \ref{remark-relation.adjacencies} can also be given. Of course, all graphs $(E_{2n},C^{2n})$ describe exactly the same dynamical system.

\begin{definition}\label{definition-generalized.finite.shift}
Let $(E,C)$ be a generalized finite shift graph, and let $(F,D)$ be its canonical resolution, which is an $l$-diagram by Proposition \ref{proposition-multiresolution.is.Ldiagram}. The associated dynamical system $(X,\sigma)$, as defined in Subsection \ref{subsection-from.Ldiagram.to.local.homeo}, will be called a {\it generalized finite shift}.
\end{definition}

The rest of this section is devoted to show how these generalized finite shifts provide common generalizations of one-sided and two-sided shifts.\\

We come back to the original situation where we have $E = (E^0,E^1,r_E,s_E)$ a finite graph with adjacency matrix $A = A_E$ with no zero-rows and no zero-columns, and set of vertices $E^0 = \{1,\dots , n\}$.

We can canonically build a finite bipartite separated graph $(E',C')$ associated to $E$ as follows. The graph $E'$ has set of vertices $(E')^0 = (E')^{0,0} \sqcup (E')^{0,1}$ with $(E')^{0,0}$ and $(E')^{0,1}$ disjoint copies of $E^0$. Let $i \mapsto v_i$ be a bijection from $E^0$ onto $(E')^{0,1}$, and $i \mapsto v^i$ a bijection of $E^0$ onto $(E')^{0,0}$. We define $C'_i= \{B_i,R_i\}$, where $B_i= \{e_i\}$ with $s(e_i)= v_i$ and $r(e_i)= v^i$, and where $R_i$ consists of a copy of $r_E^{-1}(i)$. More precisely, for each $f \in E^1 $ such that $r_E(f) = i$, we define an edge $\ol{f} \in (E')^1$ such that $s(\ol{f}) = s_E(f)_i$ and $r(\ol{f}) = r_E(f)^i$. Then we define $R_i$ as the set of such edges $\ol{f}$ with $r_E(f)= i$.

\begin{example}\label{example-forbidden.word.000}
Let $E$ be the finite graph
\begin{figure}[H]
\begin{tikzpicture}
	\path [draw=black,postaction={on each segment={mid arrow=black}}]
	(-1,2) -- node[above,red]{} (1,2)
	(-1,0) -- node[above,red]{} (-1,2)
	(1,2) -- node[above,red]{} (1,0)
	(1,0) -- node[above,red]{} (-1,0)
	;
	\node[label=left:$00$,circle,fill=black,scale=0.4] (00) at (-1,2) {};
	\node[label=right:$01$,circle,fill=black,scale=0.4] (01) at (1,2) {};
	\node[label=left:$10$,circle,fill=black,scale=0.4] (10) at (-1,0) {};
	\node[label=right:$11$,circle,fill=black,scale=0.4] (11) at (1,0) {};
	\draw[-,black,postaction={on each segment={mid arrow=black}}] (01) to [out=245,in=25] (10);
	\draw[-,black,postaction={on each segment={mid arrow=black}}] (10) to [out=65,in=205] (01);
	%\path[-,black,postaction={on each segment={mid arrow=black}}] (11) edge [loop below] (11);
	\draw[->,>= stealth] (11) edge [out=320,in=230,distance=10mm] (11);
\end{tikzpicture}
%\caption{}
\label{figure-example2}
\end{figure}

The associated separated graph $(E',C')$ is given by
\begin{figure}[H]
\begin{tikzpicture}
	\path [draw=blue,postaction={on each segment={mid arrow=blue}}]
	(-3,0) -- node[above,blue]{} (-3,2)
	(-1,0) -- node[above,blue]{} (-1,2)
	(1,0) -- node[above,blue]{} (1,2)
	(3,0) -- node[above,blue]{} (3,2)
	;
	\path [draw=red,postaction={on each segment={mid arrow=red}}]
	(-3,0) -- node[above,red]{} (-1,2)
	(-1,0) -- node[above,red]{} (1,2)
	(-1,0) -- node[above,red]{} (3,2)
	(1,0) -- node[above,red]{} (-3,2)
	(1,0) -- node[above,red]{} (-1,2)
	(3,0) -- node[above,red]{} (1,2)
	;
	\node[label=below:$v_{00}$,circle,fill=blue,scale=0.4] (00bottom) at (-3,0) {};
	\node[label=below:$v_{01}$,circle,fill=blue,scale=0.4] (01bottom) at (-1,0) {};
	\node[label=below:$v_{10}$,circle,fill=blue,scale=0.4] (10bottom) at (1,0) {};
	\node[label=below:$v_{11}$,circle,fill=blue,scale=0.4] (11bottom) at (3,0) {};
	\node[label=above:$v^{00}$,circle,fill=blue,scale=0.4] (00top) at (-3,2) {};
	\node[label=above:$v^{01}$,circle,fill=blue,scale=0.4] (01top) at (-1,2) {};
	\node[label=above:$v^{10}$,circle,fill=blue,scale=0.4] (10top) at (1,2) {};
	\node[label=above:$v^{11}$,circle,fill=blue,scale=0.4] (11top) at (3,2) {};
	\draw[-,red,postaction={on each segment={mid arrow=red}}] (11bottom) to [out=45,in=315] (11top);
\end{tikzpicture}
%\caption{}
\label{figure-example3}
\end{figure}
It is easy to show that the associated one-sided shift $(X_E,\sigma_E)$ is precisely the subset of the set of all sequences in $\{0,1\}^{\N}$ where the word $000$ is forbidden. \qed
\end{example}

The above procedure defines a separated graph $(E',C')$ that, in fact, is a generalized finite shift graph since $E$ has neither sinks nor sources. Let $(X,\sigma)$ be the associated generalized finite shift.

\begin{proposition}\label{proposition-generalized.shifts.one.sided.conjugate}
The pair $(X,\sigma)$ is topologically conjugate to the one-sided shift $(X_E,\sigma_E)$ as defined in Subsection \ref{subsection-shifts.finite.type}.
\end{proposition}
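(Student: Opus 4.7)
The strategy is to construct an explicit conjugacy $h\colon X_E \to X$ by identifying infinite blue paths in the canonical resolution $(F,D)$ of $(E',C')$ with infinite paths in $E$.

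First, I will describe the iterated structure of $(F,D)$ by induction on the layer, using Definition \ref{definition-F.infty.and.others}. The base layers give $F^{0,0}\simeq E^0$, $F^{0,1}\simeq E^0$, and $F^{0,2} = \{v(e_i,\ol{g}) : i \in E^0,\ g \in r_E^{-1}(i)\} \simeq E^1$ via $v(e_i,\ol{g}) \leftrightarrow g$. An inductive argument, exploiting the fact (established in the proof of Proposition \ref{proposition-multiresolution.is.Ldiagram}) that at each stage of the canonical resolution the new odd-level vertices have exactly two outgoing edges and the singleton forced red choices simplify the combinatorics, shows more generally that both $F^{0,2j}$ and $F^{0,2j+1}$ are in natural bijection with the set $E^j$ of length-$j$ paths $(g_1,\ldots,g_j)$ in $E$ satisfying $s_E(g_i) = r_E(g_{i+1})$ (where $E^0$ is the vertex set when $j=0$). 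Under this identification, blue edges in $F^{1,2j}$ (from $F^{0,2j+1}$ to $F^{0,2j}$) act as the identity on length-$j$ paths, while blue edges in $F^{1,2j+1}$ (from $F^{0,2j+2}$ to $F^{0,2j+1}$) correspond to truncating the last edge of a length-$(j{+}1)$ path.

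Second, this identification yields a bijection $h\colon X_E \to X$ sending $(f_0,f_1,f_2,\ldots) \in X_E$ to the unique infinite blue path whose layer-$2j$ vertex is the prefix $(f_0,\ldots,f_{j-1}) \in E^j$ (with layer-$0$ vertex $r_E(f_0)$). Concretely, $h(f_0,f_1,\ldots) = (e_0,e_1,e_2,\ldots)$ where $e_0 = e_{r_E(f_0)}$, $e_1 = \alpha^{e_{r_E(f_0)}}(\ol{f_0})$, and the higher $e_n$ are defined inductively from the prescribed layer-$2j$ vertices. Since the cylinders $Z(f_0,\ldots,f_{j-1}) \subseteq X_E$ correspond bijectively to blue-path cylinders in $X$ up to layer $2j+1$, $h$ is a homeomorphism by compactness of $X_E$ and $X$.

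Third, I will verify $\sigma \circ h = h \circ \sigma_E$. Let $(f_0,f_1,\ldots) \in X_E$ with $i_0 := r_E(f_0)$, and write $(e_0,e_1,\ldots) = h(f_0,f_1,\ldots)$. A direct computation using Proposition \ref{proposition-decomposition.odd.rombs.even}(ii) shows that the romb at $v^{i_0}$ completing the blue pair $(e_0,e_1)$ is $(e_0,e_1,\ol{f_0},\alpha^{\ol{f_0}}(e_{i_0}))$, since $\ol{f_0}$ is the unique red edge at $v^{i_0}$ whose paired edge has source $v(e_{i_0},\ol{f_0}) = s(e_1)$. In particular, the second red edge of this romb has range $v_{s_E(f_0)} \in F^{0,1}$. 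Following Construction \ref{construction-local.homeo}, the red pair assembled from the rombs at layers $0$ and $2$, combined with condition (f) of Definition \ref{definition-L.separated.Bratteli}, produces a blue pair whose first edge starts at $v_{s_E(f_0)}$; the first-step identification of blue edges at $v_{s_E(f_0)}$ with edges of $E$ in $r_E^{-1}(s_E(f_0))$, together with the uniqueness built into the $l$-diagram axioms (f) and (g), forces this blue edge to be $\alpha^{e_{s_E(f_0)}}(\ol{f_1})$. Iterating this analysis at each subsequent layer yields $\sigma(h(f_0,f_1,f_2,\ldots)) = h(f_1,f_2,f_3,\ldots) = h(\sigma_E(f_0,f_1,f_2,\ldots))$, as required. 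The main obstacle will be this inductive verification in the third step: carefully unwinding the iterated $\alpha^x(\cdots)$ notation across multiple 1-step resolutions, and confirming that the forced singleton red choices at odd layers align with the one-sided shift in $E$; the uniqueness statements in the $l$-diagram axioms are what allow this identification to be carried through without ambiguity.
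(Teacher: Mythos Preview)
Your proposal is correct and follows essentially the same route as the paper: both identify the vertices of $F^{0,2j}$ (and $F^{0,2j+1}$) with length-$j$ paths in $E$ and then read off the conjugacy from this identification. The paper packages the inductive identification in the language of higher edge graphs $E^{[N+1]}$ and the associated red adjacency (incidence) matrices, whereas you work directly with blue paths and spell out the romb computation for the intertwining $\sigma\circ h = h\circ\sigma_E$; the paper's proof is terser at that point, simply asserting the conjugacy once the vertex identification is in place.
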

\begin{proof}
Let $(F,D)$ denote the canonical resolution of $(E',C')$. We first show that the red adjacency matrix $A_2$ associated to the separated graph $(F_2,D^2)$ is exactly the incidence matrix of $E$, and hence equal to the adjacency matrix of $D(E)$, the dual graph of $E$. For this, we first observe that we can identify the vertices in $F^{0,2}$ with the edges of $E$. Indeed, each vertex in $F^{0,2}$ is of the form $v(e_i,\ol{f})$, where $f \in E^1$ with $r_E(f) = i$. Hence there is a bijection between $F^{0,2}$ and $E^1$ sending $v(e_i,\ol{f})$ to $f$. We will identify $F^{0,2}$ with $E^1$ through this bijection. Now for $v_i \in F^{0,1}$ we have
$$B_{v_i} = \{\alpha^{e_i}(\ol{f}) \mid f \in r_E^{-1}(i)\},$$
and
$$R_{v_i} = \{\alpha^{\ol{f}}(e_j) \mid f \in s_E^{-1}(i), j = r_E(f)\}.$$
Denoting by $f^*$ the edge $\alpha ^{\ol{f}}(e_j)$, we have $r(f^*) = v_{s_E(f)}$ and $s(f^*) = f$. Further, we denote by $f'$ the edge $\alpha^{e_i}(\ol{f})$, so $s(f') = f$ and $r(f') = v_{r_E(f)}$.

Now we move to the graph $(F_2, D^2)$. Recall that we are identifying $F^{0,2}$ with $E^1$. For $f \in E^1$, we have only one element in $R(\ol{f})$, namely $R(\ol{f}) = \{f^*\}$. Therefore $F^{0,3}$ can be identified with $F^{0,2} = E^1$ through the bijection $v(f',f_1^*, f_2^*.\dots ,f_p^*) \leftrightarrow f$, where $s_E^{-1}(i) = \{ f_1,f_2,\dots , f_p \}$ and $r_E(f) = i$. Observe that
$$r(\alpha ^{f'}(f_1^*, f_2^*,\dots ,f_p^*)) = s(f') = f \in F^{0,2} = E^1$$
and $s(\alpha^{f'}(f_1^*,f_2^*,\dots , f_p^*)) = v(f', f_1^*,f_2^*,\dots , f_p^*)$, which we identify with $f$ through the above bijection. Hence $B_f$ consists of a single edge, which connects $f\in F^{0,3}$ with $f\in F^{0,2}$.

It remains to identify the red edges in $(F_2,D^2)$. These are of the form
$$\alpha^{g^*}(f',g_2^*,\dots , g_p^*)$$
for $\{ g,g_2,\dots ,g_p \} = s_E^{-1}(i)$ and $r_E(f) = i$ for some $i \in \{1,\dots,n\}$. We then have
$$r(\alpha^{g^*}(f',g_2^*,\dots , g_p^*)) = s(g^*) = g \in F^{0,2}, \quad s(\alpha ^{g^*}(f',g_2^*,\dots , g_p^*)) = f \in F^{0,3}$$
under the above identifications. So there is at most one edge from $f \in F^{0,3}$ to $g \in F^{0,2}$, and there is such edge if and only if $r_E(f) = s_E(g)$. We conclude that the matrix $A_2$ is the incidence matrix of the graph $E$.

More generally, it follows by induction that for each $N \ge 1$ the layer $(F^{0,2N}\cup F^{0,2N+1}, F^{1,2N})$ gives exactly the \textit{$(N+1)^{\text{th}}$ higher edge graph} $E^{[N+1]}$ of $E$, which is defined as the graph having as vertices all the paths of length $N$ in $E$, and having exactly one edge from $e_Ne_{N-1}\cdots e_2e_1$ to $f_Nf_{N-1}\cdots f_2f_1$ if and only if 
$e_N \cdots e_{2} = f_{N-1}\cdots f_{1}$, and none otherwise. The edge is named $f_Ne_N\cdots e_2e_1$ (see e.g. \cite[Definition 2.3.4]{LM2021}).\\

Using these identifications it is now easy to define a conjugation between $(X,\sigma)$ and $(X_E,\sigma_E)$. An element $x = (e_0,e_1,e_2,\dots) \in X_E$ gives rise to a finite path $e_Ne_{N-1} \cdots e_1e_0$ for each $N \geq 0$, which corresponds to a unique vertex in the layer $(F^{0,2N+2}\cup F^{0,2N+3}, F^{1,2N+2})$ of $(F,D)$. By the construction of $(X,\sigma)$ given in Subsection \ref{subsection-from.Ldiagram.to.local.homeo}, it is clear that $x$ determines a unique element $\psi(x) \in X$. The map $\psi : X_E \ra X$ gives the desired conjugacy between $(X_E,\sigma_E)$ and $(X,\sigma)$.
\end{proof}

\begin{remark}\label{remark-true.for.two.sided}
Note that one-sided shifts $(X,\sigma)$ of infinite type are not objects in our category $\textsf{LHomeo}$, because in this case $\sigma$ is not an open map (see \cite[Theorem 1]{IT1974}), and therefore it is not a local homeomorphism. Hence there is no $l$-diagram realizing a one-sided shift of infinite type. Of course, the situation is different with the two-sided shifts, of any type, which are always homeomorphisms, and can be concretely realized by $h$-diagrams (recall Section \ref{section-homeos}). This has been studied in  \cite[6.7--6.10]{AL2018}, and we refer the reader there for further details. Note in particular that, by \cite[Proposition 6.8 (2)]{AL2018}, two-sided shifts of finite type correspond to a particular class of generalized finite shift graphs.
\end{remark}

\subsection{A configurational point of view}\label{subsection-configurations}  

We will now obtain an explicit realization of the generalized finite shifts. To obtain this, we rely on the work done in \cite{AE2014} (see also \cite{AL2018}) to describe the dynamics inherent to a bipartite separated graph. We will use the notation of \cite{AL2018}.

\begin{definition}\cite[Definition 2.11]{AL2018}\label{definition-local.configurations}
Let $(E,C)$ be a finite bipartite separated graph, and let $\mathbb F$ be the free group on $E^1$. Given $\xi \subseteq \mathbb F$ and $\alpha \in \xi$, the {\it local configuration} $\xi_{\alpha}$ of $\xi$ at $\alpha$ is
$$\xi_{\alpha} = \{ \sigma \in E^1\cup (E^1)^{-1} \mid \sigma \in \xi \cdot \alpha^{-1} \}.$$
The space $\Omega(E,C)$ is the space of all {\it configurations} $\xi \subseteq \mathbb F$ such that:
\begin{enumerate}[(a),leftmargin=0.7cm]
\item $1 \in \xi$;
\item $\xi$ is {\it right convex}, which means that $\xi$ is suffix closed: if $x_n\cdots x_2x_1 \in \xi$, where $x_n\cdots x_2x_1$ is a reduced word (so that $x_i\in E^1\cup (E^1)^{-1}$ and $x_i\ne x_{i+1}^{-1}$ for $1\le i< n$), then $x_m\cdots x_2x_1\in \xi $ for all $1 \leq m \leq n$;
\item for each $\alpha \in \xi$, one of the following holds:
\begin{enumerate}
\item[(c.1)] $\xi_{\alpha} = s^{-1}(v)$ for some $v\in E^{0,1}$;
\item[(c.2)] $\xi_{\alpha} = \{ e_X^{-1} \mid X\in C_v\}$ for some $v\in E^{0,0}$ and $e_X \in X$.
\end{enumerate}
\end{enumerate}
In words, $\Omega(E,C)$ is the space of all the configurations $\xi \subseteq \mathbb F$ containing $1$, being right convex, and whose local configuration at each point consists of either emitting all the edges departing from a vertex in $E^{0,1}$, or emitting the inverses of a choice of exactly one edge $e_X \in X$ for each $X \in C_v$, $v \in E^{0,0}$. With the induced topology from $2^{\mathbb F}$, $\Omega (E,C) \subseteq 2^{\mathbb F}$ is a totally disconnected compact metrizable space.
\end{definition}

There is a partial action of $\mathbb F$ on $\Omega(E,C)$ with open compact domains, which is defined by setting, for each $\alpha \in \mathbb F\setminus \{1\}$, the domains as
$$\Omega(E,C)_{\alpha}= \{ \xi \in \Omega(E,C) \mid \alpha^{-1} \in \xi\}$$
and the maps as
$$\theta_{\alpha} : \Omega(E,C)_{\alpha^{-1}} \ra \Omega(E,C)_{\alpha}, \quad \theta_{\alpha}(\xi)= \xi \cdot \alpha^{-1}.$$
Following \cite{AL2018}, we will use the notation $\alpha . \xi$ to denote $\theta_{\alpha}(\xi)$. We thus have $\alpha . \xi = \theta_{\alpha}(\xi)= \xi \cdot \alpha^{-1}$.

For $v \in E^0$, the space $\Omega(E,C)_v$ is defined to be the space of all configurations ``starting" at vertex $v$, that is, $\Omega(E,C)_v$ is the set of configurations $\xi \in \Omega(E,C)$ such that $\xi_1 = s^{-1}(v)$ in case $v \in E^{0,1}$, and the set of configurations $\xi \in \Omega (E,C)$ such that $\xi_1 = \{e_X^{-1} \mid X \in C_v\}$ for a choice of an edge $e_X\in X$ for each $X\in C_v$, in case $v \in E^{0,0}$.

We will also need the notion of an $n$-ball in $\Omega (E,C)$.

\begin{definition}\cite[Definition 3.4]{AL2018}\label{definition-ball.configurations}
Given $\xi \in \Omega (E,C)$ and $n\in \N$, we define the \textit{$n$-ball $\xi^n$} by
$$\xi^n = \{ \alpha \in \xi \mid |\alpha| \le n \}.$$
Here we use the notation $|\alpha|$ to denote the length of the (reduced) word $\alpha \in \mathbb F$.
 
Of course, the $1$-ball $\xi^1$ of $\xi$ is just the local configuration of $\xi$ at $1$, together with the element $1 \in \mathbb F$, so that $\xi^1 = \xi_1 \cup \{1\}$. We denote by $\calB_n(\Omega(E,C))$ the collection of $n$-balls $\xi^n$ associated to elements $\xi \in \Omega (E,C)$.
\end{definition}

Suppose now that $(E,C)$ is a generalized finite shift graph. In our next result we relate the pair $(\Omega(E,C),\theta)$ with the generalized finite shift $(X,\sigma)$ associated with $(E,C)$. Recall from Definition \ref{definition-F.infty.and.others} the notion of the $n$-step resolution $(E_n,C^n)$ of $(E,C)$.  

\begin{proposition}\label{proposition-relating.spaces}
Let $(E,C)$ be a generalized finite shift graph, let $(X,\sigma)$ be the corresponding generalized finite shift, as in Definition \ref{definition-generalized.finite.shift}, and let $\Omega(E,C)$ be the space of configurations on $\mathbb F$ defined above. Then $X$ can be identified with the subspace of configurations $\xi$ whose local configuration $\xi_1$ at $1$ is of the form (c.2). Moreover, we have
$$\sigma(\xi) = \theta_{a_1f_0^{-1}}(\xi),$$
where $\xi_1 = \{e_0^{-1},f_0^{-1}\}$, with $e_0 \in B_v$, $f_0 \in R_{v}$, $v = r(e_0) = r(f_0)$, and $a_1$ is the unique blue edge such that $s(a_1) = s(f_0)$.
\end{proposition}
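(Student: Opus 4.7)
The plan is to proceed in two stages: first, construct a homeomorphic identification $\psi\colon X \to \{\xi \in \Omega(E,C) : \xi_1 \text{ is of type (c.2)}\}$; second, unwind the definitions to verify the shift formula.

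For the identification, I would define $\psi$ as follows. Given $x = (e_0,e_1,e_2,\dots) \in X$, Construction \ref{construction-local.homeo} produces auxiliary red edges $f_0,f_1,f_2,\ldots$ such that $(e_{2j},e_{2j+1},f_{2j},f_{2j+1})$ is a romb for each $j \ge 0$. Setting $v = r(e_0) \in E^{0,0}$, the pair $(e_0,f_0)$ provides a choice from each color in $C_v=\{B_v,R_v\}$, and I declare $\xi_1 := \{e_0^{-1},f_0^{-1}\}$, which is of type (c.2). To complete $\xi$, I propagate via the zigzag: the (c.1) condition at $e_0^{-1}$ and $f_0^{-1}$ forces $\xi$ to contain all of $s^{-1}(s(e_0))\cdot e_0^{-1}$ and $s^{-1}(s(f_0))\cdot f_0^{-1}$; at each resulting position corresponding to a zigzag of length two ending in $E^{0,0}$, the new (c.2)-choice is read off from the combinatorial data recorded by deeper layers of the blue path in the canonical resolution. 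By induction on $n$, the vertex $s(e_{2n-1}) \in F^{0,2n}$ is canonically identified with a coherent radius-$n$ choice pattern centered at $v$, whose directed limit is a well-defined element of $\Omega(E,C)$ satisfying the conditions in Definition \ref{definition-local.configurations}. The inverse map extracts a blue path from $\xi$ sequentially: reading off $(e_0,f_0)$ from $\xi_1$ gives $e_0$ and determines $e_1$ uniquely in $F^{1,1}$ by the explicit form $\alpha^{e_0}(f_0)$ of blue edges in the canonical resolution; the choices at further (c.2)-positions of $\xi$, which exist and are unique by parts (f) and (g) of Definition \ref{definition-L.separated.Bratteli}, supply the edges $e_2, e_3, \ldots$. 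Both maps preserve cylinder sets and are thus continuous.

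For the shift formula, fix $x = (e_0,e_1,e_2,\ldots)$ with $\xi = \psi(x)$ and $\xi_1 = \{e_0^{-1},f_0^{-1}\}$, and write $\sigma(x) = (e'_0,e'_1,e'_2,\ldots)$. By Construction \ref{construction-local.homeo}, the edge $e'_0$ is characterized as the unique blue edge with $s(e'_0) = r(e'_1) = s(f_0)$, and the uniqueness of blue edges with a given source in a generalized finite shift graph (Definition \ref{definition-general.finite.shift}(b)) therefore identifies $e'_0 = a_1$. In particular the base vertex of $\sigma(x)$ is $r(a_1) \in E^{0,0}$. Next, since $f_0^{-1} \in \xi$ and $a_1 \in s^{-1}(s(f_0)) = \xi_{f_0^{-1}}$ by the (c.1) condition, we have $a_1 f_0^{-1} \in \xi$, so $\theta_{a_1 f_0^{-1}}$ is defined at $\xi$ and produces the configuration $\xi \cdot f_0 a_1^{-1}$, whose local configuration at $1$ is exactly $\xi_{a_1 f_0^{-1}}$, a (c.2)-type choice at $r(a_1)$. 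Under $\psi$, the local choice at $r(a_1)$ recorded by $\xi$ corresponds precisely to the initial romb $(e'_0,e'_1,f'_0,f'_1)$ of $\sigma(x)$, and iterating the zigzag-consistency argument at deeper layers yields $\psi(\sigma(x)) = \theta_{a_1 f_0^{-1}}(\xi)$.

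I expect the main obstacle to be the rigorous verification of the first stage, namely the claim that infinite blue paths in $(F,D)$ correspond bijectively to coherent configurations and that this correspondence respects the zigzag propagation at every depth. The crucial inputs are the uniqueness statements in conditions (f) and (g) of Definition \ref{definition-L.separated.Bratteli}, which ensure that local choices extracted from $x$ extend uniquely and consistently throughout $\mathbb F$. Once this bijection is established, the verification of the shift formula reduces to a direct unfolding of the definitions.
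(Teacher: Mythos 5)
Your proposal is correct in outline but follows a genuinely different route from the paper. The paper does not construct the identification $X \cong \{\xi \in \Omega(E,C) \mid \xi_1 \text{ of type (c.2)}\}$ by hand: it invokes \cite[Theorem 3.22]{AL2018}, which supplies a canonical bijection $E_n^0 \to \calB_n(\Omega(E,C))$, $v \mapsto B(v)$, homeomorphisms $\Omega(E_n,C^n)_v \to \{\xi \mid \xi^n = B(v)\}$, and group isomorphisms $\Phi_n \colon \mathbb F^{[n:\Omega]} \to \mathbb F(E_n^1)$. The identification is then immediate ($\xi$ is the unique configuration with $\xi^{2j-1} = B(e_0,\dots,e_{2j-1})$), and the shift formula is obtained by translating each romb $(e_{2j},e_{2j+1},f_{2j},f_{2j+1})$ through $\Phi_{2j-1}$ and $\Phi_{2j}$ into edges of the ball subshifts, which mechanically produces $\sigma(\xi) = (a_1f_0^{-1}).\xi$. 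Your first stage is, in effect, a re-proof of the cited theorem in this special case: the assertion that the level-$2n$ vertices of the canonical resolution biject with coherent $2n$-balls of configurations, \emph{consistently at every position of the ball and not only along the central zigzag}, is exactly the inductive content of \cite[Theorem 3.22]{AL2018}, and your sketch (propagate via (c.1), read off the (c.2)-choices from deeper layers, invoke uniqueness from conditions (f) and (g) of Definition \ref{definition-L.separated.Bratteli}) correctly identifies the ingredients but leaves the bookkeeping unwritten; note also that the radius should be $2n$ rather than $n$. What your approach buys is self-containedness within the present paper; what the paper's approach buys is that all of this combinatorics is outsourced, so the proof reduces to the translation of rombs under $\Phi_n$. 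Your second stage agrees with the paper's: the identification $e_0' = a_1$ via uniqueness of blue edges with a given source, the domain check $a_1f_0^{-1} \in \xi$ via the (c.1) condition at $f_0^{-1}$, and the matching of the rombs $(e'_{2j-1},e'_{2j},f_{2j-1},f_{2j})$ with the translated configuration are all present in the paper's argument in essentially the form you give them.
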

\begin{proof}
We will use \cite[Theorem 3.22]{AL2018} which, for every finite bipartite separated graph $(E,C)$, establishes a conjugacy
$$\theta^{(E_n,C^n)} \cong (\theta^{(E,C)})^{[n]},$$
where $(\theta^{(E,C)})^{[n]}$ is the $n$-ball convex subshift associated to the convex subshift $\theta^{(E,C)}$ (see \cite[Construction 3.14]{AL2018}). More precisely, there is a canonical bijective correspondence
$$E_n^0 \to \calB _n (\Omega(E,C)), \quad v \mapsto B(v)$$
and a homeomorphism $\varphi_n \colon \Omega (E_n,C^n)\to \Omega (E,C)$ which restricts to a homeomorphism
$$\Omega(E_n,C^n)_v  \to \{ \xi \in \Omega(E,C) \mid \xi^n = B(v) \}$$
for each $v \in E_n^0$. It is clear that $X$ can be identified with the subspace of configurations $\xi$ whose local configuration $\xi_1$ at $1$ is of the form (c.2), through the following correspondence. If $x=(e_0,e_1,\dots )\in X$, then $\xi$ is the unique configuration such that $\xi^{2j-1} = B(e_0,e_1,\dots , e_{2j-1})$, where we interpret $(e_0,e_1,\dots , e_{2j-1})$ as a vertex in $E_{2j-1}^{0,1}$, for all $j\ge 1$.
Note that $(e_0,e_1,\dots , e_{2j-1})$ can also be interpreted as a vertex in $E_{2j}^{0,0}$, and then it correponds to $\xi^{2j}$ through the above bijection.

Furthermore, a group isomorphism $\Phi_n\colon \mathbb F^{[n:\Omega]} \to \mathbb F (E_n^1)$ is established in the proof of \cite[Theorem 3.22]{AL2018}, for all $n\ge 1$, where $\Omega:= \Omega (E,C)$, and $\mathbb F ^{[n:\Omega]} := \mathbb F (A^{[n:\Omega]})$ is the free group defined in \cite[Construction 3.14]{AL2018}. 

Let $\xi \in X \subseteq \Omega(E,C)$. The element $\xi$ is determined by a sequence $(e_0,e_1,e_2,\dots )$ in the Bratteli diagram $(F,D)$. Let $(e_{2j},e_{2j+1},f_{2j},f_{2j+1})$ be the rombs associated to the pairs  $(e_{2j},e_{2j+1})$ of blue edges, for each $j\ge 0$. By definition of $\sigma$ (Construction \ref{construction-local.homeo}), we have that $\sigma (\xi)= \xi'$, with $\xi'= (a_1,e_1',e_2',\dots )$, where $(e_{2j-1}',e_{2j}',f_{2j-1},f_{2j})$ are the rombs associated to  the pairs $(f_{2j-1},f_{2j})$ of red edges, for $j\ge 1$, and $a_1$ is the unique blue edge such that $s(a_1) = r(e_1')= r(f_1)= s(f_0)$. 

Note that, for $j \ge 1$, the edges $e_{2j-1}$ and $e_{2j}$ correspond to the edges 
$$[\xi^{2j-1} \overset{e_0}{\longleftarrow} (e_0^{-1}  .\,  \xi)^{2j-1}] \quad \text{and} \quad [(e_0^{-1}  .\,  \xi)^{2j}  \overset{e_0^{-1}}{\longleftarrow}   \xi^{2j}]$$
in $(\theta^{(E,C)})^{[2j-1]}$ and $(\theta^{(E,C)})^{[2j]}$ under the isomorphisms $\Phi_{2j-1}$ and $\Phi_{2j}$, respectively. Similarly, the edges $f_{2j-1}$ and $f_{2j}$ correspond to the edges
$$[\xi^{2j-1} \overset{f_0}{\longleftarrow} (f_0^{-1}  .\,  \xi)^{2j-1}] \quad \text{and} \quad [(f_0^{-1}  .\,  \xi)^{2j}  \overset{f_0^{-1}}{\longleftarrow}   \xi^{2j}]$$
in $(\theta^{(E,C)})^{[2j-1]}$ and $(\theta^{(E,C)})^{[2j]}$ under the isomorphisms $\Phi_{2j-1}$ and $\Phi_{2j}$, respectively. It follows that the edges $e'_{2j-1}$ and $e'_{2j}$ correspond to the edges
$$[(a_1f_0^{-1} .\, \xi)^{2j-1} \overset{a_1}{\longleftarrow} (f_0^{-1}  .\,  \xi)^{2j-1}] \quad \text{and} \quad [(f_0^{-1}  .\,  \xi)^{2j}  \overset{a_1^{-1}}{\longleftarrow}   (a_1f_0^{-1} .\, \xi)^{2j}]$$
in $(\theta^{(E,C)})^{[2j-1]}$ and $(\theta^{(E,C)})^{[2j]}$, respectively. Therefore $\xi' = (a_1,e_1',e_2',\dots )$ corresponds to $(a_1f_0^{-1}).\xi = \theta _{a_1f_0^{-1}}(\xi)$, that is, $\sigma(\xi) = \theta_{a_1f_0^{-1}}(\xi)$.
\end{proof}

With the help of the above proposition, we can give a more concrete form of a generalized finite shift.

\begin{construction}\label{construction-GFS.concretedescription}
Let $(E,C)$ be a generalized finite shift graph, and set $R = \bigsqcup_{v\in E^{0,0}} R_v$. For each $f \in R$, consider a symbol $a_f$, and form the alphabet $A = \{ a_f \mid f \in R\}$. Moreover, let $s,r \colon A \to E^{0,0}$ be the maps defined by $s(a_f) = r(f)$ and $r(a_f) = r(e)$, where $e$ is the unique blue edge in $E^1$ such that $s(e)= s(f)$. Let $\mathfrak X$ be the space of configurations $\xi \subseteq \mathbb F (A)$ on the free group $\mathbb F (A)$ on $A$ such that
\begin{enumerate}[(a),leftmargin=0.7cm]
\item $1\in \xi$;
\item $\xi$ is right convex;
\item for each $\alpha \in \xi$, the local configuration of $\xi$ at $\alpha $ is of the following form: there exists a vertex $v\in E^{0,0}$ such that
$$(\alpha.\xi)^1 = \{1\}\cup \{a_f\} \cup \{a_{f_i}^{-1} \mid 1 \leq i \leq n\},$$
where $f \in R_v$ and there is a blue edge $e\in B_v$ such that $\{ f_1,\dots , f_n\}$ is the set of all red edges $g\in R$ such that $s(g)= s(e)$.
\end{enumerate}
The local homeomorphism $\tau$ on $\mathfrak X$ is defined by
$$\tau(\xi) = \xi \cdot a_{f}^{-1},$$
where $f$ is the unique red edge such that $a_{f}\in \xi^1$. 

Observe that any configuration $\xi \in \mathfrak X$ contains only elements of the form
$$a_{f_1}^{-1}\cdots a_{f_m}^{-1}a_{g_n}\cdots a_{g_1}$$
for $n,m\ge 0$, where $f_i,g_j\in R$,  $f_m\ne g_n$, $s(f_m)= s(g_n)$, $r(g_{j+1})=r(e_j)$, where $e_j$ is the unique blue edge such that $s(e_j)= s(g_j)$ for $1 \leq j \leq n-1$, and $r(f_{i+1})=r(e_i')$, where $e_i'$ is the unique blue edge such that $s(e_i')= s(f_i)$ for $1 \leq i \leq m-1$.   
\end{construction}

Note that, for $\xi \in \mathfrak X$, and with $\{f_1,\dots ,f_n\}$ the red edges of $E$ such that $a_{f_i}^{-1}\in \xi^1$ for all $1 \leq i \leq n$, we have $\xi \cdot a_{f_i} \in \mathfrak X$, and $\tau (\xi \cdot a_{f_i})= \xi$. Hence $\tau $ is surjective, but not necessarily injective. 

\begin{proposition}\label{proposition-description.of.X}
Let $(E,C)$ be a generalized finite shift graph. Then the dynamical systems $(X,\sigma)$ and $(\mathfrak X,\tau)$ are topologically conjugate. 
\end{proposition}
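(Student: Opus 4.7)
The plan is to exhibit an explicit conjugacy $\Phi\colon X\to \mathfrak X$ by working inside $\Omega(E,C)$ via the identification furnished by Proposition \ref{proposition-relating.spaces}. First, I would introduce the injective group homomorphism
\[
\phi\colon \mathbb F(A) \to \mathbb F = \mathbb F(E^1),\quad a_f \mapsto e_f f^{-1},
\]
where $e_f\in B$ is the unique blue edge with $s(e_f)=s(f)$, which exists by condition (b) in Definition \ref{definition-general.finite.shift}. Because the $e_f$'s and $f$'s are distinct letters of $\mathbb F$ and no cancellation occurs between the generators $e_f f^{-1}$ and $e_g g^{-1}$ (the letters sitting next to each other after reducing a product $(e_f f^{-1})^{\pm 1}(e_g g^{-1})^{\pm 1}$ are always of opposite color), $\phi$ is injective and identifies $\mathbb F(A)$ with the free subgroup of $\mathbb F$ freely generated by $\{e_f f^{-1}\mid f\in R\}$.

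Next I would set
\[
\Phi(\xi) := \{w\in \mathbb F(A) \mid \phi(w)\in \xi\}
\]
for $\xi\in X\subseteq \Omega(E,C)$, and verify the three defining properties of $\mathfrak X$ listed in Construction \ref{construction-GFS.concretedescription}. Membership of $1$ is immediate. Right convexity of $\Phi(\xi)$ is obtained by observing that a reduced word $a_{x_m}^{\epsilon_m}\cdots a_{x_1}^{\epsilon_1}\in \Phi(\xi)$ is sent by $\phi$ to a reduced word in $\mathbb F$ whose suffixes corresponding to suffixes of the original word are themselves in the image of $\phi$, so they lie in $\Phi(\xi)$ by the right convexity of $\xi$. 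For the local configuration at $1$, the hypothesis $\xi_1=\{e_0^{-1},f_0^{-1}\}$ (type (c.2) at $v=r(e_0)=r(f_0)$) forces $\phi(a_{f_0})=a_1 f_0^{-1}\in \xi$ (since $\xi_{f_0^{-1}}=s^{-1}(s(f_0))$ contains $a_1$); and for each red edge $g$ with $s(g)=s(e_0)$, the element $\phi(a_g^{-1})=g\, e_0^{-1}$ lies in $\xi$ because $\xi_{e_0^{-1}}=s^{-1}(s(e_0))\supseteq\{g\}$. Conversely, nothing else of length one in $\mathbb F(A)$ has its image in $\xi$. Thus $\Phi(\xi)^1$ has exactly the shape demanded by Construction \ref{construction-GFS.concretedescription}, with the parametrization $e=e_0\in B_v$. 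The local configurations at every other $\alpha\in\Phi(\xi)$ then follow by translating via $\alpha$ to $\phi(\alpha).\xi \in \Omega(E,C)$, which still lies in $X$ since the romb structure from Proposition \ref{proposition-decomposition.odd.rombs.even} forces the alternation between (c.1)-type and (c.2)-type local configurations of $\xi$ at the images of elements of $\mathbb F(A)$.

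For the intertwining property, Proposition \ref{proposition-relating.spaces} gives $\sigma(\xi) = \xi\cdot f_0 a_1^{-1} = \xi\cdot \phi(a_{f_0})^{-1}$, whence
\[
\Phi(\sigma(\xi)) = \{w\mid \phi(w\,a_{f_0})\in\xi\} = \Phi(\xi)\cdot a_{f_0}^{-1} = \tau(\Phi(\xi)),
\]
the last equality because $a_{f_0}$ is the unique positive generator of $\mathbb F(A)$ appearing in $\Phi(\xi)^1$, as established above. For bijectivity I would construct an inverse $\Psi\colon \mathfrak X\to X$ by defining $\Psi(\eta)$ to be the smallest right-convex subset of $\mathbb F$ containing $\phi(\eta)\cup\{1\}$ together with, at each vertex of $E^{0,1}$ reached along $\phi(\eta)$, the full set of outgoing edges (thus closing up to a type (c.1)/(c.2) alternating configuration). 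An inductive argument on word length shows $\Psi(\eta)\in \Omega(E,C)$ with $\Psi(\eta)_1$ of type (c.2), so $\Psi(\eta)\in X$, and $\Phi\circ\Psi=\mathrm{id}$, $\Psi\circ\Phi=\mathrm{id}$. Continuity of $\Phi$ is transparent from its definition by membership in cylinder sets, so compactness upgrades it to a homeomorphism.

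The main obstacle I expect is the second step: verifying that $\Phi(\xi)$ realizes the prescribed local configurations not only at $1$ but at every $\alpha \in \Phi(\xi)$, and dually that $\Psi(\eta)$ is a genuine element of $\Omega(E,C)$ in the subspace $X$. Both amount to carefully bootstrapping the alternating structure between vertices of $E^{0,0}$ (type (c.2) local configurations) and $E^{0,1}$ (type (c.1) local configurations), matching which red edges appear at each layer, and this information is precisely what is tracked by the romb structure of Proposition \ref{proposition-decomposition.odd.rombs.even} (ii) and the uniqueness statements in Definition \ref{definition-L.separated.Bratteli}(f)--(g). Once the alternation is pinned down, the remaining checks are routine bookkeeping.
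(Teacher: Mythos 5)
Your proof is correct and is essentially the paper's own argument: the paper also invokes Proposition \ref{proposition-relating.spaces} and defines the conjugacy by substituting $a_f$ for the composite arrow $e_ff^{-1}$ at each even-length position of $\xi$, which is precisely your map $\Phi=\phi^{-1}(\,\cdot\,)$, and the intertwining is read off from $\sigma(\xi)=\theta_{a_1f_0^{-1}}(\xi)$ exactly as you do. One small correction to your justification of the injectivity of $\phi$: cancellation \emph{can} occur at a junction $\phi(a_f)^{-1}\phi(a_g)=fe_f^{-1}e_gg^{-1}$, namely whenever $s(f)=s(g)$ with $f\ne g$ (then $e_f=e_g$ and the word reduces to $fg^{-1}$, whose adjacent letters are both red, contrary to your parenthetical); injectivity nevertheless holds because such cancellations never cascade, and in any case it is not load-bearing for the rest of your argument.
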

\begin{proof}
This follows from Proposition \ref{proposition-relating.spaces}. Indeed, if $\xi \in \Omega(E,C)$ is a configuration whose local configuration $\xi_1$ at $1$ is of the form (c.2), then replacing the arrows $ef^{-1}$, where $f$ is the unique red edge in $E^1$ such that $f^{-1}\in (\alpha \, . \, \xi)^1$, and $e$ is the unique blue edge such that $s(e) = s(f)$, by $a_f$, for each $\alpha \in \xi$ of even length, we obtain a configuration in $\mathfrak X$. And clearly, the process can be reversed. This also gives the desired intertwining between $\sigma$ and $\tau$.  
\end{proof}

We now consider a concrete example of a generalized finite shift which is neither a one-sided nor a two-sided shift. 

\begin{example}\label{example-generalized.shift.config}
Let $(E,C)$ be the generalized finite shift graph with associated red-blue adjacency matrices
$$A = \begin{pmatrix} 2 \\ 1 \end{pmatrix} \quad \text{and} \quad I = \begin{pmatrix} 1 \\ 1 \end{pmatrix}.$$
Concretely, we set $E^{0,0} = \{v\}$, $E^{0,1}= \{w_1,w_2\}$, $C_v= \{B_v,R_v\}$, where $B_v= \{ \alpha_0,\alpha_1\}$, $R_v=\{ \beta_0,\beta_1,\beta_2\}$, with all edges having range $v$, and with $s(\alpha_0)= s(\beta_0)= s(\beta_1)= w_1$, $s(\alpha_1)= s(\beta_2)= w_2$. 

\begin{figure}[H]
\begin{tikzpicture}
	\node[label=below:$w_1$,circle,fill=blue,scale=0.4] (W1) at (-2,0) {};
	\node[label=below:$w_2$,circle,fill=blue,scale=0.4] (W2) at (2,0) {};
	\node[label=above:$v$,circle,fill=blue,scale=0.4] (V) at (0,2) {};
	\draw[-,blue,postaction={on each segment={mid arrow=blue}}] (W1) to [out=110,in=200] node[above] {$\alpha_0$} (V);
	\draw[-,red,postaction={on each segment={mid arrow=red}}] (W1) to [out=80,in=210] node[below] {$\beta_0$} (V);
	\draw[-,red,postaction={on each segment={mid arrow=red}}] (W1) to [out=20,in=260] node[below] {$\beta_1$} (V);
	\draw[-,blue,postaction={on each segment={mid arrow=blue}}] (W2) to [out=100,in=340] node[above] {$\alpha_1$} (V);
	\draw[-,red,postaction={on each segment={mid arrow=red}}] (W2) to [out=160,in=280] node[below] {$\beta_2$} (V);
\end{tikzpicture}
\caption{The generalized finite shift graph $(E,C)$.}
\label{figure-example4}
\end{figure}
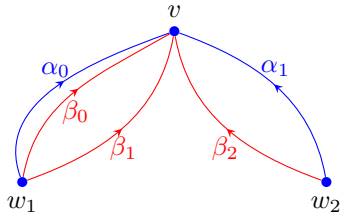

The possible local configurations are of the following form.

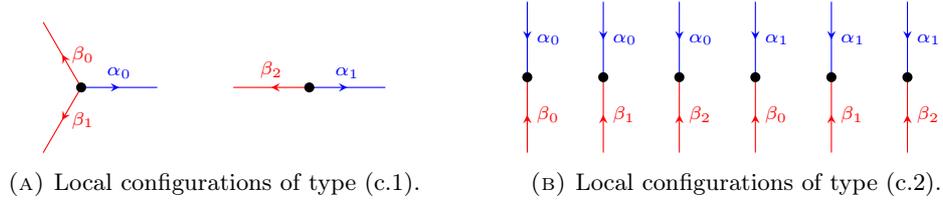
\begin{figure}[H]
\centering
	\begin{subfigure}[t]{0.4\textwidth}
	\centering
		\begin{tikzpicture}
			\path [draw=blue,postaction={on each segment={mid arrow=blue}}]
			(-1.5,0) -- node[above,blue]{\scriptsize $\alpha_0$} (-0.5,0)
			(1.5,0) -- node[above,blue]{\scriptsize $\alpha_1$} (2.5,0)
			;
			\path [draw=red,postaction={on each segment={mid arrow=red}}]
			(1.5,0) -- node[above,red]{\scriptsize $\beta_2$} (0.5,0)
			(-1.5,0) -- node[right,red]{\scriptsize $\beta_0$} (-2,0.866)
			(-1.5,0) -- node[right,red]{\scriptsize $\beta_1$} (-2,-0.866)
			;
			\node[circle,fill=black,scale=0.4] () at (-1.5,0) {};
			\node[circle,fill=black,scale=0.4] () at (1.5,0) {};
		\end{tikzpicture}
	\caption{Local configurations of type (c.1).}
	\end{subfigure}
	%\hspace{-1cm}
	\begin{subfigure}[t]{0.4\textwidth}
	\centering
		\begin{tikzpicture}
			\path [draw=blue,postaction={on each segment={mid arrow=blue}}]
			(-2.5,1) -- node[right,blue]{\scriptsize $\alpha_0$} (-2.5,0)
			(-1.5,1) -- node[right,blue]{\scriptsize $\alpha_0$} (-1.5,0)
			(-0.5,1) -- node[right,blue]{\scriptsize $\alpha_0$} (-0.5,0)
			(0.5,1) -- node[right,blue]{\scriptsize $\alpha_1$} (0.5,0)
			(1.5,1) -- node[right,blue]{\scriptsize $\alpha_1$} (1.5,0)
			(2.5,1) -- node[right,blue]{\scriptsize $\alpha_1$} (2.5,0)
			;
			\path [draw=red,postaction={on each segment={mid arrow=red}}]
			(-2.5,-1) -- node[right,red]{\scriptsize $\beta_0$} (-2.5,0)
			(-1.5,-1) -- node[right,red]{\scriptsize $\beta_1$} (-1.5,0)
			(-0.5,-1) -- node[right,red]{\scriptsize $\beta_2$} (-0.5,0)
			(0.5,-1) -- node[right,red]{\scriptsize $\beta_0$} (0.5,0)
			(1.5,-1) -- node[right,red]{\scriptsize $\beta_1$} (1.5,0)
			(2.5,-1) -- node[right,red]{\scriptsize $\beta_2$} (2.5,0)
			;
			\node[circle,fill=black,scale=0.4] () at (-2.5,0) {};
			\node[circle,fill=black,scale=0.4] () at (-1.5,0) {};
			\node[circle,fill=black,scale=0.4] () at (-0.5,0) {};
			\node[circle,fill=black,scale=0.4] () at (0.5,0) {};
			\node[circle,fill=black,scale=0.4] () at (1.5,0) {};
			\node[circle,fill=black,scale=0.4] () at (2.5,0) {};
		\end{tikzpicture}
	\caption{Local configurations of type (c.2).}
	\end{subfigure}
\caption{The different possible local configurations for the graph $(E,C)$.}
\label{figure-example5}
\end{figure}
In particular, the possible local configurations at $1$ are the ones shown in Figure \ref{figure-example5} (B). In Figure \ref{figure-example6} below we give an example of the local description of the shift map $\sigma$.

\begin{figure}[H]
\begin{tikzpicture}
	%OLD NODES
	\node[label=above:$1$,circle,fill=green,draw=black!80,scale=0.6] (1) at (0,0) {};
	\node[circle,fill=black,scale=0.4] (R) at (1,0) {};
	\node[circle,fill=green,draw=black!80,scale=0.4] (R1) at (1.5,0.866) {};
	\node[circle,fill=black,scale=0.4] (R11) at (2,1.732) {};
	\node[circle,fill=green,draw=black!80,scale=0.4] (R111) at (1.5,2.598) {};
	\node[circle,fill=green,draw=black!80,scale=0.4] (R112) at (3,1.732) {};
	\node[circle,fill=green,draw=black!80,scale=0.4] (R2) at (1.5,-0.866) {};
	\node[circle,fill=black,scale=0.4] (R21) at (2,-1.732) {};
	\node[circle,fill=green,draw=black!80,scale=0.4] (R211) at (3,-1.732) {};
	\node[circle,fill=green,draw=black!80,scale=0.4] (R212) at (1.5,-2.598) {};
	\node[circle,fill=black,scale=0.4] (L) at (-1,0) {};
	\node[circle,fill=green,draw=black!80,scale=0.4] (L1) at (-1.5,0.866) {};
	\node[circle,fill=black,scale=0.4] (L11) at (-2,1.732) {};
	\node[circle,fill=green,draw=black!80,scale=0.4] (L111) at (-2.5,2.598) {};
	\node[circle,fill=green,draw=black!80,scale=0.4] (L2) at (-1.5,-0.866) {};
	\node[circle,fill=black,scale=0.4] (L21) at (-2,-1.732) {};
	\node[circle,fill=green,draw=black!80,scale=0.4] (L211) at (-3,-1.732) {};
	\node[circle,fill=green,draw=black!80,scale=0.4] (L212) at (-1.5,-2.598) {};
	%NEW NODES
	\node[circle,fill=green,draw=black!80,scale=0.4] (1') at (8,0) {};
	\node[circle,fill=black,scale=0.4] (R') at (9,0) {};
	\node[circle,fill=green,draw=black!80,scale=0.4] (R'1) at (9.5,0.866) {};
	\node[circle,fill=black,scale=0.4] (R'11) at (10,1.732) {};
	\node[circle,fill=green,draw=black!80,scale=0.4] (R'111) at (9.5,2.598) {};
	\node[circle,fill=green,draw=black!80,scale=0.4] (R'112) at (11,1.732) {};
	\node[circle,fill=green,draw=black!80,scale=0.4] (R'2) at (9.5,-0.866) {};
	\node[circle,fill=black,scale=0.4] (R'21) at (10,-1.732) {};
	\node[circle,fill=green,draw=black!80,scale=0.4] (R'211) at (11,-1.732) {};
	\node[circle,fill=green,draw=black!80,scale=0.4] (R'212) at (9.5,-2.598) {};
	\node[circle,fill=black,scale=0.4] (L') at (7,0) {};
	\node[circle,fill=green,draw=black!80,scale=0.4] (L'1) at (6.5,0.866) {};
	\node[circle,fill=black,scale=0.4] (L'11) at (6,1.732) {};
	\node[circle,fill=green,draw=black!80,scale=0.4] (L'111) at (5.5,2.598) {};
	\node[label=right:$1$,circle,fill=green,draw=black!80,scale=0.6] (L'2) at (6.5,-0.866) {};
	\node[circle,fill=black,scale=0.4] (L'21) at (6,-1.732) {};
	\node[circle,fill=green,draw=black!80,scale=0.4] (L'211) at (5,-1.732) {};
	\node[circle,fill=green,draw=black!80,scale=0.4] (L'212) at (6.5,-2.598) {};
	%OLD EDGES
		\path [draw=blue,postaction={on each segment={mid arrow=blue}}]
	(R) -- node[above,blue]{\footnotesize $\alpha_0$} (1)
	(R11) -- node[above,blue,xshift=-5pt]{\footnotesize $\alpha_0$} (R1)
	(R21) -- node[above,blue,xshift=5pt]{\footnotesize $\alpha_0$} (R2)
	(L) -- node[above,blue,xshift=-5pt]{\footnotesize $\alpha_0$} (L2)
	(L21) -- node[below,blue,xshift=-5pt]{\footnotesize $\alpha_0$} (L212)
	(L11) -- node[above,blue,xshift=5pt]{\footnotesize $\alpha_1$} (L1)
	;
	\path [draw=red,postaction={on each segment={mid arrow=red}}]
	(L) -- node[above,red]{\footnotesize $\beta_1$} (1)
	(L) -- node[above,red,xshift=5pt]{\footnotesize $\beta_0$} (L1)
	(L11) -- node[above,red,xshift=5pt]{\footnotesize $\beta_2$} (L111)
	(L21) -- node[above,red,xshift=-5pt]{\footnotesize $\beta_0$} (L2)
	(L21) -- node[above,red]{\footnotesize $\beta_1$} (L211)
	(R) -- node[above,red,xshift=-5pt]{\footnotesize $\beta_0$} (R1)
	(R) -- node[above,red,xshift=5pt]{\footnotesize $\beta_1$} (R2)
	(R11) -- node[above,red,xshift=5pt]{\footnotesize $\beta_0$} (R111)
	(R11) -- node[above,red]{\footnotesize $\beta_1$} (R112)
	(R21) -- node[above,red]{\footnotesize $\beta_1$} (R211)
	(R21) -- node[below,red,xshift=5pt]{\footnotesize $\beta_0$} (R212)
	;
	\draw[black,dashed] (L111) -- (-2.75,3.031);
	\draw[black,dashed] (R111) -- (1.25,3.031);
	\draw[black,dashed] (R112) -- (3.5,1.732);
	\draw[black,dashed] (L211) -- (-3.5,-1.732);
	\draw[black,dashed] (L212) -- (-1.25,-3.031);
	\draw[black,dashed] (R212) -- (1.25,-3.031);
	\draw[black,dashed] (R211) -- (3.5,-1.732);
	\draw[black,dashed] (L'111) -- (5.25,3.031);
	\draw[black,dashed] (R'111) -- (9.25,3.031);
	\draw[black,dashed] (R'112) -- (11.5,1.732);
	\draw[black,dashed] (L'211) -- (4.5,-1.732);
	\draw[black,dashed] (L'212) -- (6.75,-3.031);
	\draw[black,dashed] (R'212) -- (9.25,-3.031);
	\draw[black,dashed] (R'211) -- (11.5,-1.732);
	%NEW EDGES
	\path [draw=blue,postaction={on each segment={mid arrow=blue}}]
	(R') -- node[above,blue]{\footnotesize $\alpha_0$} (1')
	(R'11) -- node[above,blue,xshift=-5pt]{\footnotesize $\alpha_0$} (R'1)
	(R'21) -- node[above,blue,xshift=5pt]{\footnotesize $\alpha_0$} (R'2)
	(L'21) -- node[below,blue,xshift=-5pt]{\footnotesize $\alpha_0$} (L'212)
	(L'11) -- node[above,blue,xshift=5pt]{\footnotesize $\alpha_1$} (L'1)
	;
	\path [draw=blue,line width=0.6mm,postaction={on each segment={mid arrow=blue}}]
	(L') -- node[above,blue,xshift=-5pt]{\footnotesize $\alpha_0$} (L'2)
	;
	\path [draw=red,postaction={on each segment={mid arrow=red}}]
	(L') -- node[above,red,xshift=5pt]{\footnotesize $\beta_0$} (L'1)
	(L'11) -- node[above,red,xshift=5pt]{\footnotesize $\beta_2$} (L'111)
	(L'21) -- node[above,red,xshift=-5pt]{\footnotesize $\beta_0$} (L'2)
	(L'21) -- node[above,red]{\footnotesize $\beta_1$} (L'211)
	(R') -- node[above,red,xshift=-5pt]{\footnotesize $\beta_0$} (R'1)
	(R') -- node[above,red,xshift=5pt]{\footnotesize $\beta_1$} (R'2)
	(R'11) -- node[above,red,xshift=5pt]{\footnotesize $\beta_0$} (R'111)
	(R'11) -- node[above,red]{\footnotesize $\beta_1$} (R'112)
	(R'21) -- node[above,red]{\footnotesize $\beta_1$} (R'211)
	(R'21) -- node[below,red,xshift=5pt]{\footnotesize $\beta_0$} (R'212)
	;
	\path [draw=red,line width=0.6mm,postaction={on each segment={mid arrow=red}}]
	(L') -- node[above,red]{\footnotesize $\beta_1$} (1')
	;
	\draw[-,black,line width=0.7mm,postaction={on each segment={mid arrow=black}}] (3.1,0) to [out=45,in=135] node [above] (S) {$\sigma$} (4.9,0);
	\draw[-,black,dotted,line width=0.4mm,postaction={on each segment={mid arrow=black}}] (7.93,-0.07) to [out=190,in=50] (6.6,-0.766);
	\node[label=above:$\xi$,scale=0.9] () at (0,1.732) {};
	\node[label=above:$\sigma(\xi)$,scale=0.9] () at (8,1.732) {};
\end{tikzpicture}
\caption{Description of the shift map $\sigma$ on $X \subseteq \Omega(E,C)$. Here the configuration $\xi$ is given by $\xi = \{1,\alpha_0^{-1},\beta_1^{-1},\beta_0\alpha_0^{-1},\beta_1\alpha_0^{-1},\beta_0\beta_1^{-1},\alpha_0\beta_1^{-1},\dots\}$, which has local configuration at $1$ given by $\{\alpha_0^{-1},\beta_1^{-1}\}$. The shifted configuration $\sigma(\xi)$ has local configuration at $1$ given by $\{\alpha_0^{-1},\beta_0^{-1}\}$.}
\label{figure-example6}
\end{figure}
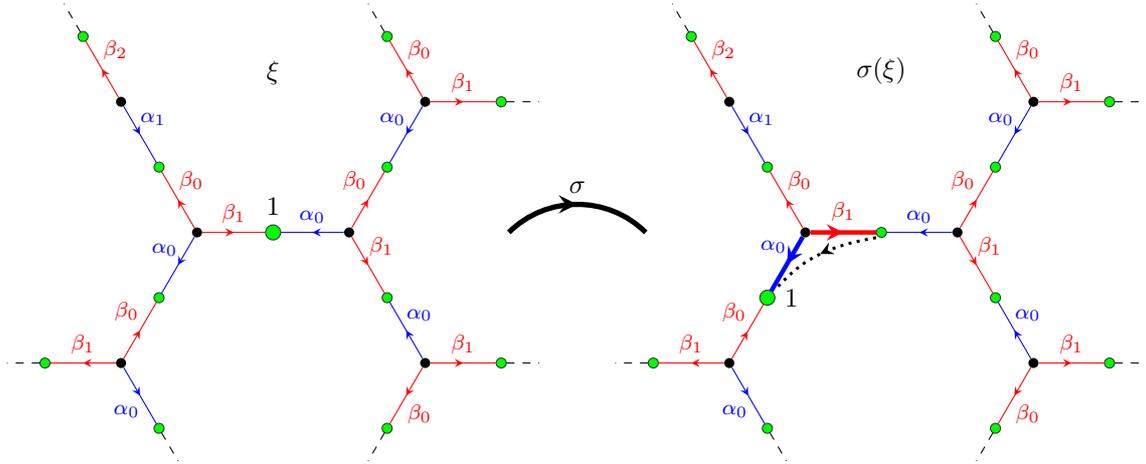

Taking $a = \alpha_0 \beta_0^{-1}$, $b = \alpha_0 \beta_1^{-1}$, $c = \alpha_1 \beta_2^{-1}$, we can further describe the generalized finite shift $(X,\sigma)$ associated with $(E,C)$ as follows.

Let $\mathbb F$ be the free group on $\{a,b,c\}$. Then $X$ is identified as the set $\mathfrak X$ of all configurations $\xi \subseteq \mathbb F$ satisfying the conditions (a)--(c) in Construction \ref{construction-GFS.concretedescription}. In particular, the local configuration of $\xi$ at each point is either of the form $\{z,a^{-1},b^{-1}\}$ or of the form $\{z,c^{-1}\}$, where $z \in \{a,b,c\}$. The local homeomorphism $\sigma$ is identified with $\tau$, which consists in shifting one step in the direction pointed by the directed edges.

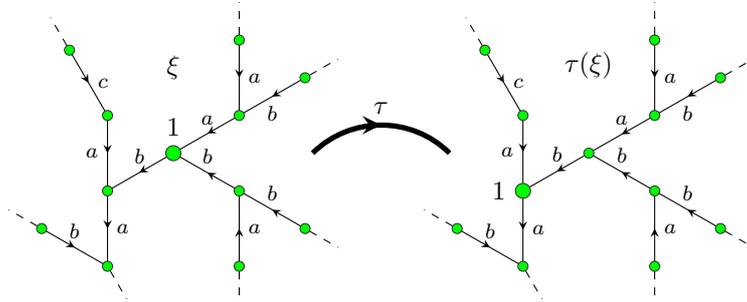
\begin{figure}[H]
\begin{tikzpicture}
	%OLD NODES
	\node[label=above:$1$,circle,fill=green,draw=black!80,scale=0.6] (1) at (0,0) {};
	\node[circle,fill=green,draw=black!80,scale=0.4] (R1) at (0.866,0.5) {};
	\node[circle,fill=green,draw=black!80,scale=0.4] (R111) at (0.866,1.5) {};
	\node[circle,fill=green,draw=black!80,scale=0.4] (R112) at (1.732,1) {};
	\node[circle,fill=green,draw=black!80,scale=0.4] (R2) at (0.866,-0.5) {};
	\node[circle,fill=green,draw=black!80,scale=0.4] (R211) at (1.732,-1) {};
	\node[circle,fill=green,draw=black!80,scale=0.4] (R212) at (0.866,-1.5) {};
	\node[circle,fill=green,draw=black!80,scale=0.4] (L1) at (-0.866,0.5) {};
	\node[circle,fill=green,draw=black!80,scale=0.4] (L111) at (-1.366,1.366) {};
	\node[circle,fill=green,draw=black!80,scale=0.4] (L2) at (-0.866,-0.5) {};
	\node[circle,fill=green,draw=black!80,scale=0.4] (L211) at (-1.732,-1) {};
	\node[circle,fill=green,draw=black!80,scale=0.4] (L212) at (-0.866,-1.5) {};
	%NEW NODES
	\node[circle,fill=green,draw=black!80,scale=0.4] (1') at (5.464,0) {};
	\node[circle,fill=green,draw=black!80,scale=0.4] (R'1) at (6.33,0.5) {};
	\node[circle,fill=green,draw=black!80,scale=0.4] (R'111) at (6.33,1.5) {};
	\node[circle,fill=green,draw=black!80,scale=0.4] (R'112) at (7.196,1) {};
	\node[circle,fill=green,draw=black!80,scale=0.4] (R'2) at (6.33,-0.5) {};
	\node[circle,fill=green,draw=black!80,scale=0.4] (R'211) at (7.196,-1) {};
	\node[circle,fill=green,draw=black!80,scale=0.4] (R'212) at (6.33,-1.5) {};
	\node[circle,fill=green,draw=black!80,scale=0.4] (L'1) at (4.598,0.5) {};
	\node[circle,fill=green,draw=black!80,scale=0.4] (L'111) at (4.098,1.366) {};
	\node[label=left:$1$,circle,fill=green,draw=black!80,scale=0.6] (L'2) at (4.598,-0.5) {};
	\node[circle,fill=green,draw=black!80,scale=0.4] (L'211) at (3.732,-1) {};
	\node[circle,fill=green,draw=black!80,scale=0.4] (L'212) at (4.598,-1.5) {};
	%OLD EDGES
	\path [draw=black,postaction={on each segment={mid arrow=black}}]
	(1) -- node[above,black]{\footnotesize $b$} (L2)
	(L2) -- node[right,black]{\footnotesize $a$} (L212)
	(L211) -- node[above,black]{\footnotesize $b$} (L212)
	(L1) -- node[left,black]{\footnotesize $a$} (L2)
	(L111) -- node[right,black]{\footnotesize $c$} (L1)
	(R1) -- node[above,black]{\footnotesize $a$} (1)
	(R2) -- node[above,black]{\footnotesize $b$} (1)
	(R111) -- node[right,black]{\footnotesize $a$} (R1)
	(R112) -- node[below,black]{\footnotesize $b$} (R1)
	(R211) -- node[above,black]{\footnotesize $b$} (R2)
	(R212) -- node[right,black]{\footnotesize $a$} (R2)
	;
	\draw[black,dashed] (L111) -- (-1.616,1.799);
	\draw[black,dashed] (L211) -- (-2.165,-0.75);
	\draw[black,dashed] (L212) -- (-0.616,-1.933);
	\draw[black,dashed] (R111) -- (0.866,2);
	\draw[black,dashed] (R112) -- (2.165,1.25);
	\draw[black,dashed] (R211) -- (2.165,-1.25);
	\draw[black,dashed] (R212) -- (0.866,-2);
	;
	%NEW EDGES
	\path [draw=black,postaction={on each segment={mid arrow=black}}]
	(1') -- node[above,black]{\footnotesize $b$} (L'2)
	(L'2) -- node[right,black]{\footnotesize $a$} (L'212)
	(L'211) -- node[above,black]{\footnotesize $b$} (L'212)
	(L'1) -- node[left,black]{\footnotesize $a$} (L'2)
	(L'111) -- node[right,black]{\footnotesize $c$} (L'1)
	(R'1) -- node[above,black]{\footnotesize $a$} (1')
	(R'2) -- node[above,black]{\footnotesize $b$} (1')
	(R'111) -- node[right,black]{\footnotesize $a$} (R'1)
	(R'112) -- node[below,black]{\footnotesize $b$} (R'1)
	(R'211) -- node[above,black]{\footnotesize $b$} (R'2)
	(R'212) -- node[right,black]{\footnotesize $a$} (R'2)
	;
	\draw[black,dashed] (L'111) -- (3.848,1.799);
	\draw[black,dashed] (L'211) -- (3.299,-0.75);
	\draw[black,dashed] (L'212) -- (4.848,-1.933);
	\draw[black,dashed] (R'111) -- (6.33,2);
	\draw[black,dashed] (R'112) -- (7.629,1.25);
	\draw[black,dashed] (R'211) -- (7.629,-1.25);
	\draw[black,dashed] (R'212) -- (6.33,-2);
	;
	\draw[-,black,line width=0.7mm,postaction={on each segment={mid arrow=black}}] (1.832,0) to [out=45,in=135] node [above] (T) {$\tau$} (3.632,0);
	\node[label=above:$\xi$,scale=0.9] () at (0,0.75) {};
	\node[label=above:$\tau(\xi)$,scale=0.9] () at (5.464,0.75) {};
\end{tikzpicture}
\caption{Description of the shift map $\tau$ on $\mathfrak X$. Here the configuration $\xi$ is given by $\xi = \{1,b,a^{-1},b^{-1},ab,a^{-1}b,a^{-2},b^{-1}a^{-1},b^{-2},a^{-1}b^{-1},\dots\}$, which has local configuration at $1$ given by $\{b,a^{-1},b^{-1}\}$. The shifted configuration $\sigma(\xi)$ has local configuration at $1$ given by $\{a,a^{-1},b^{-1}\}$. The picture conveys the same idea of a river basin  present for instance in \cite[p. 5041]{DE2017}. From any vertex there is only one way forward, and this fact enables us to define the shift map.}
\label{figure-example7}
\end{figure}

\end{example}

We now establish the universal property of the dynamical system $(X,\sigma)$ associated to a generalized finite shift graph $(E,C)$. Given such a graph, we set $B := \bigsqcup_{v\in E^{0,0}} B_v$ and $R:= \bigsqcup_{v\in E^{0,0}} R_v$.

\begin{definition}\label{definition-universal.for.generalizedSFT}
Let $(E,C)$ be a generalized finite shift graph with red adjacency matrix $A = (a_{e,v}) \in M_{B \times E^{0,0}}(\Z^+)$, where as usual $a_{e,v}$ is the number of red edges from $s(e)$ to $v$. Let $(Y,\rho)$ be an object from $\textsf{LHomeo}$. An {\it $(E,C)$-structure} on $(Y,\rho)$ is given by a $\rho$-cylinder decomposition
$$\calZ = (\{Z_e \mid e \in B\}, \{W_{e,v}^{(k)} \mid e \in B, v \in E^{0,0}, 1\le k\le a_{e,v}\}).$$
That is, we have
$$Y = \bigsqcup_{e \in B} Z_e, \qquad W_v:= \bigsqcup_{e\in B_v} Z_e = \bigsqcup_{e\in B}\bigsqcup_{k=1}^{a_{e,v}} W_{e,v}^{(k)},$$    
$\rho|_{W_{e,v}^{(k)}}$ is injective, and $\rho(W_{e,v}^{(k)})= Z_e$ for all allowable indices $e,v,k$.

We say that an $(E,C)$-structure $\calZ = (\{Z_e\},\{W_{e,v}^{(k)}\})$ on $(Y,\rho)$ is {\it universal} if for any other $(E,C)$-structure $\tilde{\calZ} = (\{\tilde{Z}_e\},\{ \tilde{W}_{e,v}^{(k)} \})$ on $(\tilde{Y},\tilde{\rho})\in \textsf{LHomeo}$, there is a unique $(E,C)$-equivariant continuous map $\psi\colon \tilde{Y}\to Y$. Here $(E,C)$-equivariant means that $\psi(\tilde{Z}_e) \subseteq Z_e$ and $\psi(\tilde{W}_{e,v}^{(k)}) \subseteq W_{e,v}^{(k)}$ for all allowable indices $e,v,k$, and that $\psi \circ \tilde{\rho} = \rho \circ \psi$.
\end{definition}

By Propositions \ref{proposition-multiresolution.is.Ldiagram} and \ref{proposition-even.labels.sigma.cylinder.dec}, every generalized finite shift $(X,\sigma)$, associated to a generalized finite shift graph $(E,C)$, has a natural $(E,C)$-structure. It turns out that this structure is universal.

\begin{theorem}\label{theorem-universal.(E,C).structure}
Let $(X,\sigma)$ be the generalized finite shift associated to the generalized finite shift graph $(E,C)$. Then the natural $(E,C)$-structure on $X$ is universal.
\end{theorem}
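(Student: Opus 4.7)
I will construct a continuous $(E,C)$-equivariant map $\psi : \tilde{Y} \to X$ that intertwines $\tilde{\rho}$ with $\sigma$, and then deduce uniqueness from the construction. The key observation is that the $(E,C)$-structure $\tilde{\calZ}$ on $\tilde{Y}$, applied iteratively along the forward orbit of each point, encodes exactly the combinatorial data needed to name an infinite blue path in the canonical resolution $(F,D)$ of $(E,C)$.

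\textbf{Construction of $\psi$.} For $y \in \tilde{Y}$, I define $\psi(y) = (e_0(y), e_1(y), e_2(y), \ldots) \in X$ by induction on the level. The edge $e_0(y) \in B = F^{1,0}$ is the unique blue edge with $y \in \tilde{Z}_{e_0(y)}$. To define $e_1(y)$: the point $y$ lies in a unique cell $\tilde{W}^{(k)}_{e, v}$; necessarily $v = r(e_0(y))$ and $e = e_0(\tilde{\rho}(y))$, since $\tilde{\rho}(\tilde{W}^{(k)}_{e,v}) = \tilde{Z}_e$. The triple $(e, v, k)$ singles out a red edge $f_1(y) \in R$ with $s(f_1(y)) = s(e_0(\tilde{\rho}(y)))$ and $r(f_1(y)) = r(e_0(y))$, and I set $e_1(y) := \alpha^{e_0(y)}(f_1(y)) \in B_{s(e_0(y))}^{(1)}$ using Definition \ref{definition-F.infty.and.others}. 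Iteratively, $e_n(y)$ is constructed by applying the same recipe one level up, feeding in the $(E,C)$-structure data read off $\tilde{\rho}^n(y)$ and invoking the romb-completion provided by Proposition \ref{proposition-decomposition.odd.rombs.even}(ii) and condition (g) of Definition \ref{definition-L.separated.Bratteli}. The proof of Proposition \ref{proposition-multiresolution.is.Ldiagram} shows that each even layer $(F^{0,2n} \cup F^{0,2n+1}, F^{1,2n})$ of $(F, D)$ is itself a generalized finite shift graph, so the base-case recipe applies verbatim at every level and the induction closes.

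\textbf{Verification.} Continuity of $\psi$ is immediate, because each preimage $\psi^{-1}(Z(e_0, \ldots, e_n))$ is an intersection of finitely many clopens pulled back from $\tilde{\calZ}$ along $\tilde{\rho}^{j}$ for $0 \le j \le n$, hence clopen. The inclusions $\psi(\tilde{Z}_e) \subseteq Z_e$ and $\psi(\tilde{W}_{e,v}^{(k)}) \subseteq W_{e,v}^{(k)}$ for the natural $(E,C)$-structure on $X$ hold by construction, so $\psi$ is $(E,C)$-equivariant. For the intertwining $\psi \circ \tilde{\rho} = \sigma \circ \psi$, I unpack Construction \ref{construction-local.homeo}: the $0$-th component $e_0'$ of $\sigma(\psi(y))$ satisfies $s(e_0') = r(e_1') = r(f_1) = s(f_0)$, where $(e_0(y), e_1(y), f_0, f_1)$ is the romb associated to the first blue pair; by the construction of $e_1(y)$ one has $f_0 = f_1(y)$, and hence $s(e_0') = s(f_1(y)) = s(e_0(\tilde{\rho}(y)))$. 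Since blue edges in $E$ are determined by their source (the disjointness part of condition (b) in Definition \ref{definition-general.finite.shift}), I conclude $e_0' = e_0(\tilde{\rho}(y))$. The agreement of higher components follows by the same argument transported to higher layers.

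\textbf{Uniqueness and main obstacle.} If $\psi' : \tilde{Y} \to X$ is any other $(E,C)$-equivariant continuous map intertwining $\tilde{\rho}$ with $\sigma$, then $\psi'(\tilde{Z}_e) \subseteq Z_e$ forces the $0$-th component of $\psi'(y)$ to be $e_0(y)$; equivariance on the $\tilde{W}^{(k)}_{e,v}$'s combined with the intertwining property inductively determines all remaining components, so $\psi' = \psi$. The principal technical difficulty is the recursive construction of $e_n(y)$ for $n \geq 2$: one must track carefully how blue edges at successive levels of $F$ are built out of the combinatorial data and verify that the romb-compatibility conditions in Definition \ref{definition-L.separated.Bratteli} are preserved at every step. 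The structural reduction that each even layer of $F$ is again a generalized finite shift graph makes the induction essentially mechanical, but the bookkeeping—keeping straight the red edges coming from each level's rombs and the corresponding $1$-step-resolution edges $\alpha^{\cdot}(\cdot)$—is the main source of tedium.
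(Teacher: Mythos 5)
Your overall strategy (a direct construction of $\psi$ as an explicit infinite blue path, rather than the paper's route of doubling $Y$ into $\ol{Y}=Y^0\sqcup Y^1$ and invoking the universality of the $(E,C)$-dynamical system $\Omega(E,C)$ from \cite[Corollary 6.11]{AE2014}) is viable in principle, and your levels $0$ and $1$ are correct. But the recursive step contains a genuine gap: the data you feed into it is the wrong data. You claim $e_n(y)$ is obtained by ``feeding in the $(E,C)$-structure data read off $\tilde{\rho}^n(y)$,'' i.e.\ forward-orbit data only. Already for $n=2$ this cannot work. The edge $e_2(y)$ must have range $s(e_1(y))=v(e_0(y),f_1(y))\in F^{0,2}$, and by Definition \ref{definition-F.infty.and.others} the blue edges with that range are of the form $\alpha^{e_1(y)}\bigl((y_f)_{f}\bigr)$, where one must choose an element $y_f=\alpha^{f}(e'_f)\in R(f)$ for \emph{every} red edge $f$ of $E$ with $s(f)=s(e_0(y))$. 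Dynamically, $e'_f$ records which cell $\tilde{Z}_{e'}$ contains the \emph{preimage} $\bigl(\tilde{\rho}|_{\tilde{W}(f)}\bigr)^{-1}(y)$ of $y$ along the branch $f$; this is backward data about $y$, not data about $\tilde{\rho}^2(y)$. The vertices of $F^{0,n}$ correspond to $n$-balls of configurations (this is exactly how Proposition \ref{proposition-relating.spaces} identifies $X$), and $n$-balls contain words with both positive and negative letters. Forward-orbit data genuinely does not determine the point: e.g.\ in Example \ref{example-generalized.shift.config}, a node located at $w_1$ has two preimage branches, and the cell ($Z_{\alpha_0}$ or $Z_{\alpha_1}$) of each preimage is a free choice unconstrained by the forward ray, so distinct points of $X$ share the same forward itinerary. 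Hence the map you describe is not well defined past the second coordinate.

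The fix is to build $e_n(y)$ from the cells containing all points reachable from $y$ by finite compositions of the partial homeomorphisms $\eta_f^{\pm 1}$ (equivalently, to assign to $y$ the whole configuration $\{a_{f_1}^{-1}\cdots a_{f_m}^{-1}a_{g_n}\cdots a_{g_1} \mid y\in \mathrm{Dom}(\eta_{f_1}^{-1}\circ\cdots\circ\eta_{g_1})\}$ as in Remark \ref{remark-universal.map.described}, and then check this is a point of $X$ and that $\psi$ is continuous and intertwining); carrying this out from scratch amounts to reproving \cite[Corollary 6.11]{AE2014} in this special case, which is precisely the work the paper outsources. Your continuity argument inherits the same defect ($\psi^{-1}(Z(e_0,\dots,e_n))$ is a Boolean combination of pullbacks along the $\eta_f^{\pm1}$, not along $\tilde{\rho}^{j}$ alone, though it is still clopen), and your uniqueness sketch, while correct in outline, must also be routed through preimages: one needs $\psi'(\eta_f^{-1}(y))=\theta_f^{-1}(\psi'(y))$, which follows from equivariance on the $\tilde{W}$-cells together with injectivity of $\sigma$ on $H_f$, before the induction on components can close.
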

\begin{proof}
The result follows from the universality of the $(E,C)$-dynamical system $\Omega(E,C)$ (\cite[Corollary 6.11]{AE2014}). We thus recall the definition of this dynamical system. We will identify $X$ as a subspace of $\Omega(E,C)$ throughout the proof, as in Proposition \ref{proposition-relating.spaces}.

The $(E,C)$-dynamical system on $\Omega(E,C)$ consists of the collection of open compact subsets $\Omega(E,C)_v$, for $v\in E^0$, and the open compact subsets $H_e$ and $H_f$, for $e\in B$ and $f\in R$, where
$$H_e = \{\xi \in \Omega(E,C) \mid e^{-1} \in \xi_1\}, \qquad H_f = \{\xi \in \Omega(E,C) \mid f^{-1} \in \xi_1\},$$
together with the homeomorphisms $\theta_e \colon \Omega(E,C)_{s(e)} \to H_e$ and $\theta_f \colon \Omega(E,C)_{s(f)} \to H_f$, which we defined just after Definition \ref{definition-local.configurations}. Note that for each $v\in E^{0,0}$, we have $\Omega(E,C)_v = \bigsqcup_{e\in B_v} H_e = \bigsqcup_{f\in R_v} H_f \subseteq X$.

Let us also identify the natural $(E,C)$-structure on $(X,\sigma)$, adopting the picture of $(X,\sigma)$ from Proposition \ref{proposition-relating.spaces}. We have that the space corresponding to $e \in B$ is just $H_e=Z(e)$, and the space corresponding to $f \in R$ is just $H_f$. Of course we have $X = \bigsqcup_{v\in E^{0,0}} \Omega(E,C)_v$ and $\Omega(E,C)_v = \bigsqcup_{e\in B_v} H_e$ for each $v\in E^{0,0}$. We also have
$$\Omega(E,C)_v = \bigsqcup_{f \in R_v} H_f = \bigsqcup_{e \in B} \bigsqcup_{k=1}^{a_{e,v}} H_{e,v}^{(k)},$$
where $H_{e,v}^{(k)} := H_{f_{e,v}^{(k)}}$, since for each $f \in R_v$ there is a unique $e \in B$ such that $s(e) = s(f)$, and then there is a unique $1 \le k\le a_{e,v}$ such that $f = f_{e,v}^{(k)}$. By the proof of Proposition \ref{proposition-relating.spaces}, the space $H_{e,v}^{(k)}$ agrees with the space $V_{e,v}^{(k)}$ defined just before Proposition \ref{proposition-existence.sigma.cylinder.dec}, namely the space of all $x=(e_0,e_1,\dots )\in X$ such that the romb corresponding to the pair of blue edges $(e_0,e_1)$ is of the form $(e_0,e_1,f_0,f_1)$ with $f_0=f_{e,v}^{(k)}$. Hence, the $(E,C)$-structure $\calH :=(\{H_e\}, \{H_{e,v}^{(k)}\})$ on $(X,\sigma)$ agrees with the previously defined $\sigma$-cylinder decomposition $\calZ_0(F,D)$, where $(F,D)$ is the canonical resolution of $(E,C)$.

Now let $\calZ = (\{Z_e\}, \{W_{e,v}^{(k)}\})$ be an $(E,C)$-structure on $(Y,\rho) \in \textsf{LHomeo}$. We need to show that there is a unique $(E,C)$-equivariant continuous map $\psi \colon Y \to X$. We first build an $(E,C)$-dynamical system
$$(\{\ol{Y}_v \mid v \in E^{0}\}, \{\ol{H}_x \mid x \in E^1\},\{\ol{\theta}_x \mid x \in E^1\})$$
on $\ol{Y} = Y^0 \sqcup Y^1$, where $Y^0$ and $Y^1$ are disjoint copies of $Y$. For $i=0,1$ we denote by $y^i$ the canonical image of $y \in Y$ in $Y^i$, and by $N^i$ the canonical image of a subset $N$ of $Y$ in $Y^i$, with $W^{(k,i)}_{e,v}: = (W^{(k)}_{e,v})^i$.

First we set $\ol{Y}_v = \bigsqcup_{e \in B_v} Z_{e}^0$ for $v \in E^{0,0}$, and $\ol{Y}_w = Z_e^1$ for $w \in E^{0,1}$, where $e$ is the unique blue edge such that $s(e) = w$. Now we set $\ol{H}_e = Z^0_e$ for $e \in B$, and $\ol{H}_f = W_{e,v}^{(k,0)}$ for $f \in R$ with $r(f) = v$, where $e \in B$ is the unique blue edge such that $s(e) = s(f)$, and $k$ is the unique index in $\{1,\dots,a_{e,v}\}$ such that $f = f_{e,v}^{(k)}$. Finally we define
$$\ol{\theta}_e \colon \ol{Y}_{s(e)} \to \ol{H}_e, \qquad \ol{\theta}_e(z^1) = z^0, \text{ for } z\in Z_e,$$
and
$$\ol{\theta}_f \colon \ol{Y}_{s(f)} \to \ol{H}_f, \qquad \ol{\theta}_f(z^1) = \Big( \rho|_{W_{e,v}^{(k)}}\Big)^{-1} (z)^0, \text{ for } z\in Z_e, \text{ where } f = f_{e,v}^{(k)}.$$
%$$\ol{\theta}_{f_{ev}^{(k)}} \colon \ol{Y}_{s(e)} \to \ol{H}_{ev}^{(k)}, \qquad \ol{\theta}_{f_{ev}^{(k)}} (z^1) = \Big( \rho|_{W_{ev}^{(k)}}\Big)^{-1} (z)^0, \text{ for } z\in Z_e,$$
By \cite[Corollary 6.11]{AE2014}, there exists a unique $(E,C)$-equivariant continuous map $\ol{\psi}\colon \ol{Y}\to \Omega (E,C)$. This means that $\ol{\psi}(\ol{Y}_v)\subseteq \Omega(E,C)_v$ for all $v\in E^0$, $\ol{\psi} (\ol{H}_x)\subseteq H_x$ for all $x\in E^1$, and that $\ol{\psi}(\ol{\theta}_x(y)) = \theta_x (\ol{\psi}(y))$ for all $y\in \ol{Y}_{s(x)}$, $x\in E^1$. Note that, for $y^0 \in Y^0$, we have $\ol{\psi}(y^0) \in \Omega(E,C)_v$ for some $v \in E^{0,0}$, and hence $\ol{\psi}(y^0) \in X$. Thus the map
$$\psi \colon Y \to X, \quad \psi(y) = \ol{\psi}(y^0)$$
is well-defined. We have
$$\psi(Z_e) = \ol{\psi}(Z_e^0) = \ol{\psi}(\ol{H}_e) \subseteq H_e, \qquad \psi(W_{e,v}^{(k)}) = \ol{\psi}(W_{e,v}^{(k,0)}) = \ol{\psi}(\ol{H}_{f_{e,v}^{(k)}}) \subseteq H_{e,v}^{(k)}.$$
We finally check that $\psi \circ \rho = \sigma \circ \psi$. For this it is enough to check that $\psi \circ \rho|_{W_{e,v}^{(k)}} = \sigma|_{H_{e,v}^{(k)}} \circ \psi$ for all $e \in B$, $v \in E^{0,0}$ and $1 \leq k \leq a_{e,v}$. For $z \in Z_e$, we compute
\begin{align*}
\psi((\rho|_{W_{e,v}^{(k)}})^{-1}(z)) & = \ol{\psi}((\rho|_{W_{ev}^{(k)}})^{-1}(z)^0) \\
& = \ol{\psi}(\ol{\theta}_{f_{e,v}^{(k)}}(z^1)) \\
& = \theta_{f_{e,v}^{(k)}}(\ol{\psi}(z^1)) \\
& = \theta_{f_{e,v}^{(k)}}(\ol{\psi}(\ol{\theta}_e^{-1}(z^0))) \\
& = \theta_{f_{e,v}^{(k)}} \theta_e^{-1}(\ol{\psi}(z^0)) = (\sigma|_{H^{(k)}_{e,v}})^{-1} (\psi (z)),
\end{align*}
where we have used Proposition \ref{proposition-relating.spaces} in the last equality. Hence $\psi$ is an $(E,C)$-equivariant continuous map from $Y$ to $X$. Uniqueness of $\psi$ follows from uniqueness of $\ol{\psi}$. Indeed, the universality of the $(E,C)$-dynamical system $\Omega(E,C)$ turns out to be equivalent to the universality of the natural $(E,C)$-structure on $X$.
\end{proof}
	
\begin{remark}\label{remark-universal.map.described}
For a dynamical system $(Y,\rho) \in \textsf{LHomeo}$ with an $(E,C)$-structure $(\{\ol{H}_e\} _{e\in B}, \{ \ol{H}_f \}_{f\in R})$, the unique $(E,C)$-equivariant map $\psi \colon Y\to X$ from Theorem \ref{theorem-universal.(E,C).structure} can be explicitly described. Indeed, observe that we have decompositions
$$Y =\bigsqcup_{e\in B} \ol{H}_e = \bigsqcup _{f\in R} \ol{H}_f$$
and homeomorphisms
$$\rho_f \colon \ol{H}_f \to \ol{H}_e,$$
for each $f\in R$, where $e$ is the unique blue edge such that $s(e) = s(f)$. Then, for $y\in Y$, $\psi(y)$ consists of the set of elements
$$a_{f_1}^{-1}\cdots a_{f_m}^{-1}a_{g_n}\cdots a_{g_1}\in \mathbb F(A),$$
in the notation of Construction \ref{construction-GFS.concretedescription}, such that
$$y \in \text{Dom}(\rho_{f_1}^{-1}\circ \cdots \circ \rho_{f_m}^{-1}\circ \rho_{g_n}\circ \cdots \circ \rho_{g_1}),$$
where the above is a composition of partial maps (see e.g. \cite[Chapter 2]{Exel2017}).
\end{remark}

We end this section by proving that any $(X,\sigma) \in \textsf{LHomeo}$ can be expressed as an inverse limit of a sequence of generalized finite shifts.\\

Let $(X,\sigma) \in \textsf{LHomeo}$, and fix an $l$-diagram $(F,D)$ corresponding to $(X,\sigma)$ under the bijection established in Theorem \ref{theorem-equivalence}. For each $i \geq 0$, write $E(i):= (F^{0,2i}\cup F^{0,2i+1}, F^{1,2i})$ and let $C(i)$ be the partition of $E(i)^1$ inherited from $D$. Then, by Proposition \ref{proposition-even.labels.sigma.cylinder.dec}, $(E(i),C(i))$ is a generalized finite shift graph. We let $(X_i,\sigma_i)$ be the generalized finite shift (Definition \ref{definition-generalized.finite.shift}) associated to the canonical resolution $(F(i),D(i))$ of the generalized finite shift graph $(E(i),C(i))$. Moreover, by Proposition \ref{proposition-even.labels.sigma.cylinder.dec}, $(X,\sigma)$ admits an $(E(i),C(i))$-structure for each $i \ge 0$. It follows from Theorem \ref{theorem-universal.(E,C).structure} that there is a unique continuous $(E(i),C(i))$-equivariant map $\psi_i \colon (X,\sigma)\to (X_i,\sigma_i)$ for each $i \ge 0$, where we consider the natural $(E(i),C(i))$-structure on $(X_i,\sigma_i)$.

\begin{proposition}\label{proposition-equivariant.maps.factors}
The space $(X_{i+1},\sigma_{i+1})$ admits an $(E(i),C(i))$-structure, in such a way that the natural $(E(i+1),C(i+1))$-structure on $(X_{i+1},\sigma_{i+1})$ is a refinement (in the sense of Definition \ref{definition-refined.sigma.cylinder.dec}) of the mentioned $(E(i),C(i))$-structure.

In particular, the map $\psi_i$ factors through $\psi_{i+1}$.
\end{proposition}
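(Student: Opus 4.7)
My strategy is to build an explicit $(E(i),C(i))$-structure $\tilde\calZ$ on $(X_{i+1},\sigma_{i+1})$ using the combinatorial data of $(F,D)$ between layers $2i$ and $2i+3$, verify that the natural $(E(i+1),C(i+1))$-structure on $X_{i+1}$ refines $\tilde\calZ$, and then deduce the factoring claim via Theorem \ref{theorem-universal.(E,C).structure}.

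\emph{Blue clopens.} The natural $(E(i+1),C(i+1))$-structure on $X_{i+1}$ consists of clopens $Z_{e'}$ for $e'\in F^{1,2i+2}$, vertex clopens $W_{v'}=\bigsqcup_{e'\in B_{v'}}Z_{e'}$ for $v'\in F^{0,2i+2}$, and refined clopens $W_{f'}^{(i+1)}$ for red $f'\in F^{1,2i+2}$ satisfying $\sigma_{i+1}(W_{f'}^{(i+1)})=Z_{e'}$ with $e'$ the unique blue edge with $s(e')=s(f')$. By Proposition \ref{proposition-decomposition.odd.rombs.even}(i) and condition (d) of Definition \ref{definition-L.separated.Bratteli}, each $v'\in F^{0,2i+2}$ is the source of a unique blue edge $\tilde e(v')\in F^{1,2i+1}$, and each vertex in $F^{0,2i+1}$ is the source of a unique blue edge in $F^{1,2i}$, which I denote $e(\cdot)$. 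Define $\pi\colon F^{0,2i+2}\to B(i)$ by $\pi(v'):=e(r(\tilde e(v')))$, and set $T_e:=\pi^{-1}(e)$; the fibers partition $F^{0,2i+2}$. Then $\tilde Z_e:=\bigsqcup_{v'\in T_e}W_{v'}$ yields blue clopens with $X_{i+1}=\bigsqcup_{e\in B(i)}\tilde Z_e$.

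\emph{Red refinement clopens.} For each red edge $g\in R_v\subseteq F^{1,2i}$ with $s(g)=w\in F^{0,2i+1}$, condition (g) of Definition \ref{definition-L.separated.Bratteli} provides, for each $v'\in T_{e(w)}$ and each $e_1\in B_{v'}\subseteq F^{1,2i+2}$, a unique rhombus $(\tilde e(v'),e_1,f_0,f_1)$ with $f_0\in R(g)$, and I set $\tilde W_g:=\bigsqcup_{v',\,e_1}W_{f_1(v',e_1,g)}^{(i+1)}$. Since $s(f_1)=s(e_1)$, one has $\sigma_{i+1}(W_{f_1}^{(i+1)})=Z_{e_1}$ and therefore $\sigma_{i+1}(\tilde W_g)=\bigsqcup_{v'\in T_{e(w)}}W_{v'}=\tilde Z_{e(w)}$. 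The containment $\tilde W_g\subseteq\tilde V_v$ follows from condition (e): since $f_0\in R(g)$, $s(f_0)\in s(R(g))\subseteq\bigsqcup_{e''\in B_v}s(B_{s(e'')})$, forcing $\pi(s(f_0))\in B_v$. For the partition $\tilde V_v=\bigsqcup_{g\in R_v}\tilde W_g$ I use the converse rhombus from condition (f) together with Remark \ref{remark-about.def.of.Ldiagram}(1), which ensures each $v''\in F^{0,2i+2}$ has a unique outgoing red edge in $F^{1,2i+1}$ belonging to a single class $R(g)$. As $(F(i+1),D(i+1))$ is refined for $i\ge 0$ (Remark \ref{remark-about.def.of.Ldiagram}(5)), condition (f) recovers the blue pair $(\tilde e(v'),e_1)$ from any red pair $(f_0,f_1)$, so each refined clopen $W_{f'}^{(i+1)}$ is hit exactly once. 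The compatibility clause (c) of Definition \ref{definition-refined.sigma.cylinder.dec} then holds because $(v',e_1)\mapsto e_1$ is a bijection between the refined indices inside $\tilde W_g$ and the set of blue edges $e'\in B(i+1)$ with $Z_{e'}\subseteq\tilde Z_{e(w)}$.

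\emph{Factoring.} The construction of $\tilde\calZ$ depends only on the combinatorics of layers $2i$ through $2i+3$ of $(F,D)$, which is precisely the data used in Proposition \ref{proposition-even.labels.sigma.cylinder.dec} to establish the refinement of the natural $(E(i),C(i))$- and $(E(i+1),C(i+1))$-structures on $X$. Consequently the $(E(i+1),C(i+1))$-equivariance of $\psi_{i+1}\colon X\to X_{i+1}$ automatically forces $(E(i),C(i))$-equivariance with respect to $\tilde\calZ$. Applying Theorem \ref{theorem-universal.(E,C).structure} to $\tilde\calZ$ produces a unique continuous $(E(i),C(i))$-equivariant map $\phi_{i,i+1}\colon X_{i+1}\to X_i$; then $\phi_{i,i+1}\circ\psi_{i+1}\colon X\to X_i$ is $(E(i),C(i))$-equivariant, and the uniqueness of $\psi_i$ yields $\psi_i=\phi_{i,i+1}\circ\psi_{i+1}$. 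The main obstacle will be the partition verification for the $\tilde W_g$'s, which requires orchestrating conditions (e), (f) and (g) of Definition \ref{definition-L.separated.Bratteli} in concert with the refinedness of $(F(i+1),D(i+1))$ so that every refined clopen $W_{f'}^{(i+1)}$ of $X_{i+1}$ is covered exactly once as $g,v',e_1$ range over their allowed values; this is exactly the content of the bijective correspondence encoded by the rhombus conditions.
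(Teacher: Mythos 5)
Your proposal is correct, and it reaches the same structure the paper builds, but by a different route: you construct the $(E(i),C(i))$-structure on $X_{i+1}$ by hand, running the rhombus bookkeeping (conditions (e), (f), (g) of Definition \ref{definition-L.separated.Bratteli} together with Remark \ref{remark-about.def.of.Ldiagram}) across layers $2i$ through $2i+3$ of $(F,D)$ and verifying the partition and compatibility clauses of Definition \ref{definition-refined.sigma.cylinder.dec} directly. The paper instead packages this: it forms an auxiliary separated Bratteli diagram $(G,H)$ by stacking the two layers $(F^{0,2i}\cup F^{0,2i+1},F^{1,2i})$ and $(F^{0,2i+1}\cup F^{0,2i+2},F^{1,2i+1})$ on top of the canonical resolution $(F(i+1),D(i+1))$, checks via Proposition \ref{proposition-multiresolution.is.Ldiagram} that $(G,H)$ is an $l$-diagram, identifies its dynamical system with $(X_{i+1},\sigma_{i+1})$ by Theorem \ref{theorem-independence.of.Lclass}, and then quotes Proposition \ref{proposition-even.labels.sigma.cylinder.dec} to obtain both the $(E(i),C(i))$-structure (from the top layer of $(G,H)$) and the fact that the $(E(i+1),C(i+1))$-structure refines it. What the paper's route buys is brevity and reuse: all the rhombus combinatorics you carry out explicitly is already done once and for all in the proof of Proposition \ref{proposition-even.labels.sigma.cylinder.dec}, so no new verification is needed. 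What your route buys is an explicit description of the coarse clopens $\tilde Z_e$ and $\tilde W_g$ inside $X_{i+1}$, which makes the compatibility clause and the equivariance of $\psi_{i+1}$ with respect to the coarser structure visibly concrete. The concluding factorization argument (uniqueness in Theorem \ref{theorem-universal.(E,C).structure} applied to $\psi_{i,i+1}\circ\psi_{i+1}$ and $\psi_i$) is identical in both treatments.
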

\begin{proof}
Let $(G,H)$ be the Bratteli diagram whose first two layers consist of the layers $E(i)= (F^{0,2i}\cup F^{0,2i+1}, F^{1,2i})$ and $(F^{0,2i+1}\cup F^{0,2i+2}, F^{1,2i+1})$, respectively, with the separations inherited from $D$, and whose other layers coincide with the layers of $(F(i+1),D(i+1))$, moved down in two units, so that for instance $G^{0,k}= F(i+1)^{0,k-2}$ for all $k\ge 2$. Using Proposition \ref{proposition-multiresolution.is.Ldiagram} it is easy to show that $(G,H)$ is an $l$-diagram. By Theorem \ref{theorem-independence.of.Lclass}, we have that the dynamical system associated to the $l$-diagram $(G,H)$ is conjugate to $(X_{i+1},\sigma_{i+1})$. In particular, we see from Proposition \ref{proposition-even.labels.sigma.cylinder.dec} that $(X_{i+1},\sigma_{i+1})$ admits an $(E(i),C(i))$-structure, and moreover the natural $(E(i+1),C(i+1))$-structure on $(X_{i+1},\sigma_{i+1})$ is a refinement of the $(E(i),C(i))$-structure.

Now Theorem \ref{theorem-universal.(E,C).structure} provides us with a unique equivariant continuous map
$$\psi_{i,i+1} \colon X_{i+1} \to X_i,$$
with respect to the respective $(E(i),C(i))$-structures. Since the three maps $\psi_i$, $\psi_{i+1}$ and $\psi_{i,i+1}$ preserve the $(E(i),C(i))$-structure, we get by uniqueness that $\psi_i = \psi_{i,i+1}\circ \psi_{i+1}$. This concludes the proof of the proposition.
\end{proof}

We therefore have an inverse system $\{(X_i,\sigma_i), \psi_{i,j}\}$ in $\textsf{LHomeo}$, and hence we get a dynamical system $(\varprojlim X_i, \tilde{\sigma})$ together with an equivariant continuous map
$$\psi \colon (X,\sigma)\to (\varprojlim X_i, \tilde{\sigma}), \quad \psi(x) = (\psi_0(x),\psi_1(x),\dots).$$

\begin{theorem}\label{theorem-inverse.limit}
With the notation as above, we have that the natural equivariant map
$$\psi \colon X \to \varprojlim X_i$$
is a homeomorphism.
\end{theorem}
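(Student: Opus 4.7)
The plan is to show that $\psi$ is a continuous bijection between compact Hausdorff spaces, so that the homeomorphism property follows automatically. Continuity is immediate from the universal property of the inverse limit. For injectivity, the blue-path cylinders $Z(e_0,\dots,e_n)$ form a basis of $X$, so if $x\ne y$ in $X$, for $i$ large enough their initial blue segments of length $2i+1$ already differ and they lie in distinct cylinders $Z_e\ne Z_{e'}$ of the $\sigma$-cylinder decomposition $\calZ_{2i}(F,D)$ from Proposition \ref{proposition-even.labels.sigma.cylinder.dec}. The $(E(i),C(i))$-equivariance of $\psi_i$ then places $\psi_i(x)\in H_e$ and $\psi_i(y)\in H_{e'}$, which are disjoint in $X_i$, so $\psi_i(x)\ne\psi_i(y)$. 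At this point $\psi$ is already a homeomorphism onto the closed set $\psi(X)\subseteq\varprojlim X_i$, and what remains is surjectivity.

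For surjectivity, I would build a continuous map $\phi\colon\varprojlim X_i\to X$ by extracting a blue path from any coherent sequence $(\xi_i)_i$. Each $\xi_i$ lies in a unique first-layer cylinder $H_{e_{2i}}$ with $e_{2i}\in F^{1,2i}$. Combining this with the compatibility $\psi_{i,i+1}(\xi_{i+1})=\xi_i$ and the explicit form of $\psi_{i,i+1}$ read off from Proposition \ref{proposition-equivariant.maps.factors}, namely that it sends the $E(i+1)$-cylinder of $e_{2i+2}\in F^{1,2i+2}$ to the $E(i)$-cylinder of the first edge of the unique length-$3$ blue path ending at $s(e_{2i+2})$, the sequence $(e_{2i})_i$ fits together with uniquely determined intermediate edges $e_{2i+1}\in F^{1,2i+1}$ into an infinite blue path, yielding a point $x:=\phi((\xi_i))\in X$. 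Continuity of $\phi$ is routine since membership in $Z(e_0,\dots,e_n)\subseteq X$ is determined by finitely many of the $\xi_i$, and the identity $\phi\circ\psi=\mathrm{id}_X$ is immediate from the definitions.

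The genuine obstacle is verifying that $\psi_i(x)=\xi_i$ for every $i$, equivalently $\psi\circ\phi=\mathrm{id}_{\varprojlim X_i}$. My approach is to pass to the configurational model $X_i\subseteq\Omega(E(i),C(i))$ of Proposition \ref{proposition-relating.spaces} and use the explicit formula for the equivariant map from Remark \ref{remark-universal.map.described}: both sides are configurations in the free group $\mathbb F(E(i)^1)$, and equality reduces, via the bijection $(F(i))^{0,n}\leftrightarrow\calB_n(\Omega(E(i),C(i)))$, to matching their $n$-balls for every $n$. Agreement at $n=1$ is exactly the fact that both configurations lie in $H_{e_{2i}}$, which we have by construction. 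For $n\ge 2$ the plan is to pick $j\gg i$ and exploit the compatibility $\xi_i=\psi_{i,j}(\xi_j)$ together with the refinement of $(E(i),C(i))$-structures by $(E(j),C(j))$-structures (Proposition \ref{proposition-equivariant.maps.factors} iterated), which should link the $n$-ball of $\xi_i$ to a finite portion of the blue path of $x$ at layers deeper than $2j$ in $F$, and hence to the $n$-ball of $\psi_i(x)$. The technical heart of the argument will be identifying, inside $F(i)^{0,n}$, the precise vertex cut out by $\xi_j$ through the chain of equivariant quotients, and showing via the romb axioms (f)-(g) of Definition \ref{definition-L.separated.Bratteli} that it coincides with the vertex traversed by $\psi_i(x)$.
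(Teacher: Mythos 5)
Your overall architecture is sound and matches the paper's: continuity plus bijectivity between compact Hausdorff spaces, injectivity read off from the blue cylinders via equivariance, and surjectivity by extracting a blue path $x=\phi\big((\xi_i)_i\big)$ from a coherent sequence and then checking $\psi_i(x)=\xi_i$. The injectivity argument and the construction of $\phi$ are fine and essentially identical to what the paper does.

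The problem is that the one step you yourself flag as ``the genuine obstacle'' and ``the technical heart'' --- verifying $\psi_i\big(\phi((\xi_j)_j)\big)=\xi_i$, i.e.\ $\psi\circ\phi=\mathrm{id}$ --- is never carried out; it remains a plan, and it is precisely the substantive content of the surjectivity half of the theorem. Your sketch of it is also slightly off: you propose to link the $n$-ball of $\xi_i$ to ``a finite portion of the blue path of $x$ at layers deeper than $2j$ in $F$,'' whereas the relevant layers are those \emph{between} $2i$ and $2j$. More importantly, the sketch lacks the observation that makes the comparison close. The paper's mechanism is: by Proposition \ref{proposition-equivariant.maps.factors} iterated, the $(E(i),C(i))$-structure on $X_j$ is the one coming from the $l$-diagram obtained by topping $(F(j),D(j))$ with layers $2i,\dots,2j-1$ of $(F,D)$; reading $\psi_{i,j}$ through Remark \ref{remark-universal.map.described}, the words of length at most $j-i$ that decide the first $2(j-i)$ coordinates of $\psi_{i,j}(y)$ only probe those topped layers, so these coordinates depend only on $r(y)\in F^{0,2j}$ --- equivalently, on the unique blue path in $F$ from $r(y)$ down to layer $2i$. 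Since $\psi_j(x)$ and $\xi_j$ both begin with the same edge $e_{2j}$ and hence share that range vertex, $\psi_i(x)=\psi_{i,j}(\psi_j(x))$ and $\xi_i=\psi_{i,j}(\xi_j)$ agree in their first $2(j-i)$ coordinates, and letting $j\to\infty$ gives $\psi_i(x)=\xi_i$. Some version of this ``initial segments depend only on the range vertex'' lemma is what your appeal to the romb axioms would have to produce; until it is written down, the proof is incomplete.
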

\begin{proof}
Let $x = (e_0,e_1,e_2,\dots )\in X$, where all $e_i\in F^{1,i}$ are blue edges. Since $\psi_i$ is $(E(i),C(i))$-equivariant, we have
$$\psi_i (x)= (e_{2i}, \tilde{e}_{i,1},\tilde{e}_{i,2},\dots ),$$
where $\tilde{e}_{i,j} \in F(i)^{1, j}$ for $j\ge 1$. It follows that if $x = (e_i), x' = (e'_i) \in X$ and $\psi(x) = \psi(x')$, then $e_{2i} = e'_{2i}$ for all $i \ge 0$. But then $e_{2i+1} = e'_{2i+1}$ for all $i \ge 0$, since there is a unique blue edge in $F$ joining $r(e_{2i+2})$ with $s(e_{2i})$. It follows that $\psi$ is injective.

We now show that $\psi$ is surjective. Let $(x_0,x_1,x_2,\dots ) \in \varprojlim X_i$. Write
$$x_i = (e_{2i}, \tilde{e}_{i,1},\tilde{e}_{i,2},\dots ),$$
where $e_{2i}\in F^{1,2i}$ and $\tilde{e}_{i,j}\in F(i)^{1, j}$ for all $j\ge 1$. Since $\psi_{i,i+1}(x_{i+1})= x_i$ and all the maps $\psi_{i,i+1}$ are $(E(i),C(i))$-equivariant, it follows by a simple induction that there is a unique blue path 
$$x= (e_0,e_1,e_2, \dots , e_{2i}, e_{2i+1},e_{2i+2}, \dots)\in X .$$
We show that $\psi (x)= (x_0,x_1,x_2,\dots )$. For this we need the description of the maps $\psi_j$ given in Remark \ref{remark-universal.map.described}. We will concentrate in showing that $\psi_0(x) = x_0$. Consider some $i \ge 0$. Then the first $2i$ coordinates of $\psi_0 (x)$ are determined by the set of all elements
$$a_{f_1}^{-1}\cdots a_{f_m}^{-1}a_{g_n}\cdots a_{g_1}\in \mathbb F (A)$$
where $f_1,\dots ,f_m,g_1,\dots ,g_n$ are red edges in $E(0)$ satisfying the requirements given in Construction \ref{construction-GFS.concretedescription}, such that
$$x \in \text{Dom}(\theta_{f_1}^{-1} \circ \cdots \circ \theta_{f_m}^{-1} \circ \theta_{g_n} \circ \cdots \circ \theta_{g_1})$$
and with $n+m \le i$. Here $\{\theta_f\}$ is the set of partial maps as in Remark \ref{remark-universal.map.described} associated to the $(E(0),C(0))$-structure on $X$. Now it follows from the construction of $\sigma$ (given in Construction \ref{construction-local.homeo}) and from the proof of surjectivity of $\sigma$ (given in Lemma \ref{lemma-sigma.surj.local.homeo}) that this information is contained in the blue path $(e_0,e_1,\dots , e_{2i-1})$, which is the initial segment of $x$ of length $2i$. Note that, by the same argument as before and the definition of the maps $\psi_{j,j+1}$, we see that, for $y \in X_i$, the first $2i$ components of $\psi _{0,i}(y)$ are determined by the unique blue path $(e_0', e_1',\dots ,e'_{2i-1})$ joining $r(y)\in F^{0,{2i}}$ with a vertex in $F^{0,0}$ in the graph $F$, because the $(E(0),C(0))$-structure on $X_i$ is determined by the canonical $(E(0),C(0))$-structure on the $l$-diagram obtained by topping $(F(i),D(i))$ with the first $2i$ layers of $(F,D)$. Since $\psi_i(x)= (e_{2i}, \dots )$ and $x_i =  (e_{2i}, \dots )$, that unique blue path is precisely $(e_0,e_1,\dots ,e_{2i-1})$ for both $\psi_i(x)$ and $x_i$, and thus the first $2i$ components of $\psi_{0,i}(\psi_i(x)) $ agree with the first $2i$ components of $\psi_{0,i}(x_i)$. But $\psi_{0,i}(\psi_i(x))= \psi_0(x)$ and $\psi_{0,i}(x_i) = x_0$, hence the first $2i$ coordinates of $\psi_0(x)$ must agree with the first $2i$ coordinates of $x_0$. Since this holds for all $i \ge 0$, we get that $\psi_0(x) = x_0$.

In the same way we may prove that $\psi_n(x) = x_n$ for all $n\ge 1$, completing the proof.
\end{proof}

\section{Approximating algebras associated to a dynamical system}\label{section-approx.algebras}

Let $(K,\ast)$ be a field endowed with an involution $\ast$. In this section we analyze both the Steinberg algebra $A_K(\calG)$ and the $C^*$-algebra $C^*(\calG)$ of the Deaconu-Renault groupoid $\calG = \calG(X,\sigma)$ associated to an object $(X,\sigma) \in \textsf{LHomeo}$. We will describe approximating processes of such algebras by means of the corresponding algebras associated with the sequence of generalized finite shifts $(X_i,\sigma_i)$ described at the end of Section \ref{section-general.shifts.finite.type}.

\subsection{The general approximation construction}\label{subsection-general.approx}

Fix $(X,\sigma) \in \textsf{LHomeo}$ and a $\sigma$-refined sequence of partitions $\{ \calP_n \}_{n\ge 0}$ of $X$. Let $(F,D)$ be the $l$-diagram associated to  $\{\calP_n\}_{n\ge 0}$, see Construction \ref{construction-l.diag}.
 We adopt the terminology and notation introduced in Section \ref{section-general.shifts.finite.type}.

For each $n \geq 0$ we define bipartite finite separated graphs $(F_n,D^n)$ as follows:
$$F_n^0 := F_n^{0,0} \sqcup F_n^{0,1} \text{ with } F_n^{0,0} := F^{0,n} \text{ and } F_n^{0,1} := F^{0,n+1},$$
$$F_n^1 := F^{1,n},$$
and $D_v^n := D_v$ for $v \in F_n^{0,0}$. Note that $r(F_n^1) = F_n^{0,0}$ and $s(F_n^1) = F_n^{0,1}$. With this notation, the generalized finite shift graphs $(E(n),C(n))$ introduced in Section \ref{section-general.shifts.finite.type} are precisely $(F_{2n},C^{2n})$. 

For notational convenience, we write $L_n := L_K(F_n,D^n)$, the Leavitt path algebra of the separated graph $(F_n,D^n)$. Also $p_n$ will denote the projection in $L_n$ on $F_n^{0,0}$, so that
$$p_n = \sum_{v \in F_n^{0,0}}v.$$

\begin{lemma}\label{lemma-technical.1}
Let $j \geq 0$ be an integer.
\begin{enumerate}[$i)$,leftmargin=0.7cm]
\item For any vertex $v \in F_{2j}^{0,0}$,
$$\textbf{S}(v) := \sum_{e \in B_v} \sum_{e' \in B_{s(e)}} s(e') = \sum_{f \in R_v} \sum_{f' \in R(f)} s(f') \in L_{2j+1}.$$
\item For any vertex $v \in F_{2j+1}^{0,0}$ and any $g \in R_{r(g)}$ such that $s(g)= v$,
$$\textbf{S}(v) := \sum_{e \in B_v} \sum_{e' \in B_{s(e)}} s(e') = \sum_{f \in R(g)} \sum_{f' \in R_{s(f)}} s(f') \in L_{2j+2}.$$
\end{enumerate}
\end{lemma}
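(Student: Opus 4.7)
The plan is to reduce each identity to an equality of finite multisets of vertices of $(F,D)$: since distinct vertices are orthogonal idempotents in any $L_n$, two sums of vertices agree in the algebra exactly when the multisets of summands agree. Using the injectivity of the source map on the various blue and red classes (provided by conditions (c)--(d) in Definition \ref{definition-L.separated.Bratteli}, part (i) of Proposition \ref{proposition-decomposition.odd.rombs.even}, and Remark \ref{remark-about.def.of.Ldiagram} (5)), I would first verify in each case that the sums in question have pairwise distinct summands, so that the multisets reduce to sets.

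For part (i), the equality of the indexing sets is then literally condition (e) of Definition \ref{definition-L.separated.Bratteli} applied to $v \in F^{0,2j}$, namely
\[
\bigsqcup_{e \in B_v} s(B_{s(e)}) \;=\; \bigsqcup_{f \in R_v} s(R(f)) \;\subseteq\; F^{0,2j+2},
\]
so (i) follows immediately.

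For part (ii), the strategy is to build a source-preserving bijection between the two indexing sets by means of the romb conditions (f) and (g) in Definition \ref{definition-L.separated.Bratteli}. Given a blue pair $(e_0,e_1)$ with $r(e_0)=v$ and $r(e_1)=s(e_0)$, condition (g) applied to the red edge $g$ produces a unique pair of red edges $(f_0,f_1)$ such that $f_0 \in R(g)$ and $(e_0,e_1,f_0,f_1)$ is a romb; in particular $s(f_1)=s(e_1)$. Conversely, given any red pair $(f_0,f_1)$ with $f_0 \in R(g)$, condition (f) produces a unique blue pair $(e_0,e_1)$ completing the same romb. The uniqueness clauses in (f) and (g) make these assignments mutually inverse, so the resulting bijection matches $s(e_1)$ with $s(f_1)$, which is exactly what is needed to identify the two sums in $L_{2j+2}$.

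The only delicate point I anticipate is keeping the uniqueness bookkeeping in (ii) straight: the romb produced from a blue pair is unique only after the class $R(g)$ has been fixed, whereas the romb produced from a red pair is automatically unique. Once that is carefully stated, both identities are routine.
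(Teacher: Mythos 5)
Your proposal is correct and takes essentially the same route as the paper: the paper also reduces both identities to source-preserving bijections between blue pairs and red pairs (via Proposition \ref{proposition-decomposition.odd.rombs.even}~(ii) for part (i) and conditions (f)--(g) of Definition \ref{definition-L.separated.Bratteli} for part (ii)), after observing that the blue-edge sum consists of distinct vertices. Your shortcut in part (i) of invoking condition (e) directly is an immaterial variant, since Proposition \ref{proposition-decomposition.odd.rombs.even}~(ii) is itself derived from condition (e).
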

\begin{proof}
Note that in both $i)$ and $ii)$ the sum $\sum_{e \in B_v} \sum_{e' \in B_{s(e)}} s(e')$ is a sum of distinct vertices in $F^{n+2}$, where $n$ is either $2j$ or $2j+1$.
\begin{enumerate}[i),leftmargin=0.7cm]
\item The set of vertices $s(e')$ in the above expression is in bijective correspondence with the set of pairs $(e,e')$, where $e \in B_v$ and $e' \in B_{s(e)}$. By Proposition \ref{proposition-decomposition.odd.rombs.even} (ii), the set of pairs $(e,e')$ where $e \in B_v$ and $e' \in B_{s(e)}$ is in bijective correspondence with the set of pairs $(f,f')$ where $f \in R_v$, $f' \in R(f)$, and $(f,f')$ corresponds to $(e,e')$ if and only if $s(f') = s(e')$. The result follows.
\item The argument is similar to the one given for the first part, taking into account conditions (f) and (g) in the definition of $l$-diagram (Definition \ref{definition-L.separated.Bratteli}).\qedhere
\end{enumerate}
\end{proof}

\begin{remark}\label{remark-unified.formula.forS}
Note that each $e \in s^{-1}(v)$ determines a unique set $X(e) \in D_v$. This notation is compatible with the notation introduced in Definition \ref{definition-F.infty.and.others}. With this notation, the formulas in Lemma \ref{lemma-technical.1} take a uniform shape:
$$\textbf{S}(v) = \sum_{e\in X} \sum _{e'\in X(e)} s(e')$$
for any $X \in D_v$, $v \in F^{0}$. 
\end{remark}

\begin{proposition}\label{proposition-graph.homo}
There exist unital $*$-homomorphisms $\wt{\phi}_n \colon  L_n \ra L_{n+1}$ such that $\wt{\phi}_n(p_n)= p_{n+2}$ and $\wt{\phi}_n(p_{n+1})=p_{n+1}$. 
\end{proposition}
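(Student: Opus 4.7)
The plan is to use the universal property of the Leavitt path algebra: constructing $\wt{\phi}_n : L_n \to L_{n+1}$ reduces to specifying images of the generators $F_n^0 \sqcup F_n^1 \sqcup (F_n^1)^*$ that satisfy the defining relations (V), (E), (SCK1), (SCK2) of $L_n$ as relations in $L_{n+1}$. Guided by Lemma \ref{lemma-technical.1}, I would define
\[
\wt{\phi}_n(w) := w \ (w \in F^{0,n+1}),\quad \wt{\phi}_n(v) := \textbf{S}(v) \ (v \in F^{0,n}),\quad \wt{\phi}_n(e) := \sum_{e' \in X(e)} (e')^* \ (e \in F^{1,n}),
\]
where $X(e) \in D^{n+1}_{s(e)}$ is the color class naturally associated with $e$ as in Remark \ref{remark-unified.formula.forS}; concretely, $X(e) = B_{s(e)}$ if $e$ is blue, $X(e) = R(e)$ if $e$ is red with $s(e)$ in an odd layer, and $X(e) = R_{s(e)}$ if $e$ is red with $s(e)$ in an even layer. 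The $*$-condition is then enforced by $\wt{\phi}_n(e^*) := \wt{\phi}_n(e)^*$.

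The verification then proceeds relation by relation, crucially using that the $l$-diagram $(F,D)$ is refined (Lemma \ref{lemma-FD.is.Ldiagram}), so distinct edges in any color class have distinct sources. Relation (V) follows from the disjoint decomposition $F^{0,n+2} = \bigsqcup_{v \in F^{0,n}}\bigsqcup_{e \in B_v} s(B_{s(e)})$, which shows that the $\textbf{S}(v)$'s have pairwise disjoint supports and are self-adjoint idempotents orthogonal to every $w \in F^{0,n+1}$. Relation (E) holds because $\sum_{e' \in X(e)} s(e')$ is a sub-sum of $\textbf{S}(r(e))$ and each $(e')^*$ already satisfies $r((e')^*) \in \mathrm{supp}(\textbf{S}(r(e)))$. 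For (SCK2), for $Y \in D^n_v$ I would compute
\[
\sum_{e \in Y} \wt{\phi}_n(e)\,\wt{\phi}_n(e)^* \;=\; \sum_{e \in Y}\sum_{e' \in X(e)} s(e') \;=\; \textbf{S}(v) \;=\; \wt{\phi}_n(v),
\]
where the first equality reduces cross-terms $(e')^* e''$ with $e' \neq e''$ in $X(e)$ to zero via (SCK1) in $L_{n+1}$, and the second is exactly Lemma \ref{lemma-technical.1} in the unified formulation of Remark \ref{remark-unified.formula.forS}. The compatibility between the two choices $Y = B_v$ and $Y$ red (both must produce $\wt{\phi}_n(v)$) is precisely the content of Lemma \ref{lemma-technical.1}.

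The main technical obstacle is relation (SCK1) for distinct $e, f \in Y \in D_v^n$, where I must show that
\[
\wt{\phi}_n(e)^*\,\wt{\phi}_n(f) \;=\; \sum_{e' \in X(e),\, f' \in X(f)} e'(f')^* \;=\; 0.
\]
Each summand vanishes iff $s(e') \neq s(f')$. In the ``blue'' and ``red-at-even-layer'' subcases, this is immediate from the disjoint decompositions $F^{0,n+2} = \bigsqcup_w s(B_w)$ and (at even $n+2$) $F^{0,n+2} = \bigsqcup_w s(R_w)$, combined with $s(e) \neq s(f)$ furnished by refinedness. The delicate subcase is ``red-at-odd-layer'', where $F^{0,n+2} = \bigcup s(R_w)$ is no longer disjoint (Proposition \ref{proposition-decomposition.odd.rombs.even} (i)); here I would invoke the romb bijection of condition (g) in Definition \ref{definition-L.separated.Bratteli}: a coincidence $s(e') = s(f')$ would force the corresponding blue pairs under the bijection to agree, which by the disjoint decomposition $F^{0,n+2} = \bigsqcup s(B_w)$ would force the red pairs, and in particular $e$ and $f$, to coincide, contradicting $e \ne f$. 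The case $e = f$ collapses via (SCK1) in $L_{n+1}$ (cross-terms vanish by refinedness) and (SCK2) to $\sum_{e' \in X(e)} e'(e')^* = s(e) = \wt{\phi}_n(s(e))$.

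By the universal property of Leavitt path algebras, $\wt{\phi}_n$ then extends uniquely to a $*$-homomorphism $L_n \to L_{n+1}$. The identity $\wt{\phi}_n(p_{n+1}) = p_{n+1}$ is tautological, and $\wt{\phi}_n(p_n) = \sum_{v \in F^{0,n}} \textbf{S}(v) = \sum_{u \in F^{0,n+2}} u = p_{n+2}$ follows from the disjoint decomposition of $F^{0,n+2}$ already invoked. Unitality is then immediate from $\wt{\phi}_n(p_n + p_{n+1}) = p_{n+2} + p_{n+1} = 1_{L_{n+1}}$.
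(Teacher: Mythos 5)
Your proposal is correct and follows essentially the same route as the paper: the same assignment of generators (vertices to $\textbf{S}(v)$ or themselves, edges to sums of ghost edges over the associated color class $X(e)$), the same use of Lemma \ref{lemma-technical.1} for (SCK2), and the same romb-based argument via conditions (f) and (g) of Definition \ref{definition-L.separated.Bratteli} for the delicate red case of (SCK1). The only (harmless) difference is that in a couple of places you invoke refinedness of $(F,D)$ where the paper instead uses only the general $l$-diagram axioms (the second parts of conditions (c) and (d), and the uniqueness in Proposition \ref{proposition-decomposition.odd.rombs.even} (ii)), so the paper's argument applies to arbitrary $l$-diagrams; since the diagrams arising in this section are refined by Lemma \ref{lemma-FD.is.Ldiagram}, your shortcut is legitimate in context.
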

\begin{proof}
We define $\wt{\phi}_n : L_n \ra L_{n+1}$ by the rules
$$\wt{\phi}_n(\alpha) =
\begin{cases}
w & \text{ if } \alpha = w \in F_n^{0,1}; \\
\textbf{S}(v) & \text{ if } \alpha = v \in F_n^{0,0}; \\
\sum_{\ol{e} \in B_{s(e)}} \ol{e}^* & \text{ if } \alpha = e \in F_n^1 \text{ is a blue edge}; \\
\sum_{\ol{f} \in R(f)} \ol{f}^* & \text{ if } n= 2j \text{ and } \alpha = f \in F_n^1 \text{ is a red edge}; \\
\sum_{\ol{f} \in R_{s(f)}} \ol{f}^* & \text{ if }  n= 2j+1 \text{ and }\alpha = f \in F_n^1 \text{ is a red edge}.
\end{cases}$$
%For a vertex $w \in E_n^{0,1}$ set $\widetilde{\phi}_n(w) = w$; for $v \in E_n^{0,0}$ set $\widetilde{\phi}_n(v) = \sum_{e \in Y_b^n(v)} \sum_{f \in Y_b^{n+1}(s(e))} s(f) = \sum_{e \in Y_r^n(v)} \sum_{f \in Y_r^{n+1}(s(e))} s(f)$; finally for an edge $e \in E_n^1$, we write $c(e)$ to be the colour of the edge, that is $c(e) = b$ if it is a blue edge, and $c(e) = r$ otherwise; now set $\widetilde{\phi}_n(e) = \sum_{f \in Y_{c(e)}^{n+1}(s(e))} f^*$.
We must check that the defining relations for $L_n$ in Definition \ref{definition-leavitt.alg} are satisfied. The relations (V), (E) are easily verified. For (SCK1) take first $e,f \in B_v$, where $v\in F_n^{0,0}= F^{0,n}$. We compute
\begin{equation*}
\begin{split}
\wt{\phi}_n(e)^* \wt{\phi}_n(f) = \Big( \sum_{\ol{e} \in B_{s(e)}} \ol{e} \Big) \Big( \sum_{\ol{f} \in B_{s(f)}} \ol{f}^* \Big) = \sum_{\ol{e} \in B_{s(e)}} \sum_{\ol{f} \in B_{s(f)}} \ol{e} \ol{f}^*.
\end{split}
\end{equation*}
The product $\ol{e}\ol{f}^*$ is zero unless $s(\ol{e}) = s(\ol{f})$. Since the edges are blue, this implies that $\ol{e} = \ol{f}$ (see Definition \ref{definition-L.separated.Bratteli}). In particular, $s(e) = r(\ol{e}) = r(\ol{f}) = s(f)$, and again we deduce that $e = f$. Hence
$$\wt{\phi}_n(e)^* \wt{\phi}_n(f) = \delta_{e,f} \Big( \sum_{\ol{e} \in B_{s(e)}} \ol{e} \ol{e}^* \Big) = \delta_{e,f} s(e)=\wt{\phi}_n (\delta_{e,f} s(e)),$$
where we have used the relation (SCK2) inside $L_{n+1}$.

Assume now that $n = 2j$ for $j \geq 0$ and that $e,f\in R_v$ for $v\in F_{2j}^{0,0}$. Then
\begin{equation*}
\begin{split}
\wt{\phi}_n(e)^* \wt{\phi}_n(f) =  \sum_{\ol{e} \in R(e)} \sum_{\ol{f} \in R(f)} \ol{e} \ol{f}^*.
\end{split}
\end{equation*}
Again, the product $\ol{e}\ol{f}^*$ is zero unless $s(\ol{e}) = s(\ol{f})$. We claim that in that case we necessarily have $(e,\ol{e}) = (f,\ol{f})$. This follows from Proposition \ref{proposition-decomposition.odd.rombs.even} (ii). Indeed, a first application of this proposition gives that there is a unique pair $(e_0,e_1)$ of blue edges such that $r(e_0) = r(e) = v$, $r(e_1) = s(e_0)$ and $s(e_1) = s(\ol{e})$. But now, since we are assuming that $s(\ol{e}) = s(\ol{f})$, we have that $(e,\ol{e})$ and $(f, \ol{f})$ are two pairs of red edges with $\ol{e}\in R(e)$ and $\ol{f}\in R(f)$ which correspond to the same pair $(e_0,e_1)$ of blue edges, so the uniqueness part of Proposition \ref{proposition-decomposition.odd.rombs.even} (ii) gives that $(e,\ol{e}) = (f,\ol{f})$, as desired. Hence
$$\wt{\phi}_n(e)^* \wt{\phi}_n(f) = \delta_{e,f} \Big( \sum_{\ol{e} \in R(e)} \ol{e} \ol{e}^* \Big) = \delta_{e,f} s(e)= \wt{\phi}_n (\delta_{e,f} s(e)),$$
where again we have used the relation (SCK2) inside $L_{n+1}$.

Assume finally that $n = 2j+1$ for $j\ge 0$ and that $e,f \in R(g)$ for $g \in R_{r(g)}$ such that $s(g) \in F_{2j+1}^{0,0}$. Then we have
\begin{equation*}
\begin{split}
\wt{\phi}_n(e)^* \wt{\phi}_n(f) =  \sum_{\ol{e} \in R_{s(e)}} \sum_{\ol{f} \in R_{s(f)}} \ol{e} \ol{f}^*.
\end{split}
\end{equation*}
Again, the product $\ol{e}\ol{f}^*$ is zero unless $s(\ol{e}) = s(\ol{f})$. We claim that, as in the previous case, we necessarily have $(e,\ol{e}) = (f,\ol{f})$. The argument is similar to the above, now using conditions (f) and (g) in Definition \ref{definition-L.separated.Bratteli}. Namely, by (f) there is a unique blue pair $(e_0,e_1)$ such that $(e_0,e_1,e,\ol{e})$ is a romb. Now by the uniqueness part of (g) (and since $e,f \in R(g)$), we deduce that $(e,\ol{e})= (f,\ol{f})$. Hence
$$\wt{\phi}_n(e)^* \wt{\phi}_n(f) = \delta_{e,f} \Big( \sum_{\ol{e} \in R_{s(e)}} \ol{e} \ol{e}^* \Big) = \delta_{e,f} s(e) = \wt{\phi}_n (\delta_{e,f} s(e))$$
again by the relation (SCK2) inside $L_{n+1}$.\\

For (SCK2) note that, for $e\in B_v$, $v\in F_n^{0,0}$, we have
$$\wt{\phi}_n(e) \wt{\phi}_n(e)^* = \sum_{f,g \in B_{s(e)}} f^*g = \sum_{f \in B_{s(e)}} s(f),$$
so that
$$\sum_{e \in B_v} \wt{\phi}_n(e) \wt{\phi}_n(e)^* = \sum_{e \in B_v} \sum_{f \in B_{s(e)}} s(f) = \textbf{S}(v)= \wt{\phi}_n(v)$$
because of Lemma \ref{lemma-technical.1}. Similarly, we get that (SCK2) is also preserved for red edges.

This analysis gives the well-definiteness of the maps $\wt{\phi}_n \colon L_n \ra L_{n+1}$. Now note that
$$\wt{\phi}(p_{n+1}) = \sum_{w\in F_n^{0,1}} w = p_{n+1} \quad \text{ and } \quad \wt{\phi}_n(p_n) = \sum_{v \in F_n^{0,0}} \textbf{S}(v) = \sum_{w \in F_{n+1}^{0,1}} w = p_{n+2}.$$
The proposition is now proved.
\end{proof}

For $m > n$, denote by $\wt{\phi}_{n,m} \colon L_n \ra L_m$ the composition $\wt{\phi}_{m-1}\circ \cdots \circ \wt{\phi}_n$. Set
$$L_{\infty} : =\varinjlim (L_n, \wt{\phi}_{n,m}),$$
with canonical maps $\wt{\phi}_{n,\infty} \colon L_n \to L_{\infty}$.

We will need the following observations.

\begin{lemma}\label{lemma-observations.for.phinm}
With the above notation, and with $k=m-n$, the following hold.
\begin{enumerate}[i),leftmargin=0.7cm]
\item If $v_1,\dots , v_l$ are mutually distinct vertices in $F_n$, then each of $\wt{\phi}_{n,m}(v_i)$ is a sum of mutually distinct vertices in $F_m$, and moreover the vertices appearing in the decompositions of two different vertices $v_i$ and $v_j$ are also mutually distinct.
\item If $f \in F_n^1$, then $\wt{\phi}_{n,m}(f)$ is a sum of edges $g \in F_m^1$ whose sources are mutually distinct if $k$ is even, and is a sum of ``ghost'' edges $g^*$ for edges $g \in F_m^1$ whose sources are mutually distinct if $k$ is odd. Moreover, for each $w \in F_m^{0,0}$, all the edges $g \in F_m^1$ appearing in the above sum with range $w$ belong to the same set $Z \in D_w$.
\end{enumerate}
\end{lemma}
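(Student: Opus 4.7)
The plan is to prove both parts simultaneously by induction on $k = m-n$, with base case $k=1$ essentially read off from the definition of $\wt{\phi}_n$ in Proposition \ref{proposition-graph.homo}, together with Lemma \ref{lemma-technical.1} and Remark \ref{remark-unified.formula.forS}. Two structural inputs from the $l$-diagram side will do most of the work: first, each vertex in $F^{0,l}$ with $l\ge 1$ has a unique incoming blue edge from $F^{0,l-1}$ (by the second part of (d) combined with Proposition \ref{proposition-decomposition.odd.rombs.even}~$i)$), so blue paths in $(F,D)$ are uniquely determined by their endpoints; second, the romb conditions (f) and (g) of Definition \ref{definition-L.separated.Bratteli} provide an analogous rigidity for red configurations.

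For part $i)$, a parity computation shows that after $k$ steps the vertices appearing in $\wt{\phi}_{n,m}(v_i)$ all lie in a single layer, whose index is determined by $k$ and by whether $v_i$ lies in $F^{0,n}$ or $F^{0,n+1}$. At each induction step, $\wt{\phi}_m$ either preserves a vertex (if it lies in $F_m^{0,1}$) or expands it via $\textbf{S}$, and blue-path uniqueness forces the resulting sum to have mutually distinct vertices. Disjointness across distinct $v_i,v_j$ holds automatically when the two sums sit in different layers, and otherwise follows from the same blue-path uniqueness.

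For part $ii)$, color is preserved and ghostness alternates under each application of $\wt{\phi}$, which matches the stated parity. The key observation is that for every $g \in F^{1,m}$, one has $\wt{\phi}_m(g) = \sum_{\ol{g}\in Z_g} \ol{g}^*$, where $Z_g \in D_{s(g)}$ is exactly one separation class at $s(g)$: namely $Z_g = B_{s(g)}$ if $g$ is blue, $Z_g = R(g)$ if $g$ is red with $m=2j$, and $Z_g = R_{s(g)}$ if $g$ is red with $m=2j+1$. The same-$Z$ statement then propagates transparently: the edges of range $w'\in F^{0,m+1}$ appearing in $\wt{\phi}_{n,m+1}(f)$ all come from the unique $g_\alpha$ in the previous sum with $s(g_\alpha)=w'$ (by the inductive source-distinctness) and together form $Z_{g_\alpha} \in D_{w'}$. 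Source distinctness inside a single $\wt{\phi}_m(g_\alpha)$ follows from (d), (c), or Remark \ref{remark-about.def.of.Ldiagram} (5) depending on the case; across different $g_\alpha$'s the blue case reduces once more to blue-path uniqueness, while the red case is the delicate point: a hypothetical collision $s(\ol{g}) = s(\ol{g}')$ with $\ol{g}\in Z_{g_\alpha}$ and $\ol{g}'\in Z_{g_\beta}$ produces, via (f), two red pairs completing the same blue pair above the common vertex, and the uniqueness in (g) then forces $\ol{g}=\ol{g}'$ and $g_\alpha = g_\beta$. The main obstacle is precisely this red-case step, together with the bookkeeping across parity subcases; once $Z_g$ is correctly identified at each level, the rest follows cleanly from the $l$-diagram axioms.
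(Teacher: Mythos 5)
Your proposal is essentially the paper's argument: induction on $k$, with the base case read off from the definition of $\wt{\phi}_n$, the identification of $\wt{\phi}_m(g)$ as a sum over a single separation class $Z_g\in D_{s(g)}$, and the same-class statement propagated by the observation that $r(\ol{g})=r(\ol{h})$ forces $s(g_\alpha)=s(g_\beta)$ and hence $g_\alpha=g_\beta$ by the inductive source-distinctness. Part i) is left to the reader in the paper, and your blue-path-uniqueness argument for it is fine.

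The one step where you diverge is the cross-term source-distinctness in part ii). The paper avoids your romb argument entirely: using the same-class induction hypothesis it dominates $\sum_{\{g\in L\,:\,r(g)=w\}}\sum_{\ol{g}\in X(g)}s(\ol{g})$ by $\textbf{S}(w)$, which is a sum of distinct vertices by Lemma \ref{lemma-technical.1}, and then uses $\textbf{S}(w)\cdot\textbf{S}(w')=0$ for $w\ne w'$ to handle edges with different ranges in one stroke. Your direct route via the romb axioms also works, but as written it has a small gap in the red case: the uniqueness in condition (g) of Definition \ref{definition-L.separated.Bratteli} is only relative to a fixed class $R(h)$, so from the common blue pair you can conclude $(g_\alpha,\ol{g})=(g_\beta,\ol{g}')$ only after first knowing that $g_\alpha$ and $g_\beta$ lie in the \emph{same} set $R(h)\in D_v$. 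That is exactly the ``same $Z\in D_w$'' part of your induction hypothesis (note $r(g_\alpha)=r(g_\beta)$ follows from the blue pair being common), so the argument closes, but you must invoke it explicitly at that point; (f) and (g) alone do not suffice. With that sentence added, your proof is complete, at the cost of a color/parity case analysis that the paper's $\textbf{S}(w)$ trick sidesteps.
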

\begin{proof}
The easy proof of $i)$ is left to the reader. For $ii)$, we proceed by induction on $k$. If $k = 1$ then $\wt{\phi}_n(f) = \sum _{\ol{f}\in X(f)} \ol{f}^*$, where we use the notation introduced in Remark \ref{remark-unified.formula.forS}. Since $r(\ol{f}) = r(\ol{f}')$ for all $\ol{f},\ol{f}'\in X(f)$, the result follows from the fact that, except possibly at the first layer, distinct blue (respectively, distinct red) edges with the same range must have distinct sources (see Definition \ref{definition-L.separated.Bratteli} and Remark \ref{remark-about.def.of.Ldiagram} (5)). 

Now suppose that the result holds for $k \ge 1$. Assume first that $k$ is even. Then we have that $\wt{\phi}_{n,n+k}(f)  = \sum _{g\in L} g$, where $L$ is a set of edges in $F_{n+k} = F_m$ whose sources are mutually distinct, and such that all members of $L$ with the same range belong to the same element of the partition $D$. Using the notation introduced in Remark \ref{remark-unified.formula.forS}, we have
$$\wt{\phi}_{n,n+k+1}(f) = \sum _{g \in L} \wt{\phi}_{n+k}(g) = \sum _{g \in L} \sum_{\ol{g} \in X(g)} \ol{g}^*,$$
where $X(g) \in D_{s(g)}$. Let $w \in F_m^0$ and let $X^w\in D_w$ be the unique set in the partition $D_w$ to which all elements $g$ of $L$ with range $w$ belong. Using Lemma \ref{lemma-technical.1} (and Remark \ref{remark-unified.formula.forS}), we have
$$\sum_{\{g\in L \mid r(g)= w\}} \sum_{\ol{g} \in X(g)} s(\ol{g}) \le \sum_{g\in X^w}\sum_{\ol{g}\in X(g)} s(\ol{g}) = \textbf{S}(w).$$
Since in addition $\textbf{S}(w) \cdot \textbf{S}(w') = 0$ for distinct $w,w' \in F_m^{0,0}$, we conclude that $\{s(\ol{g}) \mid \ol{g}\in X(g), g\in L\}$ is a set of mutually distinct vertices in $F_{m+1}^{0,1}$.
Moreover, if $r(\ol{g})= r(\ol{h})$ for $\ol{g}\in X(g)$ and $\ol{h}\in X(h)$, with $g,h\in L$, then we get
$$s(g) = r(\ol{g}) = r(\ol{h}) = s(h)$$
and by induction hypothesis we conclude that $g=h$, so that $\ol{g},\ol{h}\in X(g)= X(h)$, as wanted.

The case where $k$ is odd is treated in the same way.
\end{proof}

We now recall some basic definitions, see \cite[Definition 12.9]{Exel2017}.

\begin{definition}\label{definition-tame.stuff}
A set $F$ of partial isometries of a $*$-algebra $A$ is said to be {\it tame} if for all two elements $u,u'\in U$, where $U$ is the multiplicative $*$-subsemigroup of $A$ generated by $F$, we have that $e(u)$ and $e(u')$ are commuting elements of $A$, where $e(v)= vv^*$ for $v\in U$. Note that if $F$ is a tame set of partial isometries, then all elements of $U$ are indeed partial isometries, and therefore the elements $e(u)$, with $u\in U$, are mutually commuting projections in $A$.   

A $*$-algebra $A$ is said to be {\it tame} if it is generated as a $*$-algebra  by a tame set of partial isometries. If $A$ is a $*$-algebra generated by a subset $F$ of partial isometries, there is a universal tame $*$-algebra $A^{\text{ab}}$ associated to $F$. By definition, there is a surjective $*$-homomorphism 
$\pi \colon A \to A^{\text{ab}}$ so that $\pi (F)$ is a tame set of partial isometries generating $A^{\text{ab}}$, and such that for any $*$-homomorphism 
$\psi \colon A \to B$ of $A$ to a $*$-algebra $B$ such that $\psi(F)$ is a tame set of partial isometries in $B$, there is a unique $*$-homomorphism
$\ol{\psi}\colon A^{\text{ab}} \to B$ such that $\psi = \ol{\psi}\circ \pi$. 
Indeed, we have 
$A^{\text{ab}} = A/J$, where $J$ is the ideal of $A$ generated by all the commutators $[e(u),e(u')]$, where $u,u'\in U$.

If $(E,C)$ is a separated graph, then the universal tame $*$-algebra associated to the generating set $E^0 \sqcup E^1$ of partial isometries of $L_K(E,C)$ is denoted by $\Lab _K(E,C)$. In addition, we denote by $\calO (E,C)$ the $C^*$-algebra $C^*(E,C)/\calJ$, where $\calJ$ is the closed ideal of $C^*(E,C)$ generated by all the commutators $[e(u),e(u')]$, for $u,u'\in U$, where $U$ is the multiplicative $*$-subsemigroup of $C^*(E,C)$ generated by $E^0\cup E^1$. Clearly $\calO (E,C)$ is universal with respect to $*$-homomorphisms from $C^*(E,C)$ to a tame $C^*$-algebra. 
\end{definition}

The following lemma will be crucial for our main result. We continue with the notation introduced at the beginning of this section. 

\begin{lemma}\label{lemma-tame.aftersteps}
Let $U_n$ be the multiplicative $*$-subsemigroup of $L_n = L_K(F_n,D^n)$ generated by the set $F_n^1$. Let $\omega$ be a word in $U_n$ of length $k$. Then $\wt{\phi}_{n,n+k}(\omega \omega^*)$ is a sum of distinct vertices of $F_{n+k}$.
\end{lemma}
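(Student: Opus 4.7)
The plan is to prove the lemma by induction on $k$. The base case $k = 1$ is a direct calculation. If $\omega = e \in F_n^1$, then the definition of $\wt{\phi}_n$ combined with relation (SCK1) yields $\wt{\phi}_n(ee^*) = \sum_{\ol{e} \in X(e)} s(\ol{e})$, a sum of vertices which are pairwise distinct by condition (c) of Definition \ref{definition-L.separated.Bratteli}. If $\omega = e^*$, then $\omega\omega^* = s(e)$ is itself a single vertex.

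For the inductive step with $\omega$ of length $k \geq 2$, I would write $\omega = \omega'' x_k$ and distinguish cases by the type of the last letter $x_k$. If $x_k = e^*$ is a ghost edge, then $x_k x_k^* = s(e)$ simplifies to a vertex, and the source-compatibility $s(\omega'') = s(e)$ collapses $\omega\omega^* = \omega''(\omega'')^*$ in $L_n$; the induction hypothesis applied to the length-$(k-1)$ word $\omega''$, followed by one more application of $\wt{\phi}_{n+k-1}$ (which preserves distinctness by Lemma \ref{lemma-observations.for.phinm}(i)), finishes this case.

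The main obstacle is the case when $x_k = e$ is an edge, since $ee^*$ does not simplify to a vertex in $L_n$. In a bipartite separated graph, alternation of types in any nonzero word forces $x_{k-1} = f^*$ to be a ghost edge. Writing $\omega = \omega''' f^* e$ with $\omega'''$ of length $k-2$, I would apply $\wt{\phi}_n$ and use the base case $\wt{\phi}_n(ee^*) = \sum_{\ol{e}} s(\ol{e})$. The piece $\wt{\phi}_n(f^*) \cdot s(\ol{e})$ collapses to a single term $\ol{f}(\ol{e})$ (the unique element of $X(f)$ whose source equals $s(\ol{e})$, if any), using condition (c) of Definition \ref{definition-L.separated.Bratteli}. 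This yields
\[
\wt{\phi}_n(\omega\omega^*) = \sum_{\ol{e}} \wt{\phi}_n(\omega''') \, \ol{f}(\ol{e}) \ol{f}(\ol{e})^* \, \wt{\phi}_n(\omega''')^*.
\]
A further application of $\wt{\phi}_{n+1}$ turns each middle factor $\ol{f}(\ol{e})\ol{f}(\ol{e})^*$ into a sum of vertices by the base case applied at layer $n+1$. The resulting expression has exactly the same structural shape as the starting one, with $\omega''$ of length $k-1$ replaced by $\omega'''$ of length $k-2$ and the working layer shifted from $n$ to $n+2$. Iterating, each successive pair of $\wt{\phi}$ applications peels off two letters from $\omega$, so after the full $k$ applications one arrives at a sum of vertices in $F_{n+k}$ (possibly invoking Case A along the way if the remaining last letter becomes a ghost edge).

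The principal technical difficulty will be organizing this nested iteration cleanly and verifying distinctness of the final vertices: this follows from repeated applications of Lemma \ref{lemma-observations.for.phinm}(i) together with condition (c) and the last part of condition (d) of Definition \ref{definition-L.separated.Bratteli}, which guarantee that edges and their sources remain disjoint across different branches of the iteration. A cleaner formulation might instead prove a slightly strengthened inductive claim that maintains an invariant of the form $\sum_v \eta_v \, v \, \eta_v^*$ (with $\eta_v$ tracked through each step) so that the two-step recursion becomes manifest, but the underlying mechanism is the same.
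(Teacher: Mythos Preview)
Your base case and your Case A (last letter a ghost edge) are correct. The gap is in Case B. After two applications of $\wt{\phi}$ you arrive at
\[
\wt{\phi}_{n,n+2}(\omega\omega^*)=\wt{\phi}_{n,n+2}(\omega''')\cdot P\cdot \wt{\phi}_{n,n+2}(\omega''')^*,
\]
with $P$ a sum of distinct vertices. The claim that this ``has exactly the same structural shape as the starting one'' is not correct: $\wt{\phi}_{n,n+2}(\omega''')$ is a \emph{sum} of monomials $\mu$ in $U_{n+2}$, not a single word, and the cross terms $\mu\,P\,(\mu')^*$ for $\mu\ne\mu'$ do not vanish. Concretely, if the last two letters of $\omega'''$ are $c^*d$ with $c,d$ edges of the same range but lying in different elements of $D_{r(c)}$, then the edges appearing in $\wt{\phi}_{n,n+2}(c)$ and in $\wt{\phi}_{n,n+2}(d)$ that share a range lie in \emph{different} elements of the relevant $D_w$, so relation (SCK1) does not kill the cross terms $g^*hh^*g'$. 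Hence you cannot iterate by reducing to a single shorter word at layer $n+2$, and your proposed invariant $\sum_v \eta_v\,v\,\eta_v^*$ suffers from the same issue: after expansion the $\eta_v$'s are sums, not single words.

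The paper fixes this by peeling from the \emph{front}: write $\omega=x\omega_0$ with $x$ of length $1$. Then $\omega\omega^*=x(\omega_0\omega_0^*)x^*$, and the induction hypothesis gives $\wt{\phi}_{n,n+k}(\omega_0\omega_0^*)=\sum_i v_i$ already as a sum of distinct vertices. Because $x$ is a single letter, Lemma~\ref{lemma-observations.for.phinm}(ii) applies to $\wt{\phi}_{n,n+k}(x)$ and guarantees exactly the property you are missing: all edges in the sum sharing a given range lie in the \emph{same} element of $D$. This is what makes the conjugation $\wt{\phi}_{n,n+k}(x)\bigl(\sum_i v_i\bigr)\wt{\phi}_{n,n+k}(x)^*$ collapse via (SCK1) to a sum of distinct sources (possibly after one further application of $\wt{\phi}$). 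The moral is that Lemma~\ref{lemma-observations.for.phinm}(ii) controls the image of one letter; a product of such images no longer has the partition-compatibility needed for the collapse, which is why the outer factor in the induction must be the single letter, not the long tail.
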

\begin{proof}
We proceed by induction on $k$. If $k = 1$, then either $\omega = e$ or $\omega = e^*$ for some $e \in F_n^1$. In the first case $\omega = e$, we have
$$\wt{\phi}_n (\omega \omega ^*) = \wt{\phi}_n (ee^*) = \Big(\sum_{\ol{e}\in X(e)} \ol{e}^*\Big) \Big(\sum _{\ol{e}'\in X(e)} \ol{e}'\Big) = \sum _{\ol{e}\in X(e)}  s(\ol{e}),$$
which is a sum of mutually distinct vertices of $F_{n+1}$. In the second case $\omega = e^*$, we have
$$\wt{\phi}_n (\omega \omega^*) = \wt{\phi}_n(e^*e) =\wt{\phi}_n(s(e)) = s(e).$$
The result follows for $k = 1$.

Suppose now that the result is true for words of length $k$ and let $\omega \in U_n$ be a word of length $k+1$. Write $\omega = x\omega_0$, where $\omega_0$ is a word of length $k$ and $x$ is a word of length $1$ (so that $x= f$ or $x= f^*$ for $f\in F_n^1$). By induction hypothesis we can write
$$\wt{\phi}_{n,n+k}(\omega_0 \omega_0^*)= \sum_{i=1}^t v_i$$
for mutually distinct vertices $v_i$ of $F_{n+k}$. We now use Lemma \ref{lemma-observations.for.phinm} (ii). Suppose first that $\wt{\phi}_{n,n+k}(x)= \sum _{g \in L} g^*$, where $L$ is a family of edges in $F_{n+k}$ whose sources are mutually distinct, such that, for each $w \in F_{n+k}^{0,0}$, all the edges $g \in L$ with range $w$ belong to the same set $Z \in D_w$. We then have
\begin{align*}
\wt{\phi}_{n,n+k}(\omega \omega^*) & = 	\wt{\phi}_{n,n+k}(x)\wt{\phi}_{n,n+k}(\omega_0 \omega_0^*)\wt{\phi}_{n,n+k}(x)^* \\
& = \Big(\sum _{g \in L} g^*\Big) \Big(\sum_{i=1}^t v_i\Big) \Big(\sum_{h \in L} h\Big) \\
& = \sum_{i=1}^t \sum_{\{g,h \in L \mid r(g) = r(h) =v_i \}} g^*h \\
& = \sum_{i=1}^t \sum_{\{g \in L \mid r(g)= v_i \}} s(g),
\end{align*}
where for the last equality we have used the fact that any two edges $g,h\in L$ with the same range $w$ belong to the same element of $D_w$. Now since the sources of edges in $L$ are mutually distinct, we see that  $\wt{\phi}_{n,n+k}(\omega \omega^*)$ is a sum of mutually distinct vertices in $F_{n+k}^{0,1}$. Since $\wt{\phi}_{n,n+k+1}(\omega \omega^*)= \wt{\phi}_{n+k}(\wt{\phi}_{n,n+k}(\omega \omega^*))$, we obtain the result in this case by Lemma \ref{lemma-observations.for.phinm} (i). 

Suppose now that $\wt{\phi}_{n,n+k}(x)$ is a sum of edges in $F_{n+k}$. Then necessarily
$$\wt{\phi}_{n,n+k+1}(x)= \wt{\phi}_{n+k}(\wt{\phi}_{n,n+k}(x))$$
will be of the form $\sum _{g\in L} g^*$, where $L$ is a family of edges in $F_{n+k+1}$ whose sources are mutually distinct, such that, for each $w \in F_{n+k+1}^{0,0}$, all the edges $g \in L$ with range $w$ belong to the same set $Z\in D_w$. Since $\wt{\phi}_{n,n+k+1}(\omega _0\omega_0^*)$ is a sum of mutually distinct vertices by induction hypothesis and by Lemma \ref{lemma-observations.for.phinm} (i), the same argument as before gives us the result also in this case.
\end{proof}

We are now in a position to show that the algebra $L_{\infty} = \varinjlim L_n$ is tame, and compute it by means of the universal tame $*$-algebras of $L_n$ with respect to the sets $F_n$.

\begin{theorem}\label{theorem-tame.at.infinity}
The $*$-algebra $L_{\infty}$ is tame, with generating set of partial isometries $F_{\infty}^1 := \bigcup_{n=0}^{\infty} \wt{\phi}_{n,\infty} (F_n^1)$. Moreover, the natural $*$-homomorphisms $\wt{\phi}_{n, \infty} \colon L_n \to L_{\infty}$ factor through $\Lab _n:= \Lab_K(F_n,D^n)$ with homomorphisms $\ol{\phi}_{n,\infty} \colon \Lab _n \to L_{\infty}$, and we have
$$L_{\infty} = \bigcup _{n=0}^{\infty} \wt{\phi}_{n,\infty} (L_n) = \bigcup _{n=0}^{\infty} \ol{\phi}_{n,\infty} (\Lab_n).$$
\end{theorem}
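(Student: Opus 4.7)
The plan is to prove tameness of $F_{\infty}^1$ in $L_{\infty}$ first, since the factorization through $\Lab_n$ and the second equality will then follow quickly via the universal property. The first equality $L_{\infty}=\bigcup_n \wt{\phi}_{n,\infty}(L_n)$ is immediate from the construction of the filtered direct limit, so there is nothing to verify there. One should also note that $F_{\infty}^1$ does generate $L_{\infty}$ as a $*$-algebra: each vertex in $F_n^{0,0}$ is recovered as a sum $\sum_{e \in B_v} ee^*$ via (SCK2), and each vertex $w \in F_n^{0,1}$ equals $e^* e$ for any blue edge $e$ with $s(e)=w$, which exists by Definition \ref{definition-L.separated.Bratteli}(d). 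Thus it suffices to check that $F_{\infty}^1$ is a tame set.

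For this, let $U_{\infty}$ denote the multiplicative $*$-subsemigroup of $L_{\infty}$ generated by $F_{\infty}^1$, and take $u,u' \in U_{\infty}$. Because the system is filtered, we can find an index $n$ and words $\omega_n,\omega_n' \in U_n$ of some lengths $k,k'$ with $u=\wt{\phi}_{n,\infty}(\omega_n)$ and $u'=\wt{\phi}_{n,\infty}(\omega_n')$. Set $m := n + \max(k,k')$. By Lemma \ref{lemma-tame.aftersteps}, $\wt{\phi}_{n,n+k}(\omega_n\omega_n^*)$ is a sum of distinct vertices in $F_{n+k}^0$, and iterating Lemma \ref{lemma-observations.for.phinm}(i) on the successive applications $\wt{\phi}_{n+k+j}$ up to level $m$ shows that $\wt{\phi}_{n,m}(\omega_n\omega_n^*)$ is again a sum of distinct vertices in $F_m^0$; the same holds for $\omega_n'(\omega_n')^*$. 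Since pairwise distinct vertex projections in $F_m^0$ are orthogonal (hence commuting), the push-forwards $e(u)=\wt{\phi}_{m,\infty}\big(\wt{\phi}_{n,m}(\omega_n\omega_n^*)\big)$ and $e(u')=\wt{\phi}_{m,\infty}\big(\wt{\phi}_{n,m}(\omega_n'(\omega_n')^*)\big)$ commute in $L_{\infty}$. This establishes tameness.

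For the factorization, fix $n$. The homomorphism $\wt{\phi}_{n,\infty}$ sends $F_n^1$ inside the tame set $F_{\infty}^1$, and the $*$-subsemigroup it generates is a subsemigroup of $U_{\infty}$, hence also tame. The universal property of $\Lab_n$ (Definition \ref{definition-tame.stuff}) therefore yields a unique $*$-homomorphism $\ol{\phi}_{n,\infty}\colon \Lab_n \to L_{\infty}$ such that $\wt{\phi}_{n,\infty}=\ol{\phi}_{n,\infty}\circ \pi_n$, where $\pi_n\colon L_n \twoheadrightarrow \Lab_n$ is the canonical quotient. Surjectivity of $\pi_n$ gives $\wt{\phi}_{n,\infty}(L_n)=\ol{\phi}_{n,\infty}(\Lab_n)$, and taking unions over $n$ produces the last equality. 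The genuine work of the theorem is absorbed into Lemma \ref{lemma-tame.aftersteps}: its strong conclusion, that the range projections $\omega\omega^*$ of words in $U_n$ eventually become sums of distinct vertex idempotents, is precisely the input needed to collapse arbitrary commutators in the limit; once that lemma is granted, the rest is a formal chase through universal properties.
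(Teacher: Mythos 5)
Your proposal is correct and follows essentially the same route as the paper: tameness is deduced from Lemma \ref{lemma-tame.aftersteps} together with Lemma \ref{lemma-observations.for.phinm}(i) by pushing range projections of words forward until they become sums of distinct vertex projections, and the factorization through $\Lab_n$ then falls out of the universal property exactly as in the paper. The only (shared) looseness is the reduction of arbitrary elements of $U_\infty$ to single words $\wt{\phi}_{n,\infty}(\omega_n)$ with $\omega_n\in U_n$ — strictly, a product of generators coming from different levels pushes forward to a \emph{sum} of words at a common level — but the paper's own proof makes the same implicit leap, and the argument of Lemma \ref{lemma-tame.aftersteps} extends to cover it.
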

\begin{proof}
Let $n,n'$ be non-negative integers, and let $\omega, \omega'$ be words in $U_n$ and $U_{n'}$ respectively.  By Lemmas \ref{lemma-tame.aftersteps} and \ref{lemma-observations.for.phinm} (i), there exists $m\ge n, n'$ such that both $\wt{\phi}_{n,m}(e(\omega))$ and $\wt{\phi}_{n',m}(e(\omega'))$ are finite sums of mutually distinct vertices of $F_{m}$.  Therefore they are commuting elements of $L_{m}$. It follows that $F_{\infty}^1 = \bigcup_{n=0}^{\infty} \wt{\phi}_{n,\infty} (F_n^1)$ is a tame set of partial isometries of $L_{\infty}$. By the universal property of $\Lab _n$, $\wt{\phi}_{n,\infty}$ factors (uniquely) through	a $*$-homomorphism $\ol{\phi}_{n,\infty} \colon \Lab_n\to L_{\infty}$, and obviously $\wt{\phi}_{n,\infty} (L_n) = \ol{\phi}_{n,\infty}(\Lab _n)$. This completes the proof.   
\end{proof}

Let $\wt{\phi}_n\colon L_n \to L_{n+1}$ be the $*$-homomorphism defined in Proposition \ref{proposition-graph.homo}. Observe that 
$$\wt{\phi}_{n+1}\circ \wt{\phi}_n (p_n) = \wt{\phi}_{n+1} \Big( \sum_{w\in F_{n+1}^{0,1}} w \Big) = p_{n+2}\in L_{n+2}.$$
From now on we will write $\calA_n = p_n L_n p_n$, and we will denote by $\phi_n \colon  \calA_{2n} \ra \calA_{2n+2}$ the restriction to $\calA_{2n}$ of the composition $\wt{\phi}_{2n+1} \circ \wt{\phi}_{2n}$. We consider the inductive limit $\calA_{\infty} := \varinjlim (\calA_{2n}, \phi_n)$, with canonical maps $\phi_{n,\infty} \colon \calA_{2n}\ra \calA_{\infty}$. For $m>n$, we set $\phi_{n,m}:= \phi_{m-1} \circ \cdots \circ \phi_n\colon \calA_{2n}\to \calA_{2m}$. 

\begin{lemma}\label{lemma-ind.limit.OK}
We have a natural $*$-isomorphism
$$\calA_{\infty} \cong  pL_{\infty}p,$$
where $p = \wt{\phi}_{2n,\infty} (p_{2n})$ for all $n \ge 0$. 
\end{lemma}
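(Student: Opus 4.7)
The plan is to produce a $*$-homomorphism $\Psi\colon \calA_{\infty}\to p L_{\infty} p$ built out of the restrictions $\wt{\phi}_{2n,\infty}|_{\calA_{2n}}\colon \calA_{2n}\to p L_{\infty} p$, and then verify that it is bijective by standard directed-limit manipulations, paying attention to the fact that the connecting maps $\phi_n$ advance by two layers while the maps $\wt\phi_n$ advance by one.

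First I would check that $p$ is well-defined as an element of $L_\infty$. From Proposition \ref{proposition-graph.homo} we have $\wt{\phi}_{n}(p_n)=p_{n+2}$ and $\wt{\phi}_{n}(p_{n+1})=p_{n+1}$, and a straightforward induction on $k$ gives $\wt{\phi}_{n,n+k}(p_n)=p_{n+k}$ for every $k\ge 0$. In particular $\wt{\phi}_{2n,2m}(p_{2n})=p_{2m}$ for $m\ge n$, so $\wt{\phi}_{2n,\infty}(p_{2n})=\wt{\phi}_{2m,\infty}(p_{2m})$ and $p$ is unambiguously defined. Since $p_n$ is self-adjoint idempotent and $\wt{\phi}_{n}$ is a $*$-homomorphism, $p$ is a projection in $L_\infty$.

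Next I would observe that each restriction $\wt{\phi}_{2n,\infty}|_{\calA_{2n}}\colon \calA_{2n}\to p L_\infty p$ is a $*$-homomorphism (it lands in $pL_\infty p$ because $\wt{\phi}_{2n,\infty}(p_{2n})=p$), and that these restrictions intertwine the connecting maps of $\calA_\infty$: indeed, $\phi_n=\wt{\phi}_{2n+1}\circ\wt{\phi}_{2n}|_{\calA_{2n}}=\wt{\phi}_{2n,2n+2}|_{\calA_{2n}}$, so
\[
\wt{\phi}_{2n+2,\infty}\circ \phi_n=\wt{\phi}_{2n+2,\infty}\circ\wt{\phi}_{2n,2n+2}|_{\calA_{2n}}=\wt{\phi}_{2n,\infty}|_{\calA_{2n}}.
\]
By the universal property of the inductive limit this produces the desired $*$-homomorphism $\Psi\colon \calA_\infty\to p L_\infty p$ with $\Psi\circ \phi_{n,\infty}=\wt{\phi}_{2n,\infty}|_{\calA_{2n}}$.

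For surjectivity I would argue as follows: any $y\in pL_\infty p$ has the form $y=\wt{\phi}_{N,\infty}(x)$ for some $x\in L_N$, and by applying one further $\wt{\phi}$ if $N$ is odd we may assume $N=2m$ with $m\ge n$. Using $p=\wt{\phi}_{2m,\infty}(p_{2m})$ we get
\[
y=pyp=\wt{\phi}_{2m,\infty}(p_{2m})\,\wt{\phi}_{2m,\infty}(x)\,\wt{\phi}_{2m,\infty}(p_{2m})=\wt{\phi}_{2m,\infty}(p_{2m}xp_{2m}),
\]
with $p_{2m}xp_{2m}\in\calA_{2m}$, so $y=\Psi(\phi_{m,\infty}(p_{2m}xp_{2m}))$. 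For injectivity, suppose $a\in\calA_{2n}$ and $\Psi(\phi_{n,\infty}(a))=\wt{\phi}_{2n,\infty}(a)=0$. By the standard criterion for vanishing in an inductive limit, there exists $N\ge 2n$ with $\wt{\phi}_{2n,N}(a)=0$ in $L_N$; if $N$ is odd, apply $\wt{\phi}_N$ to replace $N$ by $N+1$. Writing $N=2k$ we conclude that $\phi_{n,k}(a)=\wt{\phi}_{2n,2k}(a)=0$, hence $\phi_{n,\infty}(a)=0$. The main (minor) obstacle is just this parity bookkeeping ensuring that the two-step connecting maps of $\calA_\infty$ correctly match the one-step maps of $L_\infty$; once that is in place the rest is formal.
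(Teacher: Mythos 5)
Your proof is correct and is precisely the filling-in of the paper's one-line argument: the paper simply asserts that the maps $\wt{\phi}_{2n,\infty}|_{\calA_{2n}}$ are compatible and that the induced map is a $*$-isomorphism, and your construction of $\Psi$ via the universal property, together with the surjectivity and injectivity checks using the standard inductive-limit criteria and the even/odd parity bookkeeping, is exactly what is needed. One small slip worth correcting: the asserted induction formula $\wt{\phi}_{n,n+k}(p_n)=p_{n+k}$ is false for odd $k$ --- already $\wt{\phi}_{n,n+1}(p_n)=\wt{\phi}_{n}(p_n)=p_{n+2}$, since $\wt{\phi}_n$ advances the top projection by two layers while fixing the bottom one --- so the correct statement is $\wt{\phi}_{n,n+k}(p_n)=p_{n+k}$ for $k$ even (and $p_{n+k+1}$ for $k$ odd); the even case is all you actually use, so the conclusion $\wt{\phi}_{2n,2m}(p_{2n})=p_{2m}$ and everything downstream remains valid.
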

\begin{proof}
We have obvious compatible $*$-homomorphisms $\wt{\phi}_{2n,\infty} \colon \calA_{2n} \ra  pL_{\infty}p$, and hence we get a $*$-homomorphism
$$\lambda \colon \calA_{\infty} \to pL_{\infty}p$$
defined by $\lambda (\phi_{n,\infty} (a)) = \wt{\phi}_{2n,\infty} (a)$ for $a\in \calA_{2n}$. It is straightforward to check that $\lambda $ is a $*$-isomorphism.
\end{proof}

Our aim for the rest of this subsection is to find a $*$-isomorphism $A_K(\calG(X,\sigma)) \cong \calA_{\infty}$, thus providing a graph-theoretic model for the Steinberg algebra $A_K(\calG(X,\sigma))$ of the Deaconu-Renault groupoid $\calG(X,\sigma)$ associated to $(X,\sigma)$. We will also deal with the corresponding $C^*$-algebras. 

To start with, we define $*$-homomorphisms $\varphi _n\colon \calA_{2n} \ra A_K(\calG(X,\sigma))$. For this, we use another presentation for $\calA_{2n}$, given by \cite[Proposition 2.8]{Ara2022}. We warn the reader about the different convention used for sources and ranges in \cite{Ara2022}.

\begin{definition}\cite[Definition 2.7]{Ara2022}\label{definition-universal}
Let $(E,C)$ be a finite bipartite separated graph with $E^0 = E^{0,0} \sqcup E^{0,1}$, $s(E^1) = E^{0,1}$ and $r(E^1) = E^{0,0}$. Let $LV_K(E,C)$ be the universal $*$-algebra with generators $P_V \sqcup T$ where $P_V = \{ p_v \mid v \in E^{0,0}\}$ and $T = \{\tau_{(e,f)} \mid e,f \in E^1, s(e) = s(f)\}$, and subject to the relations
\begin{enumerate}
\item[(V')] $p_v p_w = \delta_{v,w} p_v$, and $p_v = p_v^*$ for $v,w \in E^{0,0}$;
\item[(T)] $\tau_{(e,f)}^* = \tau_{(f,e)}$ for $e,f \in E^1$ with $s(e) = s(f)$;
\item[(E')] $\tau_{(e,f)} \cdot p_{r(f)} = p_{r(e)} \cdot \tau_{(e,f)} = \tau_{(e,f)}$ for $e,f \in E^1$ with $s(e) = s(f)$;
\item[(SCK1')] $\tau_{(e,f)} \cdot \tau_{(g,h)} = \delta_{f,g} \tau_{(e,h)}$ for $e,f,g,h \in E^1$ with $s(e) = s(f)$, $s(g) = s(h)$ and $f,g \in Y$ belonging to the same set $Y \in C$;
\item[(SCK2')] $p_v = \sum_{e \in Y} \tau_{(e,e)}$ for all $v \in E^{0,0}$ and all $Y \in C_v$.
\end{enumerate}
We denote by $\LVab_K(E,C)$ the abelianized Leavitt path algebra of $LV_K(E,C)$ with respect to its generating family $T = \{\tau_{(e,f)} \mid e,f\in E^1, s(e)=s(f)\}$ of partial isometries.
\end{definition}

\begin{proposition}\cite[Proposition 2.8]{Ara2022}\label{proposition-presentation.corner}
Let $(E,C)$ be a finite bipartite separated graph with $E^0 = E^{0,0} \sqcup E^{0,1}$, $s(E^1) = E^{0,1}$ and $r(E^1) = E^{0,0}$, and set $V = \sum_{v \in E^{0,0}} v \in L_K(E,C)$. Then there is a unique $*$-isomorphism
$$\varphi_V \colon LV_K(E,C)\to VL_K(E,C)V$$
such that $\varphi_V (p_v) = v$ for all $v \in E^{0,0}$ and $\varphi_V(\tau_{(e,f)}) = ef^*$ for all $e,f \in E^1$ with $s(e) = s(f)$.
\end{proposition}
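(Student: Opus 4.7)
The plan is to build $\varphi_V$ via the universal property of $LV_K(E,C)$ and then establish bijectivity. First I would verify that the assignment $p_v \mapsto v$, $\tau_{(e,f)} \mapsto ef^*$ respects the defining relations of $LV_K(E,C)$ inside $VL_K(E,C)V$. Relations (V') and (T) follow immediately from (V) in $L_K(E,C)$ together with $(ef^*)^* = fe^*$. Relation (E') is a consequence of (E), using that $r(e) = r(f)$ whenever $s(e) = s(f) \in E^{0,1}$. For (SCK1') one expands $(ef^*)(gh^*) = e(f^*g)h^*$ and invokes (SCK1) in $L_K(E,C)$: when $f,g$ lie in a common element of $C$, $f^*g = \delta_{f,g}\, s(f)$, and absorbing $s(f) = s(h)$ yields $\delta_{f,g}\, eh^*$. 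Finally, (SCK2') is a direct translation of (SCK2). This produces a well-defined $*$-homomorphism $\varphi_V \colon LV_K(E,C) \to VL_K(E,C)V$, and uniqueness is automatic since $\{p_v\} \cup \{\tau_{(e,f)}\}$ generates $LV_K(E,C)$ as a $*$-algebra.

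For surjectivity, I would exploit the bipartite structure: since edges of $E^1$ run from $E^{0,1}$ to $E^{0,0}$, any reduced monomial of $L_K(E,C)$ sandwiched between vertices of $E^{0,0}$ must alternate in the form $e_1 f_1^*\, e_2 f_2^* \cdots e_n f_n^*$, with $s(e_i) = s(f_i)$ and $r(f_i) = r(e_{i+1})$. Each block $e_i f_i^*$ equals $\varphi_V(\tau_{(e_i,f_i)})$, and the matching condition $r(f_i) = r(e_{i+1})$ is precisely what makes the consecutive $\tau$'s composable via (E'). Hence the image of $\varphi_V$ contains a spanning set of $VL_K(E,C)V$, and surjectivity follows.

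Injectivity is the main obstacle. My strategy is to construct an explicit inverse by an amplification argument. Adjoin to $LV_K(E,C)$ formal generators $\{\wt{e} \mid e \in E^1\}$ and projections $\{q_w \mid w \in E^{0,1}\}$, and let $\wt{A}$ be the universal $*$-algebra in which the relations $\wt{e}\, \wt{f}^* = \tau_{(e,f)}$ (for $s(e) = s(f)$), $\wt{e}^* \wt{f} = \delta_{e,f}\, q_{s(e)}$ (for $e,f$ in the same $Y \in C$), $r(e)\, \wt{e} = \wt{e}\, q_{s(e)} = \wt{e}$, and $q_w = \sum_{e \in Y,\, s(e) = w} \wt{e}^*\wt{e}$ hold. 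One checks that the natural map $LV_K(E,C) \to \wt{A}$ is injective and that $LV_K(E,C)$ sits as the corner $Q\wt{A}Q$, where $Q = \sum_{v \in E^{0,0}} p_v$. The universal property of $L_K(E,C)$ then furnishes a $*$-homomorphism $L_K(E,C) \to \wt{A}$ sending $v \mapsto p_v$, $w \mapsto q_w$, $e \mapsto \wt{e}$, and $V \mapsto Q$. Restricting to the corners produces a $*$-homomorphism $VL_K(E,C)V \to LV_K(E,C)$ which sends $v \mapsto p_v$ and $ef^* \mapsto \tau_{(e,f)}$, and which is therefore inverse to $\varphi_V$ on generators and hence globally.

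The delicate point in this approach is verifying that $\wt{A}$ contains $LV_K(E,C)$ faithfully, i.e., that the listed relations do not force any collapse beyond those already encoded in (V')--(SCK2'). This can be confirmed by a normal-form analysis mimicking the basis results for Leavitt path algebras of separated graphs in the Ara--Goodearl line of work, or alternatively by realizing $\wt{A}$ concretely inside a suitable matrix amplification of $LV_K(E,C)$. Once this is done, bijectivity is established and the proof concludes.
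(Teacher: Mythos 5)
First, a point of reference: the paper does not prove this statement at all — it is imported verbatim from \cite[Proposition 2.8]{Ara2022} — so there is no internal proof to compare against, and your proposal must stand on its own. Your first two stages do: the verification that $p_v \mapsto v$, $\tau_{(e,f)} \mapsto ef^*$ respects (V$'$), (T), (E$'$), (SCK1$'$), (SCK2$'$) is routine and correct (one small slip: you justify (E$'$) ``using that $r(e)=r(f)$ whenever $s(e)=s(f)$,'' which is false in general — two edges with a common source in $E^{0,1}$ may well have different ranges — but (E$'$) follows from (E) without that claim), and the bipartite alternation argument showing that $VL_K(E,C)V$ is spanned by the vertices of $E^{0,0}$ together with the words $e_1f_1^*\cdots e_nf_n^*$ is exactly right, so surjectivity and uniqueness are fine.

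The injectivity step is where the substance of the proposition lies, and as written it has two problems. First, your presentation of $\wt{A}$ includes the relation $q_w = \sum_{e\in Y,\,s(e)=w}\wt{e}^*\wt{e}$; since each summand already equals $q_w$ by your relation $\wt{e}^*\wt{f}=\delta_{e,f}q_{s(e)}$, this forces $q_w=k\,q_w$ with $k=|\{e\in Y : s(e)=w\}|$, hence $q_w=0$ (and then $\wt{e}=0$, $\tau_{(e,e)}=0$) whenever some $Y\in C$ contains two edges with the same source — which happens, e.g., in Example \ref{example-general.finite.shift}. That relation must simply be deleted: there is no (SCK2)-type relation at $E^{0,1}$-vertices because $C_w=\emptyset$ there. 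Second, and more seriously, the assertion that $LV_K(E,C)\to\wt{A}$ is faithful is precisely the content of the proposition, and you defer it. The clean way to close the gap — the ``matrix amplification'' you allude to — is to bypass $\wt{A}$ entirely: for each $w\in E^{0,1}$ fix an edge $e_w$ with $s(e_w)=w$ (possible since $s(E^1)=E^{0,1}$), index matrix units $E_{ij}$ by $\{0\}\sqcup E^{0,1}$, and check that $v\mapsto p_v\otimes E_{00}$, $w\mapsto \tau_{(e_w,e_w)}\otimes E_{ww}$, $e\mapsto \tau_{(e,e_{s(e)})}\otimes E_{0,s(e)}$ satisfies (V), (E), (SCK1), (SCK2) inside $M_{\{0\}\sqcup E^{0,1}}(LV_K(E,C))$; the verifications use only (E$'$), (SCK1$'$), (SCK2$'$). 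The resulting $*$-homomorphism $\Psi$ sends $V$ to $1\otimes E_{00}$, $v$ to $p_v\otimes E_{00}$ and $ef^*$ to $\tau_{(e,f)}\otimes E_{00}$, so $\Psi\circ\varphi_V$ is the identity of $LV_K(E,C)$ identified with the $(0,0)$-corner, and $\varphi_V$ is injective. With that substitution your argument is complete.
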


Let us now define the $*$-homomorphisms $\varphi _n \colon \calA_{2n} \ra A_K(\calG(X,\sigma))$. We use the presentation of $\calA_{2n} = p_{2n}L_K(F_{2n},D^{2n})p_{2n} \cong LV_K(F_{2n},D^{2n})$ provided by Definition \ref{definition-universal} (see Proposition \ref{proposition-presentation.corner} above). Recall that the vertices of $F_{2n}^{0,0}$ correspond to the sets in the partition $\calP_{2n}$ of $X$, and similarly, the vertices in $F_{2n}^{0,1}$ correspond to the sets in the partition $\calP_{2n+1}$ of $X$.

Given $p_v$, with $v \in F_{2n}^{0,0}$, there is a unique clopen subset $U \in \calP_{2n}$ such that $v = U$. Define
$$\varphi_n(p_v) = \varphi_n(p_U) = \mathds{1}_U \in A_K(\calG(X,\sigma)),$$
where we identify $Z(U,0,0,U)$ with $U$. Now we want to define the images of elements of the form $\tau_{(e,f)}$, where $e$ and $f$ are edges in $F_{2n}$ such that $s(e) = s(f)$. We write $s(e) = s(f) = W \in \calP_{2n+1}$, $r(e) = U \in \calP_{2n}$ and $r(f) = V \in \calP_{2n}$. We need to distinguish various cases.
\begin{enumerate}[(a),leftmargin=0.7cm]
\item $e,f$ are both blue edges. By definition, we have then that $W \subseteq U$ and $W \subseteq V$. In this case, define
$$\varphi_n(\tau_{(e,f)}) = \mathds{1}_W \in A_K(\calG(X,\sigma)),$$
where we identify $Z(W,0,0,W)$ with $W$.
\item $e$ and $f$ are both red edges. In this case we have $W \subseteq \sigma(U)$ and $W \subseteq \sigma (V)$. Define
$$\varphi_n(\tau_{(e,f)}) = \mathds{1}_{Z((\sigma|_U)^{-1}(W), 1, 1, (\sigma|_V)^{-1}(W))} \in A_K(\calG(X,\sigma)).$$
Observe that all sets $Z$ in the partitions $\calP_m$ satisfy the property that the restriction of $\sigma$ to $Z$ is injective. Hence $Z((\sigma|_U)^{-1}(W), 1, 1, (\sigma|_V)^{-1}(W))$ is well-defined.
\item $e$ is a red edge and $f$ is a blue edge. In this case we have $W \subseteq \sigma(U)$ and $W \subseteq V$. Define
$$\varphi_n(\tau_{(e,f)}) = \mathds{1}_{Z((\sigma|_U)^{-1}(W), 1, 0, W)} \in A_K(\calG(X,\sigma)).$$
\item $e$ is a blue edge and $f$ is a red edge. Then $W \subseteq U$ and $W \subseteq \sigma(V)$. Define
$$\varphi_n(\tau_{(e,f)}) = \mathds{1}_{Z(W, 0, 1, (\sigma|_V)^{-1}(W))} \in A_K(\calG(X,\sigma)).$$
\end{enumerate}
It is straightforward to check that the defining relations from Definition \ref{definition-universal} are preserved, and hence we obtain well-defined $*$-homomorphisms
$$\varphi_n \colon \calA_{2n} \to A_K(\calG(X,\sigma))$$
for all $n\ge 0$.

Observe that the algebras $L_{2n} = L_K(F_{2n},D^{2n})$ are $\Z$-graded for all $n \ge 0$, with grading determined by setting $d(v) = 0$ for each vertex $v \in F_{2n}^0$, $d(e) = 0$ if $e \in F_{2n}^1$ is a blue edge, and $d(f) = 1$ if $f \in F_{2n}^1$ is a red edge. For each $n \ge 0$, $\calA_{2n}$ is a graded $*$-subalgebra of $L_{2n}$, and the maps $\phi_n\colon \calA_{2n}\to \calA_{2n+2}$ are graded $*$-homomorphisms, so we get the structure of a $\Z$-graded $*$-algebra in $\calA_{\infty}$.

\begin{proposition}\label{proposition-morph.compat.steinberg}
The morphisms $\varphi_n \colon \calA_{2n} \to A_K(\calG(X,\sigma))$ are graded and compatible with the maps $\phi_n \colon \calA_{2n} \to \calA_{2n+2}$ from the inductive system $(\calA_{2n},\phi_n)$. Therefore they define a well-defined graded $*$-homomorphism
$$\varphi \colon \calA_{\infty} \to A_K(\calG(X,\sigma)).$$
\end{proposition}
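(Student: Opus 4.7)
The plan is as follows. First I verify that each $\varphi_n$ respects the $\Z$-grading by inspecting the generators $p_v$ and $\tau_{(e,f)}$ of $\calA_{2n}\cong LV_K(F_{2n},D^{2n})$. The generator $p_v$ has degree $0$ and is sent to $\mathds{1}_U$, a unit-space indicator of degree $0$; a generator $\tau_{(e,f)}$ has degree $d(e)-d(f)\in\{-1,0,1\}$, and in each of the four cases (a)--(d) defining $\varphi_n(\tau_{(e,f)})$ the image is $\mathds{1}_{Z(\cdot,m,k,\cdot)}$ with $m-k$ equal to this degree.

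To prove compatibility $\varphi_{n+1}\circ\phi_n = \varphi_n$, by the universal property of $LV_K(F_{2n},D^{2n})$ (Definition \ref{definition-universal} and Proposition \ref{proposition-presentation.corner}) it is enough to verify the equality on these generators. I would unfold $\phi_n = \wt{\phi}_{2n+1}\circ\wt{\phi}_{2n}|_{\calA_{2n}}$ using Proposition \ref{proposition-graph.homo} and simplify via (SCK1) in $L_{2n+1}$ and $L_{2n+2}$ whenever two factors lie in the same separation class. For a vertex $v$ corresponding to $U\in\calP_{2n}$, Lemma \ref{lemma-technical.1} yields $\phi_n(v) = \textbf{S}(v)$, which under the identification $F^{0,2n+2}\leftrightarrow\calP_{2n+2}$ becomes the sum of those $U''\in\calP_{2n+2}$ with $U''\subseteq U$; hence $\varphi_{n+1}(\phi_n(v)) = \sum_{U''}\mathds{1}_{U''} = \mathds{1}_U = \varphi_n(v)$.

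The main obstacle lies in the edge generators. The blue-blue case collapses (distinct blue edges have distinct sources by condition (d) of Definition \ref{definition-L.separated.Bratteli}) to $e=f$, and (SCK1) in $L_{2n+1}$ then reduces $\phi_n(ee^*)$ to the vertex computation at $W=s(e)$. The crux is the red-red case, with $r(e)=U$, $r(f)=V$, $s(e)=s(f)=W$: expanding the formulas and using Lemma \ref{lemma-group.of.f} (i) to identify the refinements produces
\[
\phi_n(ef^*) = \sum_{(\ol{e},\ol{f},Z)}\ol{\ol{e}}_{\ol{e},Z}\,\ol{\ol{f}}^*_{\ol{f},Z},
\]
where $\ol{e}\in R(e)$, $\ol{f}\in R(f)$, $Z\in\calP_{2n+3}$ with $Z\subseteq\sigma(s(\ol{e}))\cap\sigma(s(\ol{f}))$, and $\ol{\ol{e}}_{\ol{e},Z}$ (resp.\ $\ol{\ol{f}}_{\ol{f},Z}$) is the unique red edge at layer $F^{1,2n+2}$ with source $Z$ and range $s(\ol{e})$ (resp.\ $s(\ol{f})$). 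Each summand is sent by $\varphi_{n+1}$ to $\mathds{1}_{Z((\sigma|_{s(\ol{e})})^{-1}(Z),1,1,(\sigma|_{s(\ol{f})})^{-1}(Z))}$, and a combinatorial check shows that as $(\ol{e},\ol{f},Z)$ varies these cylinders partition $Z(U_W,1,1,V_W) = \{(x,0,y)\mid x\in U_W,\,y\in V_W,\,\sigma(x)=\sigma(y)\}$ via the map sending $(x,0,y)$ to the unique $(\ol{e},\ol{f},Z)$ with $x\in s(\ol{e})$, $y\in s(\ol{f})$ and $\sigma(x)=\sigma(y)\in Z$. Hence the sum of indicators equals $\mathds{1}_{Z(U_W,1,1,V_W)}=\varphi_n(ef^*)$. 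The mixed cases (c) and (d) follow from the same scheme, exploiting the uniqueness of the blue edge at each vertex $v\in F^{0,2n+3}$ to lock the blue refinement to the red one. Once compatibility is checked on all generators, the maps $\varphi_n$ pass to the inductive limit to yield the graded $*$-homomorphism $\varphi\colon\calA_\infty\to A_K(\calG(X,\sigma))$.
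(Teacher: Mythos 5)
Your proposal is correct and follows essentially the same route as the paper: verify the grading and the identity $\varphi_{n+1}\circ\phi_n=\varphi_n$ on the generators $p_v$ and $\tau_{(e,f)}$ of $LV_K(F_{2n},D^{2n})$, handle vertices via $\textbf{S}(v)$, and handle edge generators by expanding $\phi_n$ and matching the resulting index set with a decomposition of the cylinder set, using Lemma \ref{lemma-group.of.f}. The only cosmetic difference is that you work out the red--red case in detail (via Lemma \ref{lemma-group.of.f}(i)) while the paper details the red--blue case (via Lemma \ref{lemma-group.of.f}(ii)); both choices lead to the same partition-of-cylinders argument.
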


\begin{proof} It is clear from the definitions that the maps $\varphi_n$ are graded $*$-homomorphisms for all $n \ge 0$, so it only remains to show compatibility with the maps $\phi_n$.

First, if $v \in F_{2n}^{0,0}$, then $v = U$ for a unique $U \in \calP_{2n}$, and $\phi_n(v) = \textbf{S}(v)$ corresponds to the decomposition
$$U = \bigsqcup_{\{\ol{U} \in \calP_{2n+2} \mid \ol{U} \subseteq U\}} \ol{U}$$
of $U$ as a disjoint union of sets from $\calP_{2n+2}$. Hence we obtain that $(\varphi_{n+1} \circ \phi_n)(p_v)= \varphi_n(p_v)$.

There are four cases to consider concerning the elements $\tau_{(e,f)}$, all of them similar. We will only deal with the case where $e$ is a red edge and $f$ is a blue edge. So, let $e$ be a red edge and $f$ a blue edge, with $s(e) = s(f) = W$ and $r(e) = U$, $r(f) = V$, where $U,V \in \calP_{2n}$ and $W \in \calP_{2n+1}$. We then have
\begin{equation}\label{equation-tau.decomposition}
\phi_n(\tau_{(e,f)}) = \phi_n (ef^*) = \sum_{s(g) = s(h)} gh^* = \sum_{s(g) = s(h)} \tau_{(g,h)},
\end{equation}  
where $g$ ranges on the set of red edges $g \in R_{s(\ol{e})}$, where $\ol{e} \in R(e)$, and $h$ ranges on the set of blue edges $h \in B_{s(\ol{f})}$, where $\ol{f} \in B_{s(f)}$. Note that
$$S: = \{s(g) \mid g \in R_{s(\ol{e})}, \ol{e} \in R(e)\} = \{ s(h) \mid h \in B_{s(\ol{f})}, \ol{f} \in B_{s(f)}\}$$
is exactly the set of vertices appearing in the expression of $\textbf{S}(W)$ (see Lemma \ref{lemma-technical.1} (ii)), and just as before, this set corresponds to the set of clopen subsets $\ol{W} \in \calP_{2n+3}$ such that $\ol{W} \subseteq W$. Given $\ol{W} \in \calP_{2n+3}$ such that $\ol{W} \subseteq W$, we have inclusions $\ol{W} \subseteq W \subseteq \sigma(U)$ and so by Lemma \ref{lemma-group.of.f}(ii) there exists a unique $\ol{U} \in \calP_{2n+2}$ such that
\begin{equation}\label{equation-tau.decomp.first}
\ol{U} \subseteq U \quad \text{ and } \quad \ol{W} \subseteq \sigma(\ol{U}) \subseteq W \subseteq \sigma(U).
\end{equation}
In addition, there is a unique $\ol{V} \in \calP_{2n+2}$ such that $\ol{W} \subseteq \ol{V}$, and since $\ol{W} \subseteq W \subseteq V$, we must have
\begin{equation}\label{equation-tau.decomp.second}
\ol{W} \subseteq \ol{V} \subseteq W \subseteq V.
\end{equation}
It follows that each of the terms $gh^*$ in the sum appearing in \eqref{equation-tau.decomposition} corresponds exactly to a triple $(\ol{U}, \ol{W}, \ol{V})$, where $\ol{W} \in \calP_{2n+3}$, $\ol{U},\ol{V} \in \calP_{2n+2}$, and \eqref{equation-tau.decomp.first} and \eqref{equation-tau.decomp.second} hold. We infer that the decomposition \eqref{equation-tau.decomposition} corresponds, under $\varphi_{n+1}$, to the decomposition
$$\bigsqcup_{\{\ol{W} \in \calP_{2n+3} \mid \ol{W} \subseteq W\}} Z((\sigma|_{\ol{U}})^{-1}(\ol{W}), 1, 0, \ol{W}) = Z((\sigma|_U)^{-1}(W), 1, 0, W).$$
Hence we get that $(\varphi_{n+1} \circ \phi_n)(\tau_{(e,f)}) = \varphi_n(\tau_{(e,f)})$, as desired.
\end{proof}

We now start a study of elements in $\calA_{\infty}$.

\begin{definition}\label{definition-n.triple}
Let $n \ge 0$ and $U,V \in \calP_{2n}$, $W \in \calP_{2n+1}$. We say that $(U, W, V)$ is an {\it $n$-triple} if $W \subseteq V$ and $W \subseteq \sigma (U)$. Note that such a triple determines an element $fe^* = \tau_{(f,e)}$ in $\calA_{2n}$, where $f \in R_U$ and $e \in B_V$, such that $\varphi_n(fe^*) = \mathds{1}_{Z((\sigma|_U)^{-1}(W), 1, 0, W)}$.
\end{definition}

In the following lemma, we describe the images by $\varphi_n$ of some elements of $\calA_{2n}$.

\begin{lemma}\label{lemma-images.in.AG}
Let $n \ge 0$.
\begin{enumerate}[i),leftmargin=0.7cm]
\item Suppose that $x = f_1e_1^* \cdots f_ke_k^* \in \calA_{2n}$, with $f_i$ red edges and $e_i$ blue edges, such that $s(f_i) = s(e_i) = W_i$, $r(f_1) = U_1$ and $r(e_i) = r(f_{i+1}) = U_{i+1}$ for $1 \leq i \leq k-1$. Consider the set
$$T_1 := (\sigma|_{U_1})^{-1}(W_1 \cap (\sigma|_{U_2})^{-1}(W_2 \cap \cdots \cap (\sigma|_{U_{k-1}})^{-1}(W_{k-1} \cap (\sigma|_{U_k})^{-1}(W_k)) \cdots )).$$
Then
$$\varphi_n(x) = \mathds{1}_{Z(T_1, k, 0, \sigma^k(T_1))}$$
and
$$\varphi_n(xx^*) = \mathds{1}_{T_1}.$$
\item Let $x = (f_1e_1^*) \cdots (f_{k-1}e_{k-1}^*) (f_kf_{k+1}^*)(e_{k+2}f_{k+2}^*) \cdots (e_{k+l}f_{k+l}^*) \in \calA_{2n}$, with $e_i$ blue edges and $f_i$ red edges, such that $s(e_i) = s(f_i) = W_i$ for $1 \leq i \leq k-1$ and $k+2 \leq i \leq k+l$, $s(f_k) = s(f_{k+1}) = W_k = W_{k+1}$, $f_k \ne f_{k+1}$, $r(f_1) = U_1$, $r(e_i) = r(f_{i+1}) = U_{i+1}$ for $1 \leq i \leq k-1$, $r(f_{k+i}) = r(e_{k+i+1}) = U_{k+i}$ for $1 \leq i \leq l-1$ and $r(f_{k+l}) = U_{k+l}$. Let $T_1$ be as in i) and set
$$T_2 := (\sigma|_{U_{k+l}})^{-1}(W_{k+l} \cap (\sigma|_{U_{k+l-1}})^{-1}(W_{k+l-1} \cap \cdots \cap (\sigma|_{U_{k+2}})^{-1}(W_{k+2} \cap (\sigma|_{U_{k+1}})^{-1}(W_{k+1})) \cdots )).$$
Then $T := \sigma^k(T_1) \cap \sigma^l(T_2) \subseteq W_k$, $(\sigma|_{T_1})^k$ and $(\sigma|_{T_2})^l$ are injective, and, with $U := (\sigma|_{T_1})^{-k}(T)$ and $V := (\sigma|_{T_2})^{-l}(T)$, we have
$$\varphi_n(x) = \mathds{1}_{Z(U, k, l, V)}$$
and
$$\varphi_n(xx^*) = \mathds{1}_{U}.$$
\end{enumerate}
\end{lemma}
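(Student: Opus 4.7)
The approach is to compute both sides of each claim as characteristic functions of compact open bisections in the Deaconu-Renault groupoid $\calG(X,\sigma)$, exploiting the convolution rule $\mathds{1}_U\cdot\mathds{1}_V=\mathds{1}_{UV}$ from Subsection~\ref{subsection-groupoids.algebras}.

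For Part i), I proceed by induction on $k\geq 1$. The base case $k=1$ follows directly from the definition of $\varphi_n$ in case (c), together with the fact that $W_1\subseteq\sigma(U_1)$ (from the construction of red edges at even layers), which gives $\sigma(T_1)=W_1$. For the inductive step I factor $x=(f_1e_1^*)\cdot x'$ with $x'=f_2e_2^*\cdots f_ke_k^*$ and apply the inductive hypothesis to $x'$, obtaining $\varphi_n(x')=\mathds{1}_{Z(T_1',k-1,0,\sigma^{k-1}(T_1'))}$, where $T_1'$ is the analog of $T_1$ for $x'$. Multiplying the two bisections, the composability condition forces $W_1$ (source of the first) to meet $T_1'$ (range of the second), and the resulting set of admissible starting points is $(\sigma|_{U_1})^{-1}(W_1\cap T_1')=T_1$. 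The formula for $\varphi_n(xx^*)$ then follows from $\varphi_n(x^*)=\mathds{1}_{Z(\sigma^k(T_1),0,k,T_1)}$ and the injectivity of $\sigma^k|_{T_1}$; this injectivity itself is a consequence of the chain of containments $\sigma^i(T_1)\subseteq U_{i+1}$ for $0\leq i\leq k-1$, combined with the injectivity of $\sigma$ on every element of $\calP_{2n}$, a defining property of a $\sigma$-refined sequence of partitions.

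For Part ii), I decompose $x=x_1\cdot(f_kf_{k+1}^*)\cdot x_2$, where $x_1=(f_1e_1^*)\cdots(f_{k-1}e_{k-1}^*)$ and $x_2=(e_{k+2}f_{k+2}^*)\cdots(e_{k+l}f_{k+l}^*)$ both lie in $\calA_{2n}$ (understood as $p_{2n}$ when $k=1$ or $l=1$). Applying Part i) to $x_1$ and to $x_2^*$ (and taking adjoints) yields explicit descriptions of $\varphi_n(x_1)$ and $\varphi_n(x_2)$ as characteristic functions of bisections built from truncated intersections $T_1^{(k-1)}$ and $T_2^{(l-1)}$, which are obtained from the defining formulas for $T_1$ and $T_2$ by omitting the innermost layers $(\sigma|_{U_k})^{-1}(W_k)$ and $(\sigma|_{U_{k+1}})^{-1}(W_{k+1})$ respectively. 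Case (b) in the definition of $\varphi_n$ supplies
$$\varphi_n(f_kf_{k+1}^*)=\mathds{1}_{Z((\sigma|_{U_k})^{-1}(W_k),\,1,\,1,\,(\sigma|_{U_{k+1}})^{-1}(W_k))},$$
which re-inserts precisely those missing layers when the triple bisection product is formed. The composability conditions then collapse via the identities
$$T_1=\{a\in T_1^{(k-1)}:\sigma^{k-1}(a)\in(\sigma|_{U_k})^{-1}(W_k)\},\quad \sigma^{l-1}(T_2)=\sigma^{l-1}(T_2^{(l-1)})\cap(\sigma|_{U_{k+1}})^{-1}(W_k),$$
and the product simplifies to $\{(a,k-l,f):a\in T_1,\,f\in T_2,\,\sigma^k(a)=\sigma^l(f)\}$. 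The injectivity of $\sigma^k|_{T_1}$ and $\sigma^l|_{T_2}$ (proved as in Part i)) lets one rewrite this set as $Z(U,k,l,V)$ with $U=(\sigma|_{T_1})^{-k}(T)$ and $V=(\sigma|_{T_2})^{-l}(T)$; the inclusion $T\subseteq W_k$ is automatic from $\sigma^k(T_1)\subseteq W_k$. Finally $\varphi_n(xx^*)=\mathds{1}_U$ follows from the same diagonal collapse as in Part i).

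The main obstacle will be the careful bookkeeping needed to verify that the composability conditions in the triple bisection product collapse exactly to the stated intersections. This requires confirming, inductively along the definitions of $T_1$ and $T_2$, all the chains of containments $\sigma^i(T_1)\subseteq U_{i+1}$ and $\sigma^j(T_2)\subseteq U_{k+l-j}$, and invoking the injectivity of $\sigma$ on each element of $\calP_{2n}$ at every step. Once that bookkeeping is in place, the product computations themselves are routine applications of the bisection formula.
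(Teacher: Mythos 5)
Your proof is correct and follows essentially the same route as the paper: both reduce everything to products of the elementary compact open bisections $Z((\sigma|_{U_i})^{-1}(W_i),1,0,W_i)$, using the containments $\sigma^i(T_1)\subseteq W_i\subseteq U_{i+1}$ and the injectivity of $\sigma$ on partition elements. The only cosmetic differences are that the paper computes the $k$-fold product in part i) directly (describing its elements as chains $(y_0,1,y_1)\cdots(y_{k-1},1,y_k)$) rather than by induction, and leaves part ii) to the reader, whereas you spell out the triple decomposition $x=x_1(f_kf_{k+1}^*)x_2$ explicitly.
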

\begin{proof}
$i)$ Note that, for $y \in T_1$, we have $y \in U_1$ and $\sigma(y) \in W_1 \subseteq U_2$. Moreover $\sigma^2(y) = \sigma(\sigma(y)) \in W_2 \subseteq U_3$. In general we have $\sigma^i(y) \in W_i \subseteq U_{i+1}$ for $1 \leq i \leq k-1$, and $\sigma^k(y) \in W_k$. Since the restriction of $\sigma$ to each set $U_i$ is injective, we see that $\sigma^k$ is injective on $T_1$. We have
$$\varphi_n(x) = \mathds{1}_{\calZ},$$
where
$$\calZ = Z((\sigma|_{U_1})^{-1}(W_1), 1, 0, W_1) \cdots Z((\sigma|_{U_k})^{-1}(W_k), 1, 0, W_k).$$
The elements of the right-hand side of the above equality have the form $(y_0,1,y_1)(y_1,1,y_2) \cdots (y_{k-1},1,y_k)$, where $y_0 \in U_1 $, $y_1 = \sigma(y_0) \in W_1 \subseteq U_2$, and in general $y_i = \sigma^i(y_0) \in W_i \subseteq U_{i+1}$ for $1 \leq i \leq k-1$, and $\sigma^k(y_0)= y_k \in W_k$. Then we get that $y_0 \in T_1$ and that
$$(y_0,1,y_1) \cdots (y_{k-1},1,y_k) = (y_0,k,y_k) \in Z(T_1, k, 0, \sigma^k(T_1)).$$
Conversely, if $(y,k,z) \in Z(T_1, k, 0, \sigma^k(T_1))$, by the definition of $T_1$ we can write $(y,k,z)$ as a product of elements in $Z((\sigma|_{U_i})^{-1}(W_i), 1, 0, W_i)$ for all $1 \leq i \leq k$.

Now,
\begin{align*}
\varphi_n(xx^*) & = \varphi_n(x)\varphi_n(x)^* \\
& = \mathds{1}_{Z(T_1, k, 0, \sigma^k(T_1))} \cdot \mathds{1}_{Z(T_1, k, 0, \sigma^k(T_1))^{-1}} \\
& = \mathds{1}_{Z(T_1, k, 0, \sigma^k(T_1)) \cdot Z(T_1, k, 0, \sigma^k(T_1))^{-1}} \\
& = \mathds{1}_{Z(T_1, k, 0, \sigma^k(T_1)) \cdot Z(\sigma^k(T_1), 0, k, T_1)} \\
& = \mathds{1}_{T_1}.
\end{align*}

The proof of $ii)$ is similar to the proof of $i)$, and is left to the reader.
\end{proof}

This result is a first step towards finding a canonical expression for an element of $\calA_{\infty}$. The next step is to show a partial injectivity result for $\varphi$.

\begin{lemma}\label{lemma-injectivity.on.projections}
The following statements hold.
\begin{enumerate}[i),leftmargin=0.7cm]
\item Let $x = f_1e_1^* \cdots f_ke_k^* \in \calA_{2n}$ be as in Lemma \ref{lemma-images.in.AG} (i). Then $\phi_{n,\infty}(x) \ne 0$ if and only if $\varphi_n(x) \ne 0$.
\item Let $x = (f_1e_1^*) \cdots (f_{k-1}e_{k-1}^*)(f_kf_{k+1}^*)(e_{k+2}f_{k+2}^*) \cdots (e_{k+l}f_{k+l}^*) \in \calA_{2n}$ be as in Lemma \ref{lemma-images.in.AG} (ii). Then $\phi_{n,\infty}(x) \ne 0$ if and only if $\varphi_n(x) \ne 0$.
\end{enumerate}
\end{lemma}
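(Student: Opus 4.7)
The direction $\varphi_n(x)\ne 0 \Rightarrow \phi_{n,\infty}(x)\ne 0$ is immediate from Proposition \ref{proposition-morph.compat.steinberg}: the identity $\varphi_n = \varphi\circ\phi_{n,\infty}$ directly gives the contrapositive, that $\phi_{n,\infty}(x)=0$ forces $\varphi_n(x)=0$. The real content is the converse, which I plan to establish by passing through $xx^*$. Assume $\varphi_n(x)=0$; by Lemma \ref{lemma-images.in.AG}, this is equivalent to $T_1=\emptyset$ in case (i) and to $U=\emptyset$ in case (ii), and in either case it yields $\varphi_n(xx^*)=0$.

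View $x$ as a word in the multiplicative $*$-subsemigroup $U_{2n}$ of $L_{2n}$ generated by $F_{2n}^1$; its length is $K=2k$ in case (i) and $K=2(k+l-1)$ in case (ii). Lemma \ref{lemma-tame.aftersteps} applied to $\omega=x$ yields
\[ \wt\phi_{2n,\,2n+K}(xx^*) \;=\; \sum_{i=1}^{t} v_i, \]
a sum of mutually distinct vertices of $F_{2n+K}$. Since $xx^*\in p_{2n}L_{2n}p_{2n}$ and $\wt\phi_{2n,2n+K}(p_{2n}) = p_{2n+K}$, every surviving $v_i$ must lie in $F_{2n+K}^{0,0} = F^{0,\,2n+K}$, so the identity actually holds inside the corner $\calA_{2n+K}$ and coincides with $\phi_{n,\,n+K/2}(xx^*)$. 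Applying $\varphi_{n+K/2}$ and using the compatibility $\varphi_{n+K/2}\circ \phi_{n,\,n+K/2} = \varphi_n$, each $v_i$ is sent to the characteristic function of the nonempty clopen $Z_{v_i}\in \calP_{2n+K}$ labelling it; distinct $v_i$ produce disjoint $Z_{v_i}$, so $\{\mathds{1}_{Z_{v_i}}\}_i$ is $K$-linearly independent in $A_K(\calG(X,\sigma))$. From $\sum_i \mathds{1}_{Z_{v_i}} = \varphi_n(xx^*) = 0$ we conclude $t=0$, hence $\phi_{n,\,n+K/2}(xx^*)=0$ in $\calA_{2(n+K/2)}$, and a fortiori $\phi_{n,\infty}(xx^*)=0$ in $\calA_\infty$.

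It remains to promote $\phi_{n,\infty}(x)\,\phi_{n,\infty}(x)^* = \phi_{n,\infty}(xx^*) = 0$ to $\phi_{n,\infty}(x)=0$. I would argue this by showing that, after a further application of $\phi$ if needed, the element $\phi_{n,m}(x)\in \calA_{2m}$ decomposes as a sum of monomials whose source projections are mutually orthogonal (as guaranteed, in a suitable sense, by Lemma \ref{lemma-observations.for.phinm}), so that $yy^*=0$ propagates coefficient by coefficient. The main obstacle I foresee is making this ``no cancellation'' step fully rigorous within the separated graph algebra: for $K=\mathbb{C}$ with standard involution it could be sidestepped by embedding $\calA_\infty$ into its enveloping $C^*$-algebra, where $aa^*=0$ immediately gives $a=0$, but for a general field with involution a careful normal-form analysis of $\phi_{n,m}(x)$, in the spirit of Lemma \ref{lemma-tame.aftersteps}, appears to be needed.
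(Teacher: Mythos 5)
Your reduction of the hard direction to the claim $\varphi_n(xx^*)=0 \Rightarrow \phi_{n,\infty}(xx^*)=0$ is sound, and the way you prove that claim is genuinely different from the paper's: the paper runs an induction on $k$ showing that every vertex surviving in $\wt{\phi}_{2n,2n+K}(xx^*)$ labels a clopen set contained in $T_1$ (so that $\phi_{n,\infty}(xx^*)\ne 0$ forces $T_1\ne\emptyset$), whereas you invoke the compatibility $\varphi_{n+K/2}\circ\phi_{n,n+K/2}=\varphi_n$ from Proposition \ref{proposition-morph.compat.steinberg} together with the linear independence of the characteristic functions of the nonempty, pairwise disjoint members of $\calP_{2n+K}$. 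Since Proposition \ref{proposition-morph.compat.steinberg} is already available at this point, your route is correct and arguably shorter than the paper's containment induction.

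The genuine gap is the final step, which you explicitly leave open: passing from $\phi_{n,\infty}(x)\phi_{n,\infty}(x)^*=0$ to $\phi_{n,\infty}(x)=0$. No normal-form analysis and no $C^*$-completion is needed, and the implication holds over an arbitrary field with involution: this is exactly what Theorem \ref{theorem-tame.at.infinity} provides. Since $x$ is a word in the multiplicative $*$-semigroup generated by $F_{2n}^1$, its image $\phi_{n,\infty}(x)=\wt{\phi}_{2n,\infty}(x)$ lies in the multiplicative $*$-semigroup of $L_{\infty}$ generated by the tame set $F_{\infty}^1$, and by Definition \ref{definition-tame.stuff} every element $u$ of that semigroup is a partial isometry, so $uu^*=0$ forces $u=uu^*u=0$. (This is precisely the observation with which the paper opens its proof, applied on both sides; it also yields $\varphi_n(x)\ne 0 \Leftrightarrow \varphi_n(xx^*)\ne 0$, which you instead extracted from the explicit formulas of Lemma \ref{lemma-images.in.AG} --- either way is fine.) With this one line inserted, your proof is complete.
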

\begin{proof}
First, observe that for any term $x$ as in $i)$ or $ii)$, we have $\phi_{n,\infty} (x)\ne 0$ if and only if $\phi_{n,\infty} (xx^*)\ne 0$. This is due to the fact that $\calA_{\infty}$ is tame (Theorem \ref{theorem-tame.at.infinity}), so that $\phi_{n,\infty}(x) $ is a partial isometry in $\calA_{\infty}$. Similarly, $\varphi _n(x)\ne 0$ if and only if $\varphi_n(xx^*)\ne 0$.  
	
Let us prove $i)$. If $\varphi_n(x) \ne 0$ then $\phi_{n,\infty}(x) \ne 0$, because $\varphi_n(x) = \varphi(\phi_{n,\infty}(x))$.

Suppose now that $\phi_{n,\infty}(xx^*) \ne 0$, so in particular $\phi_{n,m}(xx^*) \neq 0$ for all $m> n$. We will show that $T_1$ is non-empty. We proceed by induction on $k$.

For $k = 1$, we have $x = f_1e_1^*$ and $xx^* = f_1f_1^*$. In this case $T_1 = (\sigma|_{U_1})^{-1}(W_1)\ne \emptyset$, because $W_1\ne \emptyset$ and $W_1\subseteq \sigma (U_1)$. 

Suppose now that the result is true for all lengths up to $k-1$. Take $x = f_1e_1^* \cdots f_ke_k^*$. Observe that
$$xx^* = f_1e_1^* \cdots e_{k-1}^*f_kf_k^* e_{k-1} \cdots e_1f_1^*= (f_1e_1^*)yy^*(e_1^*f_1),$$
where $y= f_2e_2^* \cdots f_ke_k^*$. Recall from Lemma \ref{lemma-tame.aftersteps} that $\phi_{n,n+k} (xx^*)= \widetilde{\phi}_{2n,2n+2k}(xx^*)$ is a sum of a finite number of distinct vertices in $F_{2n+2k}^{0,0}$, and likewise $\phi_{n,n+k-1} (yy^*)= \widetilde{\phi}_{2n,2n+2k-2}(yy^*)$ is a sum of a finite number of distinct vertices in $F_{2n+2k-2}^{0,0}$. We will show, using induction, that any of the open compact sets corresponding to the vertices appearing in the expression of $\phi_{n,n+k} (xx^*)$ is contained in $T_1$. Our assumption that $\phi_{n, \infty} (xx^*)\ne0$ would then imply that $T_1\ne \emptyset$. 

Suppose that $\phi_{n,n+k-1} (yy^*) = \sum_v v $ for distinct vertices $v\in F_{2n+2k-2}^{0,0}$ and that each of the open compact subsets of $X$ represented by these vertices is contained in
$$T' :=   (\sigma|_{U_2})^{-1}(W_2 \cap (\sigma|_{U_3})^{-1}(W_3 \cap \cdots \cap (\sigma|_{U_{k-1}})^{-1}(W_{k-1} \cap (\sigma|_{U_k})^{-1}(W_k)) \cdots )).$$

By Lemma \ref{lemma-observations.for.phinm} (ii), we can write $\phi_{n,n+k-1}(e_1) = \sum _{g\in N} g$, where $N$ is a set of blue edges in $F_{2n+2k-2}$ with mutually distinct sources. So we get
\begin{align*}
\phi_{n,n+k-1} (xx^*) & = \phi_{n,n+k-1} (f_1)\Big(\sum_{g\in N} g^*\Big)\Big(\sum_ v v\Big) \Big(\sum_{g\in N} g\Big) \phi_{n,n+k-1} (f_1^*) \\
& = \phi_{n,n+k-1} (f_1)\Big( \sum_{g\in N'} s(g) \Big)  \phi_{n,n+k-1} (f_1^*),
\end{align*}
where $N'$ is a non-empty subset of $N$. Again by Lemma \ref{lemma-observations.for.phinm} (ii), we can write $\widetilde{\phi}_{2n,2n+2k-1}(f_1) = \sum _{h\in L} h^*$, where $L$ is a subset of red edges in $F_{2n+2k-1}$, whose sources are mutually disjoint, and such that all edges in $L$ with the same range belong to the same element of the partition $D$. It follows that
$$\widetilde{\phi}_{2n,2n+2k-1} (xx^*) = \Big(\sum_{h\in L} h^*\Big) \Big( \sum_{g\in N'} s(g)\Big) \Big(\sum_{h\in L} h\Big) = \sum_{h\in L'} s(h),$$
where $L'$ is a subset of $L$. Note that we have $0\ne \phi_{n,n+k} (xx^*)= \sum_{h\in L'} s(h)$, so $L'\ne \emptyset$. Now take any $h\in L'$ and set $Z= s(h)$, $W=r(h)$. There is $g\in N'$ such that $r(h)= s(g)$. Moreover we have that $r(g)= V$, where $V\subseteq T'$ by our induction hypothesis. Since $g$ is a blue edge we have $W=r(h)= s(g)\subseteq V\subseteq T'$, so that $W\subseteq T'$. Since $h$ is a red edge in $F_{2n+2k-1}$, we must have $Z\subseteq \sigma^{-1}(W)$. On the other hand, since $xx^*\le f_1f_1^*\le r(f_1)=U_1$, we have that $s(h)\le \phi_{n,n+k} (xx^*)\le \phi_{n,n+k}(r(f_1))$, and we conclude that $Z\subseteq U_1$. Similarly $e_1^*yy^*e_1\le s(e_1)= W_1$ and thus $s(g)\le \phi_{n,n+k-1}(e_1^*yy^*e_1)\le \phi_{n,n+k-1}(s(e_1))$ implies that $W\subseteq W_1$. We thus have that 
$$W\subseteq W_1\cap T'\qquad \text{and} \qquad  Z\subseteq U_1 \cap \sigma^{-1}(W) \subseteq  U_1 \cap \sigma^{-1}(W_1\cap T'), $$
so we conclude that $Z\subseteq (\sigma|_{U_1})^{-1}(W_1\cap T') = T_1$, as desired.

The similar proof of $ii)$ is left to the reader.
\end{proof}

We can now state our result on canonical forms.

\begin{theorem}\label{theorem-four.types.theorem}
Each element $a \in \calA_{\infty}$ can be represented as $a = \phi_{n,\infty}(b)$ for some $n \ge 0$ and $b \in \calA_{2n}$ such that $b$ is a finite linear combination of the following types of elements:
\begin{enumerate}[(A),leftmargin=0.7cm]
\item Vertices $p_U$, with $U \in \calP_{2n}$.
\item Terms of the form $x = f_1e_1^* \cdots f_ke_k^*$ as in Lemma \ref{lemma-images.in.AG} (i), with $\varphi_n(x) \ne 0$.
\item Adjoints of the terms from (B).
\item Terms of the form $x = (f_1e_1^*) \cdots (f_{k-1}e_{k-1}^*)(f_kf_{k+1}^*)(e_{k+2}f_{k+2}^*) \cdots (e_{k+l}f_{k+l}^*)$ as in Lemma \ref{lemma-images.in.AG} (ii), where $k, l \ge 1$ and with $\varphi_n(x) \ne 0$. When $k = l = 1$, the corresponding term must be interpreted as $f_1f_2^*$, where $f_1, f_2$ are distinct red edges with the same source.
\end{enumerate}
Note that the family (D) is closed under adjoints.
\end{theorem}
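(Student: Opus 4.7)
The starting point is that $\calA_\infty = \varinjlim (\calA_{2n},\phi_n)$ by Lemma~\ref{lemma-ind.limit.OK}, so the plan is first to write any $a \in \calA_\infty$ as $\phi_{m,\infty}(c)$ for some $c \in \calA_{2m}$. Using the presentation $\calA_{2m} \cong LV_K(F_{2m},D^{2m})$ from Proposition~\ref{proposition-presentation.corner}, I would expand $c$ as a finite $K$-linear combination of vertex projections $p_v$ and monomials of the form $M = \tau_{(e_1,f_1)}\cdots\tau_{(e_k,f_k)} = e_1 f_1^* e_2 f_2^* \cdots e_k f_k^*$. Exhaustive use of (SCK1) at each joint either collapses $M$ (when $f_i = e_{i+1}$ lie in the same partition class) or kills it (when $f_i \ne e_{i+1}$ lie in the same class). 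The surviving non-zero ``reduced'' monomials are those where, at every joint $i<k$, the edges $f_i$ and $e_{i+1}$ lie in the two distinct sets $B_{r(f_i)}, R_{r(f_i)}$ of the separation $D^{2m}_{r(f_i)}$ --- i.e., one is blue and the other red.

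I would then classify each reduced $M$ by its color sequence $(c(e_i),c(f_i))_{i=1}^k$, under the constraint $c(f_i)\ne c(e_{i+1})$ for $i<k$. Three cases correspond directly to the canonical forms in the theorem: all pairs equal to $(R,B)$ give type~(B); all pairs $(B,R)$ give type~(C); and the pattern $(R,B)^{k-1}(R,R)(B,R)^{l-1}$ with distinct red edges in the unique middle $(R,R)$-factor gives type~(D). The remaining ``bad'' patterns involve either a diagonal factor $\tau_{(e,e)}$ --- the case $c(e_i)=c(f_i)=B$ forces $e_i = f_i$ by the second part of condition~(d) in Definition~\ref{definition-L.separated.Bratteli}, while $c(e_i)=c(f_i)=R$ with $e_i=f_i$ is a manifest projection --- or else more than one non-diagonal $(R,R)$-block (which by the joint constraint cannot be adjacent, and so must be separated by intermediate non-$(R,R)$ pieces).

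To eliminate ``bad'' factors I would use the direct computation, based on the formulas in Proposition~\ref{proposition-graph.homo}, that $\phi_m(ee^*) = \sum_{\ol{e}\in X(e)} s(\ol{e}) \in \calA_{2m+2}$, a sum of type-(A) vertices at the next level, where $X(e)\in D_{s(e)}$ is $B_{s(e)}$ when $e$ is blue, or the appropriate red class $R(e)$ when $e$ is red. Splitting an afflicted monomial as $M = M_1 \cdot (e_i e_i^*) \cdot M_2$ at a diagonal position and applying $\phi_m$ yields $\sum_{\ol{e}} \phi_m(M_1) \cdot s(\ol{e}) \cdot \phi_m(M_2)$, a sum of products of two strictly shorter reduced monomials joined by a vertex projection that absorbs naturally into the neighboring factors. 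An induction on the number of bad factors (and on the length $k$) then expresses $\phi_{m,m+N}(M)$, for $N$ large enough, as a $K$-linear combination of types~(A)-(D) in $\calA_{2m+2N}$.

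The hard part will be the final recombination step: after splitting and inductively re-expressing $\phi_m(M_1)$ and $\phi_m(M_2)$ as linear combinations of types~(A)-(D), one must verify that their product across a vertex again lies in the span of types~(A)-(D). To handle this I would transport the identity to the Steinberg algebra via Lemma~\ref{lemma-images.in.AG}: the product of a type~(C) cylinder and a type~(B) cylinder over a common vertex is a cylinder of the form $Z(U,k,l,V)$ and therefore fits either the type~(D) template (when $k,l \ge 1$) or one of the simpler templates (A)-(C) in degenerate cases. Pulling the cylinder identity back to $\calA_\infty$ then relies on Lemma~\ref{lemma-injectivity.on.projections} (partial injectivity of $\varphi$) together with Lemma~\ref{lemma-tame.aftersteps}, which together guarantee that after enough applications of $\phi$ each required canonical representative exists and is uniquely determined by its image in $A_K(\calG(X,\sigma))$.
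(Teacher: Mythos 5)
Your first three paragraphs reproduce the paper's argument: expand $c \in \calA_{2m}$ into monomials in the factors $ef^*$, collapse joints via (SCK1), observe that the only factor/joint colour patterns that cannot be collapsed are exactly those of types (B), (C) and (D), and dispose of diagonal factors $ee^*$ by applying $\phi_m$, which turns them into sums of distinct vertices by Lemma \ref{lemma-tame.aftersteps}. Up to that point the proposal is correct and coincides with the paper's proof.

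The final paragraph, however, contains a genuine flaw. The ``recombination'' difficulty you isolate neither requires nor can legitimately use a detour through $A_K(\calG(X,\sigma))$. First, it is unnecessary: when a diagonal factor $e_ie_i^*$ is excised, the vertex projections it leaves behind are absorbed by the neighbouring factors, and the newly created joint between (the images of) $f_{i-1}$ and $e_{i+1}$ is between two edges of the \emph{same} colour --- both must differ in colour from $e_i$ because the original word was joint-reduced, and there are only two colours --- so (SCK1) collapses it into a single factor and the total number of factors strictly decreases. Hence an induction on word length closes entirely inside the algebras $\calA_{2n}$; one never has to multiply two already-canonical forms, which is exactly how the paper argues. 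Second, and more seriously, your proposed fix is circular: ``pulling the cylinder identity back to $\calA_{\infty}$'' amounts to asserting that $\varphi$ is injective on the span of canonical terms, but the injectivity of $\varphi$ is precisely the content of Theorem \ref{theorem-main.iso.theorem}, whose proof \emph{uses} Theorem \ref{theorem-four.types.theorem}. Lemma \ref{lemma-injectivity.on.projections} only states that an individual term of type (B) or (D) survives in $\calA_{\infty}$ if and only if its image under $\varphi_n$ is nonzero; it gives no control over linear relations among several such terms, so it cannot be used to transport an identity of characteristic functions back to an identity in $\calA_{\infty}$. Replacing the last paragraph by the length induction just described completes the proof.
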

\begin{proof} Let $a$ be a nonzero element of $\calA_{\infty}$. There exists $m \ge 0$ and $c \in \calA_{2m}$ such that $\phi_{m,\infty}(c) = a$. The element $c$ will be a linear combination of vertices $p_U$, for $U \in \calP_{2m}$, and terms which are products of elements $ef^*$ , where $e$ and $f$ are red or blue edges such that $s(e) = s(f) \in F_{2m}^{0,1}$.

Observe that a term $ee^*$, where $e$ is either a blue or a red edge, can be simplified by applying $\phi_m$, since $\phi_m(ee^*)$ is a sum of distinct vertices in $\calA_{2m+2}$ by Lemma \ref{lemma-tame.aftersteps}, so of the form (A). We can thus assume that we do not have terms of the form $ee^*$ in our product. Moreover, since $s(e_1) = s(e_2)$ implies $e_1 = e_2$ for $e_1,e_2$ blue edges, we may, in view of the above, assume that no terms $e_1e_2^*$, with both $e_1$ and $e_2$ blue edges, appear in the product.

Suppose that we have a product $(fe^*)(gh^*)$ such that $e, g$ are both blue edges. By our assumptions, we necessarily have that both $f$ and $h$ are red edges. Then we get that
$$(fe^*)(gh^*) = \delta_{e,g}(fh^*).$$
So either the product is $0$ or it can be simplified to a product containing a term $fh^*$, with both $f$ and $h$ red edges. By the previous paragraph, we can assume that $f \ne h$. We can thus simplify these terms.

Suppose now that we have a product $(ef^*)(gh^*)$ such that $f, g$ are both red edges. Then we get, as before,
$$(ef^*)(gh^*) = \delta _{f,g}(eh^*).$$
If both $e$ and $h$ are red edges, then we can assume that they satisfy $e \ne h$ (otherwise we can reduce the length of the word by applying $\phi_m$). If $e$ and $h$ are both blue edges, then either $eh^* = 0$ or $e = h$. In the latter case, we can simplify by applying $\phi_m$. If $e$ is blue and $h$ is red the product will be either $0$ or of the form $eh^*$, and similarly when $e$ is red and $h$ is blue. Hence these terms can be simplified too. 

In conclusion, the only products that cannot be simplified are those where all the factors are of the form $e_if_i^*$, where all $e_i$ are blue and all $f_i$ are red (terms of the form (C)), or all $e_i$ are red and all $f_i$ are blue (terms of the form (B)), and those of the form (D).
\end{proof}

We are now ready to show the main result of this section.

\begin{theorem}\label{theorem-main.iso.theorem}
With the previous notation, we have that the map
$$\varphi \colon \calA_{\infty} \to A_K(\calG(X,\sigma))$$
is a graded $*$-isomorphism.
\end{theorem}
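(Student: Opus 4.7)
The strategy is to prove surjectivity and injectivity separately, leveraging the canonical form from Theorem \ref{theorem-four.types.theorem} and the explicit description of $\varphi_n$ on type (A)--(D) elements given in Lemma \ref{lemma-images.in.AG}. Since $\varphi$ is already a graded $*$-homomorphism by Proposition \ref{proposition-morph.compat.steinberg}, the content of the theorem is bijectivity.

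For surjectivity, I would start from the fact that $A_K(\calG(X,\sigma))$ is spanned by characteristic functions $\mathds{1}_{Z(U,m,n,V)}$ where $U,V \subseteq X$ are clopen and $\sigma^m|_U$, $\sigma^n|_V$ are homeomorphisms onto their images with $\sigma^m(U) = \sigma^n(V)$. Given such a basic bisection, I would choose $N$ large enough so that $U$, $V$, and $\sigma^m(U)=\sigma^n(V)$ each decompose as disjoint unions of sets belonging, respectively, to $\calP_{2N}$, $\calP_{2N}$, and an appropriate partition of higher index. The key fact is that the $\sigma$-refinement conditions of Definition \ref{definition-sigma.refined.partition} together with the diameter condition guarantee that such refinements always exist. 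One then decomposes $\mathds{1}_{Z(U,m,n,V)}$ as a finite sum of characteristic functions of basic bisections $Z(U_\alpha,m,n,V_\beta)$ where $\sigma^m(U_\alpha) = \sigma^n(V_\beta)$ is a single element of the appropriate partition, and each of these lies in $\varphi_N(\calA_{2N})$ via a term of type (A) (when $m=n=0$), type (B) or (C) (when exactly one of $m,n$ equals zero), or type (D) (when both $m,n \geq 1$), by direct comparison with the formulas of Lemma \ref{lemma-images.in.AG}.

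For injectivity, suppose $a \in \calA_\infty$ satisfies $\varphi(a)=0$. By Theorem \ref{theorem-four.types.theorem}, write $a = \phi_{n,\infty}(b)$ with $b = \sum_i \lambda_i b_i$ a finite $K$-linear combination of type (A)--(D) elements $b_i \in \calA_{2n}$, each satisfying $\varphi_n(b_i) \neq 0$. I would then show that after applying a sufficiently deep connecting map $\phi_{n,N}$, each $\phi_{n,N}(b_i)$ can be rewritten as a sum of type (A)--(D) terms in $\calA_{2N}$ whose $\varphi_N$-images are characteristic functions of basic bisections built from the finer partitions $\calP_{2N}$, $\calP_{2N+1}$. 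The pivotal observation is that by Lemma \ref{lemma-observations.for.phinm} and the formulas of Proposition \ref{proposition-graph.homo} and Lemma \ref{lemma-technical.1}, this refinement only replaces a single generator by a sum whose summands correspond to genuinely finer compact open bisections. By choosing $N$ large enough simultaneously for all finitely many indices $i$, the collection of all resulting bisections (across all $i$) can be arranged so that any two are either disjoint or coincide. Then the vanishing $\sum_i \lambda_i \varphi_N(\phi_{n,N}(b_i))= \varphi(a)=0$, combined with the linear independence of characteristic functions of distinct nonempty compact open sets in $A_K(\calG(X,\sigma))$, forces the scalar appearing in front of each distinct bisection to vanish; tracing back through the partial injectivity statement of Lemma \ref{lemma-injectivity.on.projections}, this forces $\lambda_i = 0$ for all $i$, hence $a=0$.

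The main obstacle will be executing the refinement step in the injectivity argument cleanly: one must verify that the $\varphi$-images of the refined terms behave as characteristic functions of compact open bisections which are either disjoint or equal, and this requires a careful bookkeeping between the combinatorial structure of $(F,D)$ (encoded by rombs and by conditions (f)--(g) of Definition \ref{definition-L.separated.Bratteli}) and the geometric structure of the partitions $\calP_m$ (namely the wedge-refinement and $\sigma$-image-refinement conditions). Once that correspondence is secured at the level of supports, the remainder is linear algebra: distinct nonempty compact open bisections give linearly independent characteristic functions in $A_K(\calG(X,\sigma))$, which provides exactly the tool needed to recover each individual coefficient $\lambda_i$ from the hypothesis $\varphi(a)=0$.
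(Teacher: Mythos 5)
Your surjectivity sketch is essentially the paper's argument (the paper streamlines it by factoring $Z(U,k,l,V) = Z(U,k,0,\sigma^k(U))\cdot Z(V,l,0,\sigma^l(V))^{-1}$ and then only decomposing the one-sided bisection $Z(U,k,0,\sigma^k(U))$ into images of type (B) terms via the orbit-tracing decomposition of $U$ over sequences in $\calP_{2n+1}^k$), so I have no real objection there.

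The injectivity argument, however, has a genuine gap. What is actually needed — and what the paper proves — is that the supports of two \emph{distinct} canonical-form terms of the same degree are \emph{pairwise disjoint}, and this is established directly at level $n$, without any further refinement, by tracing a hypothetical common point $(x,k-l,y)$ through the partitions $\calP_{2n}$ and $\calP_{2n+1}$ to force all the data $(U_i,W_i)$ of the two terms to coincide (the case of unequal lengths $k'=k+r$, $r>0$, being excluded by a contradiction with the condition $f'_{k'}\ne f'_{k'+1}$ in the type (D) normal form). Your plan only delivers the weaker conclusion that after refining, the resulting bisections are ``either disjoint or coincide,'' and this is not enough: if a refined bisection coming from $b_i$ coincides with one coming from $b_j$, the coefficient extracted from $\varphi(a)=0$ at that bisection is $\lambda_i+\lambda_j$, not $\lambda_i$ and $\lambda_j$ separately, so you cannot conclude each $\lambda_i=0$. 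Relatedly, the linear-independence statement you invoke is false as stated: characteristic functions of \emph{distinct} nonempty compact open sets are not linearly independent (e.g.\ $\mathds{1}_{B_1}+\mathds{1}_{B_2}-\mathds{1}_{B_1\cup B_2}-\mathds{1}_{B_1\cap B_2}=0$); one needs pairwise disjointness. So the heart of the injectivity proof — the combinatorial verification that distinct canonical terms have genuinely disjoint supports — is precisely the step your proposal defers to ``careful bookkeeping'' and never supplies. A secondary omission: the paper first uses gradedness to reduce to homogeneous elements, so that only terms with $k-l=k'-l'$ need to be compared; without that reduction your collection of terms mixes degrees and the disjointness analysis does not even apply uniformly.
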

\begin{proof}
In view of Proposition \ref{proposition-morph.compat.steinberg}, it only remains to show that $\varphi $ is bijective. We first show it is surjective. For this, it is enough to check that all the characteristic functions of sets of the form $Z(U,k,l,V)$ belong to the image of $\varphi$. Suppose therefore that $U$ and $V$ are open compact subsets of $X$ such that $\sigma^k|_{U}$ and $\sigma^l|_V$ are injective and that $\sigma^k(U) = \sigma^l(V)$. Observe that
$$Z(U,k,l,V) = Z(U,k,0,\sigma^k(U)) \cdot Z(V,l,0,\sigma^l(V))^{-1},$$
so that it suffices to show that $Z(U,k,0,\sigma^k(U))$ belongs to the image of $\varphi$.

There is some $n \ge 0$ such that $U$ can be written as a disjoint union of sets in $\calP_{2n}$. Hence we can assume that $U \in \calP_{2n}$. We claim that
\begin{equation}\label{equation-decomposition.of.U}
U = \bigsqcup_{(W_1,\dots,W_k) \in \calB} (\sigma|_{U_1})^{-1}(W_1 \cap (\sigma|_{U_2})^{-1}(W_2 \cap \cdots \cap (\sigma|_{U_{k-1}})^{-1}(W_{k-1} \cap (\sigma|_{U_k})^{-1}(W_k)) \cdots )),
\end{equation}
where
$$\calB = \{(W_1,\dots ,W_k) \in \calP_{2n+1}^k \mid W_i \subseteq \sigma(U_i) \text{ and } W_i \subseteq U_{i+1} \text{ for } 1 \leq i \leq k, \text{ for unique } U_i\text{'s} \text{ with } U_1 = U\}.$$
First, we observe that $T \cap T' = \emptyset$, where $T$ and $T'$ are sets as in the right-hand side of \eqref{equation-decomposition.of.U} corresponding to different choices $(W_1, \dots, W_k)$ and $(W_1', \dots, W_k')$. Indeed, suppose that $T \cap T' \ne \emptyset$ and take $x \in T \cap T'$. Then $x \in U_1$ and $\sigma (x) \in W_1 \cap W_1'$. Since $W_1, W_1'$ belong to the partition $\calP_{2n+1}$ of $X$, we obtain that $W_1 = W_1'$. This in turn implies that $U_2 = U_2'$. Now note that $\sigma^2(x) \in W_2 \cap W_2'$ and thus $W_2 = W_2'$ and $U_3 = U_3'$. Continuing in this way, we obtain that $(W_1, \dots, W_k) = (W_1', \dots, W_k')$.

Take now $x \in U = U_1$. We have
$$\sigma(U_1) = \bigsqcup_{\{ W \in \calP_{2n+1} \mid W \subseteq \sigma(U_1)\}} W,$$
so that there is a unique $W_1 \in \calP_{2n+1}$ such that $\sigma(x) \in W_1$. Then there is a unique $U_2 \in \calP_{2n}$ such that $W_1 \subseteq U_2$. Since $\sigma(x) \in U_2$, by the same argument we get $W_2 \in \calP_{2n+1}$ such that $W_2 \subseteq \sigma(U_2)$ and $\sigma^2(x) = \sigma(\sigma(x)) \in W_2$. In this way, we build a family of sets $W_1, \dots, W_k$ and $U_2, \dots, U_{k}$ satisfying the specified conditions and such that
$$x \in (\sigma|_{U_1})^{-1}(W_1 \cap (\sigma|_{U_2})^{-1}(W_2 \cap \cdots \cap (\sigma|_{U_{k-1}})^{-1}(W_{k-1} \cap (\sigma|_{U_k})^{-1}(W_k)) \cdots )).$$
This shows our claim.

Note that each $(U_i,W_{i},U_{i+1})$ is an $n$-triple in the sense of Definition \ref{definition-n.triple}, hence it gives rise to an element $f_ie_i^*$ in $\calA_{2n}$, where $f_i\in R_{U_i}$, $e_i\in B_{U_{i+1}}$ and $\varphi _n (f_ie_i^*)= \mathds{1}_{Z((\sigma|_{U_i})^{-1}(W_i),1,0,W_i)}$. Therefore we see that each sequence $(W_1, W_2, \dots, W_k) \in \calP_{2n+1}^k$ satisfying the conditions stated above, and such that the corresponding set $T$ is non-empty, determines an element $x = f_1e_1^* \cdots f_ke_k^*$ of the form (B) from Theorem \ref{theorem-four.types.theorem}, with $\varphi_n(x) = \mathds{1}_{Z(T,k,0,\sigma^k(T))}$ by Lemma \ref{lemma-images.in.AG}. Using this observation and \eqref{equation-decomposition.of.U}, the surjectivity of $\varphi$ follows.\\

Finally we show that $\varphi$ is injective. Since $\varphi$ is a graded homomorphism, it is enough to show that it is injective in each graded component. Suppose that we have a nonzero homogeneous element, say $a$, in $\calA_{\infty}$. By Theorem \ref{theorem-four.types.theorem}, there exists some $n \ge 0$ and an element $b \in \calA_{2n}$ which is a linear combination of terms of the forms (A), (B), (C), (D) such that $a = \phi_{n,\infty}(b)$. Of course we can assume that $\phi_{n,\infty}$ does not annihilate any of these terms, which by Lemma \ref{lemma-injectivity.on.projections} implies that $\varphi_n$ also does not annihilate any of them. It is sufficient to show that the compact open bisections corresponding to two distinct terms of the same degree are disjoint.  We will treat only the case where the two terms are of the form (D), i.e. satisfy the conditions in Lemma \ref{lemma-images.in.AG} (ii). The cases where other types of terms arise are treated in a similar way.

Suppose that we have two terms
$$\alpha = (f_1e_1^*) \cdots (f_{k-1}e_{k-1}^*) (f_k f_{k+1}^*) (e_{k+2}f_{k+2}^*) \cdots (e_{k+l}f_{k+l}^*),$$
$$\beta = f'_1(e'_1)^* \cdots f'_{k'-1}(e'_{k'-1})^* f'_{k'}(f'_{k'+1})^* e'_{k'+2}(f'_{k'+2})^* \cdots e'_{k'+l'}(f'_{k'+l'})^*$$
in $\calA_{2n}$ as in Lemma \ref{lemma-images.in.AG} (ii), and such that $k-l = k'-l'$. We will assume that $k' \ge k$, so that $k' = k + r$ and $l' = l + r$ for some $r \ge 0$. Let
$$W_1, \dots, W_k = W_{k+1}, W_{k+2}, \dots, W_{k+l} \quad \text{and} \quad U_1, \dots, U_k, U_{k+1}, \dots, U_{k+l}$$
be the sets associated to $\alpha$, and
$$W_1', \dots, W'_{k'} = W'_{k'+1}, W'_{k'+2}, \dots, W'_{k'+l'} \quad \text{and} \quad U'_1, \dots, U'_{k'}, U'_{k'+1}, \dots, U'_{k'+l'}$$
be the sets associated to $\beta$. Let $T_1$, $T_2$, $T$, $U = (\sigma|_{T_1})^{-k}(T)$ and $V = (\sigma|_{T_2})^{-l}(T)$ be the sets associated to $\alpha$ defined in Lemma \ref{lemma-images.in.AG} (ii), and let $T_1'$, $T_2'$, $T'$, $U' = (\sigma|_{T'_1})^{-k'}(T')$ and $V' = (\sigma|_{T'_2})^{-l'}(T')$ be the ones corresponding to $\beta$. Suppose, by way of contradiction, that
$$Z(U,k,l,V) \cap Z(U',k',l',V') \ne \emptyset,$$
and let $(x,k-l,y)$ be an element in this intersection. Then $x \in U \cap U'$, $y \in V \cap V'$, and $\sigma^k(x) = \sigma^l(y)$, which automatically implies $\sigma^{k'}(x) = \sigma^{l'}(y)$ because $k' = k + r$ and $l' = l + r$ with $r \ge 0$. Since $U \cap U' \neq \emptyset$, necessarily $U_1 = U_1'$, and also $U_{k+l} = U'_{k'+l'}$ because $V \cap V' \neq \emptyset$. Now observe that $\sigma(x) \in W_1 \cap W_1'$, so that $W_1 = W_1'$ and $U_2 = U_2'$. Then we get $\sigma ^2(x) \in W_2 \cap W_2'$, so $W_2 = W_2'$ and $U_3 = U_3'$, and so on up to $U_{k} = U'_{k}$ and $W_k = W_k'$. Starting at the other end, we have $\sigma(y) \in W_{k+l} \cap W'_{k'+l'}$, so that $W_{k+l} = W'_{k'+l'}$ and $U_{k+l-1} = U'_{k'+l'-1}$. Then since $\sigma^2(y) \in W_{k+l-1} \cap W'_{k'+l'-1}$, we get $W_{k+l-1} = W'_{k'+l'-1}$ and $U_{k+l-2} = U'_{k'+l'-2}$. Continuing in this way, we get that $W_{k+l-i} = W'_{k'+l'-i}$ and $U_{k+l-i}= U'_{k'+l'-i}$ for all $0 \leq i \leq l-1$. We now distinguish two cases.
\begin{enumerate}[$\cdot$,leftmargin=0.5cm]
\item In case $r = 0$ we conclude that all sets $W_i$ and all sets $U_i$ agree with $W_i'$ and $U_i'$ respectively, so that $\alpha = \beta$. 
\item If $r > 0$, then observe that
$$\sigma^k(x) = \sigma^l(y) \in W_k' \cap W'_{k'+l'-l+1},$$
so that $W_k' = W'_{k'+l'-l+1} = W'_{k+2r+1}$. Therefore $U'_{k+1} = U'_{k+2r}$. Since $\sigma^{k+1}(x) = \sigma^{l+1}(y) \in W'_{k+1}\cap W'_{k+2r}$, we get that $W'_{k+1} = W'_{k+2r}$ and thus $U'_{k+2} = U'_{k+2r-1}$. Continuing in this way, we see that $W'_{k+r-1} = W'_{k+r+2}$ and thus $U'_{k+r} = U'_{k+r+1}$, that is, we obtain that $U'_{k'} = U'_{k'+1}$. But since $f'_{k'} \ne f'_{k'+1}$ and $s(f'_{k'}) = s(f'_{k'+1})$, we necessarily have that $U'_{k'} = r(f'_{k'}) \ne r(f'_{k'+1})= U'_{k'+1}$, and we obtain a contradiction. Therefore the case where $r > 0$ is not possible.
\end{enumerate}
The conclusion is that, whenever $Z(U,k,l,V) \cap Z(U',k',l',V') \neq \emptyset$ holds, we necessarily have $\alpha = \beta$, as desired. The proof is complete.
\end{proof}

We consider now the situation for the corresponding $C^*$-algebras. Writing $C_n= C^*(F_n,D^n)$, we have that, by definition, $C_n$ is the enveloping $C^*$-algebra of $L_n$. Hence we get a sequence of $*$-homomorphisms $C_n\to C_{n+1}$, also denoted by $\tilde{\phi}_n$. Set 
$C_{\infty} =  \varinjlim (C_n,\tilde{\phi}_n )$,  where the above inductive limit is the colimit of $\{(C_n,\tilde{\phi}_n)\}$ in the cateory of $C^*$-algebras. 
We now set
$$\calB_{2n} := p_{2n}C_{2n}p_{2n}\quad \text{ and } \quad \calB_{\infty} = \varinjlim (\calB_{2n},\phi_n),$$
where, as before, $\phi_n$ is the restriction of $\tilde{\phi}_{2n+1}\circ \tilde{\phi}_{2n}$ to $\calB_{2n}$. 

We can now state our main result for $C^*$-algebras.

\begin{theorem}\label{theorem-isomorphism.for.Cstars}
With the above notation, there is a unique $*$-isomorphism $\ol{\varphi} \colon \calB_{\infty} \to C^*(\calG (X,\sigma))$ such that the following diagram
\begin{equation}\label{equation-commutative.diagram.Ainfty.Binfty}
\vcenter{
	\xymatrix{
		\calA _{\infty} \ar@{->}[r]^{\kern-20pt{\varphi}}_{\kern-20pt{\cong}} \ar@{->}[d] & A_{\C}(\calG (X,\sigma))   \ar@{->}[d] \\
		\calB_{\infty}  \ar@{->}[r]^{\kern-20pt{\ol{\varphi}}}_{\kern-20pt{\cong}}  & C^*(\calG (X,\sigma))
	}
}
\end{equation}
is commutative.
\end{theorem}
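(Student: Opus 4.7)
The plan is to lift the $*$-isomorphism $\varphi$ from the Steinberg-algebra level to the $C^*$-algebraic level by extending each $\varphi_n$ to $\calB_{2n}$, assembling these extensions into an inductive-limit map, and then producing a two-sided inverse via the appropriate universal properties.

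First, I construct $\ol{\varphi}_n\colon \calB_{2n} \to C^*(\calG(X,\sigma))$ extending $\varphi_n$. I define assignments on the generators of $L_{2n} = L_{\C}(F_{2n},D^{2n})$ mirroring the definition of $\varphi_n$: each vertex $v = U \in \calP_{2n}$ is sent to $\mathds{1}_U$, each vertex $w = W \in \calP_{2n+1}$ to $\mathds{1}_W$, each blue edge $e\colon W\to U$ to $\mathds{1}_{Z(W,0,0,U)}$, and each red edge $f\colon W\to U$ to $\mathds{1}_{Z((\sigma|_U)^{-1}(W),1,0,W)}$. The separated-graph relations (V), (E), (SCK1), (SCK2) reduce to partition-theoretic identities in $C^*(\calG(X,\sigma))$; the key (SCK2) relations at $v = U\in F_{2n}^{0,0}$ amount to the decompositions $U = \bigsqcup \{W\in \calP_{2n+1} \mid W\subseteq U\}$ on the blue side and $U = \bigsqcup \{(\sigma|_U)^{-1}(W) \mid W \in \calP_{2n+1},\,W\subseteq \sigma(U)\}$ on the red side, both of which hold because $\{\calP_n\}_{n\ge 0}$ is a $\sigma$-refined sequence of partitions. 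The universal property of $C_{2n}$ as the enveloping $C^*$-algebra of $L_{2n}$ (Definition~\ref{definition-C*.alg}) then yields a $*$-homomorphism $\pi_n\colon C_{2n} \to C^*(\calG(X,\sigma))$, and I take $\ol{\varphi}_n$ to be its restriction to the corner $\calB_{2n} = p_{2n}C_{2n}p_{2n}$. By construction, $\ol{\varphi}_n$ restricts to $\varphi_n$ on $\calA_{2n}$.

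Next, I assemble the $\ol{\varphi}_n$ into the desired map. By Proposition~\ref{proposition-morph.compat.steinberg}, $\varphi_{n+1}\circ \phi_n = \varphi_n$ on the $*$-subalgebra $\calA_{2n}\subseteq \calB_{2n}$, and $\calA_{2n}$ is dense in $\calB_{2n}$ because $L_{2n}$ is dense in $C_{2n}$ and compression by $p_{2n}$ is continuous. Since both $\ol{\varphi}_n$ and $\ol{\varphi}_{n+1}\circ \phi_n$ are continuous $*$-homomorphisms coinciding on this dense subalgebra, they agree on all of $\calB_{2n}$. The universal property of the $C^*$-algebraic inductive limit then produces a unique $*$-homomorphism $\ol{\varphi}\colon \calB_{\infty} \to C^*(\calG(X,\sigma))$ making diagram~\eqref{equation-commutative.diagram.Ainfty.Binfty} commute. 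The image of $\calA_{\infty}$ is dense in $\calB_{\infty}$ by the same density argument applied level by level, which also establishes the uniqueness clause of the theorem.

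To finish, I produce an inverse to $\ol{\varphi}$. Composing $\varphi^{-1}$ from Theorem~\ref{theorem-main.iso.theorem} with the inclusion $\calA_{\infty}\hookrightarrow \calB_{\infty}$ gives a $*$-homomorphism $A_{\C}(\calG(X,\sigma))\to \calB_{\infty}$ into the $C^*$-algebra $\calB_{\infty}$. The universal property of $C^*(\calG(X,\sigma))$ as the enveloping $C^*$-algebra of $A_{\C}(\calG(X,\sigma))$ (Definition~\ref{definition-Steinberg.algebra}) extends this to a $*$-homomorphism $\psi\colon C^*(\calG(X,\sigma))\to \calB_{\infty}$. The compositions $\psi\circ \ol{\varphi}$ and $\ol{\varphi}\circ \psi$ agree with the respective identity maps on the dense subalgebras $\calA_{\infty}$ and $A_{\C}(\calG(X,\sigma))$ by the commutativity of~\eqref{equation-commutative.diagram.Ainfty.Binfty}, and hence are the identity maps by continuity. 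The main technical obstacle is the Step~1 verification of (SCK1) and (SCK2) for the red generators inside $C^*(\calG(X,\sigma))$, which relies critically on injectivity of $\sigma$ on each $U\in \calP_{2n}$ together with Lemma~\ref{lemma-group.of.f}; once these identities are checked, the remainder is a standard chain of universal-property and density arguments.
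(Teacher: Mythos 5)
Your proposal is correct, but it takes a genuinely different route from the paper. The paper's proof is a short structural argument: it first observes that $C_\infty=\varinjlim C_n$ is the enveloping $C^*$-algebra of $L_\infty=\varinjlim L_n$, identifies $\calB_\infty$ with the corner $pC_\infty p$, and then uses the partial isometry $s=\wt{\phi}_{0,\infty}(\sum_{e\in B}e)$ with $ss^*=p$, $s^*s=1-p$ to realize $L_\infty\cong pL_\infty p\otimes M_2(\C)$ compatibly with $C_\infty\cong pC_\infty p\otimes M_2(\C)$; since the enveloping-$C^*$-algebra property passes through full corners, $\calB_\infty$ is the enveloping $C^*$-algebra of $\calA_\infty$, and the theorem drops out of Theorem \ref{theorem-main.iso.theorem} by functoriality of enveloping $C^*$-algebras. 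You instead build $\ol{\varphi}$ level by level — re-verifying the relations (V), (E), (SCK1), (SCK2) on explicit compact open bisections of $\calG(X,\sigma)$, invoking the universal property of $C^*(F_{2n},D^{2n})$, gluing via the $C^*$-inductive limit using density of $\calA_{2n}$ in $\calB_{2n}$ — and then manufacture a two-sided inverse by extending $\varphi^{-1}$ through the enveloping property of $C^*(\calG(X,\sigma))$ over $A_\C(\calG(X,\sigma))$ and checking both composites on dense $*$-subalgebras. All of these steps are sound (in particular the (SCK2) identities for the red generators do reduce to $U=\bigsqcup_{W\subseteq\sigma(U)}(\sigma|_U)^{-1}(W)$, which holds because $\calP_{2n}^\sigma\precsim\calP_{2n+1}$ and $\sigma|_U$ is injective, and your inverse argument does not actually require injectivity of $\calA_\infty\to\calB_\infty$, only density of its image). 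What the paper's route buys is brevity and the intermediate statement that $\calB_\infty$ is the enveloping $C^*$-algebra of $\calA_\infty$, proved once and for all by the matrix-amplification trick; what your route buys is explicitness and independence from that corner argument, at the cost of redoing relation checks that largely duplicate the verifications already underlying the construction of $\varphi_n$, and of a hands-on inverse construction. You recover the same intermediate statement a posteriori, since $\calB_\infty\cong C^*(\calG(X,\sigma))$ is by definition the enveloping $C^*$-algebra of $A_\C(\calG(X,\sigma))\cong\calA_\infty$.
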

\begin{proof}
It is a simple matter to check that $C_{\infty}$ is the enveloping $C^*$-algebra of $L_{\infty}$.

Just as in Lemma \ref{lemma-ind.limit.OK}, we have a natural $*$-isomorphism $\ol{\lambda} \colon \calB_{\infty} \to pC_{\infty}p$, such that the following diagram 
\begin{equation}\label{equation-commutative.diagram.Ainfty.Binfty.BIS}
\vcenter{
	\xymatrix{
		\calA _{\infty} \ar@{->}[r]^{\kern-7pt{\lambda}}_{\kern-7pt{\cong}} \ar@{->}[d] & pL_{\infty}p   \ar@{->}[d] \\
		\calB_{\infty}  \ar@{->}[r]^{\kern-7pt{\ol{\lambda}}}_{\kern-7pt{\cong}}  & pC_{\infty}p
	}
}
\end{equation} 	
is commutative. Since there is a partial isometry $s\in L_{\infty}$ such that $ss^*= p$ and $s^*s= 1-p$ (take for instance $s= \wt{\phi}_{0,\infty}(\sum_{e\in B} e)$, where $B$ is the set of blue edges in $F^{1,0}$), it follows that $L_{\infty} \cong pL_{\infty}p\otimes M_2(\C)$, and $C_{\infty}\cong pC_{\infty}p\otimes M_2(\C)$ in a compatible way. Since $C_{\infty}$ is the enveloping $C^*$-algebra of $L_{\infty}$, it follows that $pC_{\infty}p$ is the enveloping $C^*$-algebra of $pL_{\infty}p$. We infer from \eqref{equation-commutative.diagram.Ainfty.Binfty.BIS} that $\calB_{\infty}$ is the enveloping $C^*$-algebra of $\calA_{\infty}$. The result follows from this and Theorem \ref{theorem-main.iso.theorem}, because $C^*(\calG (X,\sigma))$ is the enveloping $C^*$-algebra of $A_{\C} (\calG (X,\sigma))$ (see Definition \ref{definition-Steinberg.algebra}).
\end{proof}

\subsection{Functoriality properties}\label{subsection-functoriality}

In this section we show that the Deaconu-Renault construction is indeed a functor from $\textsf{LHomeo}$ to $\textsf{AHGrd}$, where $\textsf{AHGrd}$ is the category whose objects are all the second countable, ample Hausdorff groupoids, and whose morphisms are the continuous groupoid homomorphisms. We further show that, whenever $\psi\colon Y\to X$ is the universal continuous $(E,C)$-equivariant map induced by an $(E,C)$-structure on $Y$, there are induced $*$-homomorphisms at the levels of Steinberg algebras and $C^*$-algebras of the corresponding groupoids, respectively.

The following lemma, which gives the mentioned functoriality, may be well-known to experts.

\begin{lemma}\label{lemma-morphismsOK}
Let $(X,\sigma)$ and $(Y,\rho)$ be objects in $\emph{\textsf{LHomeo}}$, and let $\psi \colon X \to Y$ be an equivariant continuous map. Then the induced map $\psi_* \colon \calG(X,\sigma)\to \calG(Y,\rho)$ defined by
$$\psi_*(x,m-n,y) = (\psi(x),m-n,\psi(y))$$
for $(x,m-n,y) \in \calG(X,\sigma)$, is a morphism in $\emph{\textsf{AHGrd}}$. Hence the Deaconu-Renault groupoid construction can be extended to a functor $\emph{\textsf{DR}} \colon \emph{\textsf{LHomeo}} \to \emph{\textsf{AHGrd}}$.
\end{lemma}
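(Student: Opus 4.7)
The plan is to verify in turn that $\psi_*$ is well-defined, is a groupoid homomorphism, and is continuous; the functoriality then follows almost for free. First, well-definedness: if $(x, m-n, y) \in \calG(X,\sigma)$ then $\sigma^m(x) = \sigma^n(y)$ for the given $m, n$, and applying $\psi$ and using equivariance ($\psi\circ\sigma = \rho\circ\psi$) inductively gives $\rho^m(\psi(x)) = \psi(\sigma^m(x)) = \psi(\sigma^n(y)) = \rho^n(\psi(y))$, placing $(\psi(x), m-n, \psi(y))$ in $\calG(Y,\rho)$. The groupoid homomorphism axioms are immediate: composability is preserved because composable pairs are characterized by equality of middle units $y$, which $\psi$ sends to $\psi(y)$; and the identities $\psi_*((x,p,y)(y,q,z)) = (\psi(x), p+q, \psi(z)) = \psi_*(x,p,y)\psi_*(y,q,z)$ and $\psi_*((x,p,y)^{-1}) = (\psi(y), -p, \psi(x)) = \psi_*(x,p,y)^{-1}$ follow by inspection.

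The main obstacle is continuity, and the subtlety is that cylinders $Z(U,m,n,V)$ are indexed by specific $m, n$ (not just their difference), and the relation $\rho^m(a) = \rho^n(b)$ cylinder in $\calG(Y,\rho)$ need not pull back to the analogous relation in $X$ for the \emph{same} $m, n$. Specifically, given $\gamma_0 = (x_0, p, y_0) \in \psi_*^{-1}(Z(U',m,n,V'))$ with $p = m - n$, there need not exist a cylinder of the form $Z(U,m,n,V)$ containing $\gamma_0$, because $\sigma^m(x_0) = \sigma^n(y_0)$ can fail even though $\psi(\sigma^m(x_0)) = \psi(\sigma^n(y_0))$ holds. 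Instead, one chooses $m_0 \ge m$ and $n_0 \ge n$ with $m_0 - n_0 = p$ and $\sigma^{m_0}(x_0) = \sigma^{n_0}(y_0)$ (which exists by definition of $\calG(X,\sigma)$ and the fact that one may freely increase $(m_0, n_0)$ along the diagonal). Then the basic cylinder $Z(U^*, m_0, n_0, V^*)$ for suitable $U^* \ni x_0, V^* \ni y_0$ contains $\gamma_0$.

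The plan is now to shrink $U^*, V^*$ so that $\psi_*(Z(U^*, m_0, n_0, V^*)) \subseteq Z(U', m, n, V')$. Since $\rho$ is a local homeomorphism, so is $\rho^{m_0 - m} = \rho^{n_0 - n}$, and therefore there is an open neighborhood $W'$ of $z_0 := \rho^m(\psi(x_0)) = \rho^n(\psi(y_0))$ on which $\rho^{m_0 - m}$ is injective. Choose open neighborhoods $U'' \subseteq U'$ of $\psi(x_0)$ with $\rho^m(U'') \subseteq W'$ and $V'' \subseteq V'$ of $\psi(y_0)$ with $\rho^n(V'') \subseteq W'$, and set $U^* := \psi^{-1}(U'')$, $V^* := \psi^{-1}(V'')$. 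For any $(x, p, y) \in Z(U^*, m_0, n_0, V^*)$, equivariance gives $\rho^{m_0}(\psi(x)) = \rho^{n_0}(\psi(y))$; both $\rho^m(\psi(x))$ and $\rho^n(\psi(y))$ lie in $W'$, and $\rho^{m_0 - m}$ sends them to the same point, so by injectivity on $W'$ we conclude $\rho^m(\psi(x)) = \rho^n(\psi(y))$. This shows $\psi_*(Z(U^*, m_0, n_0, V^*)) \subseteq Z(U', m, n, V')$, establishing continuity.

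Finally, the functoriality assertion is formal: $(\id_X)_*$ is visibly the identity on $\calG(X,\sigma)$, and if $\psi_1 \colon X \to Y$ and $\psi_2 \colon Y \to Z$ are composable equivariant continuous maps, then $(\psi_2 \circ \psi_1)_*(x, p, y) = (\psi_2\psi_1(x), p, \psi_2\psi_1(y)) = \psi_{2,*}(\psi_{1,*}(x,p,y))$, so $\textsf{DR}$ extends to a functor $\textsf{LHomeo} \to \textsf{AHGrd}$.
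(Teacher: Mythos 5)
Your proof is correct. The well-definedness, homomorphism, and functoriality checks match what the paper treats as immediate; the only substantive point is continuity, and there your argument takes a genuinely different (though closely related) route. You verify that the preimage of each cylinder $Z(U',m,n,V')$ is open, and the obstacle you correctly isolate is that a point $(x_0,m-n,y_0)$ of the preimage need not satisfy $\sigma^m(x_0)=\sigma^n(y_0)$ for that particular pair $(m,n)$; you repair this by moving up the diagonal to $(m_0,n_0)$ with $\sigma^{m_0}(x_0)=\sigma^{n_0}(y_0)$ and then descending back to level $(m,n)$ via local injectivity of $\rho^{m_0-m}$ near $\rho^m(\psi(x_0))$. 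The paper instead works pointwise at $\alpha=(x,m-n,y)$ with $(m,n)$ chosen so that $\sigma^m(x)=\sigma^n(y)$ (so the index mismatch never arises), shrinks an arbitrary compact open neighborhood $U$ of $\psi_*(\alpha)$ into a bisection $Z(U_1,m,n,U_2)$ on which $\rho^m|_{U_1}$ and $\rho^n|_{U_2}$ are injective, and concludes $\psi_*(Z(U_1',m,n,U_2'))\subseteq U$ by injectivity of the range map on that bisection, where $U_1',U_2'$ are neighborhoods of $x,y$ with $\psi(U_1')\subseteq r(U)$ and $\psi(U_2')\subseteq s(U)$. The same local-injectivity input powers both arguments; yours avoids invoking the bisection property and the openness of $r$ and $s$, at the cost of the explicit index adjustment, while the paper's is shorter because the element's own $(m,n)$ drives the construction. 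Both are complete.
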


\begin{proof}
It is clear that $\psi_*$ is well-defined and an algebraic morphism of groupoids. Thus we only need to show that $\psi_*$ is continuous.

Observe that the restriction of $\psi_*$ to the space of units $X$ agrees with $\psi$, hence it is continuous. Let $\alpha = (x,m-n,y)\in \calG(X,\sigma)$, and let $U$ be an open compact subset of $\calG(Y,\rho)$ containing $\psi_*(\alpha)$. Since $\rho ^i$ is a local homeomorphism for each $i\ge 0$, we can find open neighborhoods $U_1$ of $\psi(x)$ and $U_2$ of $\psi(y)$ such that $\rho^m|_{U_1}$ and $\rho^n|_{U_2}$ are injective. Replacing $U$ with $U \cap Z(U_1,m,n,U_2)$, we can assume that $U \subseteq Z(U_1,m,n,U_2)$, and in particular that $U$ is an open compact bisection. Note that $s(U) \subseteq U_2$ and $r(U) \subseteq U_1$.

By continuity of $\psi$, we can take open neighborhoods $U_1'$ and $U_2'$ of $x$ and $y$ in $X$, respectively, such that $\psi(U_1')\subseteq r(U)$ and $\psi(U_2') \subseteq s(U)$. Then $Z(U_1',m,n,U_2')$ is an open subset of $\calG(X,\sigma)$, and clearly $\alpha \in Z(U_1',m,n,U_2')$. Now if $\gamma \in Z(U_1',m,n,U_2')$, then $r(\psi_*(\gamma)) \in r(U)$, and since $U$ is contained in the bisection $Z(U_1,m,n,U_2)$ we conclude that $\psi_*(\gamma) \in U$. This shows that
$$\psi_*(Z(U_1',m,n,U_2')) \subseteq U,$$
which gives the continuity of $\psi_*$. The last statement follows straightforwardly.
\end{proof}

\begin{notation}\label{notation-set.red.edges}
Let $(E,C)$ be an $l$-diagram, or a layer in an $l$-diagram, and let $v$ be a vertex in $E$ . We will denote by $\sred^{-1}(v)$ the set of red edges $f$ in $E^1$ such that $s(f)= v$.
\end{notation}

If $\calG$ is a groupoid and $x\in \calG^0$, we denote by $\calG x$ the set of elements $\gamma \in \calG$ such that $s(\gamma) = x$.  

\begin{lemma}\label{lemma-preimagesOK}
Let $(Y,\rho) \in \emph{\textsf{LHomeo}}$ and let $(E,C)$ be a generalized finite shift graph. Suppose that we have an $(E,C)$-structure on $Y$, and let $\psi \colon Y \to X$ be the unique $(E,C)$-equivariant continuous map, where $(X,\sigma)$ is the generalized finite shift associated to $(E,C)$. Then for $y\in Y$, the map $\psi_*$ from Lemma \ref{lemma-morphismsOK} induces a bijection from $\calG(Y,\rho) y$ onto $\calG(X,\sigma)\psi(y)$.
\end{lemma}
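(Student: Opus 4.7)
The plan is to prove injectivity and surjectivity of the restriction $\psi_*\colon \calG(Y,\rho)y \to \calG(X,\sigma)\psi(y)$ separately, leveraging two direct consequences of the $(E,C)$-equivariance of $\psi$: (i) $\psi(Z_e)\subseteq H_e$ and $\psi(W_{e,v}^{(k)})\subseteq H_{e,v}^{(k)}$ for all allowable indices; and (ii) each of $\rho|_{W_{e,v}^{(k)}}\colon W_{e,v}^{(k)}\to Z_e$ and $\sigma|_{H_{e,v}^{(k)}}\colon H_{e,v}^{(k)}\to H_e$ is a homeomorphism. Since both $\{Z_e\}_{e\in B}$ and $\{H_e\}_{e\in B}$ are partitions (and similarly for the refined pieces), property (i) implies that for any $y'\in Y$, the block of $\{Z_e\}$ (respectively $\{W_{e,v}^{(k)}\}$) containing $y'$ is uniquely determined by the block of $\{H_e\}$ (respectively $\{H_{e,v}^{(k)}\}$) containing $\psi(y')$.

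For injectivity, I will suppose $(y',k,y),(y'',k,y)\in\calG(Y,\rho)y$ satisfy $\psi(y')=\psi(y'')$. After replacing the exponent pairs witnessing each relation with a common larger pair (applying $\rho$ repeatedly preserves the defining equality), one may assume $\rho^m(y')=\rho^m(y'')$ for a common $m\ge 0$, and then show $y'=y''$ by induction on $m$. The base case $m=0$ is trivial; for $m\ge 1$, the intertwining $\psi\circ\rho=\sigma\circ\psi$ yields $\psi(\rho(y'))=\psi(\rho(y''))$, so the inductive hypothesis applied to $\rho(y')$ and $\rho(y'')$ gives $\rho(y')=\rho(y'')$. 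Observation (i) then places $y'$ and $y''$ in a common $W_{e,v}^{(k)}$, on which $\rho$ is injective by (ii), forcing $y'=y''$.

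For surjectivity, I take $(x,m-n,\psi(y))\in\calG(X,\sigma)\psi(y)$ with $\sigma^m(x)=\sigma^n(\psi(y))=\psi(\rho^n(y))$. Set $z:=\rho^n(y)$ and $x_i:=\sigma^i(x)$ for $0\le i\le m$, so $x_m=\psi(z)$. I construct a sequence $y_m,y_{m-1},\dots,y_0$ in $Y$ with $\psi(y_i)=x_i$ and $\rho(y_{i-1})=y_i$, starting from $y_m:=z$. At the inductive step, let $(e_{i-1},v_{i-1},k_{i-1})$ be the unique triple with $x_{i-1}\in H_{e_{i-1},v_{i-1}}^{(k_{i-1})}$; then $x_i=\sigma(x_{i-1})\in H_{e_{i-1}}$, so by (i), $\psi(y_i)=x_i\in H_{e_{i-1}}$ forces $y_i\in Z_{e_{i-1}}$, and (ii) legitimizes the definition
$$y_{i-1}:=\bigl(\rho|_{W_{e_{i-1},v_{i-1}}^{(k_{i-1})}}\bigr)^{-1}(y_i).$$
Then $\psi(y_{i-1})\in H_{e_{i-1},v_{i-1}}^{(k_{i-1})}$, and $\sigma(\psi(y_{i-1}))=\psi(\rho(y_{i-1}))=\psi(y_i)=x_i=\sigma(x_{i-1})$; since $\sigma$ is injective on $H_{e_{i-1},v_{i-1}}^{(k_{i-1})}$, this forces $\psi(y_{i-1})=x_{i-1}$. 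Setting $y':=y_0$ yields $\rho^m(y')=y_m=\rho^n(y)$ and $\psi(y')=x$, so $\psi_*(y',m-n,y)=(x,m-n,\psi(y))$, as required.

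The main obstacle is verifying that each reverse-lift step in the surjectivity argument is well-posed — specifically, that $y_i$ lands in $Z_{e_{i-1}}$ so the inverse homeomorphism $(\rho|_{W_{e_{i-1},v_{i-1}}^{(k_{i-1})}})^{-1}$ can be applied at $y_i$ — but this reduces, via the disjointness of the partitions $\{H_e\}$ and $\{Z_e\}$ together with observation (i), to the single equivariance inclusion $\psi(Z_e)\subseteq H_e$.
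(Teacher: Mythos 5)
Your proof is correct, and it takes a genuinely different route from the paper's. The paper parametrizes the source fibre $\calG(Y,\rho)y$ explicitly by admissible words $a_{f_1}^{-1}\cdots a_{f_m}^{-1}a_{g_n}\cdots a_{g_1}$ in the free group on the red-edge alphabet (via Construction \ref{construction-GFS.concretedescription} and the configurational description of $\psi$ in Remark \ref{remark-universal.map.described}), does the same for $\calG(X,\sigma)\psi(y)$, and then matches the two parametrizations; the injectivity step there is a word-by-word cancellation argument in the configuration space. You instead work directly with the two finite clopen partitions $\{Z_e\}$, $\{W_{e,v}^{(k)}\}$ of $Y$ and $\{H_e\}$, $\{H_{e,v}^{(k)}\}$ of $X$, using only the equivariance inclusions and the injectivity of $\rho$ (resp.\ $\sigma$) on partition blocks: injectivity by induction on a common exponent $m$ (your reduction to a common $m$ is valid, since the defining relation $\rho^{m}(y')=\rho^{n}(y)$ is stable under post-composition with $\rho$), and surjectivity by lifting the $\sigma$-orbit of $x$ backwards one block at a time, with the well-posedness of each reverse lift secured exactly as you say by $\psi(Z_e)\subseteq H_e$ and disjointness. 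Your argument is more elementary and avoids the free-group machinery entirely; what the paper's approach buys is the explicit word parametrization of the fibres, which it reuses immediately afterwards (e.g.\ in establishing properness of $\psi_*$ in Theorem \ref{theorem-extending.to.Steinberg}). Both proofs are complete.
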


\begin{proof}
We start by fixing the notation. We will denote the natural $(E,C)$-structure on $X$ by $(\{H_e\}_{e\in B}, \{H_f\}_{f\in R})$, where $B$ and $R$ are the sets of blue and red edges of $E$, respectively. For $f \in R$, we have a homeomorphism $\theta_f\colon H_f \to H_e$, where $e$ is the unique blue edge such that $s(e) = s(f)$, and where $\theta _f$ is just the restriction of $\sigma$ to $H_f$. We will also denote by $(\{Z_e\}_{e\in B}, \{Z_f\}_{f\in R})$ the open compact sets corresponding to the $(E,C)$-structure on $Y$, and $\eta _f\colon Z_f\to Z_e$ for the corresponding homeomorphisms, which are the restrictions of $\rho$ to $Z_f$. We recall from Definition \ref{definition-universal.for.generalizedSFT} that we have decompositions $Y=\bigsqcup_{e\in B} Z_e= \bigsqcup_{f\in R} Z_f$ such that, for every $v\in E^{0,0}$, we have $\bigsqcup_{e\in B_v} Z_e = \bigsqcup_{f\in R_v} Z_f$, and, for each $f\in R$, $\rho|_{Z_f}$ is injective and $\rho (Z_f)=Z_e$, where $e$ is the unique blue edge such that $s(e)=s(f)$. 

We use the description of the map $\psi \colon Y\to X$ given in Remark \ref{remark-universal.map.described}. It is clear that, given $y\in Y$, the map $\psi_*$ induces a well-defined map $\calG (Y,\rho)y  \to \calG (X,\sigma) \psi (y)$. Observe that
$$\calG(Y,\rho) y = \{ (z, m-n , y) \in \calG(Y,\rho) \mid \rho^m(z) = \rho^n(y)\}.$$
Moreover, when $\rho^m(z)= \rho^n(y)$ and $n,m>0$, we can obviously assume that $\rho^{m-1}(z)\ne \rho^{n-1}(y)$.

We now give a description of the elements $(z,m-n,y)$ as above. Suppose that $y\in Y$ and that $\rho^m(z) = \rho^n(y)$, where $n,m>0$ and $\rho^{m-1}(z) \ne \rho^{n-1}(y)$ (the cases where $n=0$ or $m=0$ are simpler and left to the reader). Let $g_1 \in R$ be the unique red edge such that $y \in Z_{g_1}$. Then we have $\eta_{g_1}(y) = \rho(y) \in Z_{g_2}$ for a unique $g_2 \in R$, and continuing in this way we find a sequence $g_1,\dots, g_n$ of red edges such that $\rho^i(y) \in Z_{g_{i+1}}$ for $0 \leq i \leq n-1$. Now $\rho^n(y) = \rho(\rho^{n-1}(y)) \in \rho(Z_{g_n}) = Z_e$, where $e$ is the unique blue edge such that $s(e)= s(g_n)$. Set $e_m:=e$ for notational convenience. We also have $\rho(\rho^{m-1}(z)) = \rho^m(z) = \rho^n(y)$, and hence there exists a unique $f_{m}\in \sred^{-1}(s(e_m))$ such that $\rho^{m-1}(z) \in Z_{f_m}$. Note that $f_m \ne g_n$ because $\rho^{m-1}(z) \ne \rho^{n-1}(y)$. Now $\rho(\rho^{m-2}(z)) \in Z_{e_{m-1}}$ for a unique blue edge $e_{m-1}$, and so $\rho^{m-2}(z)\in Z_{f_{m-1}}$ for a unique red edge $f_{m-1}\in \sred^{-1}(s(e_{m-1}))$. In this way, we get unique sequences of blue edges $e_1,\dots ,e_m$ and red edges $f_1,\dots , f_m$, such that $f_i \in \sred^{-1} (s(e_i))$ for $1 \leq i \leq m$, and $\rho^i(z)\in Z_{f_{i+1}}$ for $0 \leq i \leq m-1$. Note that the sequence of red edges $f_1,\dots , f_m,g_1,\dots ,g_n$ satisfies the conditions given in Construction \ref{construction-GFS.concretedescription}, so that we may consider the element $a_{f_1}^{-1} \cdots  a_{f_m}^{-1} a_{g_n} \cdots a_{g_1}\in \mathbb F (A)$.

Now observe that
\begin{equation}\label{equation-relation.for.z.and.thetas}
\eta_{f_1}^{-1}\circ \cdots \circ \eta_{f_m}^{-1}\circ \eta_{g_n} \circ \cdots \circ \eta_{g_1}(y) = z,
\end{equation}
so that certainly $y\in \text{Dom}(\eta_{f_1}^{-1}\circ \cdots \circ \eta_{f_m}^{-1}\circ \eta_{g_n} \circ \cdots \circ \eta_{g_1})$.
Conversely, given a sequence $f_1,\dots , f_m,g_1,\dots ,g_n$ satisfying the conditions given in Construction \ref{construction-GFS.concretedescription} and such that $y \in \text{Dom}(\eta_{f_1}^{-1} \circ \cdots \circ \eta_{f_m}^{-1} \circ \eta_{g_n} \circ \cdots \circ \eta_{g_1})$, the element $z$ defined by equation \eqref{equation-relation.for.z.and.thetas} satisfies that $\rho^m(z)= \rho^n(y)$ and that $\rho^{m-1}(z)\ne \rho^{n-1}(y)$, thus $(z,m-n,y)\in \calG(Y,\rho)$.

We now have, for $(z,m-n,y) \in \calG(Y,\rho)y$ and corresponding red edges $f_1,\dots ,f_m,g_1,\dots , g_n$ as above,
$$a_{f_1}^{-1}  \cdots  a_{f_m}^{-1}  a_{g_n}  \cdots  a_{g_1} \in \psi (y)$$
and, by the $(E,C)$-equivariance of $\psi$,
$$\psi(z) =  \theta_{f_1}^{-1}\circ \cdots \circ \theta_{f_m}^{-1}\circ \theta_{g_n} \circ \cdots \circ \theta_{g_1} (\psi (y)).$$
To show that $\psi_*|_{\calG(Y,\rho) y}$ is surjective onto $\calG(X,\sigma) \psi(y)$, observe that the above argument applied to $(X,\sigma)$ gives that any element in $\calG(X,\sigma) \psi(y)$ is of the form
$$(\theta_{f_1}^{-1}\circ \cdots \circ \theta_{f_m}^{-1}\circ \theta_{g_n} \circ \cdots \circ \theta_{g_1} (\psi(y)), m-n, \psi (y)),$$
where $a_{f_1}^{-1}\cdots a_{f_m}^{-1}a_{g_n}\cdots a_{g_1} \in \psi (y)$. Then by the description of $\psi$ in Remark \ref{remark-universal.map.described} we have that
$$y \in  \text{Dom}(\eta_{f_1}^{-1}\circ \cdots \circ \eta_{f_m}^{-1}\circ \eta_{g_n} \circ \cdots \circ \eta_{g_1}),$$
and then $\psi_*(z,m-n,y) = (\theta_{f_1}^{-1}\circ \cdots \circ \theta_{f_m}^{-1}\circ \theta_{g_n} \circ \cdots \circ \theta_{g_1} (\psi(y)),m-n, \psi (y))$, where $z$ is given by equation \eqref{equation-relation.for.z.and.thetas}. So we get the surjectivity.

Finally, to show that $\psi_*|_{\calG(Y,\rho) y}$ is injective, suppose that
\begin{align*}
\psi_* (z_1,m-n,y) &  = (\theta_{f_1}^{-1}\circ \cdots \circ \theta_{f_m}^{-1}\circ \theta_{g_n} \circ \cdots \circ \theta_{g_1}(\psi(y)), m-n, \psi (y)) \\
& = (\theta_{f_1'}^{-1}\circ \cdots \circ \theta_{f_{m'}'}^{-1}\circ \theta_{g_{n'}'} \circ \cdots \circ \theta_{g_1'}(\psi(y)), m'-n', \psi (y)) \\
& = \psi _* (z_2,m'-n',y),
\end{align*}
for sequences of red edges $f_1,\dots ,f_m,g_1,\dots , g_n$ and $f_1',\dots ,f_{m'}',g_1',\dots ,g_{n'}'$ satisfying the conditions in Construction \ref{construction-GFS.concretedescription}, and which correspond to $(z_1, m-n,y), (z_2,m'-n',y)\in \calG (Y,\rho) y$ respectively. We have $m-n= m'-n'$ and we may assume that $m'\ge m$, so that setting $m'=m+k$ we have $k\ge 0$ and $n'= n+k$. We emphasize that the conditions in Construction \ref{construction-GFS.concretedescription} include in particular that $f_m\ne g_n$ and that $f'_{m'}\ne g'_{n'}$.

Setting $\xi = \psi (y)$, we have
$$\theta_{f_1}^{-1}\circ \cdots \circ \theta_{f_m}^{-1}\circ \theta_{g_n} \circ \cdots \circ \theta_{g_1} (\xi)
=  \theta_{f_1'}^{-1}\circ \cdots \circ \theta_{f_{m'}'}^{-1}\circ \theta_{g_{n'}'} \circ \cdots \circ \theta_{g_1'} (\xi).$$
Hence $\xi \in \text{Dom} (\theta_{g_1}) \cap \text{Dom} (\theta_{g'_1}) = H_{g_1}\cap H_{g_1'}$ and so $g_1= g_1'$. Now we get
$$\theta_{f_1}^{-1}\circ \cdots \circ \theta_{f_m}^{-1}\circ \theta_{g_n} \circ \cdots \circ \theta_{g_2} (\xi_1)
 =  \theta_{f_1'}^{-1}\circ \cdots \circ \theta_{f_{m'}'}^{-1}\circ \theta_{g_{n'}'} \circ \cdots \circ \theta_{g_2'} (\xi_1),$$
where $\xi _1 = \theta_{g_1} (\xi) = \sigma (\xi)$. The same argument gives that $g_2= g_2'$. Proceeding in this way, we obtain that $g_i= g_i'$ for $1 \leq i \leq n$. Set $\xi_n = \theta_{g_n}\circ \cdots \circ \theta_{g_1}(\xi)$. Then we have
$$\xi' := \theta_{f_1}^{-1}\circ \cdots \circ \theta_{f_m}^{-1} (\xi_n) = \theta_{f_1'}^{-1}\circ \cdots \circ \theta _{f'_m}^{-1}\circ \theta_{f'_{m+1}}^{-1} \circ \cdots \circ \theta_{f_{m+k}'}^{-1}\circ \theta_{g_{n+k}'} \circ \cdots \circ \theta_{g_{n+1}'} (\xi_n).$$
Hence $\xi' \in H_{f_1} \cap H_{f_1'}$, and thus $f_1 = f_1'$. Applying the same argument to $\theta_{f_1}(\xi')$, we get that $f_2 = f_2'$, and continuing in this way, that $f_i = f_i'$ for $1 \leq i \leq m$. Therefore, we get
$$\xi_n = \theta_{f_m}\circ \cdots \circ \theta_{f_1} (\xi') = \theta_{f'_{m+1}}^{-1} \circ \cdots \circ \theta_{f_{m+k}'}^{-1}\circ \theta_{g_{n+k}'} \circ \cdots \circ \theta_{g_{n+1}'} (\xi_n).$$
If $k>0$, then we have
$$\theta_{f_{m+k}'}\circ \cdots \circ \theta_{f_{m+1}'} (\xi_n) =  \theta_{g_{n+k}'} \circ \cdots \circ \theta_{g_{n+1}'} (\xi_n),$$
and proceeding as above we get $g_{n+i}' = f_{n+i}'$ for $1 \leq i \leq k$. In particular we have $f'_{m'} = f_{m+k}' = g_{n+k}' = g'_{n'}$, which is a contradiction with the condition $f'_{m'} \ne g'_{n'}$. Therefore $k = 0$, and thus we have $m' = m$, $n' = n$, $f_i = f_i'$ for $1 \leq i \leq m$, and $g_j = g_j'$ for $1 \leq j \leq n$. Hence we obtain that $(z_1, m-n, y) = (z_2, m'-n',y)$, as desired.
\end{proof}

\begin{theorem}\label{theorem-extending.to.Steinberg}
Let $(Y,\rho) \in \emph{\textsf{LHomeo}}$ and let $(E,C)$ be a generalized finite shift graph. Suppose that we have an $(E,C)$-structure on $Y$, and let $\psi \colon Y \to X$ be the unique $(E,C)$-equivariant continuous map, where $(X,\sigma)$ is the generalized finite shift associated to $(E,C)$. Then the associated groupoid homomorphism
$$\psi_* \colon \calG(Y,\rho) \to \calG(X,\sigma)$$
given in Lemma \ref{lemma-morphismsOK} induces a $K$-algebra $*$-homomorphism
$$\psi^*\colon A_K(\calG(X,\sigma))\to A_K(\calG(Y,\rho))$$
by the rule $\psi^*(f)= f\circ \psi_*$ for $f\in A_K(\calG(X,\sigma))$.
\end{theorem}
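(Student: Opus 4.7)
The plan is to establish the theorem by verifying three claims in succession: that for every $f \in A_K(\calG(X,\sigma))$ the pullback $f \circ \psi_*$ is locally constant with compact support on $\calG(Y,\rho)$, so that $\psi^*$ has image in $A_K(\calG(Y,\rho))$; that $\psi^*$ is $K$-linear and respects the involution; and that $\psi^*$ respects the convolution product. Since $A_K(\calG(X,\sigma))$ is spanned as a $K$-module by characteristic functions $\mathds{1}_U$ of compact open bisections $U \subseteq \calG(X,\sigma)$, and $\psi^*(\mathds{1}_U) = \mathds{1}_{\psi_*^{-1}(U)}$, the first claim reduces to showing that $\psi_*^{-1}(U)$ is itself a compact open bisection in $\calG(Y,\rho)$ for every such $U$.

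The main technical step is this reduction, and it uses Lemma \ref{lemma-preimagesOK} in an essential way. Openness of $\psi_*^{-1}(U)$ is immediate from continuity of $\psi_*$ (Lemma \ref{lemma-morphismsOK}). To see that $\psi_*^{-1}(U)$ is a bisection, suppose $\gamma_1,\gamma_2 \in \psi_*^{-1}(U)$ satisfy $s(\gamma_1) = s(\gamma_2) = y$. Then $\psi_*(\gamma_1)$ and $\psi_*(\gamma_2)$ both lie in $U \cap \calG(X,\sigma)\psi(y)$, which is a singleton because $U$ is a bisection, so $\psi_*(\gamma_1) = \psi_*(\gamma_2)$. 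The fiber bijection $\psi_*\colon \calG(Y,\rho)\, y \to \calG(X,\sigma)\,\psi(y)$ from Lemma \ref{lemma-preimagesOK} then forces $\gamma_1 = \gamma_2$, and a symmetric argument works for $r$. Since $s$ is a local homeomorphism on $\calG(Y,\rho)$, any injective restriction of $s$ to an open set is a homeomorphism onto its image; a further application of the fiber bijection yields $s(\psi_*^{-1}(U)) = \psi^{-1}(s(U))$. Because $s(U)$ is clopen in $X$ and $\psi$ is continuous, $\psi^{-1}(s(U))$ is clopen in the compact space $Y$, hence compact. Therefore $\psi_*^{-1}(U)$ is a compact open bisection, as required.

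Once the first claim is secured, the rest is routine. $K$-linearity is immediate from the definition of $\psi^*$. The involution check reads $\psi^*(f^*)(\gamma) = f^*(\psi_*(\gamma)) = f(\psi_*(\gamma)^{-1})^* = f(\psi_*(\gamma^{-1}))^* = \psi^*(f)^*(\gamma)$, using only that $\psi_*$ commutes with inversion. For multiplicativity, expanding the convolution formula and using that $\psi_*$ is a groupoid morphism gives $(\psi^*(f) * \psi^*(g))(\gamma) = \sum_{\beta \in \calG(Y,\rho)\, s(\gamma)} f(\psi_*(\gamma)\,\psi_*(\beta)^{-1})\, g(\psi_*(\beta))$; substituting $\alpha = \psi_*(\beta)$ and invoking Lemma \ref{lemma-preimagesOK} to identify $\calG(Y,\rho)\, s(\gamma)$ bijectively with $\calG(X,\sigma)\, s(\psi_*(\gamma))$ rewrites this as $(f*g)(\psi_*(\gamma)) = \psi^*(f*g)(\gamma)$. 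The main obstacle is genuinely the bisection-and-compactness step for $\psi_*^{-1}(U)$: a generic continuous groupoid homomorphism between ample Hausdorff groupoids need not induce a pullback on Steinberg algebras, and it is precisely the fiber bijectivity coming from the universal property of the generalized finite shift (Theorem \ref{theorem-universal.(E,C).structure}), as packaged in Lemma \ref{lemma-preimagesOK}, that forces the preimage of each compact open bisection to remain a compact open bisection.
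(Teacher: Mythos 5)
Your proposal is correct and follows essentially the same route as the paper: both arguments rest on the fiber bijection of Lemma \ref{lemma-preimagesOK}, first to control preimages of compact open sets (the paper phrases this as properness of $\psi_*$ and verifies it on the cylinder sets $Z(U,m,n,V)$ by showing the source map identifies the relevant preimage with $\psi^{-1}(V)$ — precisely your identification $s(\psi_*^{-1}(U))=\psi^{-1}(s(U))$), and then to perform the identical reindexing in the convolution computation. Your observation that $\psi_*^{-1}(U)$ is itself a compact open bisection for every compact open bisection $U$ is a slightly sharper packaging of the well-definedness step than the paper's properness-plus-continuity argument, but the underlying mechanism is the same.
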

\begin{proof}
We first show that $\psi_*$ is proper. For this it is enough to shown that $\psi_*^{-1}(Z(U,m,n,V))$ is compact, where $U$ and $V$ are open compact subsets of $X$ such that $\sigma^m|_{U}$ and $\sigma^n|_{V}$ are injective, and $\sigma^m(U)= \sigma^n(V)$.

For such open compact subset $Z(U,m,n,V)$ of $\calG(X,\sigma)$, we claim that the set $Z(\psi^{-1}(U),m,n,\psi^{-1}(V))$ is a bisection homeomorphic to $\psi^{-1}(V)$, and thus a compact set. For this, we show that the source map $s$ induces a bijection
$$Z(\psi^{-1}(U),m,n,\psi^{-1}(V)) \to \psi^{-1}(V).$$
Let $y \in \psi^{-1}(V)$. Then $\psi(y) \in V$ and since $\sigma^m(U)= \sigma^n(V)$, there is $x \in U$ such that $\sigma^m(x)= \sigma^n(\psi(y))$. Hence $(x,m-n,\psi (y))\in \calG(X,\sigma)\psi (y)$. Now by the proof of surjectivity in Lemma \ref{lemma-preimagesOK}, there is $z\in Y$ such that $\psi_*(z,m-n,y) = (x,m-n,\psi (y))$. But then $(z,m-n,y)\in Z(\psi^{-1}(U),m,n,\psi^{-1}(V))$ and $s(z,m-n,y) = y$, showing surjectivity.

To show injectivity, suppose that $(z_1,m-n,y), (z_2,m-n,y)$ are two elements in $Z(\psi^{-1}(U),m,n,\psi^{-1}(V))$. Then, $(\psi (z_1),m-n,\psi (y)), (\psi (z_2),m-n,\psi (y))\in Z(U,m,n,V)$, and thus $\sigma^m (\psi (z_1)) = \sigma^n (\psi (y)) = \sigma^m(\psi (z_2))$. But $\sigma^m|_{U}$ is injective, so $\psi (z_1)= \psi (z_2)$. By Lemma \ref{lemma-preimagesOK}, we conclude that $z_1=z_2$.

We have completed the proof that $\psi_*$ is proper. Hence, since it is also continuous by Lemma \ref{lemma-morphismsOK}, we obtain that $\psi^*$ is a well-defined linear map. We have to show that $\psi^*$ preserves the convolution product. For $f,g \in A_K(\calG(X,\sigma))$ and an element $\gamma = (z,m-n,y) \in \calG(Y,\rho)$, we have
\begin{align*}
(\psi^*(f) * \psi^*(g))(\gamma) & = \sum_{\alpha \in \calG(Y,\rho)y} \psi^*(f)(\gamma \alpha^{-1}) \psi^*(g) (\alpha) \\
& = \sum_{\alpha \in \calG(Y,\rho)y} f(\psi_*(\gamma) \psi_*(\alpha)^{-1}) g (\psi_*(\alpha)) \\
& = \sum_{\beta \in \calG(X,\sigma)\psi(y)} f(\psi_*(\gamma) \beta^{-1}) g(\beta) = \psi^*(f*g)(\gamma),
\end{align*}
where we have used Lemma \ref{lemma-preimagesOK} in the third equality. We thus have $\psi^*(f) * \psi^*(g) = \psi^* (f * g)$. It is straightforward to check that $\psi^*$ is a $*$-map.
\end{proof}

We also have a similar result for groupoid $C^*$-algebras.

\begin{theorem}\label{theorem-extending.to.Cstar}
Let $(Y,\rho) \in \emph{\textsf{LHomeo}}$ and $(X,\sigma)$ and $\psi$ be as in Theorem \ref{theorem-extending.to.Steinberg}. Then the groupoid homomorphism $\psi_* \colon \calG(Y,\rho) \to \calG(X,\sigma)$ induces a $*$-homomorphism
$$\psi^* \colon C^*(\calG(X,\sigma))\to C^*(\calG(Y,\rho))$$
such that $\psi^*(f) = f \circ \psi_*$ for $f \in C_c(\calG(X,\sigma))$.
\end{theorem}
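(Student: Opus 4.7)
The plan is to exploit the universal property of the full groupoid $C^*$-algebra from Definition \ref{definition-Steinberg.algebra}, which says that any $*$-homomorphism from $A_{\mathbb{C}}(\calG)$ to a $C^*$-algebra extends uniquely to $C^*(\calG)$. First, I would apply Theorem \ref{theorem-extending.to.Steinberg} with coefficients $(\mathbb{C},\text{conj})$ to obtain a $*$-algebra homomorphism $\psi^*_A \colon A_{\mathbb{C}}(\calG(X,\sigma)) \to A_{\mathbb{C}}(\calG(Y,\rho))$ given by $\psi^*_A(f) = f \circ \psi_*$. Composing with the canonical inclusion $A_{\mathbb{C}}(\calG(Y,\rho)) \hookrightarrow C^*(\calG(Y,\rho))$ yields a $*$-homomorphism into a $C^*$-algebra, which by the stated universal property extends uniquely to a $*$-homomorphism
$$\psi^* \colon C^*(\calG(X,\sigma)) \longrightarrow C^*(\calG(Y,\rho)).$$

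Next, I would verify that the formula $\psi^*(f) = f \circ \psi_*$ holds on all of $C_c(\calG(X,\sigma))$, not merely on $A_{\mathbb{C}}(\calG(X,\sigma))$. Since $\psi_*$ was shown to be continuous and proper in the proof of Theorem \ref{theorem-extending.to.Steinberg}, the assignment $T(f) := f \circ \psi_*$ defines a map $C_c(\calG(X,\sigma)) \to C_c(\calG(Y,\rho))$. The very same computation used in Theorem \ref{theorem-extending.to.Steinberg}—relying only on the fibrewise bijection supplied by Lemma \ref{lemma-preimagesOK}—shows that $T$ is a $*$-algebra homomorphism. By construction, $T$ and $\psi^*$ agree on $A_{\mathbb{C}}(\calG(X,\sigma))$.

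The identification $\psi^*|_{C_c(\calG(X,\sigma))} = T$ would then follow by a density/continuity argument. Given $f \in C_c(\calG(X,\sigma))$, I would choose a sequence $f_n \in A_{\mathbb{C}}(\calG(X,\sigma))$ whose supports lie in a common compact set $K$ and such that $f_n \to f$ uniformly; such approximations exist because $\calG(X,\sigma)$ is a Hausdorff ample groupoid, so locally constant compactly supported functions are uniformly dense in continuous compactly supported ones. The supports of the $T(f_n)$ all lie inside the compact set $\psi_*^{-1}(K)$, and $T(f_n) \to T(f)$ uniformly; hence $T(f_n) \to T(f)$ in the inductive limit topology and therefore in $C^*$-norm, using the standard estimate $\|\cdot\|_{C^*} \le \|\cdot\|_I$ for groupoid $C^*$-algebras. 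Simultaneously $f_n \to f$ in $C^*$-norm by the same argument, and since $\psi^*$ is automatically norm-decreasing as a $*$-homomorphism of $C^*$-algebras, $\psi^*(f_n) \to \psi^*(f)$ in $C^*$-norm. Combined with $\psi^*(f_n) = T(f_n)$, uniqueness of limits gives $\psi^*(f) = T(f) = f \circ \psi_*$.

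The only genuinely non-formal step is the last paragraph: the passage from the algebraic statement (Theorem \ref{theorem-extending.to.Steinberg}) to the analytic one. The main obstacle lies in justifying that the map $T$ and the extension $\psi^*$ coincide on $C_c$, and this requires the two standard but crucial facts for ample Hausdorff groupoid $C^*$-algebras, namely that $A_{\mathbb{C}}(\calG)$ is dense in $C_c(\calG)$ in the inductive limit topology, and that inductive-limit-topology convergence dominates the $C^*$-norm via the $I$-norm. Once these are in hand, the argument is essentially formal.
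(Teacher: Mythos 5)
Your proposal is correct. The paper's primary argument is shorter: it observes that the same computation as in Theorem \ref{theorem-extending.to.Steinberg} (continuity and properness of $\psi_*$ plus the fibrewise bijection of Lemma \ref{lemma-preimagesOK}) already yields a $*$-homomorphism $C_c(\calG(X,\sigma)) \to C_c(\calG(Y,\rho))$, and then invokes the fact that $C^*(\calG)$ is the enveloping $C^*$-algebra of $C_c(\calG)$ for second-countable locally compact Hausdorff \'etale groupoids (citing \cite[Theorem 9.2.3 and Lemma 9.2.4]{SSW2020}); with that universal property the formula $\psi^*(f)=f\circ\psi_*$ on $C_c$ holds by construction and no density argument is needed. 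Your route is the one the paper mentions only as an ``alternative'' --- extend from $A_{\C}(\calG(X,\sigma))$ using the enveloping property over the Steinberg algebra --- and you have correctly noticed that this route leaves a genuine (if routine) gap, namely that the extension is a priori only known to agree with $f\mapsto f\circ\psi_*$ on locally constant functions; your approximation argument (common compact support, uniform convergence, $\|\cdot\|_{C^*}\le\|\cdot\|_I$, and $\mathrm{supp}(T(f_n))\subseteq\psi_*^{-1}(K)$ via properness) closes it correctly. What the paper's route buys is brevity; what yours buys is that it makes explicit the analytic content that the paper's one-line ``alternatively'' elides.
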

\begin{proof}
As in the proof of Theorem \ref{theorem-extending.to.Steinberg}, we see that $\psi^*$ gives a $*$-homomorphism $C_c(\calG(X,\sigma)) \to C_c(\calG(Y,\rho))$. Since $C^*(\calG)$ is the enveloping $C^*$-algebra of $C_c(\calG)$ for any second-countable locally compact Hausdorff \'etale groupoid $\calG$ (see \cite[Theorem 9.2.3 and Lemma 9.2.4]{SSW2020}), we conclude the result. Alternatively, we may also use that $C^*(\calG)$ is the enveloping $C^*$-algebra of $A_{\C}(\calG)$, and use Theorem \ref{theorem-extending.to.Steinberg}.
\end{proof}

\subsection{Algebras as colimits of generalized finite shift algebras}\label{subsection-functors.continuous}

We now consider the situation treated at the end of Section \ref{section-general.shifts.finite.type}, so that $(X,\sigma) \in \textsf{LHomeo}$, $(F,D)$ is a fixed $l$-diagram for $(X,\sigma)$, and we set $(E(n),C(n)) = (F_{2n},D^{2n})$ for all $n \geq 0$. Recall that $(E(n),C(n))$ is a generalized finite shift graph, with associated generalized finite shift $(X_n,\sigma_n)$. We denote by $(F(n),D(n))$ the canonical resolution of $(E(n),C(n))$, which is an $l$-diagram for $(X_n,\sigma_n)$. By Theorem \ref{theorem-inverse.limit}, we have $(X,\sigma) = \varprojlim (X_n,\sigma_n)$, and the inverse limit maps $\psi_n \colon X \to X_n$ and the connecting maps $\psi_{n,n+1} \colon X_{n+1} \to X_n$ are universal maps induced by the corresponding $(E(n),C(n))$-structures. It therefore follows from Theorems \ref{theorem-extending.to.Steinberg} and \ref{theorem-extending.to.Cstar} that we have directed systems
$$(A_K(\calG(X_n,\sigma_n)),\psi_{n,n+1}^*) \quad \text{ and } \quad (C^*(\calG(X_n,\sigma_n)),\psi_{n,n+1}^*)$$
together with compatible $*$-homomorphisms
$$ A_K(\calG(X_n,\sigma_n)) \overset{\psi_n^*}{\longrightarrow} A_K(\calG(X,\sigma)) \quad \text{ and } \quad C^*(\calG(X_n,\sigma_n)) \overset{\psi_n^*}{\longrightarrow} C^*(\calG(X,\sigma)).$$
We show in this section that indeed $(A_K(\calG (X,\sigma)),\psi_n^*)$ and $(C^*(\calG (X,\sigma)),\psi_n^*)$ are the colimits of the above directed systems in the categories of $*$-algebras and $C^*$-algebras, respectively.\\

We introduce some useful notation. Recall the definitions of the $*$-algebra $\calA_{\infty}$ and the $C^*$-algebra $\calB_{\infty}$ associated to the $l$-diagram $(F,D)$ from Subsection \ref{subsection-general.approx}.

Given a generalized finite shift graph $(E,C)$, we identify $LV_K(E,C)$ with the corner $VL_K(E,C)V$, where $V=\sum_{v\in E^{0,0}} v \in L_K(E,C)$, see Proposition \ref{proposition-presentation.corner}. Thinking of $V$ as a projection in $C^*(E,C)$, we set $C^*V(E,C):= VC^*(E,C)V$.

\begin{notation}\label{notation-LVab}
Let $(E,C)$ be a generalized finite shift graph. We denote by $\LVab_K(E,C)$ the universal tame $*$-algebra associated to the canonical generating set of partial isometries of $LV_K(E,C)$. Similarly $\calO V(E,C)$ will denote the universal tame $C^*$-algebra associated to $C^*V(E,C)$. By using \cite[Lemma 4.3]{Ara2022}, we will identify $\LVab_K(E,C)$ with the algebra $\calA_{\infty}$ of the canonical resolution of $(E,C)$. Moreover, the arguments in \cite[Lemma 4.3]{Ara2022} apply {\it verbatim} to the $C^*$-algebra counterparts, therefore we will also identify $\calO V(E,C)$ with the $\calB_{\infty}$ $C^*$-algebra associated to the canonical resolution of $(E,C)$.
\end{notation}

Applying Theorem \ref{theorem-main.iso.theorem} to $(F(n),D(n))$ we obtain a canonical $*$-isomorphism
$$ \varphi^{(n)}\colon \LVab_K(E(n),C(n)) = \calA_{\infty}^{(n)} \longrightarrow A_K(\calG(X_n,\sigma_n)),$$
where $\LVab_K(E(n),C(n)) = \calA_{\infty}^{(n)} = \varinjlim_i p_{2i}^{(n)} L_K(F(n)_{2i},D(n)^{2i})p_{2i}^{(n)}$ is the $\calA_{\infty}$ algebra corresponding to $(F(n),D(n))$ (see Notation \ref{notation-LVab}). In particular, since the first layer of $(F(n),D(n))$ is $(E(n),C(n))$, we have the $*$-homomorphism
$$\varphi^{(n)}_0\colon LV_K(E(n),C(n)) \to A_K(\calG(X_n,\sigma_n))$$
which satisfies 
\begin{equation}\label{equation-main.varphi.property}
\varphi^{(n)}_0 = \varphi^{(n)} \circ \pi_n,
\end{equation}
where $\pi _n \colon LV_K(E(n),C(n)) \to \LVab_K(E(n),C(n))$ is the canonical projection map.

In a completely analogous fashion, using Theorem \ref{theorem-isomorphism.for.Cstars}, we get $*$-isomorphisms
$$\ol{\varphi}^{(n)} \colon \calO V(E(n),C(n)) = \calB_{\infty}^{(n)}\longrightarrow C^*(\calG (X_n,\sigma_n))$$
satisfying corresponding properties.

We can now obtain the following result, which links the two developments in Subsections \ref{subsection-general.approx} and \ref{subsection-functoriality}.

\begin{theorem}\label{theorem-combined.FINAL.RESULT}
Let $(X,\sigma)\in \emph{\textsf{LHomeo}}$ and let $(F,D)$ be an $l$-diagram for $(X,\sigma)$. With the above notation, for each $n \ge 0$ there exists a unique $*$-homomorphism $\phi_n^{\mathrm{ab}} \colon \LVab_K(E(n),C(n)) \to \LVab_K(E(n+1),C(n+1))$ such that the following diagram
\begin{equation}\label{equation-commutative.diagram.FINAL}
\vcenter{
	\xymatrix{
	LV_K(E(n),C(n)) \ar@{->}[r]^{\pi_n} \ar@{->}[d]^{\phi_n} & \LVab_K(E(n),C(n)) \ar@{->}[d]^{\phi_n^{\rm ab}} \ar@{->}[r]^{\kern5pt{\varphi^{(n)}}}_{\kern5pt{\cong}} & A_K(\calG (X_n,\sigma_n))  \ar@{->}[d]^{\psi_{n,n+1}^*} \\
	LV_K(E(n+1),C(n+1)) \ar@{->}[r]^{\pi_{n+1}} \ar@{->}[d]^{\phi_{n+1,\infty}} & \LVab_K(E(n+1),C(n+1)) \ar@{->}[r]^{\kern10pt{\varphi^{(n+1)}}}_{\kern10pt{\cong}} \ar@{->}[d]^{\phi_{n+1,\infty}^{{\rm ab}}} & A_K(\calG (X_{n+1},\sigma_{n+1})) \ar@{->}[d]^{\psi_{n+1}^*} \\
	\calA_{\infty} \ar@{->}[r]^{{\rm id}} & \calA_{\infty} \ar@{->}[r]^{\kern-15pt{\varphi}}_{\kern-15pt{\cong}} & A_K(\calG (X,\sigma))  
	}
}
\end{equation}
is commutative, where $\pi_n \colon LV_K(E(n),C(n))\to \LVab_K(E(n),C(n))$ are the canonical quotient maps. Hence we get $*$-isomorphisms
\begin{equation}\label{equation-colimits.FINAL}
A_K( \calG (X,\sigma)) \cong \calA_{\infty} \cong \varinjlim \LVab_K(E(n),C(n))  \quad \text{ and } \quad A_K(\calG (X,\sigma))\cong \varinjlim A_K(\calG (X_n,\sigma_n)).
\end{equation}
Similarly, we have $*$-isomorphisms
\begin{equation}\label{equation-colimits.Cstar.FINAL}
C^*(\calG (X,\sigma)) \cong \calB_{\infty} \cong \varinjlim \OV (E(n),C(n))  \quad \text{ and } \quad C^*(\calG (X,\sigma))\cong \varinjlim C^*(\calG (X_n,\sigma_n)).
\end{equation}
\end{theorem}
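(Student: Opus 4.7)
The plan is to build the maps $\phi_n^{\mathrm{ab}}$ by the universal property, verify commutativity of the diagram \eqref{equation-commutative.diagram.FINAL} (the real content being the rightmost squares), and then read off the colimit descriptions by transporting through the identifications already established in Theorems \ref{theorem-main.iso.theorem}, \ref{theorem-isomorphism.for.Cstars} and \ref{theorem-tame.at.infinity}.

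\emph{Step 1: Definition of $\phi_n^{\mathrm{ab}}$.} The map $\phi_n \colon LV_K(E(n),C(n)) \to LV_K(E(n+1),C(n+1))$ is the one from Proposition \ref{proposition-graph.homo}, restricted to the corner (we use the identification $LV_K(E(n),C(n)) = p_{2n} L_K(F_{2n},D^{2n})p_{2n} = \calA_{2n}$ of Proposition \ref{proposition-presentation.corner}). To factor $\pi_{n+1}\circ \phi_n$ through $\pi_n$, it suffices, by the universal property defining $\LVab_K(E(n),C(n))$, to check that the images of the canonical generators $P_V\sqcup T$ of $LV_K(E(n),C(n))$ form a tame set inside $\LVab_K(E(n+1),C(n+1))$. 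But these images are (sums of) words in the canonical generators of $LV_K(E(n+1),C(n+1))$, so once mapped into $\LVab_K(E(n+1),C(n+1))$ they lie in the multiplicative $*$-semigroup generated by a tame set of partial isometries; hence all the associated source projections $e(\cdot)$ commute. This yields $\phi_n^{\mathrm{ab}}$ uniquely, and uniqueness immediately gives the cocycle identity $\phi_{n+1,\infty}^{\mathrm{ab}}\circ \phi_n^{\mathrm{ab}} = \phi_{n,\infty}^{\mathrm{ab}}$.

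\emph{Step 2: Commutativity of \eqref{equation-commutative.diagram.FINAL}.} The upper-left square is the defining property of $\phi_n^{\mathrm{ab}}$, while the middle squares commute because of \eqref{equation-main.varphi.property} and the analogous factorisation for $\varphi$. The central point is the right-hand squares; by the compatibility just mentioned it suffices to check
$$\varphi^{(n+1)} \circ \phi_n^{\mathrm{ab}} \circ \pi_n \;=\; \psi_{n,n+1}^* \circ \varphi^{(n)} \circ \pi_n$$
on the two types of generators of $LV_K(E(n),C(n))$. For a vertex generator $p_v$ with $v\in F_{2n}^{0,0}$, the left side equals $\varphi_0^{(n+1)}(\textbf{S}(v)) = \sum_{w\le \textbf{S}(v)} \mathds{1}_{Z_w^{(n+1)}}$, a sum of characteristic functions of the clopens in the natural $(E(n+1),C(n+1))$-structure on $X_{n+1}$ that refine $Z_v^{(n)} \subseteq X_n$; the right side equals $\mathds{1}_{\psi_{n,n+1}^{-1}(Z_v^{(n)})}$, and the two agree by Proposition \ref{proposition-equivariant.maps.factors}, which says exactly that the $(E(n+1),C(n+1))$-structure on $X_{n+1}$ refines the pulled-back $(E(n),C(n))$-structure. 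For a generator $\tau_{(e,f)}$ we unwind the four cases in the definition of $\varphi_0^{(n)}$: in each case $\varphi^{(n)}(\pi_n(\tau_{(e,f)}))$ is the characteristic function of a basic compact open bisection $Z(U,k,l,V)$ of $\calG(X_n,\sigma_n)$ with $k,l\in\{0,1\}$, whose $\psi_{n,n+1}$-preimage is, again by equivariance and the refinement of partitions described in Proposition \ref{proposition-even.labels.sigma.cylinder.dec}, exactly the disjoint union indexed by the terms of $\phi_n(\tau_{(e,f)})$ produced by the formula in Proposition \ref{proposition-graph.homo}, which is precisely $\varphi^{(n+1)}\circ\phi_n^{\mathrm{ab}}\circ\pi_n(\tau_{(e,f)})$. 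The analogue for the bottom-right square is verified similarly, using the universality of $\psi_n$ as the $(E(n),C(n))$-equivariant map $X\to X_n$.

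\emph{Step 3: Colimit identifications.} Theorem \ref{theorem-tame.at.infinity} gives $\calA_{\infty} = \bigcup_n \ol{\phi}_{n,\infty}^{\mathrm{ab}}(\LVab_K(E(n),C(n)))$, and an easy lifting argument using $\phi_{n,\infty}\circ\pi_n = \ol{\phi}_{n,\infty}^{\mathrm{ab}}$ shows that if $x\in \LVab_K(E(n),C(n))$ satisfies $\ol{\phi}_{n,\infty}^{\mathrm{ab}}(x)=0$, then already $\phi_{n,m}^{\mathrm{ab}}(x)=0$ for some $m\ge n$ (lift $x$ to $\tilde x\in LV_K(E(n),C(n))$, use that $\calA_{\infty}=\varinjlim \calA_{2n}$ to find $m$ with $\phi_{n,m}(\tilde x)=0$, then apply $\pi_m$). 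Therefore $\calA_{\infty}\cong \varinjlim(\LVab_K(E(n),C(n)), \phi_n^{\mathrm{ab}})$ in the category of $*$-algebras. Combining this with the isomorphism $\calA_{\infty}\cong A_K(\calG(X,\sigma))$ of Theorem \ref{theorem-main.iso.theorem} and the compatible isomorphisms $\varphi^{(n)}$ produced by the commutative diagram gives the first chain of isomorphisms in \eqref{equation-colimits.FINAL}.

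\emph{Step 4: $C^*$-algebra statement.} Applying Proposition \ref{proposition-graph.homo} at the level of $C_n=C^*(F_n,D^n)$ and the same universal argument of Step 1 produces $*$-homomorphisms $\OV(E(n),C(n))\to\OV(E(n+1),C(n+1))$ making the $C^*$-analogue of \eqref{equation-commutative.diagram.FINAL} commute; Theorem \ref{theorem-isomorphism.for.Cstars} then transports this to an inductive system of groupoid $C^*$-algebras. Since $C^*(\calG(X,\sigma))$ is the enveloping $C^*$-algebra of $A_{\C}(\calG(X,\sigma))$ and enveloping $C^*$-algebras commute with sequential $*$-algebraic colimits (as used in the proof of Theorem \ref{theorem-isomorphism.for.Cstars}), one concludes \eqref{equation-colimits.Cstar.FINAL}. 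The main obstacle is precisely the commutativity of the top-right square in Step 2: everything else is bookkeeping, but this identification requires tracking simultaneously the combinatorics of rombs in the $l$-diagram (via the formula for $\phi_n$) and the set-theoretic refinement of $\sigma$-cylinder decompositions encoded by $\psi_{n,n+1}$.
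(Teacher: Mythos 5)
Your proposal follows essentially the same route as the paper: the entire weight rests on the generator identity $\varphi_0^{(n+1)}\circ\phi_n = \psi_{n,n+1}^*\circ\varphi_0^{(n)}$, which you correctly isolate as the main obstacle and sketch correctly (vertices via the refinement of the pulled-back $(E(n),C(n))$-structure, the $\tau_{(e,f)}$ via matching the terms of $\phi_n(\tau_{(e,f)})$ with the decomposition of the $\psi_{n,n+1}$-preimage of the corresponding bisection), and the colimit statements in Step 3 and the $C^*$-completion in Step 4 are handled as in the paper.

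The one step that does not hold as written is the tameness argument in Step 1. The element $\phi_n(\tau_{(e,f)})$ is a \emph{sum} $\sum_{(f',e')}\tau_{(f',e')}$ of canonical generators of $LV_K(E(n+1),C(n+1))$, not a word in them, so its image in $\LVab_K(E(n+1),C(n+1))$ does not lie in the multiplicative $*$-semigroup generated by the tame generating set, and commutativity of the associated projections $e(\cdot)$ is not automatic from the defining relations of the abelianization: when one expands a word in these sums, the cross terms $u_\alpha u_\beta^*$ with $\alpha\neq\beta$ need not vanish a priori (compare Lemma \ref{lemma-tame.aftersteps}, where tameness is only achieved after pushing a word of length $k$ forward $k$ steps). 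The paper sidesteps this entirely: since $\varphi^{(n)}$ and $\varphi^{(n+1)}$ are isomorphisms, $\phi_n^{\mathrm{ab}}:=(\varphi^{(n+1)})^{-1}\circ\psi_{n,n+1}^*\circ\varphi^{(n)}$ exists and is unique from the upper-right square, and the upper-left square is then exactly the generator identity above. Your own Step 2 computation is independent of the existence of $\phi_n^{\mathrm{ab}}$, so the proof is repaired simply by reordering: establish $\varphi_0^{(n+1)}\circ\phi_n=\psi_{n,n+1}^*\circ\varphi_0^{(n)}$ first and read off from it both the factorization through $\pi_n$ and the commutativity of both squares. (Alternatively, tameness of $\pi_{n+1}(\phi_n(T))$ is in fact true, but the honest justification is that $\varphi_0^{(n+1)}(\phi_n(\tau_{(e,f)}))$ is the indicator of a single compact open bisection, the terms of the sum having mutually disjoint sources and ranges, and indicators of compact open bisections form a tame set in any Steinberg algebra of an ample Hausdorff groupoid; this again relies on the Step 2 computation rather than on formal properties of the abelianization.)
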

\begin{proof}
Since $\varphi^{(n)}$ and $\varphi^{(n+1)}$ are isomorphisms, there is a unique $*$-homomorphism
$$\phi_n^{{\rm ab}} \colon \LVab_K(E(n),C(n)) \to \LVab_K(E(n+1),C(n+1))$$
making the upper right square in \eqref{equation-commutative.diagram.FINAL} commutative.

By \eqref{equation-main.varphi.property}, in order to show that the upper left square in \eqref{equation-commutative.diagram.FINAL} is commutative, it suffices to show the identity $\varphi_0^{(n+1)} \circ \phi_n = \psi_{n,n+1}^* \circ \varphi_0^{(n)}$. To do so, we have to fix some notation.

For $u\in E(n)^0$, we will denote by $Z^{(n)}(u)$ the corresponding cylinder set in $X_n$. Let $(G,D)$ be the $l$-diagram obtained by placing the two layers $(F_{2n},D^{2n})=(E(n),C(n))$ and $(F_{2n+1},D^{2n+1})$ on top of $(F(n+1),D(n+1))$, and denote by $\calP_i^G$, $i\ge 0$, the corresponding partitions of $X_{n+1}$. Note that the first three layers of $(G,D)$ are identical to $(F_{2n},D^{2n})\cup (F_{2n+1},D^{2n+1})\cup (F_{2n+2},D^{2n+2})$, so in particular we have a natural bijection
$$\gamma \colon E(n)^{0,0}\cup E(n)^{0,1} \cup E(n+1)^{0,0}\cup E(n+1)^{0,1} \longrightarrow \calP^G_0\cup \calP^G_1\cup \calP^G_2\cup \calP^G_3.$$
Note that $\gamma(u) = Z^{(n+1)}(u)$ for $u\in E(n+1)^0$. Using the definition of $\psi_{n,n+1}$, we get, for $u\in E(n)^{0,0}$, that $\psi_{n,n+1}^{-1}(Z^{(n)}(u)) = \gamma(u)$, so setting $U :=\gamma (u)$, we have
$$\psi_{n,n+1}^{-1}(Z^{(n)}(u)) = U = \bigsqcup_{\{ U'\in \calP^G_{2} \mid U'\subseteq U\}} U'.$$
Similarly, for $w\in E(n)^{0,1}$, we have $\psi_{n+1,n}^{-1}(Z^{(n)}(w)) = \bigsqcup_{W'} W'$ where the union is extended to all $W'\in \calP^G_{3}$ such that $W'\subseteq W$, where $W :=\gamma(w) \in \calP^G_1$. It follows that, for $u\in E(n)^{0,0}$, and with $U :=\gamma(u)$, we have
\begin{align*}
\psi_{n,n+1}^*(\varphi_0^{(n)}(u)) & = \psi_{n,n+1}^*(\mathds{1}_{Z^{(n)}(u)}) \\
& = \mathds{1}_{Z^{(n)}(u)}\circ \psi_{n,n+1} \\
& = \mathds{1}_{\psi_{n,n+1}^{-1}(Z^{(n)}(u))} \\
&	= \sum_{\{U'\in \calP^G_2 \mid U'\subseteq U\}} \mathds{1}_{U'} \\
& = \varphi_0^{(n+1)}(\mathbf{S}(u)) = \varphi_0^{(n+1)}(\phi_n(u)).
\end{align*}
Similarly $\psi_{n,n+1}^*(\varphi_0^{(n)}(w)) = \varphi_0^{(n+1)}(\phi_n(w))$ for all $w \in E(n)^{0,1}$.

Let now $f \in R_u$, $e\in B_v$, where $e,f\in E(n)^1$, $u,v\in E(n)^{0,0}$, and $s(e) = s(f) =: w\in E(n)^{0,1}$. Using Lemma \ref{lemma-technical.1} and the definition of $\phi_n$, we have
$$\phi_n(fe^*) = \sum_{(f',e')\in A} f'(e')^*,$$
where $A$ is the set of all pairs $(f',e') \in E(n+1)^1 \times E(n+1)^1$ such that $f' \in R_{u'}$, $e'\in B_{v'}$, $u' = r(f') \le \mathbf{S}(u)$, $v'= r(e')\le \mathbf{S}(v)$ and $w' := s(f')= s(e') \le \mathbf{S}(w)$. Each pair $(f',e')\in A$ corresponds exactly to a triple $(U',W',V')$ such that $U',V'\in \calP^G_2$ and $W'\in \calP^G_3$, with $W'\subseteq \sigma_{n+1}(U')$ and $W' \subseteq V'$, with $U' = \gamma(u') \subseteq U = \gamma(u)$, $V' = \gamma(v') \subseteq V := \gamma(v)$ and $W' = \gamma(w') \subseteq W = \gamma(w)$. Moreover, observe that
$$\varphi^{(n+1)}_0 (f'(e')^*) = \mathds{1}_{Z((\sigma|_{U'})^{-1}(W'),1,0,W')} \in A_K(\calG(X_{n+1},\sigma_{n+1})).$$
We denote by $B$ the family of all such triples $(U',W',V')$. Using this, we compute
\begin{align*}
\psi_{n,n+1}^*(\varphi_0^{(n)}(fe^*)) & = \psi_{n,n+1}^*(\mathds{1}_{Z((\sigma_n|_{Z^{(n)}(u)})^{-1}(Z^{(n)}(w)),1,0,Z^{(n)}(w))}) \\
& = \mathds{1}_{Z((\sigma_n|_{Z^{(n)}(u)})^{-1}(Z^{(n)}(w)),1,0,Z^{(n)}(w))} \circ (\psi_{n,n+1})_* \\
& = \mathds{1}_{({\psi_{n+1,n}}_*)^{-1} Z((\sigma_n|_{Z^{(n)}(u)})^{-1}(Z^{(n)}(w)),1,0,Z^{(n)}(w))} \\
& = \mathds{1}_{( Z((\sigma_{n+1}|_{U})^{-1}(W)),1,0,W)} \\
& = \sum_{(U',W',V')\in B} \mathds{1}_{Z((\sigma_{n+1}|_{U'})^{-1}(W'),1,0,W')} \\
& = \sum_{(f',e')\in A} \varphi_0^{(n+1)}(f'(e')^*) = \varphi_0^{(n+1)}(\phi_n(fe^*)).
\end{align*}
Since the family of partial isometries $fe^*$, with $f\in R_u$, $e\in B_v$, $u,v\in E(n)^{0,0}$, generates $LV_K(E(n),C(n))$ as a $*$-algebra, we get that $\psi_{n,n+1}^* \circ \varphi_0^{(n)} = \varphi_0^{(n+1)}\circ \phi_n$, and hence $\phi_n^{\text{ab}}\circ \pi_n = \pi_{n+1}\circ \phi_n$, so that the upper left square of \eqref{equation-commutative.diagram.FINAL} is commutative.

Using that $\varphi$ is an isomorphism, we see that there is a unique $*$-homomorphism
$$\phi_{n+1,\infty}^{\text{ab}} \colon \LVab_K(E(n+1),C(n+1)) \longrightarrow \calA_{\infty}$$
making the lower right square of \eqref{equation-commutative.diagram.FINAL} commutative. Now a computation similar to the above gives that
$$\psi_{n+1}^*\circ \varphi_0^{(n+1)} = \varphi_{n+1} = \varphi \circ \phi_{n+1,\infty},$$
and so the lower left square of \eqref{equation-commutative.diagram.FINAL} is also commutative.

Now recall that, by its definition, $(\calA_{\infty},\phi_{n,\infty})$ is the colimit of the directed system
$$(LV_K(E(n),C(n)),\phi_n).$$
Since all the maps $\pi_n$ are surjective, the commutativity of \eqref{equation-commutative.diagram.FINAL} gives first that the maps $\phi_{n,\infty}^{{\rm ab}}$ are compatible with the transition maps $\phi_n^{{\rm ab}}$, that is, $\phi_{n,\infty}^{{\rm ab}} = \phi_{n+1,\infty}^{{\rm ab}}\circ \phi_n^{{\rm ab}}$, and second that $(\calA_{\infty},\phi_{n,\infty}^{{\rm ab}})$ is the colimit of the directed system
$$(\LVab_K(E(n),C(n)),\phi_n^{{\rm ab}}).$$
This gives the isomorphism $\calA_{\infty} \cong \varinjlim LV_K^{\text{ab}}(E(n),C(n))$. Using the commutativity of the right squares of \eqref{equation-commutative.diagram.FINAL} and the fact that $\varphi$ and all $\varphi^{(n)}$ are isomorphisms, we obtain \eqref{equation-colimits.FINAL}.

The proof of \eqref{equation-colimits.Cstar.FINAL} uses a diagram of $C^*$-algebras obtained by taking $C^*$-completions in diagram \eqref{equation-commutative.diagram.FINAL}. We leave the routine details to the reader. This concludes the proof.
\end{proof}

\section*{Acknowledgments}

The authors would like to thank the anonymous referee for her/his many suggestions, which have improved the exposition of the paper.

\providecommand{\bysame}{\leavevmode\hbox to3em{\hrulefill}\thinspace}
\providecommand{\MR}{\relax\ifhmode\unskip\space\fi MR }
% \MRhref is called by the amsart/book/proc definition of \MR.
\providecommand{\MRhref}[2]{%
  \href{http://www.ams.org/mathscinet-getitem?mr=#1}{#2}
}
\providecommand{\href}[2]{#2}

\end{document}